\definecolor{hexcolor0xff0000}{rgb}{1.000,0.000,0.000}
\definecolor{hexcolor0x000000}{rgb}{0.000,0.000,0.000}
\definecolor{hexcolor0x00ff00}{rgb}{0.000,1.000,0.000}
\definecolor{hexcolor0xffff00}{rgb}{1.000,1.000,0.000}
\definecolor{hexcolor0x000000}{rgb}{0.000,0.000,0.000}
\definecolor{hexcolor0x000000}{rgb}{0.000,0.000,0.000}
\definecolor{white}{rgb}{1.000,1.000,1.000}
\tikzstyle{none}=[inner sep=0pt]
\tikzstyle{port}=[inner sep=0pt]
\tikzstyle{component}=[circle,fill=white,draw=black, inner sep=2.5pt]
\tikzstyle{integral}=[inner sep=0pt]
\tikzstyle{differential}=[inner sep=0pt]
\tikzstyle{codifferential}=[inner sep=0pt]
\tikzstyle{function}=[regular polygon,regular polygon sides=4,fill=white,draw=black]
\tikzstyle{function2}=[regular polygon,regular polygon sides=4,fill=white,draw=black, inner sep=2pt]
\tikzstyle{function3}=[regular polygon,regular polygon sides=4,fill=white,draw=black, inner sep=-2pt]
\tikzstyle{duplicate}=[circle,fill=white,draw=black, inner sep=1pt]
\tikzstyle{wire}=[-,draw=black,line width=1.000]
\tikzstyle{object}=[inner sep=2pt]
\tikzset{every picture/.append style={scale=.65}, transform shape}
\newtheorem{example}[therm]{Example}
\begin{document}

\lefttitle{Monoidal Reverse Differential Categories}
\righttitle{Mathematical Structures in Computer}

\papertitle{Article}

\jnlPage{1}{00}
\jnlDoiYr{2019}
\doival{10.1017/xxxxx}

\title{Monoidal Reverse Differential Categories}

\begin{authgrp}
\author{Geoff Cruttwell}
\affiliation{Mount Allison University, Department of Mathematics and Computer Science\\
        \email{gcruttwell@mta.ca}} 
        \author{Jonathan Gallagher}
\affiliation{HRL Laboratories Center for Secure and Resilient Systems\\
        \email{jonathan@infinitylab.io}} 
                \author{Jean-Simon Pacaud Lemay}
\affiliation{Mount Allison University, Department of Mathematics and Computer Science\\
        \email{jsplemay@gmail.com}} 
                               \author{ Dorette Pronk}
\affiliation{Dalhousie University, Department of Mathematics and Statistics\\
        \email{Dorette.Pronk@dal.ca}} 
\end{authgrp}

\history{(Received xx xxx xxx; revised xx xxx xxx; accepted xx xxx xxx)}

\begin{abstract}
Cartesian reverse differential categories (CRDCs) are a recently defined structure which categorically model the reverse differentiation operations used in supervised learning.  Here we define a related structure called a \emph{monoidal reverse differential category}, prove important results about its relationship to CRDCs, and provide examples of both structures, including examples coming from models of quantum computation.  
\end{abstract}

\begin{keywords}
differential categories, reverse differential categories, monoidal reverse differential categories 
\end{keywords}

\maketitle

\section{Introduction}

To handle notions of differentiation that have become more prominent in computer science, two categorical structures have been useful: monoidal differential categories \cite[]{blute2006differential} and Cartesian differential categories \cite[]{blute2009cartesian}.  Each axiomatizes a different aspect of differentiation: monoidal differential categories axiomatize the linear maps and then derive the smooth maps from them; conversely, Cartesian differential categories axiomatize the smooth maps and derive the linear maps from them. While these structures have been very useful, they both only represent the ``forward'' aspect of differentiation. For uses of the derivative in supervised learning, the ``reverse'' derivative is more relevant.  

To understand the difference between forward and reverse differentiation, let us provide a simple example. Consider the smooth map $f: \mathbb{R}^2 \to \mathbb{R}$ defined by $f(x_1,x_2) = x_1^2x_2 + \sin(x_2)$. The Jacobian matrix of $f$, at $(x_1, x_2)$, is the $1 \times 2$ matrix whose components are the partial derivatives of $f$: 
\[ \mathbf{J}_f(x_1,x_2) := \begin{bmatrix} 2x_1x_2 & x_1^2 + \cos(x_2) \end{bmatrix} \]
The directional (forward) derivative of $f$ is the map $\mathsf{D}[f]: \mathbb{R}^2 \times \mathbb{R}^2 \to \mathbb{R}$ given by multiplying the Jacobian matrix of $f$ at the first input vector by the second input vector (seen as a $2 \times 1$ matrix): 
	\[ \mathsf{D}[f]\left ((x_1, x_2), (v_1, v_2) \right) = \mathbf{J}_f(x_1,x_2) \begin{bmatrix} v_1 \\
	v_2\end{bmatrix} =  2x_1x_2v_1 + (x_1^2 + \cos(x_2))v_2 \]
Note that this ``pushes vectors forwards'': at a point of $\mathbb{R}^2$, the directional derivative $\mathsf{D}[f]$ takes a vector in $\mathbb{R}^2$ to a vector in $\mathbb{R}$, that is, vectors are moved in the same direction as the map $f$ itself. 

Conversely, the reverse derivative moves vectors in the opposite direction.  The reverse derivative uses the \emph{transpose} of the Jacobian of $f$ at $(x_1, x_2)$, which is the $2 \times 1$ matrix: 
\[ \mathbf{J}^T_f(x_1,x_2) := \begin{bmatrix} 2x_1x_2 \\ x_1^2 + \cos(x_2) \end{bmatrix} \]
Then the reverse derivative of $f$ is $\mathsf{R}[f]: \mathbb{R}^2 \times \mathbb{R} \to \mathbb{R}^2$ defined by multiplying the transpose of the Jacobian $f$ at the first input vector by the second input vector (this time seen as a $1 \times 1$ matrix):
	\[ \mathsf{R}[f]\left ((x_1, x_2), t \right) = \mathbf{J}^T_f(x_1,x_2) t =  \begin{bmatrix} 2x_1x_2t \\ \left( x_1^2 + \cos(x_2) \right) t \end{bmatrix}  \]
Thus this operation indeed moves vectors in the opposite direction as $f$; that is,  it takes vectors from the codomain of $f$ and returns vectors in the domain. The reverse derivative is better suited for supervised learning situations, in which one knows a change in the codomain (e.g., the error of some function) and wants to know how much adjustment to make in the domain.    

Thus, a natural question is how to modify monoidal and Cartesian differential categories to handle reverse differentiation. For the Cartesian side of the picture, this was already accomplished in \cite[]{cockett_et_al:LIPIcs:2020:11661}.  While a Cartesian differential category (CDC) involves a category which comes equipped with an operator $\mathsf{D}$ which for any map $f: A \to B$ outputs a map $\mathsf{D}[f]: A \times A \to B$, a Cartesian \emph{reverse} differential category (CRDC) comes equipped with an operator $\mathsf{R}$ which for any map $f: A \to B$ outputs a map $\mathsf{R}[f]: A \times B \to A$. It was shown in \cite[]{cockett_et_al:LIPIcs:2020:11661} that a CRDC can be seen as a CDC with additional structure. Specifically, a CRDC is equivalent to giving a CDC in which the subcategory of linear maps in each simple slice has a transpose operator, which categorically speaking is a special type of of dagger structure. The explicit connection with supervised learning was then made in \cite[]{gradientBasedLearning}, which showed how to describe several supervised-learning techniques in the abstract setting of a CRDC.  

However, the first CRDC paper \cite[]{cockett_et_al:LIPIcs:2020:11661} left open the question of what a \emph{monoidal} reverse differential category (MRDC) should be. The goal of this paper is to fill in this gap by defining \emph{monoidal reverse differential categories} and establishing their fundamental relationships to the existing categorical differential structures described above. 

\begin{center}
\begin{tabular}{ |c|c|c| } 
  & Cartesian & Monoidal \\ 
 Forward & CDC \cite[]{blute2009cartesian} & MDC \cite[]{blute2006differential} \\ 
 Reverse & CRDC \cite[]{cockett_et_al:LIPIcs:2020:11661} & MRDC (this paper) \\ 
\end{tabular}
\end{center}

What should this structure look like?  As mentioned above, CDCs axiomatize smooth maps, while MDCs axiomatize linear maps.  However, as noted above, for a CRDC, its subcategory of linear maps has dagger structure. So at a minimum, an MRDC should have dagger structure.  However, we argue in this paper that an MRDC should be an even stronger: it should be \emph{self-dual compact closed}.  

Why do we ask for this additional structure?  There are two important requirements we ask of an MRDC. 
\begin{enumerate}
    \item Just as every Cartesian reverse differential category (CRDC) gives a Cartesian differential category (CDC) so should a monoidal reverse differential category (MRDC) give a monoidal differential category (MDC); moreover, we should be able to characterize precisely what structure is required of an MDC to make it an MRDC (as we can in the Cartesian case \cite[Theorem 41]{cockett_et_al:LIPIcs:2020:11661}).
    \item Just as the coKleisli category of an MDC is a CDC \cite[Proposition 3.2.1]{blute2009cartesian}, so should the coKleisli category of an MRDC be a CRDC.
\end{enumerate}
We shall see in Section \ref{sec:mrdc_is_sdcc} that these requirements force an MRDC to be self-dual compact closed.  

To prove these results, it will be helpful to investigate the basic structure of an MDC more closely.  In Section \ref{sec:diffcats}, we review monoidal differential categories, and add a new aspect to their story: a ``context fibration'' which helps to relate the structure of MDCs to CDCs (and then similarly between MRDCs and CRDCs). 

Thus, the main contributions of this paper are as follows:
\begin{itemize}
    \item Give the basic definition of a monoidal reverse differential category (MRDC), along with examples, including some unexpected ones in quantum computation.  
    \item Prove theorems that describe the relationships of MRDCs to CDCs, CRDCs, and MDCs.  
    \item Provide additional material about the relationship of MDCs to CDCs via a ``context fibration''.  
\end{itemize}
This work leaves open many future avenues for exploration; we describe some of these in Section \ref{sec:future_work}.

\paragraph*{Related Work:} In this paper, we study the linear logic categorical semantics for reverse differentiation. This was also studied by Smeding and V{\'a}k{\'a}r when they provide the categorical semantics for CHAD, their programming language for automatic differentiation (which includes both forward and reverse differentiation) \cite[]{vakar2021chad}. The work in this paper is also related to work done in categorical quantum mechanics. Indeed, the categorical semantics of (differential) linear logic that we consider in this paper also comes with the added assumption of dagger-compact closed structure. Compact closed categories have long been considered as models for linear logic \cite[]{hyland2003glueing, shirahata1996sequent}, and they form a setting that is often studied by those in the categorical quantum community, sometimes called multiplicative categorical quantum logic \cite[Chapter 4]{duncan2006types}. We are specifically interested in compact closed models of linear logic with exponentials. This is a setting that was studied by Selinger and Valiron when they developed a programming language for quantum computation with classical control \cite[]{journal:selinger-valiron-fully-abstract-quantum}, as well as by Vicary who studied categorical quantum harmonic oscillators \cite[]{vicary2008categorical}. Cockett, Comfort, and Srinivasan also provide a generalization of linear logic with exponentials for categorical quantum mechanics, by generalizing compact closed categories to linear distributive categories with daggers \cite[]{srinivasan2021dagger}. 

\paragraph*{Outline:} A reader interested in just the definition of MRDC can skip ahead to Section \ref{sec:mrdc}.  However, an important part of the paper is the justification of why we define MRDCs the way we do: in particular, why the self-dual compact-closed requirement is important.  For this, it was helpful for us to expand on a number of aspects of MDCs and CRDCs.  In particular, in Section \ref{sec:context_fibration}, we define a canonical ``context'' fibration associated to any MDC.  Then, in Section \ref{sec:fibration_equivalence}, we show that when we can build an associated CDC from an MDC, the canonical fibration associated to the MDC is equivalent to the canonical fibration of linear maps associated to a CDC.  This result is key in seeing why MRDCs must be self-dual compact closed.  Thus, prior to defining an MRDC, we review the background of MDCs, CDCs, and CRDCs, but also add some important new theory of these structures, which will in turn be helpful in understanding how our definition of an MRDC comes about.  Section \ref{sec:mrdc} contains the main definition of the paper, that of an MRDC.  We also describe examples, and prove the required properties.  We conclude the section by describing additional ways to build CRDCs.  Finally, in Section \ref{sec:future_work}, we describe some ways this work can be expanded on in the future.  

\paragraph*{Conventions:} In this paper, we will use the same terminology, notation, graphical calculus, and conventions as found in \cite[]{Blute2019}. In particular, our string diagrams are to be read from top to bottom, and we write composition diagrammatically, that is, the composition of maps $f: A \to B$ and $g: B \to C$ is denoted as $f;g: A \to C$.  

\section{Forward Differential Categories}\label{sec:diffcats}

In this section, we review the theory of monoidal and Cartesian differential categories, and add an important new element to the story: a canonical fibration associated to any coalgebra modality (in particular, to any differential category); see Section \ref{sec:context_fibration}.   When the differential category has products, so that we can build its associated Cartesian differential category, we show that this canonical fibration is isomorphic to the canonical linear fibration of the Cartesian differential category; see Theorem \ref{thm:fibration_equivalence}.  This isomorphism will be very useful when we go from monoidal reverse differential categories to Cartesian reverse differential categories.  

\subsection{Coalgebra Modalities}

The central structure on which a monoidal differential category rests is a coalgebra modality.  

\begin{definition}\label{coalgdef} A \textbf{coalgebra modality} \cite[Definition 2.1]{blute2006differential} on a symmetric monoidal category $\mathbb{X}$ is a quintuple $(\oc, \delta, \varepsilon, \Delta, e)$ consisting of an endofunctor $\oc: \mathbb{X} \to \mathbb{X}$ and four natural transformations: 
\begin{align*}
\delta_A: \oc A \to \oc \oc A && \varepsilon_A: \oc A \to A && \Delta_A: \oc A \to \oc A \otimes \oc A && e_A: \oc A \to k
\end{align*}
such that $(\oc, \delta, \varepsilon)$ is a comonad and for each object $A$,  $(\oc A, \Delta, e)$ is a cocommutative comonoid and $\delta_A$ is a comonoid morphism, that is, the diagrams found in \cite[Definition 1]{Blute2019} commute. 
 \end{definition}
 
Note that requiring that $\Delta$ and $e$ be natural transformations is equivalent to asking that for each map $f: A \to B$, $\oc(f): \oc A \to \oc B$ is also a comonoid morphism. In the graphical calculus, we will use functor boxes when dealing with string diagrams involving the endofunctor, that is, a mere map $f: A \to B$ will be encased in a circle while $\oc(f): \oc A \to \oc B$ will be encased in a box: 
 \begin{align*}
 \begin{array}[c]{c}
f
   \end{array}=
 \begin{array}[c]{c}
\begin{tikzpicture}
	\begin{pgfonlayer}{nodelayer}
		\node [style=circle] (0) at (0, 2.25) {$A$};
		\node [style=circle] (1) at (0, -0.25) {$B$};
		\node [style={component}] (2) at (0, 1) {$f$};
	\end{pgfonlayer}
	\begin{pgfonlayer}{edgelayer}
		\draw [style=wire] (0) to (2);
		\draw [style=wire] (2) to (1);
	\end{pgfonlayer}
\end{tikzpicture}
\end{array}
 &&  \begin{array}[c]{c}
\oc(f)
   \end{array}=
   \begin{array}[c]{c}
   \begin{tikzpicture}
	\begin{pgfonlayer}{nodelayer}
		\node [style=circle] (0) at (0, 2.25) {$\oc A$};
		\node [style=circle] (1) at (0, -0.25) {$\oc B$};
		\node [style={function}] (2) at (0, 1) {$f$};
	\end{pgfonlayer}
	\begin{pgfonlayer}{edgelayer}
		\draw [style=wire] (0) to (2);
		\draw [style=wire] (2) to (1);
	\end{pgfonlayer}
\end{tikzpicture}
   \end{array}
\end{align*}
The remaining coalgebra modality structure maps are drawn as follows: 
\begin{align*}
   \begin{array}[c]{c}
\begin{tikzpicture}
	\begin{pgfonlayer}{nodelayer}
		\node [style=object] (1) at (15, 3.5) {$\oc A$};
		\node [style=object] (2) at (15, 1) {$\oc\oc A$};
		\node [style=component] (3) at (15, 2.25) {$\delta$};
		\node [style=object] (5) at (19.75, 3.5) {$\oc A$};
		\node [style=object] (7) at (19.25, 1) {$\oc A$};
		\node [style=duplicate] (8) at (19.75, 2.25) {$\Delta$};
		\node [style=object] (10) at (20.25, 1) {$\oc A$};
		\node [style=object] (11) at (17, 3.5) {$\oc A$};
		\node [style=object] (12) at (17, 1) {$A$};
		\node [style=component] (13) at (17, 2.25) {$\varepsilon$};
		\node [style=object] (14) at (22, 3.5) {$\oc A$};
		\node [style=component] (16) at (22, 2.5) {$e$};
	\end{pgfonlayer}
	\begin{pgfonlayer}{edgelayer}
		\draw [style=wire] (1) to (3);
		\draw [style=wire] (3) to (2);
		\draw [style=wire, bend right=15, looseness=1.25] (8) to (7);
		\draw [style=wire] (5) to (8);
		\draw [style=wire, in=97, out=-53] (8) to (10);
		\draw [style=wire] (11) to (13);
		\draw [style=wire] (13) to (12);
		\draw [style=wire] (14) to (16);
	\end{pgfonlayer}
\end{tikzpicture}
   \end{array}
\end{align*}

We will occasionally make use of the following canonical natural transformation associated to any coalgebra modality.  

\begin{definition}\label{dcircdef} For a coalgebra modality $(\oc, \delta, \varepsilon, \Delta, e)$ on a symmetric monoidal category $\mathbb{X}$, its \textbf{coderiving transformation} \cite[Definition 2.2]{cockett_lemay_2018} is the natural transformation $\mathsf{d}^\circ_A: \oc A \to \oc A \otimes A$ defined as follows: 
\begin{align*}
\mathsf{d}^\circ_A := \xymatrixcolsep{5pc}\xymatrix{ \oc A \ar[r]^-{\Delta_A} & \oc A \otimes \oc A \ar[r]^-{1_{\oc A} \otimes \varepsilon_A} & \oc A \otimes A  
  } && \mathsf{d}^\circ := \begin{array}[c]{c} 
\begin{tikzpicture}
	\begin{pgfonlayer}{nodelayer}
		\node [style=component] (0) at (2, 1.5) {$\varepsilon$};
		\node [style=duplicate] (1) at (1.25, 2.25) {$\Delta$};
		\node [style=object] (2) at (0.5, 0.75) {$\oc A$};
		\node [style=object] (3) at (1.25, 3.25) {$\oc A$};
		\node [style=object] (4) at (2, 0.75) {$A$};
		\node [style=object] (5) at (-1.5, 1) {$A$};
		\node [style=object] (6) at (-2, 3) {$\oc A$};
		\node [style=integral] (7) at (-2, 2) {{\bf =\!=\!=\!=}};
		\node [style=object] (8) at (-2.5, 1) {$\oc A$};
		\node [style=object] (9) at (-0.25, 2) {$=$};
	\end{pgfonlayer}
	\begin{pgfonlayer}{edgelayer}
		\draw [style=wire] (3) to (1);
		\draw [style=wire, in=90, out=0, looseness=1.25] (1) to (0);
		\draw [style=wire] (0) to (4);
		\draw [style=wire, in=90, out=180] (1) to (2);
		\draw [style=wire, bend left] (7) to (5);
		\draw [style=wire] (6) to (7);
		\draw [style=wire, bend right] (7) to (8);
	\end{pgfonlayer}
\end{tikzpicture}
   \end{array}
\end{align*}
\end{definition}
See \cite[Proposition 2.1]{cockett_lemay_2018} for a list of identities the coderiving transformation satisfies. 

We now turn our attention to when our symmetric monoidal category also has finite products. 

\begin{definition} \label{Seelydef} For a coalgebra modality $(\oc, \delta, \varepsilon, \Delta, e)$ on a symmetric monoidal category $\mathbb{X}$ with finite products, the \textbf{Seely maps} consist of the natural transformations:
\begin{align*}
    \chi_{A,B}: \oc(A \times B) \to \oc A \otimes \oc B && \chi_\top: \oc \top \to k
\end{align*}
defined respectively as follows: 
\begin{align*}
  \chi_{A,B} := \xymatrixcolsep{3.5pc}\xymatrix{\oc(A \times B) \ar[r]^-{\Delta_{A,B}} &  \oc(A \times B) \otimes \oc(A \times B) \ar[r]^-{\oc(\pi_0) \otimes \oc(\pi_1)} & \oc A \otimes \oc B
  } 
\end{align*}
\begin{align*}
  \chi_\top := \xymatrixcolsep{3pc}\xymatrix{  \oc\top \ar[r]^-{e_\top} & k
  } 
\end{align*}
\begin{align*}
   \begin{array}[c]{c}
\begin{tikzpicture}
	\begin{pgfonlayer}{nodelayer}
		\node [style=function2] (0) at (1.25, 1.25) {$\pi_1$};
		\node [style=duplicate] (1) at (0.5, 2.25) {$\Delta$};
		\node [style=object] (2) at (-0.25, 0.25) {$\oc A$};
		\node [style=object] (3) at (0.5, 3.25) {$\oc (A \times B)$};
		\node [style=object] (4) at (1.25, 0.25) {$\oc B$};
		\node [style=object] (5) at (-1.5, 1) {$\oc B$};
		\node [style=object] (6) at (-2, 3) {$\oc(A \times B)$};
		\node [style=duplicate] (7) at (-2, 2) {$\chi$};
		\node [style=object] (8) at (-2.5, 1) {$\oc A$};
		\node [style=object] (9) at (-1, 2) {$=$};
		\node [style=function2] (10) at (-0.25, 1.25) {$\pi_0$};
		\node [style=component] (12) at (5.25, 1.5) {$e$};
		\node [style=object] (14) at (5.25, 2.5) {$\oc \top$};
		\node [style=object] (17) at (3.75, 2.5) {$\oc \top$};
		\node [style=duplicate] (18) at (3.75, 1.5) {$\chi_\top$};
		\node [style=object] (20) at (4.5, 2) {$=$};
	\end{pgfonlayer}
	\begin{pgfonlayer}{edgelayer}
		\draw [style=wire] (3) to (1);
		\draw [style=wire, in=90, out=-15, looseness=1.25] (1) to (0);
		\draw [style=wire] (0) to (4);
		\draw [style=wire, bend left] (7) to (5);
		\draw [style=wire] (6) to (7);
		\draw [style=wire, bend right] (7) to (8);
		\draw [style=wire, in=90, out=-165, looseness=1.25] (1) to (10);
		\draw [style=wire] (10) to (2);
		\draw [style=wire] (14) to (12);
		\draw [style=wire] (17) to (18);
	\end{pgfonlayer}
\end{tikzpicture}
   \end{array}
\end{align*}
 A coalgebra modality $(\oc, \delta, \varepsilon, \Delta, e)$ on a symmetric monoidal category $\mathbb{X}$ with finite products has \textbf{Seely isomorphisms} \cite[Definition 10]{Blute2019} if the Seely maps are isomorphisms, so that $\oc(A \times B) \cong \oc A \otimes \oc B$ and $\oc \top \cong k$. 
\end{definition}

Coalgebra modalities with Seely isomorphisms can equivalently be described as \textbf{monoidal coalgebra modalities} \cite[Definition 2]{Blute2019}, which are coalgebra modalities equipped with extra structure: a natural transformation ${\mathsf{m}_{A,B}: \oc A \otimes \oc B \to \oc(A \otimes B)}$ and a map $\mathsf{m}_k: k \to \oc k$ such that the underlying comonad $\oc$ is a symmetric monoidal comonad, and that $\Delta$ and $\mathsf{e}$ are both monoidal transformations and $\oc$-coalgebra morphisms (which imply that $\mathsf{m}_{A,B}$ and $\mathsf{m}_k$ are comonoid morphisms). See \cite[Section 7]{Blute2019} for how to build $\mathsf{m}$ and $\mathsf{m}_k$ from the Seely isomorphisms, and vice-versa. Note however that monoidal coalgebra modalities can be defined without the need of finite products; however, as they do not play a central role in this paper, we have elected to only briefly mention them. Many examples of (monoidal) coalgebra modalities can be found throughout the literature, see for example \cite[Section 2.4]{hyland2003glueing} for a very nice list of various kinds of examples of (monoidal) coalgebra modalities. 

We conclude this section by briefly discussing coalgebra modalities in the presence of additive structure. Indeed, the underlying categorical structure of a differential category is not only a symmetric monoidal category but an \emph{additive} symmetric monoidal category. 

\begin{definition}\label{addcatdef} An \textbf{additive symmetric monoidal category} \cite[Definition 3]{Blute2019} is a symmetric monoidal category $\mathbb{X}$ such that each hom-set $\mathbb{X}(A,B)$ is a commutative monoid with zero map $0 \in \mathbb{X}(A,B)$ and addition ${+: \mathbb{X}(A,B) \times \mathbb{X}(A,B) \to \mathbb{X}(A,B)}$, $(f,g) \mapsto f +g$, and, such that composition and the tensor product preserves the additive structure, that is, the following equalities hold: 
\begin{align*}
k;(f + g);h =k;f;h +k;g;h && k;0;h=0
\end{align*}
\begin{align*}
 k \otimes (f+g)\otimes h= k\otimes f\otimes h + k\otimes g \otimes h && k \otimes 0 \otimes h =0
\end{align*}
\end{definition}

By \cite[Theorem 1]{Blute2019}, for additive symmetric monoidal categories, monoidal coalgebra modalities can equivalently be describe as \textbf{additive bialgebra modalities} \cite[Definition 3]{Blute2019}. This implies that, in the additive case, we also have two extra natural transformations $\nabla_A: \oc A \otimes \oc A \to \oc A$ and $u_A: k \to \oc A$ such that $\oc A$ is a bimonoid. In particular, this implies that $\oc A$ is a commutative monoid. In the graphical calculus:
\begin{align*}
   \begin{array}[c]{c}
\begin{tikzpicture}
	\begin{pgfonlayer}{nodelayer}
		\node [style=object] (3) at (19.75, 1) {$\oc A$};
		\node [style=object] (4) at (19.25, 3.5) {$\oc A$};
		\node [style=duplicate] (5) at (19.75, 2.25) {$\nabla$};
		\node [style=object] (6) at (20.25, 3.5) {$\oc A$};
		\node [style=object] (10) at (22, 1) {$\oc A$};
		\node [style=component] (11) at (22, 2) {$u$};
	\end{pgfonlayer}
	\begin{pgfonlayer}{edgelayer}
		\draw [style=wire, bend left=15, looseness=1.25] (5) to (4);
		\draw [style=wire] (3) to (5);
		\draw [style=wire, in=-97, out=53] (5) to (6);
		\draw [style=wire] (10) to (11);
	\end{pgfonlayer}
\end{tikzpicture}
   \end{array}
\end{align*}
If an additive symmetric monoidal category has finite products, then the product $\times$ is in fact a biproduct and the terminal object $\top$ is a zero object. Thus, for an additive symmetric monoidal category with finite (bi)products, a coalgebra modality with Seely isomorphisms is an additive bialgebra modality and vice-versa \cite[Theorem 6]{Blute2019}. In particular, the inverse maps $\chi^{-1}_{A,B}: \oc A \otimes \oc B \to \oc (A \times B)$ and $\chi^{-1}_\top: k \to \oc \top$ are constructed as follows using the monoid structure of $\oc A$: 
\begin{align*}
  \chi^{-1}_{A,B} := \xymatrixcolsep{3.25pc}\xymatrix{\oc A \otimes \oc B  \ar[rr]^-{\oc(\iota_0) \otimes \oc(\iota_1)} && \oc(A \times B) \otimes \oc(A \times B) \ar[r]^-{\nabla_{A \times B}} & \oc(A \times B)
  } 
\end{align*}
\begin{align*}
  \chi^{-1}_\top := \xymatrixcolsep{3pc}\xymatrix{  k \ar[r]^-{u_\top} & \oc\top 
  } 
\end{align*}
\begin{align*}
   \begin{array}[c]{c}
\begin{tikzpicture}
	\begin{pgfonlayer}{nodelayer}
		\node [style=function2] (0) at (4.25, 2.25) {$\iota_1$};
		\node [style=duplicate] (1) at (3.5, 1.25) {$\nabla$};
		\node [style=object] (2) at (2.75, 3.25) {$\oc A$};
		\node [style=object] (3) at (3.5, 0.25) {$\oc (A \times B)$};
		\node [style=object] (4) at (4.25, 3.25) {$\oc B$};
		\node [style=object] (5) at (1.5, 2.5) {$\oc B$};
		\node [style=object] (6) at (1, 0.5) {$\oc(A \times B)$};
		\node [style=duplicate] (7) at (1, 1.5) {$\chi$};
		\node [style=object] (8) at (0.5, 2.5) {$\oc A$};
		\node [style=object] (9) at (2, 1.5) {$=$};
		\node [style=function2] (10) at (2.75, 2.25) {$\iota_0$};
		\node [style=component] (11) at (8.25, 2) {$u$};
		\node [style=object] (12) at (8.25, 1) {$\oc \top$};
		\node [style=object] (13) at (6.75, 1) {$\oc \top$};
		\node [style=duplicate] (14) at (6.75, 2) {$\chi_\top$};
		\node [style=object] (15) at (7.5, 1.5) {$=$};
	\end{pgfonlayer}
	\begin{pgfonlayer}{edgelayer}
		\draw [style=wire] (3) to (1);
		\draw [style=wire, in=-90, out=15, looseness=1.25] (1) to (0);
		\draw [style=wire] (0) to (4);
		\draw [style=wire, bend right] (7) to (5);
		\draw [style=wire] (6) to (7);
		\draw [style=wire, bend left] (7) to (8);
		\draw [style=wire, in=-90, out=165, looseness=1.25] (1) to (10);
		\draw [style=wire] (10) to (2);
		\draw [style=wire] (12) to (11);
		\draw [style=wire] (13) to (14);
	\end{pgfonlayer}
\end{tikzpicture}
   \end{array}
\end{align*}
 where $\iota_0: A \to A \times B$ and $\iota_1: B \to A \times B$ are the injection maps of the biproduct.

\subsection{The context fibration associated to a coalgebra modality}\label{sec:context_fibration}

In this section we describe a canonical fibration associated to any coalgebra modality, whose individual fibres were studied in  \cite[]{ehrhard2021categorical,HYLAND1999127}. We assume the reader is familiar with the theory of fibrations (as, for example, presented in \cite[Section 2.1]{jacobs1999categorical}). The fibration in question will be over the coKleisli category of the comonad $\oc$. As we will be working with coKleisli categories, we will use the notation in \cite[]{blute2015cartesian}, where interpretation brackets $\llbracket - \rrbracket$ are used to translate between maps in the coKleisli category and maps in the base category.  That is, for a comonad $(\oc, \delta, \varepsilon)$ on a category $\mathbb{X}$ if $\mathbb{X}_\oc$ is its coKleisli category, then a map $f: A \to B$ in $\mathbb{X}_{\oc}$ corresponds to a map $\llbracket f \rrbracket: \oc A \to B$ in $\mathbb{X}$.  Using this notation, recall that composition and identities in $\mathbb{X}_\oc$ are defined as: 
\begin{align*}
\llbracket f;g \rrbracket = \delta_{A}; \oc(\llbracket f \rrbracket); \llbracket g \rrbracket && \llbracket 1_A \rrbracket = \varepsilon_A
\end{align*}
There are canonical adjoint functors $\mathsf{U}_\oc: \mathbb{X}_\oc \to \mathbb{X}$ and $\mathsf{F}_\oc: \mathbb{X} \to \mathbb{X}_\oc$ defined as follows: 
\begin{align*}
\mathsf{U}_\oc(A) = \oc A && \mathsf{U}_\oc(\llbracket f \rrbracket) = \delta_A;  \oc(\llbracket f \rrbracket) && \mathsf{F}_\oc(A) = A && \llbracket \mathsf{F}_\oc(f) \rrbracket = \varepsilon_A ; f
\end{align*}
We now describe the canonical fibration over the coKleisli category of a coalgebra modality. 

\begin{definition} Let $(\oc, \delta, \varepsilon, \Delta, e)$ be a coalgebra modality on a symmetric monoidal category $\mathbb{X}$. Define the category $\mathcal{L}_\oc[\mathbb{X}]$ as follows: 
\begin{enumerate}[{\em (i)}]
\item The objects of $\mathcal{L}_\oc[\mathbb{X}]$ are pairs of objects $(X,A)$ of $\mathbb{X}$; that is:
\[Ob\left( \mathcal{L}_\oc[\mathbb{X}] \right) = Ob\left( \mathbb{X} \right) \times Ob\left( \mathbb{X} \right);\]
\item The maps of $\mathcal{L}_\oc[\mathbb{X}]$ are pairs $(\llbracket f \rrbracket,g): (X,A) \to (Y,B)$ consisting of a coKleisli map ${\llbracket f \rrbracket: \oc X \to Y}$ and a map $g: \oc X \otimes A \to B$, that is, 
\[\mathcal{L}_\oc[\mathbb{X}]\left( (X,A), (Y,B) \right) = \mathbb{X}_\oc(X, Y) \times \mathbb{X}(\oc X \otimes A, B);\] 
\item The identity map of $(X,A)$ is defined as $(\llbracket 1_A \rrbracket, e_X \otimes 1_A) = (\varepsilon_A, e_X \otimes 1_A): (X,A) \to (X,A)$;
\begin{align*}
\left(\begin{array}[c]{c}
\begin{tikzpicture}
	\begin{pgfonlayer}{nodelayer}
		\node [style=object] (17) at (9.5, 1) {$\oc X$};
		\node [style=object] (20) at (9.5, -1) {$X$};
		\node [style=component] (21) at (9.5, 0) {$\varepsilon$};
	\end{pgfonlayer}
	\begin{pgfonlayer}{edgelayer}
		\draw [style=wire] (17) to (21);
		\draw [style=wire] (21) to (20);
	\end{pgfonlayer}
\end{tikzpicture}
   \end{array},
   \begin{array}[c]{c}
\begin{tikzpicture}
	\begin{pgfonlayer}{nodelayer}
		\node [style=object] (17) at (9.75, 1.75) {$\oc X$};
		\node [style=object] (19) at (10.5, 1.75) {$A$};
		\node [style=object] (21) at (10.5, -0.25) {$A$};
		\node [style=component] (22) at (9.75, 0.5) {$e$};
	\end{pgfonlayer}
	\begin{pgfonlayer}{edgelayer}
		\draw [style=wire] (19) to (21);
		\draw [style=wire] (17) to (22);
	\end{pgfonlayer}
\end{tikzpicture}
   \end{array} \right) 
\end{align*}
\item The composition of maps $(\llbracket f \rrbracket,g): (X,A) \to (Y,B)$ and $(\llbracket h \rrbracket, k): (Y,B) \to (Z,C)$ is defined as follows: 
\begin{align*} &(\llbracket f \rrbracket,g); (\llbracket h \rrbracket, k) = \\
&\left( \llbracket f;g \rrbracket, \xymatrixcolsep{4pc}\xymatrix{ \oc X \otimes A \ar[r]^-{\Delta_X \otimes 1_A} & \oc X \otimes \oc X \otimes A \ar[r]^-{\mathsf{U}_\oc\left( \llbracket f \rrbracket \right) \otimes g} & \oc Y \otimes B \ar[r]^-{k} & C} \right)
\end{align*}
\begin{align*}
\left(\begin{array}[c]{c}
\begin{tikzpicture}
	\begin{pgfonlayer}{nodelayer}
		\node [style=object] (17) at (9.5, 1) {$\oc X$};
		\node [style=object] (20) at (9.5, -1) {$Y$};
		\node [style=component] (21) at (9.5, 0) {$f$};
	\end{pgfonlayer}
	\begin{pgfonlayer}{edgelayer}
		\draw [style=wire] (17) to (21);
		\draw [style=wire] (21) to (20);
	\end{pgfonlayer}
\end{tikzpicture}
   \end{array},
   \begin{array}[c]{c}
\begin{tikzpicture}
	\begin{pgfonlayer}{nodelayer}
		\node [style=object] (17) at (9.75, 1.75) {$\oc X$};
		\node [style=object] (19) at (10.75, 1.75) {$A$};
		\node [style=component] (20) at (10.25, 0.75) {$g$};
		\node [style=object] (21) at (10.25, -0.25) {$B$};
	\end{pgfonlayer}
	\begin{pgfonlayer}{edgelayer}
		\draw [style=wire] (20) to (21);
		\draw [style=wire, in=165, out=-90] (17) to (20);
		\draw [style=wire, in=-90, out=15] (20) to (19);
	\end{pgfonlayer}
\end{tikzpicture}
   \end{array} \right) ; \left(\begin{array}[c]{c}
\begin{tikzpicture}
	\begin{pgfonlayer}{nodelayer}
		\node [style=object] (17) at (9.5, 1) {$\oc Y$};
		\node [style=object] (20) at (9.5, -1) {$Z$};
		\node [style=component] (21) at (9.5, 0) {$h$};
	\end{pgfonlayer}
	\begin{pgfonlayer}{edgelayer}
		\draw [style=wire] (17) to (21);
		\draw [style=wire] (21) to (20);
	\end{pgfonlayer}
\end{tikzpicture}
   \end{array},
   \begin{array}[c]{c}
\begin{tikzpicture}
	\begin{pgfonlayer}{nodelayer}
		\node [style=object] (17) at (9.75, 1.75) {$\oc Y$};
		\node [style=object] (19) at (10.75, 1.75) {$B$};
		\node [style=component] (20) at (10.25, 0.75) {$k$};
		\node [style=object] (21) at (10.25, -0.25) {$C$};
	\end{pgfonlayer}
	\begin{pgfonlayer}{edgelayer}
		\draw [style=wire] (20) to (21);
		\draw [style=wire, in=165, out=-90] (17) to (20);
		\draw [style=wire, in=-90, out=15] (20) to (19);
	\end{pgfonlayer}
\end{tikzpicture}
   \end{array} \right) = 
\left(\begin{array}[c]{c}
\begin{tikzpicture}
	\begin{pgfonlayer}{nodelayer}
		\node [style=object] (17) at (9.5, 1.75) {$\oc X$};
		\node [style=function2] (18) at (9.5, 0) {$f$};
		\node [style=component] (19) at (9.5, -1) {$h$};
		\node [style=object] (20) at (9.5, -1.75) {$Z$};
		\node [style=component] (21) at (9.5, 1) {$\delta$};
	\end{pgfonlayer}
	\begin{pgfonlayer}{edgelayer}
		\draw [style=wire] (18) to (19);
		\draw [style=wire] (19) to (20);
		\draw [style=wire] (17) to (21);
		\draw [style=wire] (21) to (18);
	\end{pgfonlayer}
\end{tikzpicture}
   \end{array},
   \begin{array}[c]{c}
\begin{tikzpicture}
	\begin{pgfonlayer}{nodelayer}
		\node [style=component] (15) at (9, 0) {$\delta$};
		\node [style=duplicate] (16) at (9.5, 1) {$\Delta$};
		\node [style=object] (17) at (9.5, 1.75) {$\oc X$};
		\node [style=component] (18) at (10.25, 0) {$g$};
		\node [style=object] (19) at (10.75, 1.75) {$A$};
		\node [style=component] (20) at (9.75, -1.75) {$k$};
		\node [style=object] (21) at (9.75, -2.5) {$C$};
		\node [style=function2] (23) at (9, -1) {$f$};
	\end{pgfonlayer}
	\begin{pgfonlayer}{edgelayer}
		\draw [style=wire, bend right] (16) to (15);
		\draw [style=wire] (17) to (16);
		\draw [style=wire, in=-90, out=15, looseness=0.75] (18) to (19);
		\draw [style=wire, in=150, out=-30, looseness=1.25] (16) to (18);
		\draw [style=wire] (20) to (21);
		\draw [style=wire, in=30, out=-90] (18) to (20);
		\draw [style=wire] (15) to (23);
		\draw [style=wire, bend right, looseness=1.25] (23) to (20);
	\end{pgfonlayer}
\end{tikzpicture}
   \end{array} \right)
\end{align*}
\end{enumerate}
Let $\mathsf{p}_\oc: \mathcal{L}_\oc[\mathbb{X}] \to \mathbb{X}_\oc$ be the forgetful functor, which is defined on objects as $\mathsf{p}_\oc(X,A) = X$ and on maps as ${\mathsf{p}_\oc(\llbracket f \rrbracket,g) = \llbracket f \rrbracket}$. 
\end{definition}


The following is then straightforward:

\begin{proposition}\label{prop:context_fibration}  Let $(\oc, \delta, \varepsilon, \Delta, e)$ be a coalgebra modality on a symmetric monoidal category $\mathbb{X}$. Then $\mathsf{p}_\oc: \mathcal{L}_\oc[\mathbb{X}] \to \mathbb{X}_\oc$ is a fibration where the Cartesian maps are those of the form: 
\begin{align*}
(\llbracket f \rrbracket, e_X \otimes 1_A): (X,A) \to (Y,A) && 
\left(\begin{array}[c]{c}
\begin{tikzpicture}
	\begin{pgfonlayer}{nodelayer}
		\node [style=object] (17) at (9.5, 1) {$\oc X$};
		\node [style=object] (20) at (9.5, -1) {$Y$};
		\node [style=component] (21) at (9.5, 0) {$f$};
	\end{pgfonlayer}
	\begin{pgfonlayer}{edgelayer}
		\draw [style=wire] (17) to (21);
		\draw [style=wire] (21) to (20);
	\end{pgfonlayer}
\end{tikzpicture}
   \end{array},
   \begin{array}[c]{c}
\begin{tikzpicture}
	\begin{pgfonlayer}{nodelayer}
		\node [style=object] (17) at (9.75, 1.75) {$\oc X$};
		\node [style=object] (19) at (10.5, 1.75) {$A$};
		\node [style=object] (21) at (10.5, -0.25) {$A$};
		\node [style=component] (22) at (9.75, 0.5) {$e$};
	\end{pgfonlayer}
	\begin{pgfonlayer}{edgelayer}
		\draw [style=wire] (19) to (21);
		\draw [style=wire] (17) to (22);
	\end{pgfonlayer}
\end{tikzpicture}
   \end{array} \right) 
\end{align*}
\end{proposition}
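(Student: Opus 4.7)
The plan is to verify in turn that $\mathsf{p}_\oc$ is a functor and that, for every map $\llbracket f \rrbracket : X \to Y$ in $\mathbb{X}_\oc$ and every object $(Y,A)$ of $\L_\oc[\mathbb{X}]$ sitting above $Y$, the map $\bar{f} := (\llbracket f \rrbracket, e_X \otimes 1_A) : (X,A) \to (Y,A)$ is a Cartesian lift. Functoriality of $\mathsf{p}_\oc$ is immediate: the first component of the composition and of the identity in $\L_\oc[\mathbb{X}]$ is defined to agree exactly with coKleisli composition and the coKleisli identity $\varepsilon$, so the projection preserves both on the nose.

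For the Cartesian property, I fix any map $(\llbracket g \rrbracket, \ell) : (Z,B) \to (Y,A)$ in $\L_\oc[\mathbb{X}]$ together with a factorisation $\llbracket g \rrbracket = \llbracket u ; f \rrbracket$ in $\mathbb{X}_\oc$, and show that there is a unique $(\llbracket u \rrbracket, \ell') : (Z,B) \to (X,A)$ with $(\llbracket u \rrbracket, \ell') ; \bar{f} = (\llbracket g \rrbracket, \ell)$. The first-component equation holds automatically from the factorisation, so all content lies in the equation between second components. Unfolding the definition of composition in $\L_\oc[\mathbb{X}]$, the second component of $(\llbracket u \rrbracket, \ell') ; \bar{f}$ is
\[ (\Delta_Z \otimes 1_B) ; \bigl(\mathsf{U}_\oc(\llbracket u \rrbracket) \otimes \ell'\bigr) ; (e_X \otimes 1_A). \]

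The key step is to show that this composite collapses to $\ell'$. Using naturality of $e$ together with the fact that $\delta$ is a comonoid morphism (so $\delta_Z ; e_{\oc Z} = e_Z$), one computes $\mathsf{U}_\oc(\llbracket u \rrbracket) ; e_X = \delta_Z ; \oc(\llbracket u \rrbracket) ; e_X = \delta_Z ; e_{\oc Z} = e_Z$. Bifunctoriality of $\otimes$ then rewrites the displayed composite as $(\Delta_Z \otimes 1_B) ; (e_Z \otimes \ell')$, and the counit law of the comonoid $(\oc Z, \Delta_Z, e_Z)$, namely $\Delta_Z ; (e_Z \otimes 1_{\oc Z}) = 1_{\oc Z}$, simplifies this further to $\ell'$. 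Hence the second-component equation reduces to $\ell' = \ell$, giving both existence and uniqueness of the lift.

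The only substantive manipulation is this collapse of the second component, which is a routine diagram chase in the coalgebra modality axioms recalled in Section \ref{sec:context_fibration}; I anticipate no significant obstacle, and rendering the calculation in string diagrams using the coderiving-style moves for $\Delta$ and $e$ would make it particularly transparent.
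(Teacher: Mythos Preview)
Your proof is correct and supplies exactly the verification the paper omits: the paper merely asserts the result is ``straightforward'' without writing out any argument, and your computation of the second component---using naturality of $e$, the identity $\delta_Z;e_{\oc Z}=e_Z$, and the comonoid counit law---is precisely the routine check one needs to see that $(\llbracket f \rrbracket, e_X\otimes 1_A)$ is a Cartesian lift.
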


We now describe the fibres of this fibration. The fibres are examples of Hyland and Schalk's \textbf{comonoid indexing} \cite[Section 4]{HYLAND1999127} over the cofree $\oc$-coalgebras, which are also used by Ehrhard and Jafarrahmani for studying fixed point formulas \cite[]{ehrhard2021categorical}. In particular, since $\oc X$ is a comonoid, $\oc X \otimes -$ is a comonad and, furthermore, its coKleisli category is precisely the fibre over $X$. 

 \begin{lemma}\label{lem:linfibre} Let $(\oc, \delta, \varepsilon, \Delta, e)$ be a coalgebra modality on a symmetric monoidal category $\mathbb{X}$. For any object $X \in \mathbb{X}$, the fibre over $X$ of the fibration $\mathsf{p}_\oc: \mathcal{L}_\oc[\mathbb{X}] \to \mathbb{X}_\oc$ is written as $\mathcal{L}_\oc[X]$ and given by
\begin{enumerate}[{\em (i)}]
\item The objects of $\mathcal{L}_\oc[X]$ are the same as the objects of $\mathbb{X}$, that is, $Ob\left( \mathcal{L}_\oc[X] \right) = Ob\left( \mathbb{X} \right)$;
\item The maps of $\mathcal{L}_\oc[X]$ are maps $f: \oc X \otimes A \to B$, that is, $\mathcal{L}_\oc[X]\left( A,B \right) = \mathbb{X}(\oc X \otimes A, B)$; 
\item The identity map of $A$ is defined as $e_X \otimes 1_A: \oc X \otimes A \to A$;
\[ \begin{array}[c]{c}
\begin{tikzpicture}
	\begin{pgfonlayer}{nodelayer}
		\node [style=object] (17) at (9.75, 1.75) {$\oc X$};
		\node [style=object] (19) at (10.5, 1.75) {$A$};
		\node [style=object] (21) at (10.5, -0.25) {$A$};
		\node [style=component] (22) at (9.75, 0.5) {$e$};
	\end{pgfonlayer}
	\begin{pgfonlayer}{edgelayer}
		\draw [style=wire] (19) to (21);
		\draw [style=wire] (17) to (22);
	\end{pgfonlayer}
\end{tikzpicture}
   \end{array} \]
\item The composition of maps $f: \oc X \otimes A \to B$ and $g: \oc X \otimes B \to C$ is defined as follows: 
\begin{align*}
 \xymatrixcolsep{5pc}\xymatrix{ \oc X \otimes A \ar[r]^-{\Delta_X \otimes 1_A} & \oc X \otimes \oc X \otimes A \ar[r]^-{1_{\oc X} \otimes f} & \oc X \otimes B \ar[r]^-{g} & C} 
 \end{align*}
 \begin{align*}
  \begin{array}[c]{c}
\begin{tikzpicture}
	\begin{pgfonlayer}{nodelayer}
		\node [style=object] (17) at (9.75, 1.75) {$\oc X$};
		\node [style=object] (19) at (10.75, 1.75) {$A$};
		\node [style=component] (20) at (10.25, 0.75) {$f$};
		\node [style=object] (21) at (10.25, -0.25) {$B$};
	\end{pgfonlayer}
	\begin{pgfonlayer}{edgelayer}
		\draw [style=wire] (20) to (21);
		\draw [style=wire, in=165, out=-90] (17) to (20);
		\draw [style=wire, in=-90, out=15] (20) to (19);
	\end{pgfonlayer}
\end{tikzpicture}
   \end{array};    \begin{array}[c]{c}
\begin{tikzpicture}
	\begin{pgfonlayer}{nodelayer}
		\node [style=object] (17) at (9.75, 1.75) {$\oc X$};
		\node [style=object] (19) at (10.75, 1.75) {$B$};
		\node [style=component] (20) at (10.25, 0.75) {$g$};
		\node [style=object] (21) at (10.25, -0.25) {$C$};
	\end{pgfonlayer}
	\begin{pgfonlayer}{edgelayer}
		\draw [style=wire] (20) to (21);
		\draw [style=wire, in=165, out=-90] (17) to (20);
		\draw [style=wire, in=-90, out=15] (20) to (19);
	\end{pgfonlayer}
\end{tikzpicture}
   \end{array}=
   \begin{array}[c]{c}
\begin{tikzpicture}
	\begin{pgfonlayer}{nodelayer}
		\node [style=duplicate] (1) at (9.5, 1) {$\Delta$};
		\node [style=object] (2) at (9.5, 1.75) {$\oc X$};
		\node [style=component] (3) at (10.5, 0) {$f$};
		\node [style=object] (4) at (11, 1.75) {$A$};
		\node [style=component] (5) at (9.75, -1.25) {$g$};
		\node [style=object] (6) at (9.75, -2) {$C$};
	\end{pgfonlayer}
	\begin{pgfonlayer}{edgelayer}
		\draw [style=wire] (2) to (1);
		\draw [style=wire, in=-90, out=15, looseness=0.75] (3) to (4);
		\draw [style=wire] (5) to (6);
		\draw [style=wire, in=30, out=-90] (3) to (5);
		\draw [style=wire, in=165, out=-150] (1) to (5);
		\draw [style=wire, in=165, out=-15, looseness=1.25] (1) to (3);
	\end{pgfonlayer}
\end{tikzpicture}
   \end{array}
\end{align*}
\end{enumerate}
For every coKleisli map $\llbracket h \rrbracket: \oc X \to Y$, define the \textbf{substitution functor} $\llbracket h \rrbracket^\ast_\oc: \mathcal{L}_\oc[Y] \to \mathcal{L}_\oc[X]$ on objects as $\llbracket h \rrbracket^\ast_\oc(A) = A$ and on maps $f: \oc Y \otimes A \to B$ as follows: 
\begin{align*}
\llbracket h \rrbracket^\ast_\oc(f) := \xymatrixcolsep{5pc}\xymatrix{ \oc X \otimes A \ar[r]^-{\llbracket h \rrbracket \otimes 1_A} & \oc Y \otimes A \ar[r]^-{f} & B} &&  \llbracket h \rrbracket^\ast_\oc \left(   \begin{array}[c]{c}
\begin{tikzpicture}
	\begin{pgfonlayer}{nodelayer}
		\node [style=object] (17) at (9.75, 1.75) {$\oc Y$};
		\node [style=object] (19) at (10.75, 1.75) {$A$};
		\node [style=component] (20) at (10.25, 0.75) {$f$};
		\node [style=object] (21) at (10.25, -0.25) {$B$};
	\end{pgfonlayer}
	\begin{pgfonlayer}{edgelayer}
		\draw [style=wire] (20) to (21);
		\draw [style=wire, in=165, out=-90] (17) to (20);
		\draw [style=wire, in=-90, out=15] (20) to (19);
	\end{pgfonlayer}
\end{tikzpicture}
   \end{array} \right) =    \begin{array}[c]{c}
\begin{tikzpicture}
	\begin{pgfonlayer}{nodelayer}
		\node [style=component] (0) at (9, 0.25) {$\delta$};
		\node [style=object] (1) at (10.5, 1.25) {$A$};
		\node [style=component] (2) at (9.75, -1.75) {$f$};
		\node [style=object] (3) at (9.75, -2.75) {$B$};
		\node [style=function2] (4) at (9, -0.75) {$h$};
		\node [style=object] (5) at (9, 1.25) {$\oc X$};
	\end{pgfonlayer}
	\begin{pgfonlayer}{edgelayer}
		\draw [style=wire] (2) to (3);
		\draw [style=wire, bend right, looseness=1.25] (4) to (2);
		\draw [style=wire] (0) to (4);
		\draw [style=wire, in=30, out=-90, looseness=0.75] (1) to (2);
		\draw [style=wire] (5) to (0);
	\end{pgfonlayer}
\end{tikzpicture}
   \end{array} 
\end{align*}
\end{lemma}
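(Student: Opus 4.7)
The plan is to first identify the strict fibre $\mathsf{p}_\oc^{-1}(X)$ with the category described in items (i)--(iv), and then to derive the formula for the substitution functor $\llbracket h \rrbracket^\ast_\oc$ by computing reindexing against the Cartesian lifts supplied by Proposition \ref{prop:context_fibration}. The identification is: an object $A$ of $\L_\oc[X]$ corresponds to the pair $(X,A)$, and a morphism $f\colon \oc X \otimes A \to B$ corresponds to the $\L_\oc[\mathbb{X}]$-morphism $(\varepsilon_X, f)\colon (X,A) \to (X,B)$, which lies over $\mathsf{p}_\oc$-image $\varepsilon_X = \llbracket 1_X \rrbracket$, i.e.\ the identity on $X$ in $\mathbb{X}_\oc$.

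To check that this identification gives a subcategory, I would specialize the composition formula of $\L_\oc[\mathbb{X}]$ to pairs whose first component is $\varepsilon_X$. The first coordinate of the composite is $\llbracket \varepsilon_X ; \varepsilon_X \rrbracket = \varepsilon_X$ by the counit laws of the comonad, so the result stays in the fibre. In the second coordinate, the middle factor $\mathsf{U}_\oc(\llbracket f \rrbracket) \otimes g$ becomes $\mathsf{U}_\oc(\varepsilon_X) \otimes g$, and using $\mathsf{U}_\oc(\varepsilon_X) = \delta_X ; \oc(\varepsilon_X) = 1_{\oc X}$ (the $\oc$-comonad identity) this reduces to the explicit formula displayed in item (iv). The identity of $(X,A)$ in $\L_\oc[\mathbb{X}]$ is $(\varepsilon_X, e_X \otimes 1_A)$, giving exactly the identity in item (iii); associativity and unitality are inherited from $\L_\oc[\mathbb{X}]$.

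For the substitution functor, Proposition \ref{prop:context_fibration} tells us that the Cartesian lift of $\llbracket h \rrbracket\colon X \to Y$ at $(Y,A)$ is $\overline{h}_A := (\llbracket h \rrbracket, e_X \otimes 1_A)\colon (X,A) \to (Y,A)$, so on objects $\llbracket h \rrbracket^\ast_\oc(A) = A$. Given a fibre morphism $f\colon \oc Y \otimes A \to B$, its reindexing is the unique $g\colon \oc X \otimes A \to B$ with $(\varepsilon_X, g); \overline{h}_B = \overline{h}_A; (\varepsilon_Y, f)$ in $\L_\oc[\mathbb{X}]$. Expanding both sides with the composition formula and the counit laws, the left side simplifies via $\Delta_X;(e_X \otimes 1) = 1_{\oc X}$ to $(\llbracket h \rrbracket, g)$, and the right side simplifies similarly to $\bigl(\llbracket h \rrbracket,\, (\mathsf{U}_\oc(\llbracket h \rrbracket) \otimes 1_A); f\bigr)$; equating second components gives exactly the claimed formula, which matches the string diagram (where the box enclosing $h$ preceded by $\delta$ denotes $\mathsf{U}_\oc(\llbracket h \rrbracket) = \delta_X;\oc(\llbracket h \rrbracket)$).

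The only real pitfall is bookkeeping: one must distinguish between $\llbracket h \rrbracket\colon \oc X \to Y$ as a morphism of $\mathbb{X}$ and its functorial image $\mathsf{U}_\oc(\llbracket h \rrbracket)\colon \oc X \to \oc Y$, as well as between identities in $\mathbb{X}$ and identities in $\mathbb{X}_\oc$. With these distinctions kept straight, everything reduces to routine applications of the comonad counit law and the cocommutative comonoid counit law for $(\oc X, \Delta_X, e_X)$.
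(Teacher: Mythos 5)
Your proposal is correct. The paper gives no explicit proof of this lemma (it is presented as a direct description of the fibres and of a choice of reindexing along the Cartesian lifts from Proposition \ref{prop:context_fibration}), and your verification — specializing the composition formula of $\L_\oc[\mathbb{X}]$ to morphisms over $\llbracket 1_X \rrbracket = \varepsilon_X$ using $\mathsf{U}_\oc(\varepsilon_X) = 1_{\oc X}$ and the comonoid counit law, and then extracting the substitution functor from the universal property of the lift $(\llbracket h \rrbracket, e_X \otimes 1_A)$ — is exactly the computation the paper leaves implicit, including the correct reading of the displayed formula for $\llbracket h \rrbracket^\ast_\oc(f)$ as $(\mathsf{U}_\oc(\llbracket h \rrbracket)\otimes 1_A);f$.
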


Every fibre is also a symmetric monoidal category. 

\begin{lemma}\cite[Proposition 4.1]{HYLAND1999127} Let $(\oc, \delta, \varepsilon, \Delta, e)$ be a coalgebra modality on a symmetric monoidal category $\mathbb{X}$. For every object $X \in \mathbb{X}$, $\mathcal{L}_\oc[X]$ is a symmetric monoidal category where the tensor product $\otimes$ is defined on objects as the tensor product in $\mathbb{X}$, and on maps $f: \oc X \otimes A \to B$ and $g: \oc X \otimes C \to D$ as follows: 
  \[   \begin{array}[c]{c}  f \otimes g    \end{array} :=    \begin{array}[c]{c} \xymatrixcolsep{5pc}\xymatrixrowsep{1pc}\xymatrix{ \oc X \otimes A  \otimes C \ar[r]^-{\Delta_X \otimes 1_A \otimes 1_C} &  \oc X \otimes \oc X \otimes A \otimes C \ar[r]^-{1_{\oc X} \otimes \sigma_{\oc X, A} \otimes 1_C} & \\
   \oc X \otimes A \otimes \oc X \otimes C  \ar[r]^-{f \otimes g} & B \otimes D}    \end{array} \]
  \begin{align*}
   \begin{array}[c]{c}
\begin{tikzpicture}
	\begin{pgfonlayer}{nodelayer}
		\node [style=object] (17) at (9.75, 1.75) {$\oc X$};
		\node [style=object] (19) at (10.75, 1.75) {$A$};
		\node [style=component] (20) at (10.25, 0.75) {$f$};
		\node [style=object] (21) at (10.25, -0.25) {$B$};
		\node [style=object] (22) at (11.25, 0.75) {$\bigotimes$};
		\node [style=object] (23) at (11.75, 1.75) {$\oc X$};
		\node [style=object] (24) at (12.75, 1.75) {$C$};
		\node [style=component] (25) at (12.25, 0.75) {$g$};
		\node [style=object] (26) at (12.25, -0.25) {$D$};
		\node [style=object] (27) at (13.25, 0.75) {$=$};
		\node [style=component] (30) at (14.5, 0.25) {$f$};
		\node [style=object] (31) at (14.5, -0.75) {$B$};
		\node [style=object] (32) at (14.5, 2.25) {$\oc X$};
		\node [style=object] (33) at (15.5, 2.25) {$A$};
		\node [style=object] (34) at (16.5, 2.25) {$C$};
		\node [style=duplicate] (35) at (14.5, 1.5) {$\Delta$};
		\node [style=component] (36) at (15.75, 0.25) {$g$};
		\node [style=object] (37) at (15.75, -0.75) {$D$};
	\end{pgfonlayer}
	\begin{pgfonlayer}{edgelayer}
		\draw [style=wire] (20) to (21);
		\draw [style=wire, in=165, out=-90] (17) to (20);
		\draw [style=wire, in=-90, out=15] (20) to (19);
		\draw [style=wire] (25) to (26);
		\draw [style=wire, in=165, out=-90] (23) to (25);
		\draw [style=wire, in=-90, out=15] (25) to (24);
		\draw [style=wire] (30) to (31);
		\draw [style=wire] (32) to (35);
		\draw [style=wire] (36) to (37);
		\draw [style=wire, in=135, out=-15, looseness=0.75] (35) to (36);
		\draw [style=wire, in=-180, out=-150, looseness=1.50] (35) to (30);
		\draw [style=wire, in=0, out=-90] (33) to (30);
		\draw [style=wire, in=15, out=-90] (34) to (36);
	\end{pgfonlayer}
\end{tikzpicture}
   \end{array}
\end{align*}
and where the monoidal unit is the same as $\mathbb{X}$. Furthermore, for every coKleisli map $\llbracket h \rrbracket: \oc X \to Y$, the substitution functor $\llbracket h \rrbracket^\ast_\oc: \mathcal{L}_\oc[Y] \to \mathcal{L}_\oc[X]$ is a strict symmetric monoidal functor. 
\end{lemma}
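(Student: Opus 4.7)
The plan is to recognize $\L_\oc[X]$ as the coKleisli category of an oplax symmetric monoidal comonad, and then invoke the standard fact that such a coKleisli category inherits a symmetric monoidal structure from the base.

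Concretely, by Definition~\ref{coalgdef}, $(\oc X, \Delta_X, e_X)$ is a cocommutative comonoid in $\mathbb{X}$, so the endofunctor $\oc X \otimes -\colon \mathbb{X} \to \mathbb{X}$ carries the structure of an oplax symmetric monoidal comonad. The comonad counit and comultiplication are $e_X \otimes 1$ and $\Delta_X \otimes 1$, and the oplax monoidal coherence $\oc X \otimes (A \otimes B) \to (\oc X \otimes A) \otimes (\oc X \otimes B)$ is given by $\Delta_X$ followed by the symmetry in $\mathbb{X}$. The required coherence laws for such a comonad reduce directly to the coassociativity, counitality, and cocommutativity of $\Delta_X$.

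By unfolding definitions, the coKleisli category of this comonad has objects $Ob(\mathbb{X})$, hom-sets $\mathbb{X}(\oc X \otimes A, B)$, identity $e_X \otimes 1_A$, and composition precisely as in Lemma~\ref{lem:linfibre}, so it agrees with $\L_\oc[X]$ on the nose. The induced tensor on maps is obtained by applying the oplax structure followed by the tensor in $\mathbb{X}$; this gives exactly the formula $\Delta_X \otimes 1$; shuffle; $f \otimes g$ appearing in the statement. The associator, unitor, and braiding of $\L_\oc[X]$ are lifted from those of $\mathbb{X}$ by precomposing with $e_X$ on the $\oc X$ wire, and the coherence axioms reduce to coherence in $\mathbb{X}$ together with the cocommutative comonoid axioms for $\oc X$.

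For the substitution functor, observe that for any coKleisli map $\llbracket h \rrbracket\colon \oc X \to Y$, the underlying map $\mathsf{U}_\oc(\llbracket h \rrbracket) = \delta_X; \oc(\llbracket h \rrbracket)\colon \oc X \to \oc Y$ is a comonoid homomorphism: $\delta_X$ is a comonoid homomorphism by the coalgebra modality axioms, and $\oc(\llbracket h \rrbracket)$ is one by the naturality of $\Delta$ and $e$. More generally, any comonoid morphism $k\colon C \to D$ in $\mathbb{X}$ induces a strict symmetric monoidal functor from the coKleisli category of $D \otimes -$ to that of $C \otimes -$ by precomposing with $k$ on the comonoid wire; strictness follows because $k$ commutes with the comultiplications and counits, so the chosen tensor, identity, associator, unitor and braiding are preserved on the nose. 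Instantiating $k = \mathsf{U}_\oc(\llbracket h \rrbracket)$ yields $\llbracket h \rrbracket^\ast_\oc\colon \L_\oc[Y] \to \L_\oc[X]$ and the desired strict symmetric monoidality.

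The only real obstacle is bookkeeping: the many coherence diagrams for symmetric monoidal categories need to be checked through the coKleisli construction. In the graphical calculus, however, each such check collapses to at most a few applications of coassociativity, counitality, or cocommutativity of $\Delta_X$, so the verification is routine.
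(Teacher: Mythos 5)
Your proposal is correct and follows essentially the same route as the paper, which cites Hyland--Schalk's comonoid indexing and explicitly notes that since $\oc X$ is a cocommutative comonoid, $\oc X \otimes -$ is a comonad whose coKleisli category is the fibre $\L_\oc[X]$. Identifying the tensor as the one induced by the oplax monoidal structure coming from $\Delta_X$, and the substitution functors as precomposition with the comonoid morphism $\delta_X;\oc(\llbracket h \rrbracket)$, is exactly the intended argument.
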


That is, as described in \cite[Remark 3.5]{monoidal_fibrations} this fibration is a pseudomonoid in the 2-category of fibrations over $\mathbb{X}_\oc$.  This is different from a monoidal fibration, which is defined to be a pseudomonoid in the 2-category of fibrations with non-fixed base \cite[Definition 3.1]{monoidal_fibrations}.  

However, if the base category has finite products $\mathbb{X}$, then so does $\mathbb{X}_\oc$: on objects the product is defined as in $\mathbb{X}$, and the remaining data is defined as follows: 
\begin{align*}
\llbracket \pi_0 \rrbracket =  \varepsilon_{A \times B}; \pi_0 && \llbracket \pi_1 \rrbracket = \varepsilon_{A \times B}; \pi_1 && \llbracket \langle f, g \rangle \rrbracket = \left \langle \llbracket f \rrbracket, \llbracket g \rrbracket \right \rangle && \llbracket f \times g \rrbracket = \left \langle \oc(\pi_0); \llbracket f \rrbracket , \oc(\pi_1); \llbracket g \rrbracket \right \rangle
\end{align*}
Moreover, such a fibration in which the base category is Cartesian is a monoidal fibration: see \cite[Theorem  4.1]{monoidal_fibrations} and \cite[Theorem 12.8]{shulman_monoidal_fibrations}.  In particular this means that the total category of the fibration is monoidal, and the following corollary describes its structure.

\begin{corollary}  Let $(\oc, \delta, \varepsilon, \Delta, e)$ be a coalgebra modality on a symmetric monoidal category $\mathbb{X}$ with finite products. Then $\mathcal{L}_\oc[\mathbb{X}]$ is a symmetric monoidal category where the tensor product $\otimes_\oc$ is defined on objects as $(X,A) \otimes (Y,B) = (X \times Y, A \otimes B)$, and on maps $(\llbracket f \rrbracket, g): (X,A) \to (Y,B)$ and $(\llbracket h \rrbracket, k): (Z,C) \to (W,D)$, $ (\llbracket f \rrbracket, g) \otimes (\llbracket h \rrbracket, k)$ is defined as follows:
\[ \left(\begin{array}[c]{c} \llbracket f \times h \rrbracket  \end{array}, \begin{array}[c]{c} \xymatrixcolsep{4pc}\xymatrix{ \oc (X \times Y) \!\otimes\! A  \!\otimes\! C \ar[r]^-{\chi_{X,Y} \otimes 1_A \otimes 1_C} & \oc X \!\otimes\! \oc Y \!\otimes\! A \!\otimes\! C \ar[r]^-{1_{\oc X} \otimes \sigma_{\oc Y, A} \otimes 1_C} & \\
  \oc X \otimes A \otimes \oc X \otimes C  \ar[r]^-{g \otimes k} & B \otimes D}  \end{array} \right)  \]
\begin{align*}
\left(\begin{array}[c]{c}
\begin{tikzpicture}
	\begin{pgfonlayer}{nodelayer}
		\node [style=object] (17) at (9.5, 1) {$\oc X$};
		\node [style=object] (20) at (9.5, -1) {$Y$};
		\node [style=component] (21) at (9.5, 0) {$f$};
	\end{pgfonlayer}
	\begin{pgfonlayer}{edgelayer}
		\draw [style=wire] (17) to (21);
		\draw [style=wire] (21) to (20);
	\end{pgfonlayer}
\end{tikzpicture}
   \end{array},
   \begin{array}[c]{c}
\begin{tikzpicture}
	\begin{pgfonlayer}{nodelayer}
		\node [style=object] (17) at (9.75, 1.75) {$\oc X$};
		\node [style=object] (19) at (10.75, 1.75) {$A$};
		\node [style=component] (20) at (10.25, 0.75) {$g$};
		\node [style=object] (21) at (10.25, -0.25) {$B$};
	\end{pgfonlayer}
	\begin{pgfonlayer}{edgelayer}
		\draw [style=wire] (20) to (21);
		\draw [style=wire, in=165, out=-90] (17) to (20);
		\draw [style=wire, in=-90, out=15] (20) to (19);
	\end{pgfonlayer}
\end{tikzpicture}
   \end{array} \right) \otimes \left(\begin{array}[c]{c}
\begin{tikzpicture}
	\begin{pgfonlayer}{nodelayer}
		\node [style=object] (17) at (9.5, 1) {$\oc Z$};
		\node [style=object] (20) at (9.5, -1) {$W$};
		\node [style=component] (21) at (9.5, 0) {$h$};
	\end{pgfonlayer}
	\begin{pgfonlayer}{edgelayer}
		\draw [style=wire] (17) to (21);
		\draw [style=wire] (21) to (20);
	\end{pgfonlayer}
\end{tikzpicture}
   \end{array},
   \begin{array}[c]{c}
\begin{tikzpicture}
	\begin{pgfonlayer}{nodelayer}
		\node [style=object] (17) at (9.75, 1.75) {$\oc Z$};
		\node [style=object] (19) at (10.75, 1.75) {$C$};
		\node [style=component] (20) at (10.25, 0.75) {$k$};
		\node [style=object] (21) at (10.25, -0.25) {$D$};
	\end{pgfonlayer}
	\begin{pgfonlayer}{edgelayer}
		\draw [style=wire] (20) to (21);
		\draw [style=wire, in=165, out=-90] (17) to (20);
		\draw [style=wire, in=-90, out=15] (20) to (19);
	\end{pgfonlayer}
\end{tikzpicture}
   \end{array} \right) = 
\left(\begin{array}[c]{c}
\begin{tikzpicture}
	\begin{pgfonlayer}{nodelayer}
		\node [style=object] (17) at (9.5, 1.5) {$\oc (X \times Y)$};
		\node [style=object] (20) at (9.5, -1.5) {$Y$};
		\node [style=component] (21) at (9.5, 0) {$\llbracket f \times h \rrbracket$};
	\end{pgfonlayer}
	\begin{pgfonlayer}{edgelayer}
		\draw [style=wire] (17) to (21);
		\draw [style=wire] (21) to (20);
	\end{pgfonlayer}
\end{tikzpicture}   \end{array},
   \begin{array}[c]{c}
\begin{tikzpicture}
	\begin{pgfonlayer}{nodelayer}
		\node [style=component] (30) at (14.5, 0.25) {$f$};
		\node [style=object] (31) at (14.5, -0.75) {$B$};
		\node [style=object] (32) at (14.5, 2.25) {$\oc (X \times Y)$};
		\node [style=object] (33) at (15.5, 2.25) {$A$};
		\node [style=object] (34) at (16.5, 2.25) {$C$};
		\node [style=duplicate] (35) at (14.5, 1.5) {$\chi$};
		\node [style=component] (36) at (15.75, 0.25) {$g$};
		\node [style=object] (37) at (15.75, -0.75) {$D$};
	\end{pgfonlayer}
	\begin{pgfonlayer}{edgelayer}
		\draw [style=wire] (30) to (31);
		\draw [style=wire] (32) to (35);
		\draw [style=wire] (36) to (37);
		\draw [style=wire, in=135, out=-15, looseness=0.75] (35) to (36);
		\draw [style=wire, in=-180, out=-150, looseness=1.50] (35) to (30);
		\draw [style=wire, in=0, out=-90] (33) to (30);
		\draw [style=wire, in=15, out=-90] (34) to (36);
	\end{pgfonlayer}
\end{tikzpicture}
   \end{array} \right)
\end{align*}
and where the monoidal unit is $(\top, k)$. Furthermore, $\mathsf{p}_\oc: \mathcal{L}_\oc[\mathbb{X}] \to \mathbb{X}_\oc$ is a monoidal fibration in the sense of \cite[Definition 3.1]{monoidal_fibrations}.   
\end{corollary}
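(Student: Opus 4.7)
The plan is to derive this corollary from \cite[Theorem 4.1]{monoidal_fibrations} (equivalently, Shulman's \cite[Theorem 12.8]{shulman_monoidal_fibrations}), which asserts that a fibration whose base has finite products and whose fibres carry symmetric monoidal structures with strict symmetric monoidal substitution functors is automatically a monoidal fibration (in particular, its total category is symmetric monoidal). Every hypothesis is already in hand: the base $\mathbb{X}_\oc$ has the finite products displayed just before the corollary, each fibre $\L_\oc[X]$ is symmetric monoidal by the preceding lemma, and each substitution functor $\llbracket h \rrbracket^\ast_\oc$ is strict symmetric monoidal, again by the preceding lemma. Applying the theorem thus gives both that $\L_\oc[\mathbb{X}]$ is symmetric monoidal and that $\mathsf{p}_\oc$ is a monoidal fibration.

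The remaining task is to unfold the general recipe from the cited theorem and verify that it agrees with the formulas stated. For the unit: a monoidal fibration takes as its unit the monoidal unit of the fibre over the terminal object of the base, which here is the tensor unit $k$ of $\L_\oc[\top]$, yielding $(\top, k)$. For the tensor of objects: one pulls the fibre components back along the two projections $\llbracket \pi_0 \rrbracket$ and $\llbracket \pi_1 \rrbracket$ and then tensors them in the common fibre over $X \times Y$; since $\llbracket h \rrbracket^\ast_\oc$ acts as the identity on objects, this immediately reduces to $(X \times Y, A \otimes B)$.

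For the tensor of morphisms, the base component is $\llbracket f \rrbracket \times \llbracket h \rrbracket = \llbracket f \times h \rrbracket$ computed in the Cartesian category $\mathbb{X}_\oc$. For the fibre component, the key observation is that the comonad identity $\delta; \oc(\varepsilon) = 1$ gives $\mathsf{U}_\oc(\llbracket \pi_0 \rrbracket) = \oc(\pi_0)$ and $\mathsf{U}_\oc(\llbracket \pi_1 \rrbracket) = \oc(\pi_1)$. Substituting $g$ along $\llbracket \pi_0 \rrbracket$ thus precomposes it with $\oc(\pi_0) \otimes 1_A$, and likewise substituting $k$ along $\llbracket \pi_1 \rrbracket$ precomposes it with $\oc(\pi_1) \otimes 1_C$. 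Tensoring the two resulting maps inside $\L_\oc[X \times Z]$ via the fibre tensor formula from the previous lemma introduces a $\Delta_{X \times Z}$; recognising $\Delta_{X \times Z}; (\oc(\pi_0) \otimes \oc(\pi_1))$ as the Seely map $\chi_{X,Z}$ and inserting the symmetry required by the fibre tensor formula yields exactly the composite displayed in the statement.

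The main obstacle is bookkeeping rather than any substantive mathematics: one must carefully thread the Cartesian structure on $\mathbb{X}_\oc$, the comonad triangle identity, and the definition of $\chi$ through the monoidal fibration construction so that the general recipe produces the specific formula stated. No identities beyond those already established in this section are needed.
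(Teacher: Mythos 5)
Your proposal is correct and follows essentially the same route as the paper: the paper derives this corollary directly from the general result that a fibration over a Cartesian base with symmetric monoidal fibres and strict symmetric monoidal substitution functors is a monoidal fibration (citing \cite[Theorem 4.1]{monoidal_fibrations} and \cite[Theorem 12.8]{shulman_monoidal_fibrations}), and then simply records the resulting structure. Your unfolding of the recipe — using the comonad triangle identity to identify $\mathsf{U}_\oc(\llbracket \pi_i \rrbracket)$ with $\oc(\pi_i)$ and recognising $\Delta;(\oc(\pi_0)\otimes\oc(\pi_1))$ as the Seely map $\chi$ — is exactly the bookkeeping needed to match the stated formulas.
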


It also interesting to note that for monoidal coalgebra modalities, each fibre also comes equipped with monoidal coalgebra modality structure \cite[]{ehrhard2021categorical,HYLAND1999127}. Furthermore, if one also assumes finite products, we can extend this to a monoidal coalgebra modality on the whole fibration.  However, these results are not necessary for the rest of the story of this paper. 

\subsection{Monoidal Differential Categories}

We now recall one of the central structures of this paper: monoidal differential categories (these were originally simply called differential categories, but here we add ``monoidal'' to help differentiate the various structures we are considering). For a more detailed introduction to monoidal differential categories, we refer the reader to \cite[]{Blute2019,blute2006differential}. 

\begin{definition}\label{def:diffcat} A \textbf{ monoidal differential category} \cite[Definition 2.4]{blute2006differential} is an additive symmetric monoidal category $\mathbb{X}$ with a coalgebra modality $(\oc, \delta, \varepsilon, \Delta, e)$ which comes equipped with a \textbf{deriving transformation} \cite[Definition 7]{Blute2019}; that is, a natural transformation $\mathsf{d}_A: \oc A \otimes A \to \oc A$, which is drawn in the graphical calculus as: 
\[\mathsf{d}:= \begin{array}[c]{c} 
\begin{tikzpicture}
	\begin{pgfonlayer}{nodelayer}
		\node [style=object] (0) at (1.75, 2.75) {$A$};
		\node [style=object] (1) at (1.25, 1.25) {$\oc A$};
		\node [style=integral] (2) at (1.25, 2) {{\bf =\!=\!=\!=}};
		\node [style=object] (3) at (0.75, 2.75) {$\oc A$};
	\end{pgfonlayer}
	\begin{pgfonlayer}{edgelayer}
		\draw [style=wire, bend right] (2) to (0);
		\draw [style=wire] (1) to (2);
		\draw [style=wire, bend left] (2) to (3);
	\end{pgfonlayer}
\end{tikzpicture}
   \end{array}\]
and such that the following axioms hold: 
\begin{description}
\item[{\bf [d.1]}] Constant Rule:
\begin{align*}
\begin{array}[c]{c}
 \xymatrixcolsep{5pc}\xymatrix{\oc A \otimes A \ar[dr]_-{0} \ar[r]^-{\mathsf{d}_A} & \oc A \ar[d]^-{e_A} \\
  & k} 
   \end{array}&&
   \begin{array}[c]{c}
\begin{tikzpicture}
	\begin{pgfonlayer}{nodelayer}
		\node [style=component] (0) at (1.25, 0) {$e$};
		\node [style=object] (1) at (2, 2) {$A$};
		\node [style=differential] (2) at (1.25, 1) {{\bf =\!=\!=\!=}};
		\node [style=object] (3) at (0.5, 2) {$\oc A$};
		\node [style=port] (4) at (2.25, 1) {$=$};
		\node [style=port] (5) at (3, 1) {$0$};
	\end{pgfonlayer}
	\begin{pgfonlayer}{edgelayer}
		\draw [style=wire, bend right] (2) to (1);
		\draw [style=wire, bend left] (2) to (3);
		\draw [style=wire] (2) to (0);
	\end{pgfonlayer}
\end{tikzpicture}
   \end{array}
\end{align*}
\item[{\bf [d.2]}] Leibniz Rule (or Product Rule):
\begin{align*}
\begin{array}[c]{c}
\xymatrixcolsep{3.75pc}\xymatrix{\oc A \otimes A \ar[d]_-{\Delta_A \otimes 1_A} \ar[rr]^-{\mathsf{d}_A} && \oc A \ar[d]^-{\Delta_A} \\
    \oc A \otimes \oc A \otimes A \ar[rr]_-{(1_{\oc A} \otimes \mathsf{d}_A) + (1_{\oc A} \otimes \sigma_{\oc A, A})(\mathsf{d}_A \otimes 1_{\oc A})} && \oc A \otimes \oc A
  }
   \end{array} && 
   \begin{array}[c]{c}
\begin{tikzpicture}
	\begin{pgfonlayer}{nodelayer}
		\node [style=differential] (6) at (7, 1) {{\bf =\!=\!=\!=}};
		\node [style=object] (7) at (7.75, 2) {$A$};
		\node [style=object] (8) at (6.25, 2) {$\oc A$};
		\node [style=object] (9) at (6.25, -0.75) {$\oc A$};
		\node [style=duplicate] (10) at (7, 0.25) {$\Delta$};
		\node [style=object] (11) at (7.75, -0.75) {$\oc A$};
		\node [style=object] (12) at (9.25, 2) {$\oc A$};
		\node [style=differential] (13) at (9, 0) {{\bf =\!=\!=\!=}};
		\node [style=object] (14) at (10.75, 2) {$A$};
		\node [style=duplicate] (15) at (9.25, 1) {$\Delta$};
		\node [style=object] (16) at (9, -0.75) {$\oc A$};
		\node [style=object] (17) at (10.5, -0.75) {$\oc A$};
		\node [style=object] (18) at (13.5, 2) {$A$};
		\node [style=differential] (19) at (13, 0) {{\bf =\!=\!=\!=}};
		\node [style=object] (20) at (13, -0.75) {$\oc A$};
		\node [style=object] (21) at (11.75, -0.75) {$\oc A$};
		\node [style=object] (22) at (12.25, 2) {$\oc A$};
		\node [style=duplicate] (23) at (12.25, 1) {$\Delta$};
		\node [style=port] (24) at (8, 0.5) {$=$};
		\node [style=port] (25) at (11.25, 0.5) {$+$};
	\end{pgfonlayer}
	\begin{pgfonlayer}{edgelayer}
		\draw [style=wire, bend right] (6) to (7);
		\draw [style=wire, bend left] (6) to (8);
		\draw [style=wire, bend right] (10) to (9);
		\draw [style=wire, bend left] (10) to (11);
		\draw [style=wire] (6) to (10);
		\draw [style=wire, in=-90, out=45] (13) to (14);
		\draw [style=wire, in=150, out=-150, looseness=1.50] (15) to (13);
		\draw [style=wire] (12) to (15);
		\draw [style=wire] (13) to (16);
		\draw [style=wire, bend left, looseness=1.25] (15) to (17);
		\draw [style=wire, in=-90, out=60, looseness=1.25] (19) to (18);
		\draw [style=wire, in=91, out=-135, looseness=0.75] (23) to (21);
		\draw [style=wire, in=150, out=-30] (23) to (19);
		\draw [style=wire] (22) to (23);
		\draw [style=wire] (19) to (20);
	\end{pgfonlayer}
\end{tikzpicture}
   \end{array}
\end{align*}
\item[{\bf [d.3]}] Linear Rule:
\begin{align*}
\begin{array}[c]{c}
\xymatrixcolsep{5pc}\xymatrix{\oc A \otimes A \ar[r]^-{\mathsf{d}_A} \ar[dr]_-{e_A \otimes 1_A} & \oc A \ar[d]^-{\varepsilon_A} \\
  & A   
  }
   \end{array} &&
   \begin{array}[c]{c}
\begin{tikzpicture}
	\begin{pgfonlayer}{nodelayer}
		\node [style=object] (0) at (3.25, 1.75) {$\oc A$};
		\node [style=object] (1) at (4, 1.75) {$A$};
		\node [style=object] (2) at (4, -1) {$A$};
		\node [style=component] (3) at (3.25, 0.5) {$e$};
		\node [style=object] (4) at (2, 1.75) {$A$};
		\node [style=object] (5) at (0.5, 1.75) {$\oc A$};
		\node [style=component] (6) at (1.25, 0) {$\varepsilon$};
		\node [style=differential] (7) at (1.25, 0.75) {{\bf =\!=\!=\!=}};
		\node [style=object] (8) at (1.25, -1) {$A$};
		\node [style=port] (9) at (2.5, 0.5) {$=$};
	\end{pgfonlayer}
	\begin{pgfonlayer}{edgelayer}
		\draw [style=wire] (0) to (3);
		\draw [style=wire] (1) to (2);
		\draw [style=wire, bend right] (7) to (4);
		\draw [style=wire, bend left] (7) to (5);
		\draw [style=wire] (7) to (6);
		\draw [style=wire] (6) to (8);
	\end{pgfonlayer}
\end{tikzpicture}
   \end{array}
\end{align*}
\item[{\bf [d.4]}] Chain Rule: 
\begin{align*}
\begin{array}[c]{c}
\xymatrixcolsep{5pc}\xymatrix{\oc A \otimes A \ar[d]_-{\Delta_A \otimes 1_A} \ar[rr]^-{\mathsf{d}_A} && \oc A \ar[d]^-{\delta_A} \\
    \oc A \otimes \oc A \otimes A \ar[r]_-{\delta_A \otimes \mathsf{d}_A} & \oc \oc A \otimes \oc A \ar[r]_-{\mathsf{d}_{\oc A}} & \oc \oc A
  }
   \end{array}&&
   \begin{array}[c]{c}
\begin{tikzpicture}
	\begin{pgfonlayer}{nodelayer}
		\node [style=object] (10) at (7.75, 1.75) {$A$};
		\node [style=differential] (11) at (7, 0.75) {{\bf =\!=\!=\!=}};
		\node [style=object] (12) at (7, -1.25) {$\oc \oc A$};
		\node [style=object] (13) at (6.25, 1.75) {$\oc A$};
		\node [style=component] (14) at (7, -0.25) {$\delta$};
		\node [style=component] (15) at (9, 0) {$\delta$};
		\node [style=duplicate] (16) at (9.5, 1) {$\Delta$};
		\node [style=object] (17) at (9.5, 1.75) {$\oc A$};
		\node [style=differential] (18) at (10.25, 0) {{\bf =\!=\!=\!=}};
		\node [style=object] (19) at (10.5, 1.75) {$A$};
		\node [style=differential] (20) at (9.75, -1) {{\bf =\!=\!=\!=}};
		\node [style=object] (21) at (9.75, -1.75) {$\oc \oc A$};
		\node [style=port] (22) at (8, 0.25) {$=$};
	\end{pgfonlayer}
	\begin{pgfonlayer}{edgelayer}
		\draw [style=wire, bend right] (11) to (10);
		\draw [style=wire, bend left] (11) to (13);
		\draw [style=wire] (11) to (14);
		\draw [style=wire] (14) to (12);
		\draw [style=wire, bend right] (16) to (15);
		\draw [style=wire] (17) to (16);
		\draw [style=wire, bend right] (18) to (19);
		\draw [style=wire, in=150, out=-30, looseness=1.25] (16) to (18);
		\draw [style=wire] (20) to (21);
		\draw [style=wire, in=30, out=-90] (18) to (20);
		\draw [style=wire, in=150, out=-90] (15) to (20);
	\end{pgfonlayer}
\end{tikzpicture}
   \end{array}
\end{align*}
     \item[{\bf [d.5]}] Interchange Rule:
     \begin{align*}
\begin{array}[c]{c}
\xymatrixcolsep{5pc}\xymatrix{\oc A \otimes A \otimes A  \ar[d]_-{\mathsf{d} _A \otimes 1_A}\ar[r]^-{1_{\oc A} \otimes \sigma_{A,A}} & \oc A \otimes A \otimes A \ar[r]^-{\mathsf{d}_A \otimes 1_A} & \oc A \otimes A \ar[d]^-{\mathsf{d}_A} \\
         \oc A \otimes A \ar[rr]_-{\mathsf{d}_A} && \oc A 
  }
   \end{array}&&
   \begin{array}[c]{c}
\begin{tikzpicture}
	\begin{pgfonlayer}{nodelayer}
		\node [style=object] (23) at (14, 1.75) {$A$};
		\node [style=object] (24) at (13.5, 1.75) {$A$};
		\node [style=object] (25) at (13.5, -0.75) {$\oc A$};
		\node [style=object] (26) at (12.5, 1.75) {$\oc A$};
		\node [style=codifferential] (27) at (13.5, 0) {{\bf =\!=\!=\!=}};
		\node [style=codifferential] (28) at (13, 1) {{\bf =\!=\!=\!=}};
		\node [style=object] (29) at (15.25, 1.75) {$\oc A$};
		\node [style=codifferential] (30) at (15.75, 1) {{\bf =\!=\!=\!=}};
		\node [style=codifferential] (31) at (16.25, 0) {{\bf =\!=\!=\!=}};
		\node [style=object] (32) at (16.25, -0.75) {$\oc A$};
		\node [style=object] (33) at (16.25, 1.75) {$A$};
		\node [style=object] (34) at (16.75, 1.75) {$A$};
		\node [style=port] (35) at (14.75, 0.75) {$=$};
	\end{pgfonlayer}
	\begin{pgfonlayer}{edgelayer}
		\draw [style=wire, bend left=15, looseness=1.25] (27) to (28);
		\draw [style=wire, bend right] (27) to (23);
		\draw [style=wire] (25) to (27);
		\draw [style=wire, bend left] (28) to (26);
		\draw [style=wire, bend right] (28) to (24);
		\draw [style=wire, bend left=15, looseness=1.25] (31) to (30);
		\draw [style=wire] (32) to (31);
		\draw [style=wire, bend left] (30) to (29);
		\draw [style=wire, in=45, out=-90] (34) to (30);
		\draw [style=wire, in=45, out=-90, looseness=1.50] (33) to (31);
	\end{pgfonlayer}
\end{tikzpicture}
   \end{array}
\end{align*}
\end{description}
\end{definition}

For lists of many examples of differential categories, we invite the reader to see \cite[]{Blute2019,blute2006differential}. 

\begin{definition} A \textbf{monoidal differential storage category} \cite[Section 4]{blute2006differential} is a monoidal differential category with finite products whose coalgebra modality has Seely isomorphisms. 
\end{definition}

For a monoidal differential storage category, the differential structure can also equivalently be described in terms of a \textbf{codereliction} \cite[Definition 4.11]{blute2006differential}, which is a natural transformation $\eta_A: A \to \oc A$ satisfying the axioms found in \cite[Definition 9]{Blute2019}. By \cite[Theorem 4]{Blute2019}, for coalgebra modalities with Seely isomorphisms (or more generally monoidal coalgebra modalities), there is a bijective correspondence between coderelictions and deriving transformations. Starting with a deriving transformation $\mathsf{d}$, we construct a codereliction as follows: 
\begin{align*}
\eta_A := \xymatrixcolsep{5pc}\xymatrix{A \ar[r]^-{u_A \otimes 1_A} & \oc A \otimes A \ar[r]^-{\mathsf{d}_A} & \oc A } && \begin{array}[c]{c}
\begin{tikzpicture}
	\begin{pgfonlayer}{nodelayer}
		\node [style=circle] (0) at (0, 2.25) {$A$};
		\node [style=circle] (1) at (0, -0.25) {$\oc A$};
		\node [style={component}] (2) at (0, 1) {$\eta$};
	\end{pgfonlayer}
	\begin{pgfonlayer}{edgelayer}
		\draw [style=wire] (0) to (2);
		\draw [style=wire] (2) to (1);
	\end{pgfonlayer}
\end{tikzpicture}
   \end{array}=
   \begin{array}[c]{c}
\begin{tikzpicture}
	\begin{pgfonlayer}{nodelayer}
		\node [style=component] (0) at (0.5, 2) {$u$};
		\node [style=differential] (1) at (1, 1) {{\bf =\!=\!=}};
		\node [style=object] (2) at (1.75, 2.5) {$A$};
		\node [style=object] (3) at (1, 0.25) {$\oc A$};
	\end{pgfonlayer}
	\begin{pgfonlayer}{edgelayer}
		\draw [style=wire, bend left] (1) to (0);
		\draw [style=wire, bend right] (1) to (2);
		\draw [style=wire] (1) to (3);
	\end{pgfonlayer}
\end{tikzpicture}
   \end{array} 
\end{align*}
Conversely, starting with a codereliction $\eta$, we construct a deriving transformation as follows: 
\begin{align*}
 \mathsf{d}_A :=  \xymatrixcolsep{5pc}\xymatrix{\oc A \otimes A \ar[r]^-{1_{\oc A} \otimes \eta_A} & \oc A \otimes \oc A \ar[r]^-{\nabla_A} & \oc A } && \begin{array}[c]{c}
\begin{tikzpicture}
	\begin{pgfonlayer}{nodelayer}
		\node [style=object] (0) at (1.75, 2.75) {$A$};
		\node [style=object] (1) at (1.25, 1.25) {$\oc A$};
		\node [style=integral] (2) at (1.25, 2) {{\bf =\!=\!=\!=}};
		\node [style=object] (3) at (0.75, 2.75) {$\oc A$};
	\end{pgfonlayer}
	\begin{pgfonlayer}{edgelayer}
		\draw [style=wire, bend right] (2) to (0);
		\draw [style=wire] (1) to (2);
		\draw [style=wire, bend left] (2) to (3);
	\end{pgfonlayer}
\end{tikzpicture}
   \end{array}=
   \begin{array}[c]{c}
\begin{tikzpicture}
	\begin{pgfonlayer}{nodelayer}
		\node [style=component] (0) at (2, 2.5) {$\eta$};
		\node [style=duplicate] (1) at (1.25, 1.5) {$\nabla$};
		\node [style=object] (2) at (0.5, 3.25) {$\oc A$};
		\node [style=object] (3) at (1.25, 0.75) {$\oc A$};
		\node [style=object] (4) at (2, 3.25) {$A$};
	\end{pgfonlayer}
	\begin{pgfonlayer}{edgelayer}
		\draw [style=wire] (3) to (1);
		\draw [style=wire, in=-90, out=0, looseness=1.25] (1) to (0);
		\draw [style=wire] (0) to (4);
		\draw [style=wire, in=-90, out=180] (1) to (2);
	\end{pgfonlayer}
\end{tikzpicture}
   \end{array}
\end{align*}
These constructions are inverses of each other.

\subsection{Cartesian Differential Categories}

Another central structure of this paper is a \emph{Cartesian} differential category. For a full detailed introduction to Cartesian differential categories, see \cite[]{blute2009cartesian,garner2021cartesian}. 

The underlying structure of a Cartesian differential category is that of a Cartesian left additive category. A category is said to be \emph{left} additive if it is \emph{skew}-enriched over the category of commutative monoids \cite[Section 2.1]{garner2021cartesian}, or in other words, if each hom-set is a commutative monoid such that pre-composition preserves the additive structure. This allows one to have zero maps and sums of maps while allowing for maps which do not preserve the additive structure. Maps which do preserve the additive structure are called \emph{additive} maps. 

\begin{definition}\label{CLACdef} A \textbf{Cartesian left additive category} \cite[Definition 1.2.1]{blute2009cartesian} is a category $\mathbb{X}$ with finite products such that each hom-set $\mathbb{X}(A,B)$ is a commutative monoid with zero map $0 \in \mathbb{X}(A,B)$ and addition ${+: \mathbb{X}(A,B) \times \mathbb{X}(A,B) \to \mathbb{X}(A,B)}$, $(f,g) \mapsto f +g$, and, such that:
\begin{enumerate}[{\em (i)}]
\item Pre-composition preserves the additive structure; that is, the following equalities hold: $f;0 = 0$ and $f;(g+h) = f;g + f;h$
\item Post-composition by the projection maps preserve the additive structure; that is, the following equalities hold: $0;\pi_i= 0$ and $(f+g);\pi_i = f;\pi_i + g;\pi_i$. 
\end{enumerate}
In a Cartesian left additive category, a map $f: A \to B$ is \textbf{additive} \cite[Definition 1.1.1]{blute2009cartesian} if post-composition by $f$ preserves the additive structure; that is, the following equalities hold: $(g+h);f = g;f + h;f$ and $0;f = 0$ (note that the projection maps are additive). 
\end{definition}

Here are now some important maps for Cartesian differential categories that can be defined in any Cartesian left additive category. In a Cartesian left additive category $\mathbb{X}$: 
\begin{enumerate}[{\em (i)}]
\item \label{injdef} For each pair of objects $A$ and $B$, define the \textbf{injection maps} $\iota_0: A \to A \times B$ and $\iota_1: B \to A \times B$ respectively as $\iota_0 := \langle 1_A, 0 \rangle$ and $\iota_1 := \langle 0, 1_B \rangle$
\item \label{nabladef} For each object $A$, define the \textbf{sum map} $+_A: A \times A \to A$ as $+_A := \pi_0 + \pi_1$. 
\item \label{elldef} For each object $A$, define the \textbf{lifting map} $\ell_A: A \times A \to (A \times A) \times (A \times A)$ as follows $\ell_A := \iota_0 \times \iota_1$. 
\item \label{cdef} For each object $A$ define the \textbf{interchange map} $c_A: (A \times A) \times (A \times A) \to (A \times A) \times (A \times A)$ as follows $c_A : = \left \langle \pi_0 \times \pi_0, \pi_1 \times \pi_1 \right \rangle$. 
\end{enumerate}

Observe that while $c$ is natural in the obvious sense, the same cannot be said for the rest. Indeed, the injection maps $\iota_j$, the sum map $\nabla$, and the lifting map $\ell$ are not natural transformations. In particular, since the injection maps are not natural, it follows that these injection maps do not make the product a coproduct, and therefore not a biproduct. However, the well-known biproduct identities still hold in a Cartesian left additive category. 

\begin{definition}\label{cartdiffdef} A \textbf{Cartesian differential category} (CDC) \cite[Definition 2.1.1]{blute2009cartesian} is a Cartesian left additive category $\mathbb{X}$ equipped with a \textbf{differential combinator} $\mathsf{D}$ which is a family of operators:
\begin{align*}
    \mathsf{D}: \mathbb{X}(A,B) \to \mathbb{X}(A \times A,B) && (f: A\to B) \mapsto (\mathsf{D}[f]: A \times A \to B)
\end{align*}
where $\mathsf{D}[f]$ is called the \textbf{derivative} of $f$, such that the following seven axioms hold:  
\begin{enumerate}[{\bf [CD.1]}] 
\item Additivity of the differentiation: 
\begin{align*} 
\mathsf{D}[f+g] = \mathsf{D}[f] + \mathsf{D}[g] && \mathsf{D}[0]=0 
\end{align*}
\item Additivity of the derivative in its second variable: 
\begin{align*} (1_A \times +_A); \mathsf{D}[f] = (1_A \times \pi_0); \mathsf{D}[f] + (1_A \times \pi_1);\mathsf{D}[f] && \iota_0; \mathsf{D}[f]=0
\end{align*}
\item Coherence with identities and projections: 
\begin{align*} \mathsf{D}[1_A]=\pi_1 && \mathsf{D}[\pi_0] = \pi_1;\pi_0 && \mathsf{D}[\pi_1] = \pi_1;\pi_1 \end{align*}
\item Coherence with pairings: 
\begin{align*}
\mathsf{D}[\langle f, g \rangle] = \langle \mathsf{D}[f] , \mathsf{D}[g] \rangle
\end{align*}
\item Chain rule: 
\begin{align*} \mathsf{D}[fg] = \langle \pi_0 f, \mathsf{D}[f] \rangle; \mathsf{D}[g]
\end{align*}
\item Linearity of the derivative in its second variable: 
\begin{align*} \ell_A; \mathsf{D}\!\left[\mathsf{D}[f] \right] = \mathsf{D}[f]
\end{align*}
\item Symmetry of mixed partial derivatives: 
\begin{align*} c_A; \mathsf{D}\!\left[\mathsf{D}[f] \right]= \mathsf{D}\left[\mathsf{D}[f] \right]
\end{align*} 
\end{enumerate}
\end{definition}

More discussions on the intuition for the differential combinator axioms can be found in \cite[Remark 2.1.3]{blute2009cartesian}. There are many interesting (and sometimes very exotic) examples of Cartesian differential categories in the literature: see \cite[]{cockett2020linearizing,garner2021cartesian}.

\subsection{Linear Fibration of a Cartesian Differential Category}

Just as any monoidal differential category has a canonical fibration associated to it, so too does a Cartesian differential category.  To understand this fibration, we begin by describing what it means for a map in a Cartesian differential category to be linear.  

\begin{definition}\label{def:linear-def}  In a Cartesian differential category $\mathbb{X}$ with differential combinator $\mathsf{D}$, a map $f: A \to B$ is \textbf{linear} \cite[Definition 2.2.1]{blute2009cartesian} if the following diagram commutes: 
  \[  \xymatrixcolsep{5pc}\xymatrix{A \times A \ar[dr]_-{\pi_1} \ar[rr]^-{\mathsf{D}[f]} && B  \\
  & A \ar[ur]_-{f}
  } \]
or equivalently \cite[Lemma 12]{cockett_et_al:LIPIcs:2020:11661}, if the following diagram commutes: 
  \[  \xymatrixcolsep{5pc}\xymatrix{A \ar[dr]_-{\iota_1} \ar[rr]^-{f} && B  \\
  & A \times A \ar[ur]_-{\mathsf{D}[f]}
  } \]
Define the subcategory of linear maps $\mathsf{LIN}[\mathbb{X}]$ to be the category whose objects are the same as $\mathbb{X}$ and whose maps are linear in $\mathbb{X}$.
\end{definition}

A modification of this notion allows one to describe maps which are only ``linear in one variable''.  

\begin{definition}\label{defn:linear_fibration} In a Cartesian differential category $\mathbb{X}$ with differential combinator $\mathsf{D}$, a map ${f\!: X \!\times\! A \to B}$ is \textbf{linear in its second argument $A$} (or \textbf{linear in context $X$}) \cite[Definition 9]{cockett_et_al:LIPIcs:2020:11661} if the following diagram commutes: 
  \[  \xymatrixcolsep{5pc}\xymatrix{X \times (A \times A) \ar[r]^-{\left \langle 1_X \times \pi_0; 0 \times \pi_1 \right \rangle} \ar[d]_-{1_X \times \pi_1} & (X \times A) \times (X \times A) \ar[d]^-{\mathsf{D}[f]} \\
    X \times A \ar[r]_-{f} & B 
  } \]
or equivalently \cite[Lemma 12]{cockett_et_al:LIPIcs:2020:11661}, if the following diagram commutes: 
  \[  \xymatrixcolsep{5pc}\xymatrix{X \times A \ar[dr]_-{\iota_0 \times \iota_1} \ar[rr]^-{f} && B  \\
  & (X \times A) \times (X \times A) \ar[ur]_-{\mathsf{D}[f]}
  } \]
\end{definition}

With such maps we can define the canonical fibration associated to a Cartesian differential category.  

\begin{definition} For a Cartesian differential category $\mathbb{X}$ with differential combinator $\mathsf{D}$, define the category $\mathcal{L}[\mathbb{X}]$ as follows: 
\begin{enumerate}[{\em (i)}]
\item The objects of $\mathcal{L}[\mathbb{X}]$ are pairs of elements $(X,A)$ of $\mathbb{X}$, that is, $Ob\left( \mathcal{L}[\mathbb{X}] \right) = Ob\left( \mathbb{X} \right) \times Ob\left( \mathbb{X} \right)$;
\item The maps of $\mathcal{L}[\mathbb{X}]$ are pairs of maps $(f,g): (X,A) \to (Y,B)$ consisting of an arbitrary map ${f: X \to Y}$ and a map $g: X \times A \to B$ which is linear in context $X$; 
\item The identity map of $(X,A)$ is the pair $(1_X, \pi_1): (X,A) \to (X,A)$;
\item The composition of maps $(f,g): (X,A) \to (Y,B)$ and $(h, k): (Y,B) \to (Z,C)$ is defined as follows: 
  \[ (f,g); (h, k) = \left( (f;h) , \xymatrixcolsep{3pc}\xymatrix{ X \times A \ar[r]^-{\langle \pi_0, 1_{X \times A} \rangle } & X \times (X \times A) \ar[r]^-{f \times g} &Y \times B \ar[r]^-{k} & C} \right) \]
  \end{enumerate}
Let $\mathsf{p}: \mathcal{L}[\mathbb{X}] \to \mathbb{X}$ be the forgetful functor defined on objects as $\mathsf{p}(X,A) = X$ and on maps as $\mathsf{p}(f,g) = f$. 
\end{definition}

Note that this is a subcategory of the simple fibration over $\mathbb{X}$ \cite[Definition 1.3.1]{jacobs1999categorical}.  It is then straightforward to show that:

\begin{proposition}\label{prop:linear_fibration}  Let $\mathbb{X}$ be a Cartesian differential category with differential combinator $\mathsf{D}$. Then the forgetful functor ${\mathsf{p}: \mathcal{L}[\mathbb{X}] \to \mathbb{X}}$ is a fibration where the Cartesian maps are those of the form $(f, \pi_1): (X,A) \to (Y,A)$. 
\end{proposition}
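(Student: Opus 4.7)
The plan is to verify three things in turn: that $\L[\mathbb{X}]$ is a well-defined category, that $\mathsf{p}$ is a functor, and that for every map $f: X \to Y$ in $\mathbb{X}$ and every object $(Y,A)$ of $\L[\mathbb{X}]$, the map $(f, \pi_1): (X, A) \to (Y, A)$ is a Cartesian lift.

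First I would check that $\L[\mathbb{X}]$ is a category. The identity component $\pi_1: X \times A \to A$ is linear in context $X$: using $\mathsf{D}[\pi_1] = \pi_1;\pi_1$ from \textbf{[CD.3]}, both sides of the defining diagram in Definition \ref{defn:linear_fibration} reduce to projecting onto the $A$-component of the input. For composition, given $(f, g): (X, A) \to (Y, B)$ and $(h, k): (Y, B) \to (Z, C)$, the composite second component is $\tilde{g} := \langle \pi_0, 1_{X \times A} \rangle; (f \times g); k$. To show $\tilde{g}$ is linear in context $X$, I would use the alternative characterization in Definition \ref{defn:linear_fibration} (that linearity in context is equivalent to $(\iota_0 \times \iota_1); \mathsf{D}[-] = -$), expanding $\mathsf{D}[\tilde{g}]$ by repeated use of the chain rule \textbf{[CD.5]} together with \textbf{[CD.3]} and \textbf{[CD.4]}, and then invoking the hypotheses that $g$ is linear in context $X$ and $k$ is linear in context $Y$ to collapse the expression back to $\tilde{g}$. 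Associativity and the unit laws then fall out from standard CDC computations.

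Second, $\mathsf{p}$ is manifestly a functor, since by construction the first component of the composite $(f, g); (h, k)$ is simply $f; h$, and the first component of the identity on $(X,A)$ is $1_X$.

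Finally, to verify the Cartesian lift property, suppose $(f', g'): (W, C) \to (Y, A)$ is any map in $\L[\mathbb{X}]$ and $h: W \to X$ in $\mathbb{X}$ satisfies $h; f = f'$. A would-be lift $(h, \ell): (W, C) \to (X, A)$ satisfies $(h, \ell); (f, \pi_1) = (f', g')$ if and only if its second component $\langle \pi_0, 1_{W \times C} \rangle; (h \times \ell); \pi_1$, which equals $\ell$, coincides with $g'$. Thus $\ell = g'$ is forced, and is linear in context $W$ by hypothesis, giving the desired unique factorization through $(f, \pi_1)$. I expect the main obstacle to be the calculation in the first step establishing that composition in $\L[\mathbb{X}]$ preserves linearity in context; once this is settled, the remaining steps are essentially direct unwindings of the definitions together with the universal property of the product in $\mathbb{X}$.
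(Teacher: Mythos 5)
Your proposal is correct and is essentially the ``straightforward'' argument the paper alludes to (the paper gives no proof, only the remark that $\L[\mathbb{X}]$ sits inside the simple fibration, whose Cartesian maps are exactly the $(f,\pi_1)$): the only genuinely nontrivial point is closure of linearity-in-context under the composition formula, and your chain-rule sketch handles it, provided that when you collapse the $k$-term you invoke the first characterization of Definition~\ref{defn:linear_fibration} (derivative at an arbitrary base point $(y,b)$ in a direction of the form $(0,b')$), since after differentiating the composite the base point lands at $(f(x),g(x,a))$ rather than at something with a zero second component. The identity, functoriality, and Cartesian-lift verifications are exactly as they should be.
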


It will be useful to have an explicit description of the fibres of this fibration:

\begin{lemma}\label{lemma:L_fibres} Let $\mathbb{X}$ be a Cartesian differential category with differential combinator $\mathsf{D}$. For any object $X \in \mathbb{X}$, the fibre over $X$ of the fibration $\mathsf{p}: \mathcal{L}[\mathbb{X}] \to \mathbb{X}$ is written as $\mathcal{L}[X]$ and given by
\begin{enumerate}[{\em (i)}]
\item The objects of $\mathcal{L}[X]$ are the same as the objects of $\mathbb{X}$, that is, $Ob\left( \mathcal{L}[X] \right) = Ob\left( \mathbb{X} \right)$;
\item The maps of $\mathcal{L}[X]$ are maps $f: X \times A \to B$ which are linear in context $X$; 
\item The identity map of $A$ is defined as $\pi_1: X \times A \to A$;
\item The composition of maps $f: X \times A \to B$ and $g: X \times B \to C$ is defined as follows: 
\begin{align*}
 \xymatrixcolsep{5pc}\xymatrix{ X \times A \ar[r]^-{\langle \pi_0, 1_{X \times A} \rangle} &X \times (X \times A) \ar[r]^-{1_X \times f} &X \times B \ar[r]^-{g} & C} \end{align*}
\end{enumerate}
For every map $h: X \to Y$, define the \textbf{substitution functor} $h^\ast: \mathcal{L}[Y] \to \mathcal{L}[X]$ on objects as $h^\ast(A) = A$ and on maps $f: Y \times A \to B$ as follows: 
\begin{align*}
h^\ast(f) := \xymatrixcolsep{5pc}\xymatrix{ X \times A \ar[r]^-{h \times 1_A} & Y \times A \ar[r]^-{f} & B} \end{align*}
Furthermore, note that for the terminal object $\top$, there is an isomorphism $\mathcal{L}[\top] \cong \mathsf{LIN}[\mathbb{X}]$. 
\end{lemma}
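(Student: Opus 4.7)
The plan is to unfold the general definition of a fibre under the fibration $\mathsf{p}: \L[\mathbb{X}] \to \mathbb{X}$ (from Proposition \ref{prop:linear_fibration}) and verify that it matches the stated description. First I would observe that the objects of the fibre $\L[X]$ are those $(Y,A) \in \L[\mathbb{X}]$ with $\mathsf{p}(Y,A) = X$, i.e., $Y = X$; hence they correspond to arbitrary objects $A$ of $\mathbb{X}$. Similarly, the vertical maps are exactly the pairs $(1_X, g): (X,A) \to (X,B)$ with $g: X \times A \to B$ linear in context $X$, which we identify with $g$. The identity in $\L[X]$ at $A$ is $(1_X, \pi_1)$, giving $\pi_1$. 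For composition, specialize the composition rule of $\L[\mathbb{X}]$ to the case $f = h = 1_X$: the first component becomes $1_X$, and the second becomes $\langle \pi_0, 1_{X \times A} \rangle; (1_X \times g); g'$, matching the formula in the statement.

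Next I would derive the substitution functor using the Cartesian lifts from Proposition \ref{prop:linear_fibration}. Given $h: X \to Y$ in $\mathbb{X}$ and a vertical map $(1_Y, f): (Y, A) \to (Y, B)$ over $Y$, there is a unique vertical map $(1_X, h^\ast(f)): (X, A) \to (X, B)$ together with the equation
\[ (1_X, h^\ast(f)); (h, \pi_1) = (h, \pi_1); (1_Y, f). \]
Computing both sides via the composition rule, the second component of the left-hand side reduces to $h^\ast(f)$, while the second component of the right-hand side reduces to $(h \times 1_A); f$, yielding $h^\ast(f) = (h \times 1_A); f$ as claimed. Functoriality and preservation of the linear-in-context property follow automatically from fibration theory, though one can also verify the latter directly using the chain rule {\bf [CD.5]}, {\bf [CD.4]}, and the behaviour of $\mathsf{D}$ on projections {\bf [CD.3]}.

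Finally, for the isomorphism $\L[\top] \cong \mathsf{LIN}[\mathbb{X}]$, I would use that $\pi_1: \top \times A \to A$ is a natural isomorphism with inverse $\langle !_A, 1_A \rangle$. This gives a bijection between maps $f: \top \times A \to B$ and maps $\tilde{f}: A \to B$ via $\tilde{f} = \langle !_A, 1_A \rangle; f$ and $f = \pi_1; \tilde{f}$. One then checks that the identity $\pi_1$ and composition formula in $\L[\top]$ correspond, under this bijection, to the identity and composition in $\mathsf{LIN}[\mathbb{X}]$, and that $f$ is linear in context $\top$ if and only if $\tilde{f}$ is linear in the sense of Definition \ref{def:linear-def}. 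This last equivalence is the main technical step: it requires using the chain rule together with {\bf [CD.3]} to compute $\mathsf{D}[f]$ along $\iota_0 \times \iota_1$ and reduce the linear-in-context condition to the defining equation $\iota_1; \tilde{f} = \mathsf{D}[\tilde{f}]$ of linearity. The main obstacle will be carrying out this reduction cleanly; the rest of the proof is essentially bookkeeping with the fibration structure.
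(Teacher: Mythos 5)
Your proposal is correct and follows exactly the route the paper intends: the lemma is stated without proof as a routine unpacking of the fibration of Proposition \ref{prop:linear_fibration}, and your computations of the vertical maps, the specialized composition, the reindexing functor via the Cartesian lifts $(h,\pi_1)$, and the reduction of linearity in context $\top$ to linearity (via the chain rule and \textbf{[CD.3]}, using that $\pi_1\colon \top\times A\to A$ is a linear isomorphism) are all accurate. Nothing is missing.
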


\subsection{The coKleisli construction}\label{cokleislisection}

In this section we review a very important source of Cartesian differential categories: the coKleisli categories of monoidal differential categories. 

Before constructing the differential combinator, we must first describe the additive structure of the coKleisli category. So let $(\oc, \delta, \varepsilon)$ be a comonad on a category $\mathbb{X}$ with finite biproducts. Then $\mathbb{X}_\oc$ is a Cartesian left additive category \cite[Proposition 1.3.3]{blute2009cartesian} where the additive structure is defined as follows: 
\begin{align*}
\llbracket f+g \rrbracket = \llbracket f \rrbracket + \llbracket g \rrbracket && \llbracket 0 \rrbracket = 0 \end{align*}
Furthermore, the injection maps, sum maps, lifting maps, and interchange maps in the coKleisli category are easily computed out to be: 
\begin{align*}
\llbracket \iota_0 \rrbracket = \varepsilon_A; \iota_0 && \llbracket \iota_1 \rrbracket = \varepsilon_B; \iota_1 && \llbracket +_A \rrbracket = \varepsilon_{A \times A}; +_A &&  \llbracket \ell_A \rrbracket = \varepsilon_{A \times A}; \ell_A &&  \llbracket c_A \rrbracket = \varepsilon_{(A \times A) \times (A \times A)}; c_A     \end{align*}
If one starts with a differential category, then using the deriving transformation, we are able to construct a differential combinator for the coKleisli category. 

\begin{proposition}\label{coKleisliCDC} \cite[Proposition 3.2.1]{blute2009cartesian} Let $\mathbb{X}$ be a monoidal differential category with coalgebra modality $(\oc, \delta, \varepsilon, \Delta, e)$ and deriving transformation $\mathsf{d}: \oc A \otimes A \to \oc A$, and finite (bi)products (which we denote here using the product notation). Then the coKleisli category $\mathbb{X}_\oc$ is a Cartesian differential category with Cartesian left additive structure defined above and differential combinator $\mathsf{D}$ defined as follows on a coKleisli map $\llbracket f \rrbracket: \oc A \to B$: 
 \begin{align*}
 \llbracket \mathsf{D}[f] \rrbracket := \xymatrixcolsep{3pc}\xymatrix{\oc(A \times A) \ar[r]^-{\chi_{A \times A}} & \oc A \otimes \oc A \ar[r]^-{1_{\oc A} \otimes \varepsilon_A} & \oc A \otimes A \ar[r]^-{\mathsf{d}_A} & \oc A \ar[r]^-{\llbracket f \rrbracket} & B 
 } \end{align*}
  \begin{align*}
\begin{array}[c]{c}
\llbracket \mathsf{D}[f] \rrbracket 
   \end{array}=
   \begin{array}[c]{c}
\begin{tikzpicture}
	\begin{pgfonlayer}{nodelayer}
		\node [style=component] (0) at (7.5, 1.75) {$\varepsilon$};
		\node [style=differential] (1) at (7, 1) {{\bf =\!=\!=\!=}};
		\node [style=object] (2) at (7, -0.5) {$B$};
		\node [style=component] (4) at (7, 0.25) {$f$};
		\node [style=duplicate] (6) at (7, 2.5) {$\chi$};
		\node [style=object] (7) at (7, 3.25) {$\oc (A \times A)$};
	\end{pgfonlayer}
	\begin{pgfonlayer}{edgelayer}
		\draw [style=wire, in=-90, out=30, looseness=1.50] (1) to (0);
		\draw [style=wire] (1) to (4);
		\draw [style=wire] (4) to (2);
		\draw [style=wire] (7) to (6);
		\draw [style=wire, in=90, out=-30, looseness=1.25] (6) to (0);
		\draw [style=wire, in=150, out=-150] (6) to (1);
	\end{pgfonlayer}
\end{tikzpicture}
   \end{array}
\end{align*}
where $\chi_{A \times A}: \oc (A \times A) \to \oc A \otimes \oc A$ is defined as in Definition \ref{Seelydef}.  
\end{proposition}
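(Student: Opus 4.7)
The plan is to verify the seven axioms [CD.1]--[CD.7] of a Cartesian differential category for the operator $\mathsf{D}$ as defined. The fact that $\mathbb{X}_\oc$ is Cartesian left additive under the stated additive enrichment is already cited before the proposition, so only the differential-combinator axioms require work. The toolkit consists of: the additive bialgebra modality structure on $\oc$ (which is automatic once we have a coalgebra modality on an additive symmetric monoidal category with finite biproducts and Seely isomorphisms), the naturality of $\delta, \varepsilon, \Delta, e, \nabla, u, \chi$, the coKleisli composition formula $\llbracket f;g \rrbracket = \delta;\oc(\llbracket f \rrbracket);\llbracket g \rrbracket$, and the five deriving transformation axioms [d.1]--[d.5]. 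I would carry out each verification in the graphical calculus rather than with commutative diagrams, as threading $\chi$ past the other structure maps is much cleaner pictorially.

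I would proceed axiom by axiom as follows. For [CD.1] (additivity of $\mathsf{D}$ in $f$), additivity is immediate: every operation in the formula for $\llbracket \mathsf{D}[f] \rrbracket$ preserves sums and zeros on the right. For [CD.2] (additivity in the second variable), one computes that the coKleisli sum in the second argument corresponds to $\nabla$ on $\oc A$; pushing $\chi$ through $\nabla$ via the bialgebra law, and then applying the Leibniz rule [d.2] together with the constant rule [d.1], yields the desired identity. For [CD.3] (identities and projections), direct computation using $\llbracket 1_A \rrbracket = \varepsilon_A$, $\llbracket \pi_i \rrbracket = \varepsilon_{A \times B}; \pi_i$, and the linear rule [d.3] suffices. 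For [CD.4] (coherence with pairings), one uses the universal property of the product in $\mathbb{X}$ together with the fact that $\chi$ is defined via $\oc(\pi_0) \otimes \oc(\pi_1)$.

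The more substantial axioms are [CD.5], [CD.6], and [CD.7]. For the chain rule [CD.5], one expands both $\llbracket \mathsf{D}[f;g] \rrbracket$ (using coKleisli composition, which introduces $\delta$ and a box around $\llbracket f \rrbracket$) and $\langle \pi_0 f, \mathsf{D}[f]\rangle; \mathsf{D}[g]$, then reduces the former to the latter by invoking the deriving transformation's chain rule [d.4] and the comonoid-morphism property of $\delta$. For [CD.6] (linearity of the derivative in its second variable), the key reduction is that after precomposing $\mathsf{D}[\mathsf{D}[f]]$ with the coKleisli lifting map, the inner occurrence of $\mathsf{d}$ sits against an $\varepsilon$ coming from the Seely map on the injection $\iota_1$, so [d.3] collapses it and one recovers $\mathsf{D}[f]$. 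Finally [CD.7] (symmetry of mixed partials) is a direct translation of the interchange rule [d.5] through the two applications of $\chi$.

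I expect the main obstacle to be [CD.2] and [CD.5]: both involve repeatedly commuting $\chi$ and $\delta$ past the bialgebra structure $(\Delta, e, \nabla, u)$, and bookkeeping the many wires produced by the Leibniz rule and by coKleisli composition. The graphical calculus developed in Section~\ref{sec:diffcats} and the fibre structure of $\L_\oc[\mathbb{X}]$ from Section~\ref{sec:context_fibration} make these essentially routine, but still the longest part of the verification.
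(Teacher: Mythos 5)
The paper does not actually prove this proposition; it is imported verbatim from \cite[Proposition 3.2.1]{blute2009cartesian}, and your overall strategy --- verify \textbf{[CD.1]}--\textbf{[CD.7]} directly from \textbf{[d.1]}--\textbf{[d.5]}, naturality, and the coKleisli composition formula --- is exactly the strategy of that cited proof. So the shape of your argument is right.

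There is, however, one concrete gap: your toolkit includes the additive bialgebra structure $(\nabla, u)$ and "the bialgebra law," and your plan for \textbf{[CD.2]} hinges on "pushing $\chi$ through $\nabla$." But $\nabla$ and $u$ only exist when the coalgebra modality has the Seely isomorphisms (this is the content of \cite[Theorem 6]{Blute2019} as recalled in Section 2.1), and the proposition does not assume them --- the paper explicitly remarks immediately after the statement that the result "does not require the coalgebra modality to be monoidal or, equivalently, to have Seely isomorphisms." The $\chi_{A,A}$ appearing in the formula for $\llbracket \mathsf{D}[f]\rrbracket$ is merely the Seely \emph{map} $\Delta;(\oc(\pi_0)\otimes\oc(\pi_1))$, which exists for any coalgebra modality with finite products, and it need not be invertible. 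Fortunately \textbf{[CD.2]} does not need $\nabla$ at all: since $+_A = \pi_0+\pi_1$ and $\iota_0=\langle 1_A,0\rangle$ in the coKleisli category, one has $\oc(1_A\times +_A);\chi_{A,A} = \chi_{A,A\times A};(1_{\oc A}\otimes \oc(\pi_0+\pi_1))$ by naturality of $\chi$, and then naturality of $\varepsilon$ together with the fact that composition and $\otimes$ are bilinear over $+$ and annihilate $0$ (Definition \ref{addcatdef}) gives both halves of the axiom; the Leibniz rule \textbf{[d.2]} is not the relevant tool here --- its actual job is in the chain rule \textbf{[CD.5]}, where $\oc$ of the pairing $\langle \pi_0 f,\mathsf{D}[f]\rangle$ forces $\mathsf{d}$ past a $\Delta$. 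If you rewrite \textbf{[CD.2]} along these lines and strike every appeal to $\nabla$, $u$, and the Seely \emph{isomorphisms} (keeping only the Seely \emph{maps} and the naturality of $\Delta$, $e$, $\varepsilon$, $\delta$), the rest of your outline goes through under the stated hypotheses.
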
 

It is important to note that the above proposition does not require the coalgebra modality to be monoidal or, equivalently, to have Seely isomorphisms.  In addition, we could have also expressed the differential combinator of the coKleisli category in terms of the coderiving transformation $\mathsf{d}^\circ$ (Definition \ref{dcircdef}) as follows on a coKleisli map $\llbracket f \rrbracket: \oc A \to B$: 
 \begin{align*} \llbracket \mathsf{D}[f] \rrbracket := \xymatrixcolsep{2.25pc}\xymatrix{\oc(A \times A) \ar[r]^-{\mathsf{d}^\circ_{A \times A}} & \oc (A \times A) \otimes (A \times A) \ar[r]^-{\oc(\pi_0) \otimes \pi_1} & \oc A \otimes A \ar[r]^-{\mathsf{d}_A} & \oc A \ar[r]^-{\llbracket f \rrbracket} & B 
 }
  \end{align*}
  \begin{align*}
\begin{array}[c]{c}
\llbracket \mathsf{D}[f] \rrbracket 
   \end{array}=
   \begin{array}[c]{c}
\begin{tikzpicture}
	\begin{pgfonlayer}{nodelayer}
		\node [style=differential] (1) at (7, 0.75) {{\bf =\!=\!=\!=}};
		\node [style=object] (2) at (7, -0.75) {$B$};
		\node [style=component] (4) at (7, 0) {$f$};
		\node [style=object] (7) at (7, 3.5) {$\oc (A \times A)$};
		\node [style=differential] (8) at (7, 2.75) {{\bf =\!=\!=\!=}};
		\node [style=component] (9) at (7.75, 1.75) {$\pi_1$};
		\node [style=function2] (10) at (6.25, 1.75) {$\pi_0$};
	\end{pgfonlayer}
	\begin{pgfonlayer}{edgelayer}
		\draw [style=wire] (1) to (4);
		\draw [style=wire] (4) to (2);
		\draw [style=wire] (7) to (8);
		\draw [style=wire, in=90, out=-150, looseness=1.25] (8) to (10);
		\draw [style=wire, in=135, out=-90] (10) to (1);
		\draw [style=wire, in=-90, out=45, looseness=1.25] (1) to (9);
		\draw [style=wire, in=90, out=-30, looseness=1.25] (8) to (9);
	\end{pgfonlayer}
\end{tikzpicture}
   \end{array}
\end{align*}
We now turn our attention to giving an explicit description of the linear maps in the coKleisli category. 

\begin{lemma}\label{lem:cokleisli-linear}  Let $\mathbb{X}$ be a monoidal differential category with coalgebra modality $(\oc, \delta, \varepsilon, \Delta, e)$ and deriving transformation $\mathsf{d}: \oc A \otimes A \to \oc A$, and finite (bi)products. Then: 
\begin{enumerate}[{\em (i)}]
\item\label{lem:cokleisli-linear.i} A coKleisli map $\llbracket f \rrbracket: \oc A \to B$ is linear in $\mathbb{X}_\oc$ if and only if the following diagram commutes: 
 \begin{align*}
   \begin{array}[c]{c}
  \xymatrixcolsep{5pc}\xymatrix{ \oc A \ar[rr]^-{\llbracket f \rrbracket} \ar[d]_-{\mathsf{d}^\circ_A} && B \\
   \oc A \otimes A \ar[r]_-{\oc(0) \otimes 1_A} & \oc A \otimes A \ar[r]_-{\mathsf{d}_A} & \oc A \ar[u]_-{\llbracket f \rrbracket} }
 \end{array} && 
   \begin{array}[c]{c}
\begin{tikzpicture}
	\begin{pgfonlayer}{nodelayer}
		\node [style=differential] (0) at (6.75, 1) {{\bf =\!=\!=\!=}};
		\node [style=object] (1) at (6.75, -0.75) {$B$};
		\node [style=component] (2) at (6.75, 0.25) {$f$};
		\node [style=object] (3) at (6.75, 3.25) {$ \oc A$};
		\node [style=function2] (6) at (6.25, 1.75) {$0$};
		\node [style=object] (7) at (4.75, 2.75) {$\oc A$};
		\node [style=object] (8) at (4.75, 0.75) {$B$};
		\node [style=component] (9) at (4.75, 1.75) {$f$};
		\node [style=object] (10) at (5.5, 1.75) {$=$};
		\node [style=differential] (11) at (6.75, 2.5) {{\bf =\!=\!=\!=}};
	\end{pgfonlayer}
	\begin{pgfonlayer}{edgelayer}
		\draw [style=wire] (0) to (2);
		\draw [style=wire] (2) to (1);
		\draw [style=wire, in=150, out=-90, looseness=1.25] (6) to (0);
		\draw [style=wire] (7) to (9);
		\draw [style=wire] (9) to (8);
		\draw [style=wire] (3) to (11);
		\draw [style=wire, in=90, out=-150, looseness=1.25] (11) to (6);
		\draw [style=wire, in=30, out=-30, looseness=1.50] (11) to (0);
	\end{pgfonlayer}
\end{tikzpicture}
   \end{array}
\end{align*}
\item \label{lem:cokleisli-linear.ii} For every map $g: A \to B$ in $\mathbb{X}$, $\llbracket \mathsf{F}_\oc(g) \rrbracket = \varepsilon_A ; g: \oc A \to B$ is linear in $\mathbb{X}_\oc$. Therefore there is a functor $\mathsf{F}_{\mathsf{L}}: \mathbb{X} \to \mathsf{LIN}[\mathbb{X}_\oc]$ defined on objects as $\mathsf{F}_{\mathsf{L}}(A) = A$ and on maps $g: A \to B$ as $\llbracket \mathsf{F}_{\mathsf{L}}(g) \rrbracket = \llbracket \mathsf{F}_\oc(g) \rrbracket = \varepsilon_A ; g$. 
\end{enumerate}
\end{lemma}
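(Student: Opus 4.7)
The plan is to decode the CDC linearity condition into $\mathbb{X}$ via the interpretation brackets, and then simplify using the basic structure of the coalgebra modality.

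For part \emph{(i)}, I would start from the second (equivalent) form of linearity given in Definition \ref{def:linear-def}, namely $f = \iota_1; \mathsf{D}[f]$. In the coKleisli category, $\llbracket \iota_1 \rrbracket = \varepsilon_A; \iota_1$, and using the coKleisli composition law together with the comonad counit law $\delta; \oc(\varepsilon) = 1$, one gets $\llbracket \iota_1; \mathsf{D}[f] \rrbracket = \oc(\iota_1); \llbracket \mathsf{D}[f] \rrbracket$. Substituting the $\mathsf{d}^\circ$-form of $\llbracket \mathsf{D}[f] \rrbracket$ from Proposition \ref{coKleisliCDC} yields $\oc(\iota_1); \mathsf{d}^\circ_{A \times A}; (\oc(\pi_0) \otimes \pi_1); \mathsf{d}_A; \llbracket f \rrbracket$. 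Pushing $\oc(\iota_1)$ past $\mathsf{d}^\circ$ by naturality converts this to $\mathsf{d}^\circ_A; (\oc(\iota_1;\pi_0) \otimes \iota_1;\pi_1); \mathsf{d}_A; \llbracket f \rrbracket$, and the standard biproduct identities $\iota_1;\pi_0 = 0$ and $\iota_1;\pi_1 = 1_A$ produce precisely the displayed equation. Hence the stated condition is literally the $\iota_1$ form of linearity translated through the brackets, and by \cite[Lemma 12]{cockett_et_al:LIPIcs:2020:11661} this is equivalent to linearity of $\llbracket f \rrbracket$ in $\mathbb{X}_\oc$.

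For part \emph{(ii)}, I would verify linearity of $\llbracket \mathsf{F}_\oc(g) \rrbracket = \varepsilon_A; g$ directly via the $\pi_1$ form $\mathsf{D}[h] = \pi_1; h$. Using the Seely-map expression $\llbracket \mathsf{D}[\mathsf{F}_\oc(g)] \rrbracket = \chi_{A,A}; (1_{\oc A} \otimes \varepsilon_A); \mathsf{d}_A; \varepsilon_A; g$, the linear rule \textbf{[d.3]} collapses $\mathsf{d}_A; \varepsilon_A$ to $e_A \otimes 1_A$ (modulo the left unitor). Expanding $\chi_{A,A} = \Delta_{A \times A}; (\oc(\pi_0) \otimes \oc(\pi_1))$, naturality of $e$ and $\varepsilon$, together with counitality of $\Delta$ with counit $e$, then collapse the composite to $\varepsilon_{A \times A}; \pi_1; g$. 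On the other side, $\llbracket \pi_1; \mathsf{F}_\oc(g) \rrbracket = \oc(\pi_1); \varepsilon_A; g$, which by naturality of $\varepsilon$ equals $\varepsilon_{A \times A}; \pi_1; g$ as well, establishing linearity. Functoriality of $\mathsf{F}_{\mathsf{L}}$ is then inherited directly from that of $\mathsf{F}_\oc$.

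Everything is routine bookkeeping; the only subtle point is recognising that the displayed condition in \emph{(i)} comes from the $\iota_1$-form (rather than the $\pi_1$-form) of linearity, so that the naturality rewrite of $\mathsf{d}^\circ$ is available to rewrite $\oc(\iota_1)$ in terms of $\oc(0)$ and $1_A$.
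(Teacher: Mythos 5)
Your proposal is correct, and part \emph{(i)} follows exactly the paper's argument: reduce to the $\iota_1$-form of linearity, observe that precomposition with $\llbracket \iota_1 \rrbracket$ unfolds to $\oc(\iota_1);\llbracket \mathsf{D}[f]\rrbracket$, substitute the $\mathsf{d}^\circ$-expression of the differential combinator, and slide $\oc(\iota_1)$ through $\mathsf{d}^\circ$ by naturality so that the biproduct identities $\iota_1;\pi_0 = 0$ and $\iota_1;\pi_1 = 1_A$ yield the displayed condition. For part \emph{(ii)} you diverge mildly: the paper applies the criterion it has just established in \emph{(i)}, computing $\mathsf{d}^\circ_A;(\oc(0)\otimes 1_A);\mathsf{d}_A;\varepsilon_A;g = \varepsilon_A;g$ via \textbf{[d.3]}, naturality of $e$, and the counit law for $\Delta$, whereas you verify the $\pi_1$-form of the definition of linearity directly by expanding $\chi_{A,A}$ and collapsing with the same three ingredients. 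Both computations are of comparable length and use the same structural facts, so nothing is gained or lost; the paper's choice has the small advantage of exercising the criterion of \emph{(i)}, while yours stays closer to the original definition and does not need \emph{(i)} at all. The closing remark that functoriality of $\mathsf{F}_{\mathsf{L}}$ is inherited from $\mathsf{F}_\oc$ matches the paper.
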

\begin{proof} For $(i)$, first observe that for any coKleisli map $\llbracket k \rrbracket: \oc (A \times A) \to B$, precomposing by $\llbracket \iota_1 \rrbracket = \varepsilon_A; \iota_1$ is equal to $\llbracket \iota_1; k \rrbracket = \oc(\iota_1); \llbracket k \rrbracket$. Therefore, for any coKleisli map $\llbracket f \rrbracket: \oc A \to B$, we compute the following: 
\begin{align*}
\begin{array}[c]{c}
\llbracket \iota_0; \mathsf{D}[f] \rrbracket 
   \end{array}=
   \begin{array}[c]{c}
\begin{tikzpicture}
	\begin{pgfonlayer}{nodelayer}
		\node [style=differential] (0) at (5.5, 2.5) {{\bf =\!=\!=\!=}};
		\node [style=object] (1) at (5.5, 0.75) {$B$};
		\node [style=component] (2) at (5.5, 1.75) {$f$};
		\node [style=object] (3) at (5.5, 4.75) {$ \oc A$};
		\node [style=function2] (6) at (5, 3.25) {$0$};
		\node [style=object] (10) at (1.25, 3.25) {$=$};
		\node [style=differential] (11) at (5.5, 4) {{\bf =\!=\!=\!=}};
		\node [style=differential] (12) at (-0.25, 2.25) {{\bf =\!=\!=\!=}};
		\node [style=object] (13) at (-0.25, 0.75) {$B$};
		\node [style=component] (14) at (-0.25, 1.5) {$f$};
		\node [style=object] (15) at (-0.25, 5.75) {$\oc A$};
		\node [style=differential] (16) at (-0.25, 4.25) {{\bf =\!=\!=\!=}};
		\node [style=component] (17) at (0.5, 3.25) {$\pi_1$};
		\node [style=function2] (18) at (-1, 3.25) {$\pi_0$};
		\node [style=function2] (19) at (-0.25, 5) {$\iota_1$};
		\node [style=object] (23) at (2.75, 5.5) {$\oc A$};
		\node [style=differential] (28) at (2.75, 4.75) {{\bf =\!=\!=\!=}};
		\node [style=component] (29) at (3.5, 3.75) {$\iota_1$};
		\node [style=function2] (30) at (2, 3.75) {$\iota_1$};
		\node [style=differential] (31) at (2.75, 1.75) {{\bf =\!=\!=\!=}};
		\node [style=object] (32) at (2.75, 0.25) {$B$};
		\node [style=component] (33) at (2.75, 1) {$f$};
		\node [style=component] (34) at (3.5, 2.75) {$\pi_1$};
		\node [style=function2] (35) at (2, 2.75) {$\pi_0$};
		\node [style=object] (36) at (4.25, 3.25) {$=$};
	\end{pgfonlayer}
	\begin{pgfonlayer}{edgelayer}
		\draw [style=wire] (0) to (2);
		\draw [style=wire] (2) to (1);
		\draw [style=wire, in=150, out=-90, looseness=1.25] (6) to (0);
		\draw [style=wire] (3) to (11);
		\draw [style=wire, in=90, out=-150, looseness=1.25] (11) to (6);
		\draw [style=wire, in=30, out=-30, looseness=1.50] (11) to (0);
		\draw [style=wire] (12) to (14);
		\draw [style=wire] (14) to (13);
		\draw [style=wire, in=90, out=-150, looseness=1.25] (16) to (18);
		\draw [style=wire, in=135, out=-90] (18) to (12);
		\draw [style=wire, in=-90, out=45, looseness=1.25] (12) to (17);
		\draw [style=wire, in=90, out=-30, looseness=1.25] (16) to (17);
		\draw [style=wire] (15) to (19);
		\draw [style=wire] (19) to (16);
		\draw [style=wire, in=90, out=-150, looseness=1.25] (28) to (30);
		\draw [style=wire, in=90, out=-30, looseness=1.25] (28) to (29);
		\draw [style=wire] (23) to (28);
		\draw [style=wire] (31) to (33);
		\draw [style=wire] (33) to (32);
		\draw [style=wire, in=135, out=-90] (35) to (31);
		\draw [style=wire, in=-90, out=45, looseness=1.25] (31) to (34);
		\draw [style=wire] (30) to (35);
		\draw [style=wire] (29) to (34);
	\end{pgfonlayer}
\end{tikzpicture}
   \end{array}
\end{align*}
So $\llbracket \iota_1; \mathsf{D}[f] \rrbracket =  \mathsf{d}^\circ_A; \left( \oc(0) \otimes 1_A \right);  \mathsf{d}_A; \llbracket f \rrbracket$. Therefore, by definition, $\llbracket f \rrbracket: \oc A \to B$ is linear if and only if $ \mathsf{d}^\circ_A; \left( \oc(0) \otimes 1_A \right);  \mathsf{d}_A; \llbracket f \rrbracket  = \llbracket \iota_1; \mathsf{D}[f] \rrbracket  = \llbracket f \rrbracket$. For $(ii)$, we use the linear rule \textbf{[d.3]} to compute: 
\begin{align*}
\begin{tikzpicture}
	\begin{pgfonlayer}{nodelayer}
		\node [style=differential] (0) at (-4.25, 3) {{\bf =\!=\!=\!=}};
		\node [style=object] (1) at (-4.25, 0.75) {$B$};
		\node [style=object] (3) at (-4.25, 5.25) {$ \oc A$};
		\node [style=function2] (6) at (-4.75, 3.75) {$0$};
		\node [style=differential] (11) at (-4.25, 4.5) {{\bf =\!=\!=\!=}};
		\node [style=object] (36) at (-3.25, 3.75) {$=$};
		\node [style=component] (47) at (-4.25, 2.25) {$\varepsilon$};
		\node [style=component] (48) at (-4.25, 1.5) {$g$};
		\node [style=object] (49) at (-2, 5.25) {$ \oc A$};
		\node [style=function2] (50) at (-2.5, 3.75) {$0$};
		\node [style=differential] (51) at (-2, 4.5) {{\bf =\!=\!=\!=}};
		\node [style=component] (52) at (-2.5, 3) {$e$};
		\node [style=object] (53) at (-1.5, 2.25) {$B$};
		\node [style=component] (54) at (-1.5, 3.75) {$g$};
		\node [style=object] (55) at (0.5, 5.25) {$ \oc A$};
		\node [style=component] (56) at (0, 3.75) {$e$};
		\node [style=differential] (57) at (0.5, 4.5) {{\bf =\!=\!=\!=}};
		\node [style=object] (59) at (1, 2.5) {$B$};
		\node [style=component] (60) at (1, 3.75) {$g$};
		\node [style=object] (61) at (-0.75, 3.75) {$=$};
		\node [style=object] (62) at (2.5, 2.75) {$B$};
		\node [style=component] (63) at (2.5, 4.25) {$\varepsilon$};
		\node [style=component] (64) at (2.5, 3.5) {$g$};
		\node [style=object] (65) at (1.75, 3.75) {$=$};
		\node [style=object] (66) at (2.5, 5) {$ \oc A$};
	\end{pgfonlayer}
	\begin{pgfonlayer}{edgelayer}
		\draw [style=wire, in=150, out=-90, looseness=1.25] (6) to (0);
		\draw [style=wire] (3) to (11);
		\draw [style=wire, in=90, out=-150, looseness=1.25] (11) to (6);
		\draw [style=wire, in=30, out=-30, looseness=1.50] (11) to (0);
		\draw [style=wire] (0) to (47);
		\draw [style=wire] (47) to (48);
		\draw [style=wire] (48) to (1);
		\draw [style=wire] (49) to (51);
		\draw [style=wire, in=90, out=-150, looseness=1.25] (51) to (50);
		\draw [style=wire] (50) to (52);
		\draw [style=wire] (54) to (53);
		\draw [style=wire, in=90, out=-30, looseness=1.25] (51) to (54);
		\draw [style=wire] (55) to (57);
		\draw [style=wire, in=90, out=-150, looseness=1.25] (57) to (56);
		\draw [style=wire] (60) to (59);
		\draw [style=wire, in=90, out=-30, looseness=1.25] (57) to (60);
		\draw [style=wire] (63) to (64);
		\draw [style=wire] (64) to (62);
		\draw [style=wire] (66) to (63);
	\end{pgfonlayer}
\end{tikzpicture}
\end{align*}
Therefore, $\llbracket \mathsf{F}_\oc(g) \rrbracket = \varepsilon_A ; g$ is linear. As a consequence, $\mathsf{F}_{\mathsf{L}}: \mathbb{X} \to \mathsf{LIN}[\mathbb{X}_\oc]$ is well-defined and is functor since $\mathsf{F}_\oc: \mathbb{X} \to \mathbb{X}_\oc$ is a functor. 
\end{proof} 

It is important to note that for an arbitrary differential category $\mathbb{X}$ with finite products, not every linear map in the coKleisli category is of the form $\varepsilon_B; g$. Therefore, $\mathsf{LIN}[\mathbb{X}_\oc]$ is not necessarily isomorphic to the base category $\mathbb{X}$. However, for differential storage categories, the desired isomorphism holds, which is a fundamental concept in differential linear logic. 

\begin{corollary}\label{cor:seely-lin} Let $\mathbb{X}$ be a monoidal differential storage category with coalgebra modality $(\oc, \delta, \varepsilon, \Delta, e)$ with Seely isomorphisms, deriving transformation $\mathsf{d}: \oc A \otimes A \to \oc A$ (or equivalently codereliction $\eta_A: A \to \oc A$), and finite (bi)products. Then: 
\begin{enumerate}[{\em (i)}]
\item A coKleisli map $\llbracket f \rrbracket: \oc A \to B$ is linear in $\mathbb{X}_\oc$ if and only if the following diagram commutes: 
 \begin{align*}
   \begin{array}[c]{c}
  \xymatrixcolsep{5pc}\xymatrix{ \oc A \ar[r]^-{\llbracket f \rrbracket} \ar[d]_-{\varepsilon_A} & B \\
  A \ar[r]_-{\eta_A} & \oc A \ar[u]_-{\llbracket f \rrbracket} }   \end{array} && 
   \begin{array}[c]{c}
\begin{tikzpicture}
	\begin{pgfonlayer}{nodelayer}
		\node [style=component] (0) at (6.25, 1.75) {$\eta$};
		\node [style=object] (1) at (6.25, 0) {$B$};
		\node [style=component] (2) at (6.25, 0.75) {$f$};
		\node [style=object] (3) at (6.25, 3.5) {$ \oc A$};
		\node [style=component] (4) at (6.25, 2.75) {$\varepsilon$};
		\node [style=object] (5) at (4.75, 2.75) {$\oc A$};
		\node [style=object] (6) at (4.75, 0.75) {$B$};
		\node [style=component] (7) at (4.75, 1.75) {$f$};
		\node [style=object] (8) at (5.5, 1.75) {$=$};
	\end{pgfonlayer}
	\begin{pgfonlayer}{edgelayer}
		\draw [style=wire] (0) to (2);
		\draw [style=wire] (2) to (1);
		\draw [style=wire] (3) to (4);
		\draw [style=wire] (5) to (7);
		\draw [style=wire] (7) to (6);
		\draw [style=wire] (4) to (0);
	\end{pgfonlayer}
\end{tikzpicture}
   \end{array}
\end{align*}
\item $\mathsf{F}_{\mathsf{L}}: \mathbb{X} \to \mathsf{LIN}[\mathbb{X}_\oc]$ is an isomorphism with inverse $\mathsf{F}^{-1}_{\mathsf{L}}: \mathsf{LIN}[\mathbb{X}_\oc] \to \mathbb{X}$ defined on objects as $\mathsf{F}^{-1}_{\mathsf{L}}(A) = A$ and on maps $\llbracket f \rrbracket: \oc A \to B$ as $\mathsf{F}^{-1}_{L}\left( \llbracket f \rrbracket \right) = \eta_A; \llbracket f \rrbracket$. 
\end{enumerate}
In other words, a coKleisli map $\llbracket f \rrbracket: \oc A \to B$ is linear in $\mathbb{X}_\oc$ if and only if $\llbracket f \rrbracket = \varepsilon_A; g$ for some (necessarily unique) map $g: A \to B$ in $\mathbb{X}$. 
\end{corollary}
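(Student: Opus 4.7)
The plan is to first prove part $(i)$ by reducing to the characterization already established in Lemma~\ref{lem:cokleisli-linear}$(i)$, and then derive part $(ii)$ as a straightforward consequence of $(i)$ together with the codereliction axiom $\eta_A;\varepsilon_A = 1_A$.

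For $(i)$, by Lemma~\ref{lem:cokleisli-linear}$(i)$, it suffices to show that in the Seely setting the natural transformation $\mathsf{d}^\circ_A;(\oc(0) \otimes 1_A);\mathsf{d}_A : \oc A \to \oc A$ coincides with $\varepsilon_A;\eta_A$. I would expand both $\mathsf{d}^\circ$ and $\mathsf{d}$ in terms of the additive bialgebra modality structure, using the defining formulas $\mathsf{d}^\circ_A = \Delta_A;(1_{\oc A} \otimes \varepsilon_A)$ and $\mathsf{d}_A = (1_{\oc A} \otimes \eta_A);\nabla_A$, obtaining
\[
\mathsf{d}^\circ_A;(\oc(0) \otimes 1_A);\mathsf{d}_A \;=\; \Delta_A;\bigl(\oc(0) \otimes (\varepsilon_A;\eta_A)\bigr);\nabla_A.
\]
The key intermediate step is the identity $\oc(0_{A,A}) = e_A;u_A$. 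To prove this I factor the zero map as $0\colon A \to \top \to A$ through the terminal object (which is a zero object by biproduct structure), apply functoriality of $\oc$, and use that in the Seely setting $\chi_\top = e_\top$ is an isomorphism with inverse $u_\top$; naturality of $e$ and $u$ then gives $\oc(A \to \top) = e_A;u_\top$ and $\oc(\top \to A) = e_\top;u_A$, so that $\oc(0_{A,A}) = e_A;(u_\top;e_\top);u_A = e_A;u_A$. Substituting this, the counit law $\Delta_A;(e_A \otimes 1_{\oc A}) = 1_{\oc A}$ for the comonoid and the unit law $(u_A \otimes 1_{\oc A});\nabla_A = 1_{\oc A}$ for the monoid collapse the expression to $\varepsilon_A;\eta_A$, as required.

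For $(ii)$, I first check that $\mathsf{F}^{-1}_{\mathsf{L}}$ is a well-defined functor. Given a linear coKleisli map $\llbracket f \rrbracket\colon \oc A \to B$, part $(i)$ yields $\llbracket f \rrbracket = \varepsilon_A;\eta_A;\llbracket f \rrbracket$, and since $\eta_A;\llbracket f \rrbracket$ is a map in $\mathbb{X}$, every linear coKleisli map is of the form $\varepsilon_A; g$ for a unique $g$ (uniqueness follows from $\eta_A;\varepsilon_A = 1_A$). Functoriality of $\mathsf{F}^{-1}_{\mathsf{L}}$ then falls out: if $\llbracket f \rrbracket = \varepsilon_A;f'$ and $\llbracket g \rrbracket = \varepsilon_B;g'$ are linear, then using the coKleisli composition formula, naturality of $\varepsilon$, and the comonad identity $\delta_A;\oc(\varepsilon_A) = 1_{\oc A}$, one computes $\llbracket f;g \rrbracket = \varepsilon_A;(f';g')$, so $\mathsf{F}^{-1}_{\mathsf{L}}(\llbracket f;g \rrbracket) = f';g' = \mathsf{F}^{-1}_{\mathsf{L}}(\llbracket f \rrbracket);\mathsf{F}^{-1}_{\mathsf{L}}(\llbracket g \rrbracket)$, and identities are handled by $\eta_A;\varepsilon_A = 1_A$. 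The two composites $\mathsf{F}^{-1}_{\mathsf{L}} \circ \mathsf{F}_{\mathsf{L}}$ and $\mathsf{F}_{\mathsf{L}} \circ \mathsf{F}^{-1}_{\mathsf{L}}$ then equal the identity by $\eta_A;\varepsilon_A = 1_A$ and part $(i)$ respectively.

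I expect the main obstacle to be the identity $\oc(0_{A,A}) = e_A;u_A$, since this is where the Seely hypothesis genuinely enters; the Seely isomorphism on the nullary component $\oc\top \cong k$ is precisely what upgrades the coalgebra modality structure so that the zero morphism is forced to factor through the counit and unit of the bialgebra. Once this identity is in hand, the rest of the argument is a direct manipulation with the bimonoid and codereliction axioms.
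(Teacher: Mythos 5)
Your proposal is correct and follows essentially the same route as the paper: part $(i)$ is reduced to Lemma~\ref{lem:cokleisli-linear}$(i)$ by showing $\mathsf{d}^\circ_A;(\oc(0)\otimes 1_A);\mathsf{d}_A = \varepsilon_A;\eta_A$ via the identity $\oc(0)=e_A;u_A$ and the bimonoid unit/counit laws, and part $(ii)$ follows from $\eta_A;\varepsilon_A=1_A$ together with $(i)$. The only (harmless) divergences are that you supply a proof of $\oc(0)=e_A;u_A$ where the paper merely recalls it, and that you verify functoriality of $\mathsf{F}^{-1}_{\mathsf{L}}$ directly whereas the paper sidesteps this by noting that a two-sided inverse of a functor is automatically a functor.
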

\begin{proof} For $(i)$, recall that in an additive bialgebra modality, $\oc(0) = e_A; u_A$. Therefore, for any coKleisli map $\llbracket f \rrbracket: \oc A \to B$ we have that: 
\begin{align*}
\begin{tikzpicture}
	\begin{pgfonlayer}{nodelayer}
		\node [style=component] (0) at (-1.75, 4.5) {$\eta$};
		\node [style=object] (1) at (-1.75, 2.5) {$B$};
		\node [style=component] (2) at (-1.75, 3.5) {$f$};
		\node [style=object] (3) at (-1.75, 6.25) {$ \oc A$};
		\node [style=component] (4) at (-1.75, 5.5) {$\varepsilon$};
		\node [style=object] (8) at (-5.25, 4.5) {$=$};
		\node [style=differential] (9) at (-6.25, 3.75) {{\bf =\!=\!=\!=}};
		\node [style=object] (10) at (-6.25, 2) {$B$};
		\node [style=component] (11) at (-6.25, 3) {$f$};
		\node [style=object] (12) at (-6.25, 6) {$ \oc A$};
		\node [style=function2] (13) at (-6.75, 4.5) {$0$};
		\node [style=differential] (18) at (-6.25, 5.25) {{\bf =\!=\!=\!=}};
		\node [style=differential] (19) at (-4, 3.25) {{\bf =\!=\!=\!=}};
		\node [style=object] (20) at (-4, 1.75) {$B$};
		\node [style=component] (21) at (-4, 2.5) {$f$};
		\node [style=object] (22) at (-4, 6.5) {$ \oc A$};
		\node [style=differential] (24) at (-4, 5.75) {{\bf =\!=\!=\!=}};
		\node [style=component] (25) at (-4.5, 5) {$e$};
		\node [style=component] (26) at (-4.5, 4) {$u$};
		\node [style=object] (27) at (-2.5, 4.5) {$=$};
	\end{pgfonlayer}
	\begin{pgfonlayer}{edgelayer}
		\draw [style=wire] (0) to (2);
		\draw [style=wire] (2) to (1);
		\draw [style=wire] (3) to (4);
		\draw [style=wire] (4) to (0);
		\draw [style=wire] (9) to (11);
		\draw [style=wire] (11) to (10);
		\draw [style=wire, in=150, out=-90, looseness=1.25] (13) to (9);
		\draw [style=wire] (12) to (18);
		\draw [style=wire, in=90, out=-150, looseness=1.25] (18) to (13);
		\draw [style=wire, in=30, out=-30, looseness=1.50] (18) to (9);
		\draw [style=wire] (19) to (21);
		\draw [style=wire] (21) to (20);
		\draw [style=wire] (22) to (24);
		\draw [style=wire, in=30, out=-30, looseness=1.25] (24) to (19);
		\draw [style=wire, in=150, out=-90] (26) to (19);
		\draw [style=wire, in=90, out=-150] (24) to (25);
	\end{pgfonlayer}
\end{tikzpicture}
\end{align*}
So $\mathsf{d}^\circ_A; \left( \oc(0) \otimes 1_A \right);  \mathsf{d}_A; \llbracket f \rrbracket = \eta_A; \varepsilon_A; \llbracket f \rrbracket$. Then by Lemma \ref{lem:cokleisli-linear}.(\ref{lem:cokleisli-linear.i}),  $\llbracket f \rrbracket: \oc A \to B$ is linear if and only if $\llbracket f \rrbracket= \mathsf{d}^\circ_A; \left( \oc(0) \otimes 1_A \right);  \mathsf{d}_A; \llbracket f \rrbracket = \eta_A; \varepsilon_A; \llbracket f \rrbracket$. 

For (ii), usually we would first have to check that  $\mathsf{F}^{-1}_{\mathsf{L}}$ is a functor; that is, $\mathsf{F}^{-1}_{\mathsf{L}}$ preserves composition and identities. However, it turns out that there is a way around this by applying \cite[Chapter IV, Theorem 2]{mac2013categories} to isomorphisms. Briefly, if $\mathsf{F}: \mathbb{X} \to \mathbb{Y}$ is a functor and $\mathsf{G}: \mathbb{Y} \to \mathbb{X}$ is a well-defined mapping on objects and maps such that $\mathsf{F} \circ \mathsf{G} = 1_\mathbb{Y}$ and $\mathsf{G} \circ \mathsf{F} = 1_\mathbb{X}$, then $\mathsf{G}$ is a functor and so $\mathsf{F}$ is an isomorphism with inverse $\mathsf{G}$. By Lemma \ref{lem:cokleisli-linear}.(\ref{lem:cokleisli-linear.ii}), $\mathsf{F}_{\mathsf{L}}: \mathbb{X} \to \mathsf{LIN}[\mathbb{X}_\oc]$ is a functor, so it remains to show that $\mathsf{F}^{-1}_{\mathsf{L}} \circ \mathsf{F}_\mathsf{L} = 1_\mathbb{X}$ and $\mathsf{F}_{\mathsf{L}} \circ \mathsf{F}^{-1}_\mathsf{L} = 1_{\mathsf{LIN}[\mathbb{X}_\oc]}$. Starting with the former, on objects this is immediate, $\mathsf{F}^{-1}_{\mathsf{L}}\mathsf{F}_\mathsf{L} (A) = A$. While on maps, recall that the linear rule for the codereliction says that $\eta_A; \varepsilon_A = 1_A$, therefore:
\[\mathsf{F}^{-1}_{\mathsf{L}}\mathsf{F}_\mathsf{L} (f) = \eta_A; \varepsilon_A; f = f\] 
So $\mathsf{F}^{-1}_{\mathsf{L}} \circ \mathsf{F}_\mathsf{L} = 1_\mathbb{X}$. For the other direction, on objects this is again immediate $\mathsf{F}_{\mathsf{L}}\mathsf{F}^{-1}_\mathsf{L} (A) = A$. For a linear coKleisli map $\llbracket f \rrbracket: \oc A \to B$, by Lemma \ref{lem:cokleisli-linear}.(\ref{lem:cokleisli-linear.i}), we have that:
\[\mathsf{F}_{\mathsf{L}}\mathsf{F}^{-1}_\mathsf{L}  \left( \llbracket f \rrbracket \right) = \varepsilon_A ; \eta_A ; \llbracket f \rrbracket = \llbracket f \rrbracket\]
So $\mathsf{F}_{\mathsf{L}} \circ \mathsf{F}^{-1}_\mathsf{L} = 1_{\mathsf{LIN}[\mathbb{X}_\oc]}$. Therefore, $\mathsf{F}^{-1}_{\mathsf{L}}: \mathsf{LIN}[\mathbb{X}_\oc] \to \mathbb{X}$ is a functor and is an inverse of $\mathsf{F}_{\mathsf{L}}: \mathbb{X} \to \mathsf{LIN}[\mathbb{X}_\oc]$. So we conclude that $\mathbb{X} \cong \mathsf{LIN}[\mathbb{X}_\oc]$.\end{proof} 

\subsection{Equivalence of Linear Fibrations}
\label{sec:fibration_equivalence}
Consider a monoidal differential category with finite products.  On the one hand, we have the fibration ${\mathsf{p}_\oc: \mathcal{L}_\oc[\mathbb{X}] \to \mathbb{X}_\oc}$
of Proposition \ref{prop:context_fibration} associated to any coalgebra modality.  On the other hand, by Proposition \ref{coKleisliCDC}, $\mathbb{X}_{\oc}$ is a Cartesian differential category, and so we also have its associated linear fibration ${\mathsf{p}: \mathcal{L}[\mathbb{X}_\oc] \to \mathbb{X}_\oc}$ of Proposition \ref{prop:linear_fibration}. The objective of this section is to show that they are in fact isomorphic (as fibrations over $\mathbb{X}_{\oc}$). We begin by providing an explicit description of maps which are linear in context in the coKleisli category. 

\begin{lemma}\label{lem:cokleisli-linearcontext}  Let $\mathbb{X}$ be a differential category with coalgebra modality $(\oc, \delta, \varepsilon, \Delta, e)$ and deriving transformation $\mathsf{d}: \oc A \otimes A \to \oc A$, and finite (bi)products. Then: 
\begin{enumerate}[{\em (i)}]
\item \label{lem:cokleisli-linearcontext.i} For every object $X$ and $A$, the following diagram commutes: 
\begin{align*}
  \begin{array}[c]{c}
  \xymatrixcolsep{5pc}\xymatrix{ \oc X \otimes A  \ar[d]_-{\oc(\iota_0) \otimes \iota_1} \ar@{=}[rr]^-{} & & \oc X \otimes A \\
  \oc(X \times A) \!\otimes\! (X \times A) \ar[r]_-{\mathsf{d}_{X \times A}} & \oc (X \times A) \ar[r]_-{\mathsf{d}^\circ_{X \times A}} & \oc(X \times A) \!\otimes\!  (X \times A) \ar[u]_-{\oc(\pi_0) \otimes \pi_1} }   
    \end{array} \end{align*}
    \begin{align*}   \begin{array}[c]{c}
\begin{tikzpicture}
	\begin{pgfonlayer}{nodelayer}
		\node [style=differential] (20) at (0.75, 4.5) {{\bf =\!=\!=\!=}};
		\node [style=component] (21) at (1.5, 3.5) {$\pi_1$};
		\node [style=function2] (22) at (0, 3.5) {$\pi_0$};
		\node [style=differential] (23) at (0.75, 5.25) {{\bf =\!=\!=\!=}};
		\node [style=component] (26) at (1.5, 6.25) {$\iota_1$};
		\node [style=function2] (27) at (0, 6.25) {$\iota_0$};
		\node [style=object] (43) at (2.25, 4.75) {$=$};
		\node [style=object] (67) at (0, 7) {$\oc X$};
		\node [style=object] (68) at (1.5, 7) {$A$};
		\node [style=object] (69) at (0, 2.75) {$\oc X$};
		\node [style=object] (70) at (1.5, 2.75) {$A$};
		\node [style=object] (77) at (3.25, 7) {$\oc X$};
		\node [style=object] (78) at (4.75, 7) {$A$};
		\node [style=object] (79) at (3.25, 2.75) {$\oc X$};
		\node [style=object] (80) at (4.75, 2.75) {$A$};
	\end{pgfonlayer}
	\begin{pgfonlayer}{edgelayer}
		\draw [style=wire, in=90, out=-150, looseness=1.25] (20) to (22);
		\draw [style=wire, in=90, out=-30, looseness=1.25] (20) to (21);
		\draw [style=wire, in=135, out=-90] (27) to (23);
		\draw [style=wire, in=-90, out=45, looseness=1.25] (23) to (26);
		\draw [style=wire] (23) to (20);
		\draw [style=wire] (22) to (69);
		\draw [style=wire] (21) to (70);
		\draw [style=wire] (67) to (27);
		\draw [style=wire] (68) to (26);
		\draw [style=wire] (77) to (79);
		\draw [style=wire] (78) to (80);
	\end{pgfonlayer}
\end{tikzpicture}
   \end{array}
\end{align*}
\item\label{lem:cokleisli-linearcontext.ii} A coKleisli map $\llbracket f \rrbracket: \oc(X \times A) \to B$ is linear in context $X$ if and only if the following diagram commutes: 
 \begin{align*}
   \begin{array}[c]{c}
  \xymatrixcolsep{3.5pc}\xymatrix{ \oc (X \times A)  \ar[d]_-{\mathsf{d}^\circ_{X \times A}} \ar[rrr]^-{\llbracket f \rrbracket} & && B \\
  \oc(X \times A) \!\otimes\! (X \times A) \ar[r]_-{\oc(\pi_0) \otimes \pi_1} & \oc X \otimes A \ar[r]_-{\oc(\iota_0) \otimes \iota_1} & \oc(X \times A) \!\otimes\!  (X \times A) \ar[r]_-{\mathsf{d}_{X \times A}} & \oc(X \times A) \ar[u]_-{\llbracket f \rrbracket} }   
    \end{array}
    \end{align*}
    \begin{align*} 
    \begin{array}[c]{c}
\begin{tikzpicture}
	\begin{pgfonlayer}{nodelayer}
		\node [style=object] (15) at (-1.75, 5.5) {$\oc(X \times A)$};
		\node [style=differential] (16) at (-1.75, 4.75) {{\bf =\!=\!=\!=}};
		\node [style=component] (17) at (-1, 3.75) {$\pi_1$};
		\node [style=function2] (18) at (-2.5, 3.75) {$\pi_0$};
		\node [style=differential] (19) at (-1.75, 1.75) {{\bf =\!=\!=\!=}};
		\node [style=object] (20) at (-1.75, 0) {$B$};
		\node [style=component] (21) at (-1.75, 1) {$f$};
		\node [style=component] (22) at (-1, 2.75) {$\iota_1$};
		\node [style=function2] (23) at (-2.5, 2.75) {$\iota_0$};
		\node [style=object] (32) at (-4, 4.25) {$\oc (X \times A)$};
		\node [style=object] (33) at (-4, 2.25) {$B$};
		\node [style=component] (34) at (-4, 3.25) {$f$};
		\node [style=object] (35) at (-3.25, 3.25) {$=$};
	\end{pgfonlayer}
	\begin{pgfonlayer}{edgelayer}
		\draw [style=wire, in=90, out=-150, looseness=1.25] (16) to (18);
		\draw [style=wire, in=90, out=-30, looseness=1.25] (16) to (17);
		\draw [style=wire] (15) to (16);
		\draw [style=wire] (19) to (21);
		\draw [style=wire] (21) to (20);
		\draw [style=wire, in=150, out=-90] (23) to (19);
		\draw [style=wire, in=-90, out=30] (19) to (22);
		\draw [style=wire] (18) to (23);
		\draw [style=wire] (17) to (22);
		\draw [style=wire] (32) to (34);
		\draw [style=wire] (34) to (33);
	\end{pgfonlayer}
\end{tikzpicture}
   \end{array} 
\end{align*}
\item\label{lem:cokleisli-linearcontext.iii} For every map $g: \oc X \otimes A \to B$ in $\mathbb{X}$, the composite: 
\begin{align*}
   \begin{array}[c]{c}
\xymatrixcolsep{3pc}\xymatrix{\oc(X \times A) \ar[r]^-{\mathsf{d}^\circ_{X \times A}} & \oc (X \times A) \times (X \times A) \ar[r]^-{\oc(\pi_0) \times \pi_1} & \oc X \otimes A \ar[r]^-{g} & B } 
   \end{array} &&    \begin{array}[c]{c}
\begin{tikzpicture}
	\begin{pgfonlayer}{nodelayer}
		\node [style=component] (0) at (7, 0.75) {$g$};
		\node [style=object] (2) at (7, 0) {$B$};
		\node [style=object] (3) at (7, 3.5) {$\oc (X \times A)$};
		\node [style=differential] (4) at (7, 2.75) {{\bf =\!=\!=\!=}};
		\node [style=component] (5) at (7.75, 1.75) {$\pi_1$};
		\node [style=function2] (6) at (6.25, 1.75) {$\pi_0$};
	\end{pgfonlayer}
	\begin{pgfonlayer}{edgelayer}
		\draw [style=wire] (0) to (2);
		\draw [style=wire] (3) to (4);
		\draw [style=wire, in=90, out=-150, looseness=1.25] (4) to (6);
		\draw [style=wire, in=135, out=-90] (6) to (0);
		\draw [style=wire, in=-90, out=45, looseness=1.25] (0) to (5);
		\draw [style=wire, in=90, out=-30, looseness=1.25] (4) to (5);
	\end{pgfonlayer}
\end{tikzpicture}
   \end{array}
\end{align*}
is linear in context $X$ in $\mathbb{X}_\oc$. 
\end{enumerate}
\end{lemma}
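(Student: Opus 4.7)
The plan is to prove Part (i) by direct string diagram calculation, and then to obtain Parts (ii) and (iii) as formal consequences of Part (i).

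For Part (i), I would expand $\mathsf{d}^\circ_{X\times A} = \Delta_{X\times A};(1 \otimes \varepsilon_{X\times A})$ and apply the Leibniz rule \textbf{[d.2]} to rewrite the composite $\mathsf{d}_{X\times A};\Delta_{X\times A}$ as a sum of two terms. The first summand, after contracting with $1 \otimes \varepsilon_{X\times A}$, reduces via the linear rule \textbf{[d.3]} (so that $\mathsf{d};\varepsilon = e \otimes 1$) and the comonoid counit axiom (so that $\Delta;(1 \otimes e) = 1$) to the identity on $\oc(X\times A) \otimes (X\times A)$; pre- and post-composing by $\oc(\iota_0)\otimes \iota_1$ and $\oc(\pi_0) \otimes \pi_1$ then collapses to $(\iota_0;\pi_0) \otimes (\iota_1;\pi_1) = 1_{\oc X \otimes A}$. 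For the second summand, naturality of $\Delta$ lets me move $\oc(\iota_0)$ past the diagonal, and the symmetry in \textbf{[d.2]} routes one diagonal copy under the $\varepsilon$; composing further with $\pi_1$ produces the factor $\oc(\iota_0);\varepsilon_{X\times A};\pi_1 = \varepsilon_X;\iota_0;\pi_1 = \varepsilon_X;0 = 0$. Since any tensor with $0$ is $0$ in an additive symmetric monoidal category, this whole summand vanishes, leaving $1_{\oc X \otimes A}$.

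For Part (ii), I would use the second characterization of linearity in context from Definition \ref{defn:linear_fibration}: $\llbracket f\rrbracket$ is linear in context $X$ in $\X_{\oc}$ if and only if $(\iota_0 \times \iota_1);\mathsf{D}[f] = f$ in $\X_{\oc}$. Translating back to $\X$, since $\iota_0 \times \iota_1$ comes from the base category we have $\llbracket \iota_0 \times \iota_1 \rrbracket = \varepsilon_{X\times A};(\iota_0 \times \iota_1)$, and the coKleisli composition formula reduces to $\llbracket (\iota_0 \times \iota_1);\mathsf{D}[f]\rrbracket = \oc(\iota_0 \times \iota_1);\llbracket \mathsf{D}[f]\rrbracket$. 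Applying the $\mathsf{d}^\circ$-form of $\llbracket \mathsf{D}[f]\rrbracket$ given after Proposition \ref{coKleisliCDC}, naturality of $\mathsf{d}^\circ$, and the elementary identities $(\iota_0 \times \iota_1);\pi_i = \pi_i;\iota_i$, this rewrites as $\mathsf{d}^\circ_{X\times A};(\oc(\pi_0) \otimes \pi_1);(\oc(\iota_0) \otimes \iota_1);\mathsf{d}_{X\times A};\llbracket f \rrbracket$, which is precisely the right-hand side of the diagram in Part (ii); the stated equivalence follows. Then Part (iii) is immediate: substituting the explicit form $\llbracket f \rrbracket = \mathsf{d}^\circ_{X\times A};(\oc(\pi_0) \otimes \pi_1);g$ into that right-hand side, the inner block $(\oc(\iota_0) \otimes \iota_1);\mathsf{d}_{X\times A};\mathsf{d}^\circ_{X\times A};(\oc(\pi_0) \otimes \pi_1)$ is the identity by Part (i), so the right-hand side collapses back to $\llbracket f \rrbracket$.

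The main obstacle is Part (i). The Leibniz rule produces an unwanted summand, and the key observation is that the symmetry in \textbf{[d.2]} positions one factor underneath $\varepsilon_{X\times A}$ exactly so that the post-composition by $\pi_1$ activates the vanishing $\iota_0;\pi_1 = 0$. Once this cancellation is in hand, Parts (ii) and (iii) are routine bookkeeping translations through the coKleisli category.
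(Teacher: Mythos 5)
Your proposal is correct and follows essentially the same route as the paper: part (i) via a Leibniz-rule expansion of $\mathsf{d}_{X\times A};\mathsf{d}^\circ_{X\times A}$ that leaves an identity summand plus a summand killed by a mixed injection--projection composite ($\iota_i;\pi_j=0$ for $i\neq j$), and parts (ii) and (iii) by translating the characterization $f=(\iota_0\times\iota_1);\mathsf{D}[f]$ of linearity in context through the coKleisli category exactly as the paper does. The only cosmetic difference is that the paper invokes the $\mathsf{d}$--$\mathsf{d}^\circ$ commutation relation of \cite[Proposition 4.1]{cockett_lemay_2018} as a black box, where you re-derive it inline from \textbf{[d.2]}, \textbf{[d.3]}, and the comonoid counit axiom.
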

\begin{proof} For $(i)$, recall the following useful compatibility relation between the deriving transformation and coderiving transformation \cite[Proposition 4.1]{cockett_lemay_2018}: 
\begin{align*}
\begin{tikzpicture}
	\begin{pgfonlayer}{nodelayer}
		\node [style=differential] (20) at (0.75, 4.5) {{\bf =\!=\!=\!=}};
		\node [style=object] (21) at (1.5, 3.5) {$A$};
		\node [style=object] (22) at (0, 3.5) {$\oc A$};
		\node [style=differential] (23) at (0.75, 5.25) {{\bf =\!=\!=\!=}};
		\node [style=object] (26) at (1.5, 6.25) {$A$};
		\node [style=object] (27) at (0, 6.25) {$\oc A$};
		\node [style=object] (43) at (2.25, 4.75) {$=$};
		\node [style=differential] (44) at (3.25, 4.25) {{\bf =\!=\!=\!=}};
		\node [style=object] (45) at (4.75, 3.5) {$A$};
		\node [style=object] (46) at (3.25, 3.5) {$\oc A$};
		\node [style=differential] (47) at (3.25, 5.5) {{\bf =\!=\!=\!=}};
		\node [style=object] (48) at (4.75, 6.25) {$A$};
		\node [style=object] (49) at (3.25, 6.25) {$\oc A$};
		\node [style=object] (50) at (5.5, 4.75) {$+$};
		\node [style=object] (52) at (8, 3.5) {$A$};
		\node [style=object] (53) at (6.5, 3.5) {$\oc A$};
		\node [style=object] (55) at (8, 6.25) {$A$};
		\node [style=object] (56) at (6.5, 6.25) {$\oc A$};
	\end{pgfonlayer}
	\begin{pgfonlayer}{edgelayer}
		\draw [style=wire, in=90, out=-150, looseness=1.25] (20) to (22);
		\draw [style=wire, in=90, out=-30, looseness=1.25] (20) to (21);
		\draw [style=wire, in=135, out=-90] (27) to (23);
		\draw [style=wire, in=-90, out=45, looseness=1.25] (23) to (26);
		\draw [style=wire] (23) to (20);
		\draw [style=wire] (44) to (46);
		\draw [style=wire] (49) to (47);
		\draw [style=wire] (56) to (53);
		\draw [style=wire] (55) to (52);
		\draw [style=wire, in=45, out=-90] (48) to (44);
		\draw [style=wire, in=90, out=-60, looseness=1.50] (47) to (45);
		\draw [style=wire, in=135, out=-135, looseness=1.25] (47) to (44);
	\end{pgfonlayer}
\end{tikzpicture}
\end{align*}
Then by using the above identity and the biproduct coherences, we compute: 
\begin{align*}
\begin{tikzpicture}
	\begin{pgfonlayer}{nodelayer}
		\node [style=differential] (0) at (1.25, 9.5) {{\bf =\!=\!=\!=}};
		\node [style=component] (1) at (2, 8.5) {$\pi_1$};
		\node [style=function2] (2) at (0.5, 8.5) {$\pi_0$};
		\node [style=differential] (3) at (1.25, 10.25) {{\bf =\!=\!=\!=}};
		\node [style=component] (4) at (2, 11.25) {$\iota_1$};
		\node [style=function2] (5) at (0.5, 11.25) {$\iota_0$};
		\node [style=object] (6) at (2.5, 10) {$=$};
		\node [style=object] (7) at (0.5, 12) {$\oc X$};
		\node [style=object] (8) at (2, 12) {$A$};
		\node [style=object] (9) at (0.5, 7.75) {$\oc X$};
		\node [style=object] (10) at (2, 7.75) {$A$};
		\node [style=differential] (11) at (3.5, 9.25) {{\bf =\!=\!=\!=}};
		\node [style=component] (12) at (4.75, 8.5) {$\pi_1$};
		\node [style=function2] (13) at (3.5, 8.5) {$\pi_0$};
		\node [style=differential] (14) at (3.5, 10.5) {{\bf =\!=\!=\!=}};
		\node [style=component] (15) at (4.75, 11.25) {$\iota_1$};
		\node [style=function2] (16) at (3.5, 11.25) {$\iota_0$};
		\node [style=object] (17) at (3.5, 12) {$\oc X$};
		\node [style=object] (18) at (4.75, 12) {$A$};
		\node [style=object] (19) at (3.5, 7.75) {$\oc X$};
		\node [style=object] (20) at (4.75, 7.75) {$A$};
		\node [style=object] (21) at (5.25, 10) {$+$};
		\node [style=component] (22) at (7, 9) {$\pi_1$};
		\node [style=function2] (23) at (6, 9) {$\pi_0$};
		\node [style=component] (24) at (7, 10.75) {$\iota_1$};
		\node [style=function2] (25) at (6, 10.75) {$\iota_0$};
		\node [style=object] (26) at (6, 12) {$\oc X$};
		\node [style=object] (27) at (7, 12) {$A$};
		\node [style=object] (28) at (6, 7.75) {$\oc X$};
		\node [style=object] (29) at (7, 7.75) {$A$};
		\node [style=object] (30) at (7.75, 10) {$=$};
		\node [style=differential] (31) at (9.25, 8.5) {{\bf =\!=\!=\!=}};
		\node [style=component] (32) at (10.5, 8.5) {$\pi_1$};
		\node [style=function2] (33) at (8.75, 9.5) {$\pi_0$};
		\node [style=differential] (34) at (9.25, 10.5) {{\bf =\!=\!=\!=}};
		\node [style=component] (35) at (10.5, 11.25) {$\iota_1$};
		\node [style=function2] (36) at (9.25, 11.25) {$\iota_0$};
		\node [style=object] (37) at (9.25, 12) {$\oc X$};
		\node [style=object] (38) at (10.5, 12) {$A$};
		\node [style=object] (39) at (9.25, 7.75) {$\oc X$};
		\node [style=object] (40) at (10.5, 7.75) {$A$};
		\node [style=object] (41) at (11, 10) {$+$};
		\node [style=object] (42) at (11.75, 12) {$\oc X$};
		\node [style=object] (43) at (12.75, 12) {$A$};
		\node [style=object] (44) at (11.75, 7.75) {$\oc X$};
		\node [style=object] (45) at (12.75, 7.75) {$A$};
		\node [style=component] (46) at (10.5, 10.25) {$\pi_0$};
		\node [style=differential] (47) at (1.75, 3.25) {{\bf =\!=\!=\!=}};
		\node [style=component] (48) at (3, 3.25) {$\pi_1$};
		\node [style=function2] (49) at (1.25, 4.25) {$\pi_0$};
		\node [style=differential] (50) at (1.75, 5.25) {{\bf =\!=\!=\!=}};
		\node [style=function2] (51) at (1.75, 6) {$\iota_0$};
		\node [style=object] (52) at (1.75, 6.75) {$\oc X$};
		\node [style=object] (53) at (3, 6.75) {$A$};
		\node [style=object] (54) at (1.75, 2.5) {$\oc X$};
		\node [style=object] (55) at (3, 2.5) {$A$};
		\node [style=object] (56) at (3.5, 4.75) {$+$};
		\node [style=object] (57) at (4.25, 6.75) {$\oc X$};
		\node [style=object] (58) at (5.25, 6.75) {$A$};
		\node [style=object] (59) at (4.25, 2.5) {$\oc X$};
		\node [style=object] (60) at (5.25, 2.5) {$A$};
		\node [style=component] (61) at (3, 5) {$0$};
		\node [style=object] (62) at (0.25, 4.75) {$=$};
		\node [style=object] (63) at (5.75, 4.75) {$=$};
		\node [style=object] (64) at (7.75, 6.75) {$\oc X$};
		\node [style=object] (65) at (8.75, 6.75) {$A$};
		\node [style=object] (66) at (7.75, 2.5) {$\oc X$};
		\node [style=object] (67) at (8.75, 2.5) {$A$};
		\node [style=object] (68) at (9.25, 4.75) {$=$};
		\node [style=object] (69) at (10, 6.75) {$\oc X$};
		\node [style=object] (70) at (11, 6.75) {$A$};
		\node [style=object] (71) at (10, 2.5) {$\oc X$};
		\node [style=object] (72) at (11, 2.5) {$A$};
		\node [style=object] (73) at (6.5, 4.75) {$0$};
		\node [style=object] (74) at (7.25, 4.75) {$+$};
	\end{pgfonlayer}
	\begin{pgfonlayer}{edgelayer}
		\draw [style=wire, in=90, out=-150, looseness=1.25] (0) to (2);
		\draw [style=wire, in=90, out=-30, looseness=1.25] (0) to (1);
		\draw [style=wire, in=135, out=-90] (5) to (3);
		\draw [style=wire, in=-90, out=45, looseness=1.25] (3) to (4);
		\draw [style=wire] (3) to (0);
		\draw [style=wire] (2) to (9);
		\draw [style=wire] (1) to (10);
		\draw [style=wire] (7) to (5);
		\draw [style=wire] (8) to (4);
		\draw [style=wire] (11) to (13);
		\draw [style=wire] (16) to (14);
		\draw [style=wire, in=135, out=-135] (14) to (11);
		\draw [style=wire] (13) to (19);
		\draw [style=wire] (12) to (20);
		\draw [style=wire] (17) to (16);
		\draw [style=wire] (18) to (15);
		\draw [style=wire, in=-90, out=30] (11) to (15);
		\draw [style=wire, in=90, out=-30] (14) to (12);
		\draw [style=wire] (23) to (28);
		\draw [style=wire] (22) to (29);
		\draw [style=wire] (26) to (25);
		\draw [style=wire] (27) to (24);
		\draw [style=wire] (25) to (23);
		\draw [style=wire] (24) to (22);
		\draw [style=wire] (36) to (34);
		\draw [style=wire] (32) to (40);
		\draw [style=wire] (37) to (36);
		\draw [style=wire] (38) to (35);
		\draw [style=wire, in=90, out=-30] (34) to (32);
		\draw [style=wire] (42) to (44);
		\draw [style=wire] (43) to (45);
		\draw [style=wire, in=90, out=-135] (34) to (33);
		\draw [style=wire, in=135, out=-90] (33) to (31);
		\draw [style=wire] (31) to (39);
		\draw [style=wire] (35) to (46);
		\draw [style=wire, in=45, out=-90] (46) to (31);
		\draw [style=wire] (51) to (50);
		\draw [style=wire] (48) to (55);
		\draw [style=wire] (52) to (51);
		\draw [style=wire, in=90, out=-30] (50) to (48);
		\draw [style=wire] (57) to (59);
		\draw [style=wire] (58) to (60);
		\draw [style=wire, in=90, out=-135] (50) to (49);
		\draw [style=wire, in=135, out=-90] (49) to (47);
		\draw [style=wire] (47) to (54);
		\draw [style=wire, in=45, out=-90] (61) to (47);
		\draw [style=wire] (53) to (61);
		\draw [style=wire] (64) to (66);
		\draw [style=wire] (65) to (67);
		\draw [style=wire] (69) to (71);
		\draw [style=wire] (70) to (72);
	\end{pgfonlayer}
\end{tikzpicture}
\end{align*}
 For $(ii)$, first note that for ${\llbracket k \rrbracket: \oc\left( (X \times A) \times (X \times A) \right) \to B}$, precomposing by ${\llbracket \iota_0 \times \iota_1 \rrbracket = \varepsilon_{X \times A}; (\iota_0 \times \iota_1)}$ is equal to $\llbracket (\iota_0 \times \iota_1); k \rrbracket = \oc (\iota_0 \times \iota_1); \llbracket k \rrbracket$. Therefore, for any coKleisli map $\llbracket f \rrbracket: \oc(X \times A) \to B$, we compute: 
\begin{align*}
\begin{array}[c]{c}
\llbracket (\iota_0 \times \iota_1); \mathsf{D}[f] \rrbracket 
   \end{array}=
   \begin{array}[c]{c}
\begin{tikzpicture}
	\begin{pgfonlayer}{nodelayer}
		\node [style=object] (41) at (4.75, 6.75) {$=$};
		\node [style=differential] (43) at (3.25, 5.75) {{\bf =\!=\!=\!=}};
		\node [style=object] (44) at (3.25, 4.25) {$B$};
		\node [style=component] (45) at (3.25, 5) {$f$};
		\node [style=object] (46) at (3.25, 10) {$\oc(X \times A)$};
		\node [style=differential] (47) at (3.25, 7.75) {{\bf =\!=\!=\!=}};
		\node [style=component] (48) at (4, 6.75) {$\pi_1$};
		\node [style=function2] (49) at (2.5, 6.75) {$\pi_0$};
		\node [style=function3] (50) at (3.25, 8.75) {$\iota_0 \times \iota_1$};
		\node [style=object] (51) at (6.75, 9) {$\oc(X \times A)$};
		\node [style=differential] (52) at (6.75, 8.25) {{\bf =\!=\!=\!=}};
		\node [style=component] (53) at (7.5, 7) {$\iota_0 \times \iota_1$};
		\node [style=function3] (54) at (6, 7) {$\iota_0 \times \iota_1$};
		\node [style=differential] (55) at (6.75, 4.75) {{\bf =\!=\!=\!=}};
		\node [style=object] (56) at (6.75, 3.25) {$B$};
		\node [style=component] (57) at (6.75, 4) {$f$};
		\node [style=component] (58) at (7.5, 5.75) {$\pi_1$};
		\node [style=function2] (59) at (6, 5.75) {$\pi_0$};
		\node [style=object] (60) at (8.75, 6.75) {$=$};
		\node [style=object] (61) at (10.5, 9) {$\oc(X \times A)$};
		\node [style=differential] (62) at (10.5, 8.25) {{\bf =\!=\!=\!=}};
		\node [style=component] (63) at (11.25, 7.25) {$\pi_1$};
		\node [style=function2] (64) at (9.75, 7.25) {$\pi_0$};
		\node [style=differential] (65) at (10.5, 5.25) {{\bf =\!=\!=\!=}};
		\node [style=object] (66) at (10.5, 3.75) {$B$};
		\node [style=component] (67) at (10.5, 4.5) {$f$};
		\node [style=component] (68) at (11.25, 6.25) {$\iota_1$};
		\node [style=function2] (69) at (9.75, 6.25) {$\iota_0$};
	\end{pgfonlayer}
	\begin{pgfonlayer}{edgelayer}
		\draw [style=wire] (43) to (45);
		\draw [style=wire] (45) to (44);
		\draw [style=wire, in=90, out=-150, looseness=1.25] (47) to (49);
		\draw [style=wire, in=135, out=-90] (49) to (43);
		\draw [style=wire, in=-90, out=45, looseness=1.25] (43) to (48);
		\draw [style=wire, in=90, out=-30, looseness=1.25] (47) to (48);
		\draw [style=wire] (46) to (50);
		\draw [style=wire] (50) to (47);
		\draw [style=wire, in=90, out=-150, looseness=1.25] (52) to (54);
		\draw [style=wire, in=90, out=-30, looseness=1.25] (52) to (53);
		\draw [style=wire] (51) to (52);
		\draw [style=wire] (55) to (57);
		\draw [style=wire] (57) to (56);
		\draw [style=wire, in=135, out=-90] (59) to (55);
		\draw [style=wire, in=-90, out=45, looseness=1.25] (55) to (58);
		\draw [style=wire] (54) to (59);
		\draw [style=wire] (53) to (58);
		\draw [style=wire, in=90, out=-150, looseness=1.25] (62) to (64);
		\draw [style=wire, in=90, out=-30, looseness=1.25] (62) to (63);
		\draw [style=wire] (61) to (62);
		\draw [style=wire] (65) to (67);
		\draw [style=wire] (67) to (66);
		\draw [style=wire, in=135, out=-90] (69) to (65);
		\draw [style=wire, in=-90, out=45, looseness=1.25] (65) to (68);
		\draw [style=wire] (64) to (69);
		\draw [style=wire] (63) to (68);
	\end{pgfonlayer}
\end{tikzpicture}
   \end{array}
\end{align*}
So 
	\[ \llbracket (\iota_0 \times \iota_1); \mathsf{D}[f] \rrbracket  = \mathsf{d}^\circ_{X \times A}; (\oc( \pi_0) \otimes \pi_1); (\oc( \iota_0 \otimes \iota_1); \mathsf{d}_{X \times A}; \llbracket f \rrbracket. \]
Therefore, by definition, $\llbracket f \rrbracket: \oc(X \times A) \to B$ is linear in context $X$ if and only if 
	\[ \llbracket f \rrbracket  = \llbracket (\iota_0 \times \iota_1); \mathsf{D}[f] \rrbracket  = \mathsf{d}^\circ_{X \times A}; (\oc( \pi_0) \otimes \pi_1); (\oc( \iota_0 \otimes \iota_1); \mathsf{d}_{X \times A}; \llbracket f \rrbracket. \]
	
For $(iii)$, it is automatic by $(i)$ that we have that: 
\[ \begin{tikzpicture}
	\begin{pgfonlayer}{nodelayer}
		\node [style=component] (0) at (7, 0.75) {$g$};
		\node [style=object] (1) at (7, 0) {$B$};
		\node [style=differential] (3) at (7, 2.75) {{\bf =\!=\!=\!=}};
		\node [style=component] (4) at (7.75, 1.75) {$\pi_1$};
		\node [style=function2] (5) at (6.25, 1.75) {$\pi_0$};
		\node [style=object] (6) at (7, 7.5) {$\oc(X \times A)$};
		\node [style=differential] (7) at (7, 6.75) {{\bf =\!=\!=\!=}};
		\node [style=component] (8) at (7.75, 5.75) {$\pi_1$};
		\node [style=function2] (9) at (6.25, 5.75) {$\pi_0$};
		\node [style=differential] (10) at (7, 3.75) {{\bf =\!=\!=\!=}};
		\node [style=component] (13) at (7.75, 4.75) {$\iota_1$};
		\node [style=function2] (14) at (6.25, 4.75) {$\iota_0$};
		\node [style=object] (15) at (8.25, 3.5) {$=$};
		\node [style=component] (16) at (10, 2.5) {$g$};
		\node [style=object] (17) at (10, 1.75) {$B$};
		\node [style=differential] (18) at (10, 4.5) {{\bf =\!=\!=\!=}};
		\node [style=component] (19) at (10.75, 3.5) {$\pi_1$};
		\node [style=function2] (20) at (9.25, 3.5) {$\pi_0$};
		\node [style=object] (21) at (10, 5.25) {$\oc(X \times A)$};
	\end{pgfonlayer}
	\begin{pgfonlayer}{edgelayer}
		\draw [style=wire] (0) to (1);
		\draw [style=wire, in=90, out=-150, looseness=1.25] (3) to (5);
		\draw [style=wire, in=135, out=-90] (5) to (0);
		\draw [style=wire, in=-90, out=45, looseness=1.25] (0) to (4);
		\draw [style=wire, in=90, out=-30, looseness=1.25] (3) to (4);
		\draw [style=wire, in=90, out=-150, looseness=1.25] (7) to (9);
		\draw [style=wire, in=90, out=-30, looseness=1.25] (7) to (8);
		\draw [style=wire] (6) to (7);
		\draw [style=wire, in=150, out=-90] (14) to (10);
		\draw [style=wire, in=-90, out=30] (10) to (13);
		\draw [style=wire] (9) to (14);
		\draw [style=wire] (8) to (13);
		\draw [style=wire] (10) to (3);
		\draw [style=wire] (16) to (17);
		\draw [style=wire, in=90, out=-150, looseness=1.25] (18) to (20);
		\draw [style=wire, in=135, out=-90] (20) to (16);
		\draw [style=wire, in=-90, out=45, looseness=1.25] (16) to (19);
		\draw [style=wire, in=90, out=-30, looseness=1.25] (18) to (19);
		\draw [style=wire] (21) to (18);
	\end{pgfonlayer}
\end{tikzpicture}\] 
Then by $(ii)$, it follows that $\mathsf{d}^\circ_{X \times A}; (\oc(\pi_0) \otimes \pi_1); g$ is linear in context $X$. 
\end{proof} 

Below we will show that, in fact, a coKleisli map $\llbracket f \rrbracket: \oc(X \times A) \to B$ is linear in context $X$ if and only if it is of the form $\llbracket f \rrbracket = \mathsf{d}^\circ_{X \times A}; (\oc(\pi_0) \otimes \pi_1); g$ for some (necessarily unique) map $g: \oc X \otimes A \to B$. If we have the Seely isomorphisms, we may also re-express linearity in context using the codereliction. 

\begin{corollary} Let $\mathbb{X}$ be a differential storage category with coalgebra modality $(\oc, \delta, \varepsilon, \Delta, e)$ with Seely isomorphisms, deriving transformation $\mathsf{d}: \oc A \otimes A \to \oc A$ (or equivalently codereliction $\eta_A: A \to \oc A$), and finite (bi)products. Then a coKleisli map $\llbracket f \rrbracket: \oc(X \times A) \to B$ is linear in context $X$ if and only if the following diagram commutes: 
 \begin{align*}
   \begin{array}[c]{c}
     \xymatrixcolsep{3pc}\xymatrix{ \oc (X \times A)  \ar[d]_-{\chi_{X,A}} \ar[rrr]^-{\llbracket f \rrbracket} & && B \\
  \oc X \otimes \oc A \ar[r]_-{1_{\oc X} \otimes \varepsilon_A} & \oc X \otimes A \ar[r]_-{1_{\oc X} \otimes \eta_A} & \oc X \otimes \oc A \ar[r]_-{\chi^{-1}_{X, A}} & \oc(X \times A) \ar[u]_-{\llbracket f \rrbracket} }   
    \end{array}
\end{align*}
\begin{align*}
    \begin{array}[c]{c}
\begin{tikzpicture}
	\begin{pgfonlayer}{nodelayer}
		\node [style=component] (0) at (7, 1) {$\chi$};
		\node [style=object] (1) at (7, -0.75) {$B$};
		\node [style=component] (2) at (7, 0) {$f$};
		\node [style=object] (3) at (7, 4.25) {$\oc(X \times A)$};
		\node [style=component] (4) at (7, 3.25) {$\chi$};
		\node [style=component] (5) at (7.75, 1.75) {$\eta$};
		\node [style=object] (6) at (4.75, 2.75) {$\oc (X \times A)$};
		\node [style=object] (7) at (4.75, 0.75) {$B$};
		\node [style=component] (8) at (4.75, 1.75) {$f$};
		\node [style=object] (9) at (5.5, 1.75) {$=$};
		\node [style=component] (10) at (7.75, 2.5) {$\varepsilon$};
	\end{pgfonlayer}
	\begin{pgfonlayer}{edgelayer}
		\draw [style=wire] (0) to (2);
		\draw [style=wire] (2) to (1);
		\draw [style=wire] (3) to (4);
		\draw [style=wire, in=-90, out=15, looseness=1.50] (0) to (5);
		\draw [style=wire] (6) to (8);
		\draw [style=wire] (8) to (7);
		\draw [style=wire] (10) to (5);
		\draw [style=wire, in=90, out=-15] (4) to (10);
		\draw [style=wire, in=165, out=-150] (4) to (0);
	\end{pgfonlayer}
\end{tikzpicture}
   \end{array}
\end{align*}
\end{corollary}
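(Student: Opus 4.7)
The strategy is to reduce this characterization to Lemma~\ref{lem:cokleisli-linearcontext}(\ref{lem:cokleisli-linearcontext.ii}) by proving that the two endomorphisms of $\oc(X \times A)$ appearing on the left-hand side of the two diagrams are in fact equal. Concretely, I plan to establish the identity
\begin{equation*}
\mathsf{d}^\circ_{X \times A}; (\oc(\pi_0) \otimes \pi_1); (\oc(\iota_0) \otimes \iota_1); \mathsf{d}_{X \times A} \;=\; \chi_{X,A}; (1_{\oc X} \otimes \varepsilon_A); (1_{\oc X} \otimes \eta_A); \chi^{-1}_{X,A}
\end{equation*}
as maps $\oc(X \times A) \to \oc(X \times A)$. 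Once this is shown, postcomposing both sides with $\llbracket f \rrbracket$ makes the two fixed-point conditions on $\llbracket f \rrbracket$ coincide, and Lemma~\ref{lem:cokleisli-linearcontext}(\ref{lem:cokleisli-linearcontext.ii}) delivers the result.

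To prove the displayed equality I would split it into two pieces. For the left half, I would unfold $\mathsf{d}^\circ_{X \times A} = \Delta_{X \times A}; (1 \otimes \varepsilon_{X \times A})$ (Definition~\ref{dcircdef}) and then apply naturality of $\varepsilon$ to rewrite $\varepsilon_{X \times A}; \pi_1 = \oc(\pi_1); \varepsilon_A$. Using bifunctoriality of $\otimes$ this turns $\mathsf{d}^\circ_{X \times A}; (\oc(\pi_0) \otimes \pi_1)$ into $\Delta_{X \times A}; (\oc(\pi_0) \otimes \oc(\pi_1)); (1 \otimes \varepsilon_A)$, which by the definition of the Seely map (Definition~\ref{Seelydef}) is exactly $\chi_{X,A}; (1 \otimes \varepsilon_A)$.

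For the right half, since $\mathbb{X}$ is a differential storage category the deriving transformation may be rewritten using the codereliction as $\mathsf{d}_{X \times A} = (1 \otimes \eta_{X \times A}); \nabla_{X \times A}$. Using bifunctoriality together with naturality of $\eta$, which gives $\iota_1; \eta_{X \times A} = \eta_A; \oc(\iota_1)$, I can massage $(\oc(\iota_0) \otimes \iota_1); \mathsf{d}_{X \times A}$ into $(1 \otimes \eta_A); (\oc(\iota_0) \otimes \oc(\iota_1)); \nabla_{X \times A}$, and the rightmost two factors are precisely the Seely expression for $\chi^{-1}_{X,A}$. Concatenating the two halves yields the required identity.

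The argument is essentially a calculation, so I do not expect a real obstacle; the only thing to watch out for is keeping the bifunctoriality rearrangements straight, in particular making sure that the $\varepsilon_A$ and $\eta_A$ end up applied to the correct tensor factor and that the naturality squares for $\varepsilon$ and $\eta$ are invoked in the right direction. Once the endomorphism equality is in place, the ``if and only if'' statement is immediate from Lemma~\ref{lem:cokleisli-linearcontext}(\ref{lem:cokleisli-linearcontext.ii}).
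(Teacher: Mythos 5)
Your proposal is correct and follows essentially the same route as the paper: the paper's proof likewise rewrites $\mathsf{d}$ as $(1 \otimes \eta);\nabla$ and $\mathsf{d}^\circ$ as $\Delta;(1 \otimes \varepsilon)$, pushes the projections and injections through by naturality, recognizes the Seely maps $\chi$ and $\chi^{-1}$, and then invokes Lemma \ref{lem:cokleisli-linearcontext}.(\ref{lem:cokleisli-linearcontext.ii}). Both halves of your calculation check out, so the only difference is presentational (algebraic rearrangement versus the paper's string-diagram computation).
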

\begin{proof} By expressing the deriving transformation in terms of the multiplication and codereliction, for any coKleisli map $\llbracket f \rrbracket: \oc(X \times A) \to B$, we compute: 
\begin{align*}
\begin{tikzpicture}
	\begin{pgfonlayer}{nodelayer}
		\node [style=object] (19) at (0.75, 9) {$\oc(X \times A)$};
		\node [style=differential] (20) at (0.75, 8.25) {{\bf =\!=\!=\!=}};
		\node [style=component] (21) at (1.5, 7.25) {$\pi_1$};
		\node [style=function2] (22) at (0, 7.25) {$\pi_0$};
		\node [style=differential] (23) at (0.75, 5.25) {{\bf =\!=\!=\!=}};
		\node [style=object] (24) at (0.75, 3.5) {$B$};
		\node [style=component] (25) at (0.75, 4.5) {$f$};
		\node [style=component] (26) at (1.5, 6.25) {$\iota_1$};
		\node [style=function2] (27) at (0, 6.25) {$\iota_0$};
		\node [style=object] (28) at (4, 10.25) {$\oc(X \times A)$};
		\node [style=object] (33) at (4, 2.25) {$B$};
		\node [style=component] (34) at (4, 3.25) {$f$};
		\node [style=component] (35) at (4.75, 6.25) {$\iota_1$};
		\node [style=function2] (36) at (3.25, 6) {$\iota_0$};
		\node [style=component] (37) at (4.75, 5.25) {$\eta$};
		\node [style=duplicate] (38) at (4, 4.25) {$\nabla$};
		\node [style=component] (39) at (4.75, 7.25) {$\pi_1$};
		\node [style=function2] (40) at (3.25, 7.75) {$\pi_0$};
		\node [style=component] (41) at (4.75, 8.25) {$\varepsilon$};
		\node [style=duplicate] (42) at (4, 9.25) {$\Delta$};
		\node [style=object] (43) at (2.25, 6.75) {$=$};
		\node [style=object] (44) at (7.25, 10.25) {$\oc(X \times A)$};
		\node [style=object] (45) at (7.25, 2.25) {$B$};
		\node [style=component] (46) at (7.25, 3.25) {$f$};
		\node [style=component] (47) at (8, 6.25) {$\eta$};
		\node [style=function2] (48) at (6.5, 5.25) {$\iota_0$};
		\node [style=function2] (49) at (8, 5.25) {$\iota_1$};
		\node [style=duplicate] (50) at (7.25, 4.25) {$\nabla$};
		\node [style=component] (51) at (8, 7.25) {$\varepsilon$};
		\node [style=function2] (52) at (6.5, 8.25) {$\pi_0$};
		\node [style=function2] (53) at (8, 8.25) {$\pi_1$};
		\node [style=duplicate] (54) at (7.25, 9.25) {$\Delta$};
		\node [style=object] (55) at (5.5, 6.75) {$=$};
		\node [style=component] (56) at (10.5, 6) {$\chi$};
		\node [style=object] (57) at (10.5, 4) {$B$};
		\node [style=component] (58) at (10.5, 5) {$f$};
		\node [style=object] (59) at (10.5, 9.25) {$\oc(X \times A)$};
		\node [style=component] (60) at (10.5, 8.25) {$\chi$};
		\node [style=component] (61) at (11.25, 6.75) {$\eta$};
		\node [style=object] (65) at (9, 6.75) {$=$};
		\node [style=component] (66) at (11.25, 7.5) {$\varepsilon$};
	\end{pgfonlayer}
	\begin{pgfonlayer}{edgelayer}
		\draw [style=wire, in=90, out=-150, looseness=1.25] (20) to (22);
		\draw [style=wire, in=90, out=-30, looseness=1.25] (20) to (21);
		\draw [style=wire] (19) to (20);
		\draw [style=wire] (23) to (25);
		\draw [style=wire] (25) to (24);
		\draw [style=wire, in=135, out=-90] (27) to (23);
		\draw [style=wire, in=-90, out=45, looseness=1.25] (23) to (26);
		\draw [style=wire] (22) to (27);
		\draw [style=wire] (21) to (26);
		\draw [style=wire] (34) to (33);
		\draw [style=wire, in=-90, out=0, looseness=1.25] (38) to (37);
		\draw [style=wire] (38) to (34);
		\draw [style=wire] (35) to (37);
		\draw [style=wire, in=-90, out=180] (38) to (36);
		\draw [style=wire, in=90, out=0, looseness=1.25] (42) to (41);
		\draw [style=wire] (39) to (41);
		\draw [style=wire, in=90, out=-180] (42) to (40);
		\draw [style=wire] (40) to (36);
		\draw [style=wire] (39) to (35);
		\draw [style=wire] (28) to (42);
		\draw [style=wire] (46) to (45);
		\draw [style=wire, in=-90, out=0, looseness=1.25] (50) to (49);
		\draw [style=wire] (50) to (46);
		\draw [style=wire] (47) to (49);
		\draw [style=wire, in=-90, out=180] (50) to (48);
		\draw [style=wire, in=90, out=0, looseness=1.25] (54) to (53);
		\draw [style=wire] (51) to (53);
		\draw [style=wire, in=90, out=-180] (54) to (52);
		\draw [style=wire] (52) to (48);
		\draw [style=wire] (51) to (47);
		\draw [style=wire] (44) to (54);
		\draw [style=wire] (56) to (58);
		\draw [style=wire] (58) to (57);
		\draw [style=wire] (59) to (60);
		\draw [style=wire, in=-90, out=15, looseness=1.50] (56) to (61);
		\draw [style=wire] (66) to (61);
		\draw [style=wire, in=90, out=-15] (60) to (66);
		\draw [style=wire, in=165, out=-150] (60) to (56);
	\end{pgfonlayer}
\end{tikzpicture}
\end{align*}
So we have that:
\[\mathsf{d}^\circ_{X \times A}; (\oc( \pi_0) \otimes \pi_1); (\oc( \iota_0 \otimes \iota_1); \mathsf{d}_{X \times A}; \llbracket f \rrbracket = \chi_{X,A}; (1_{\oc X} \otimes \varepsilon_A); (1_{\oc X} \otimes \eta_A); \chi^{-1}_{X,A}; \llbracket f \rrbracket \]
Then by Lemma \ref{lem:cokleisli-linearcontext}.(\ref{lem:cokleisli-linearcontext.ii}),  $\llbracket f \rrbracket: \oc (X \times A) \to B$ is linear if and only if the following equality holds: 
\[ \llbracket f \rrbracket =  \mathsf{d}^\circ_{X \times A}; (\oc( \pi_0) \otimes \pi_1); (\oc( \iota_0 \otimes \iota_1); \mathsf{d}_{X \times A}; \llbracket f \rrbracket = \chi_{X,A}; (1_{\oc X} \otimes \varepsilon_A); (1_{\oc X} \otimes \eta_A); \chi^{-1}_{X,A}; \llbracket f \rrbracket\]
So the desired equivalence holds. 
\end{proof} 

We now prove the main result of this section, that we have an isomorphism of fibrations. It is important to note that the following result does not require the Seely isomorphisms. 

\begin{therm}\label{thm:fibration_equivalence}  Let $\mathbb{X}$ be a monoidal differential category with coalgebra modality $(\oc, \delta, \varepsilon, \Delta, e)$ and deriving transformation $\mathsf{d}: \oc A \otimes A \to \oc A$, and finite (bi)products. Then the fibrations ${\mathsf{p}_\oc: \mathcal{L}_\oc[\mathbb{X}] \to \mathbb{X}_\oc}$ and ${\mathsf{p}: \mathcal{L}[\mathbb{X}_\oc] \to \mathbb{X}_\oc}$ are isomorphic via the functors $\mathsf{E}: \mathcal{L}_\oc[\mathbb{X}] \to \mathcal{L}[\mathbb{X}_\oc]$ and $\mathsf{E}^{-1}: \mathcal{L}[\mathbb{X}_\oc] \to  \mathcal{L}_\oc[\mathbb{X}]$ where: 
\begin{enumerate}[{\em (i)}]
\item $\mathsf{E}$ is defined on objects as $\mathsf{E}(X,A) = (X,A)$, and on maps $(\llbracket f \rrbracket, g): (X,A) \to (Y,B)$ as follows: 
\begin{align*}
\mathsf{E}(\llbracket f \rrbracket, g) 
&= \left( \llbracket f \rrbracket,  \xymatrixcolsep{3pc}\xymatrix{\oc(X \times A) \ar[r]^-{\mathsf{d}^\circ_{X \times A}} & \oc (X \times A) \times (X \times A) \ar[r]^-{\oc(\pi_0) \times \pi_1} & \oc X \otimes A \ar[r]^-{g} & B } \right) \end{align*}
\begin{align*} \mathsf{E}\left(\begin{array}[c]{c}
\begin{tikzpicture}
	\begin{pgfonlayer}{nodelayer}
		\node [style=object] (17) at (9.5, 1) {$\oc X$};
		\node [style=object] (20) at (9.5, -1) {$Y$};
		\node [style=component] (21) at (9.5, 0) {$f$};
	\end{pgfonlayer}
	\begin{pgfonlayer}{edgelayer}
		\draw [style=wire] (17) to (21);
		\draw [style=wire] (21) to (20);
	\end{pgfonlayer}
\end{tikzpicture}
   \end{array},
   \begin{array}[c]{c}
\begin{tikzpicture}
	\begin{pgfonlayer}{nodelayer}
		\node [style=object] (17) at (9.75, 1.75) {$\oc X$};
		\node [style=object] (19) at (10.75, 1.75) {$A$};
		\node [style=component] (20) at (10.25, 0.75) {$g$};
		\node [style=object] (21) at (10.25, -0.25) {$B$};
	\end{pgfonlayer}
	\begin{pgfonlayer}{edgelayer}
		\draw [style=wire] (20) to (21);
		\draw [style=wire, in=165, out=-90] (17) to (20);
		\draw [style=wire, in=-90, out=15] (20) to (19);
	\end{pgfonlayer}
\end{tikzpicture}
   \end{array} \right) 
 =  \left(\begin{array}[c]{c}
\begin{tikzpicture}
	\begin{pgfonlayer}{nodelayer}
		\node [style=object] (17) at (9.5, 1) {$\oc X$};
		\node [style=object] (20) at (9.5, -1) {$Y$};
		\node [style=component] (21) at (9.5, 0) {$f$};
	\end{pgfonlayer}
	\begin{pgfonlayer}{edgelayer}
		\draw [style=wire] (17) to (21);
		\draw [style=wire] (21) to (20);
	\end{pgfonlayer}
\end{tikzpicture}
   \end{array},
   \begin{array}[c]{c}
\begin{tikzpicture}
	\begin{pgfonlayer}{nodelayer}
		\node [style=component] (0) at (7, 0.75) {$g$};
		\node [style=object] (2) at (7, 0) {$B$};
		\node [style=object] (3) at (7, 3.5) {$\oc (X \times A)$};
		\node [style=differential] (4) at (7, 2.75) {{\bf =\!=\!=\!=}};
		\node [style=component] (5) at (7.75, 1.75) {$\pi_1$};
		\node [style=function2] (6) at (6.25, 1.75) {$\pi_0$};
	\end{pgfonlayer}
	\begin{pgfonlayer}{edgelayer}
		\draw [style=wire] (0) to (2);
		\draw [style=wire] (3) to (4);
		\draw [style=wire, in=90, out=-150, looseness=1.25] (4) to (6);
		\draw [style=wire, in=135, out=-90] (6) to (0);
		\draw [style=wire, in=-90, out=45, looseness=1.25] (0) to (5);
		\draw [style=wire, in=90, out=-30, looseness=1.25] (4) to (5);
	\end{pgfonlayer}
\end{tikzpicture}
   \end{array} \right)
\end{align*}
\item $\mathsf{E}^{-1}$ is defined on objects as $\mathsf{E}^{-1}(X,A) = (X,A)$, and on maps $(\llbracket f \rrbracket, \llbracket g \rrbracket): (X,A) \to (Y,B)$ as follows: 
\begin{align*}
\mathsf{E}^{-1}(\llbracket f \rrbracket, \llbracket g \rrbracket) = \left( \llbracket f \rrbracket,  \xymatrixcolsep{2.5pc}\xymatrix{\oc X \otimes A \ar[r]^-{\oc(\iota_0) \otimes \iota_1} & \oc(X \times A) \times (X \times A) \ar[r]^-{\mathsf{d}_{X \times A}} & \oc (X \times A) \ar[r]^-{\llbracket g \rrbracket} & B } \right)\end{align*} \begin{align*} \mathsf{E}^{-1}\left(\begin{array}[c]{c}
\begin{tikzpicture}
	\begin{pgfonlayer}{nodelayer}
		\node [style=object] (17) at (9.5, 1) {$\oc X$};
		\node [style=object] (20) at (9.5, -1) {$Y$};
		\node [style=component] (21) at (9.5, 0) {$f$};
	\end{pgfonlayer}
	\begin{pgfonlayer}{edgelayer}
		\draw [style=wire] (17) to (21);
		\draw [style=wire] (21) to (20);
	\end{pgfonlayer}
\end{tikzpicture}
   \end{array},
   \begin{array}[c]{c}
\begin{tikzpicture}
	\begin{pgfonlayer}{nodelayer}
		\node [style=object] (17) at (9.5, 1) {$\oc (X \times A)$};
		\node [style=object] (20) at (9.5, -1) {$B$};
		\node [style=component] (21) at (9.5, 0) {$g$};
	\end{pgfonlayer}
	\begin{pgfonlayer}{edgelayer}
		\draw [style=wire] (17) to (21);
		\draw [style=wire] (21) to (20);
	\end{pgfonlayer}
\end{tikzpicture}
   \end{array} \right) = \left(\begin{array}[c]{c}
\begin{tikzpicture}
	\begin{pgfonlayer}{nodelayer}
		\node [style=object] (17) at (9.5, 1) {$\oc X$};
		\node [style=object] (20) at (9.5, -1) {$Y$};
		\node [style=component] (21) at (9.5, 0) {$f$};
	\end{pgfonlayer}
	\begin{pgfonlayer}{edgelayer}
		\draw [style=wire] (17) to (21);
		\draw [style=wire] (21) to (20);
	\end{pgfonlayer}
\end{tikzpicture}
   \end{array},
   \begin{array}[c]{c}
\begin{tikzpicture}
	\begin{pgfonlayer}{nodelayer}
		\node [style=differential] (0) at (7, 1) {{\bf =\!=\!=\!=}};
		\node [style=object] (1) at (7, -0.5) {$B$};
		\node [style=component] (2) at (7, 0.25) {$g$};
		\node [style=component] (5) at (7.75, 1.75) {$\iota_1$};
		\node [style=function2] (6) at (6.25, 1.75) {$\iota_0$};
		\node [style=object] (7) at (6.25, 2.5) {$\oc X$};
		\node [style=object] (8) at (7.75, 2.5) {$A$};
	\end{pgfonlayer}
	\begin{pgfonlayer}{edgelayer}
		\draw [style=wire] (0) to (2);
		\draw [style=wire] (2) to (1);
		\draw [style=wire, in=135, out=-90] (6) to (0);
		\draw [style=wire, in=-90, out=45, looseness=1.25] (0) to (5);
		\draw [style=wire] (7) to (6);
		\draw [style=wire] (8) to (5);
	\end{pgfonlayer}
\end{tikzpicture}
   \end{array} \right)
\end{align*}
\end{enumerate}
\end{therm}
\begin{proof} For this proof, we will follow the same strategy as in the proof of Corollary \ref{cor:seely-lin}. We will first prove that $\mathsf{E}: \mathcal{L}_\oc[\mathbb{X}] \to \mathcal{L}[\mathbb{X}_\oc]$ is a well-defined morphism of fibrations (that is, a functor that preserves Cartesian maps). Then we will prove that $\mathsf{E} \circ \mathsf{E}^{-1} = 1_{ \mathcal{L}[\mathbb{X}_\oc]}$ and $\mathsf{E}^{-1} \circ \mathsf{E} = 1_{ \mathcal{L}_\oc[\mathbb{X}]}$. Therefore, it follows that ${\mathsf{E}^{-1}: \mathcal{L}[\mathbb{X}_\oc] \to  \mathcal{L}_\oc[\mathbb{X}]}$ is also a morphism of fibrations, and that $\mathsf{E}$ and $\mathsf{E}^{-1}$ are isomorphisms and inverses of each other. 

By Lemma \ref{lem:cokleisli-linearcontext}.(\ref{lem:cokleisli-linearcontext.iii}), $\mathsf{E}$ is well-defined. To show that $\mathsf{E}$ preserve composition, first observe that by using Lemma \ref{lem:cokleisli-linearcontext}.(\ref{lem:cokleisli-linearcontext.i}) we see that composition in $\mathcal{L}[\mathbb{X}_\oc]$ can be expressed as follows (which we leave as an exercise for the reader to work out for themselves): 
\[ \left(\begin{array}[c]{c}
\begin{tikzpicture}
	\begin{pgfonlayer}{nodelayer}
		\node [style=object] (17) at (9.5, 1) {$\oc X$};
		\node [style=object] (20) at (9.5, -1) {$Y$};
		\node [style=component] (21) at (9.5, 0) {$f$};
	\end{pgfonlayer}
	\begin{pgfonlayer}{edgelayer}
		\draw [style=wire] (17) to (21);
		\draw [style=wire] (21) to (20);
	\end{pgfonlayer}
\end{tikzpicture}
   \end{array},
   \begin{array}[c]{c}
\begin{tikzpicture}
	\begin{pgfonlayer}{nodelayer}
		\node [style=object] (17) at (9.5, 1) {$\oc (X \times A)$};
		\node [style=object] (20) at (9.5, -1) {$B$};
		\node [style=component] (21) at (9.5, 0) {$g$};
	\end{pgfonlayer}
	\begin{pgfonlayer}{edgelayer}
		\draw [style=wire] (17) to (21);
		\draw [style=wire] (21) to (20);
	\end{pgfonlayer}
\end{tikzpicture}
   \end{array} \right) ;  \left( \begin{array}[c]{c}
\begin{tikzpicture}
	\begin{pgfonlayer}{nodelayer}
		\node [style=object] (17) at (9.5, 1) {$\oc Y$};
		\node [style=object] (20) at (9.5, -1) {$Z$};
		\node [style=component] (21) at (9.5, 0) {$h$};
	\end{pgfonlayer}
	\begin{pgfonlayer}{edgelayer}
		\draw [style=wire] (17) to (21);
		\draw [style=wire] (21) to (20);
	\end{pgfonlayer}
\end{tikzpicture}
   \end{array},
   \begin{array}[c]{c}
\begin{tikzpicture}
	\begin{pgfonlayer}{nodelayer}
		\node [style=object] (17) at (9.5, 1) {$\oc (Y \times B)$};
		\node [style=object] (20) at (9.5, -1) {$C$};
		\node [style=component] (21) at (9.5, 0) {$k$};
	\end{pgfonlayer}
	\begin{pgfonlayer}{edgelayer}
		\draw [style=wire] (17) to (21);
		\draw [style=wire] (21) to (20);
	\end{pgfonlayer}
\end{tikzpicture}
   \end{array} \right) = \left(\begin{array}[c]{c}
\begin{tikzpicture}
	\begin{pgfonlayer}{nodelayer}
		\node [style=object] (17) at (9.5, 1.75) {$\oc X$};
		\node [style=function2] (18) at (9.5, 0) {$f$};
		\node [style=component] (19) at (9.5, -1) {$h$};
		\node [style=object] (20) at (9.5, -1.75) {$Z$};
		\node [style=component] (21) at (9.5, 1) {$\delta$};
	\end{pgfonlayer}
	\begin{pgfonlayer}{edgelayer}
		\draw [style=wire] (18) to (19);
		\draw [style=wire] (19) to (20);
		\draw [style=wire] (17) to (21);
		\draw [style=wire] (21) to (18);
	\end{pgfonlayer}
\end{tikzpicture}
   \end{array},
   \begin{array}[c]{c}\resizebox{!}{4.75cm}{%
\begin{tikzpicture}
	\begin{pgfonlayer}{nodelayer}
		\node [style=component] (0) at (4.75, 0) {$\delta$};
		\node [style=object] (2) at (6, 3.75) {$\oc (X \times A)$};
		\node [style=object] (6) at (5.5, -4.75) {$C$};
		\node [style=function2] (7) at (4.75, -1.25) {$f$};
		\node [style=differential] (23) at (6, 2.75) {{\bf =\!=\!=\!=}};
		\node [style=component] (24) at (6.75, 1.75) {$\pi_1$};
		\node [style=function2] (25) at (5.25, 1.75) {$\pi_0$};
		\node [style=duplicate] (26) at (5.25, 0.75) {$\Delta$};
		\node [style=differential] (27) at (6.25, -0.75) {{\bf =\!=\!=\!=}};
		\node [style=component] (28) at (6.75, 0) {$\iota_1$};
		\node [style=function2] (29) at (5.75, 0) {$\iota_0$};
		\node [style=differential] (30) at (5.5, -3.25) {{\bf =\!=\!=\!=}};
		\node [style=component] (31) at (6.25, -2.5) {$\iota_1$};
		\node [style=function2] (32) at (4.75, -2.5) {$\iota_0$};
		\node [style=component] (33) at (5.5, -4) {$k$};
		\node [style=component] (34) at (6.25, -1.5) {$g$};
	\end{pgfonlayer}
	\begin{pgfonlayer}{edgelayer}
		\draw [style=wire] (0) to (7);
		\draw [style=wire, in=90, out=-150, looseness=1.25] (23) to (25);
		\draw [style=wire, in=90, out=-30, looseness=1.25] (23) to (24);
		\draw [style=wire] (2) to (23);
		\draw [style=wire] (25) to (26);
		\draw [style=wire, in=90, out=-150] (26) to (0);
		\draw [style=wire, in=135, out=-90] (29) to (27);
		\draw [style=wire, in=-90, out=45, looseness=1.25] (27) to (28);
		\draw [style=wire, in=135, out=-90] (32) to (30);
		\draw [style=wire, in=-90, out=45, looseness=1.25] (30) to (31);
		\draw [style=wire] (7) to (32);
		\draw [style=wire] (30) to (33);
		\draw [style=wire] (33) to (6);
		\draw [style=wire] (27) to (34);
		\draw [style=wire] (34) to (31);
		\draw [style=wire, in=90, out=-30, looseness=1.25] (26) to (29);
		\draw [style=wire] (24) to (28);
	\end{pgfonlayer}
\end{tikzpicture}
} %
   \end{array}  \right)\] 
Then by Lemma \ref{lem:cokleisli-linearcontext}.(\ref{lem:cokleisli-linearcontext.ii}), we compute that:   
   \begin{align*}
   &\mathsf{E}\left(\begin{array}[c]{c}
\begin{tikzpicture}
	\begin{pgfonlayer}{nodelayer}
		\node [style=object] (17) at (9.5, 1) {$\oc X$};
		\node [style=object] (20) at (9.5, -1) {$Y$};
		\node [style=component] (21) at (9.5, 0) {$f$};
	\end{pgfonlayer}
	\begin{pgfonlayer}{edgelayer}
		\draw [style=wire] (17) to (21);
		\draw [style=wire] (21) to (20);
	\end{pgfonlayer}
\end{tikzpicture}
   \end{array},
   \begin{array}[c]{c}
\begin{tikzpicture}
	\begin{pgfonlayer}{nodelayer}
		\node [style=object] (17) at (9.75, 1.75) {$\oc X$};
		\node [style=object] (19) at (10.75, 1.75) {$A$};
		\node [style=component] (20) at (10.25, 0.75) {$g$};
		\node [style=object] (21) at (10.25, -0.25) {$B$};
	\end{pgfonlayer}
	\begin{pgfonlayer}{edgelayer}
		\draw [style=wire] (20) to (21);
		\draw [style=wire, in=165, out=-90] (17) to (20);
		\draw [style=wire, in=-90, out=15] (20) to (19);
	\end{pgfonlayer}
\end{tikzpicture}
   \end{array} \right) ; \mathsf{E}\left(\begin{array}[c]{c}
\begin{tikzpicture}
	\begin{pgfonlayer}{nodelayer}
		\node [style=object] (17) at (9.5, 1) {$\oc Y$};
		\node [style=object] (20) at (9.5, -1) {$Z$};
		\node [style=component] (21) at (9.5, 0) {$h$};
	\end{pgfonlayer}
	\begin{pgfonlayer}{edgelayer}
		\draw [style=wire] (17) to (21);
		\draw [style=wire] (21) to (20);
	\end{pgfonlayer}
\end{tikzpicture}
   \end{array},
   \begin{array}[c]{c}
\begin{tikzpicture}
	\begin{pgfonlayer}{nodelayer}
		\node [style=object] (17) at (9.75, 1.75) {$\oc Y$};
		\node [style=object] (19) at (10.75, 1.75) {$B$};
		\node [style=component] (20) at (10.25, 0.75) {$k$};
		\node [style=object] (21) at (10.25, -0.25) {$C$};
	\end{pgfonlayer}
	\begin{pgfonlayer}{edgelayer}
		\draw [style=wire] (20) to (21);
		\draw [style=wire, in=165, out=-90] (17) to (20);
		\draw [style=wire, in=-90, out=15] (20) to (19);
	\end{pgfonlayer}
\end{tikzpicture}
   \end{array} \right) =  \left(\begin{array}[c]{c}
\begin{tikzpicture}
	\begin{pgfonlayer}{nodelayer}
		\node [style=object] (17) at (9.5, 1) {$\oc X$};
		\node [style=object] (20) at (9.5, -1) {$Y$};
		\node [style=component] (21) at (9.5, 0) {$f$};
	\end{pgfonlayer}
	\begin{pgfonlayer}{edgelayer}
		\draw [style=wire] (17) to (21);
		\draw [style=wire] (21) to (20);
	\end{pgfonlayer}
\end{tikzpicture}
   \end{array},
   \begin{array}[c]{c}
\begin{tikzpicture}
	\begin{pgfonlayer}{nodelayer}
		\node [style=component] (0) at (7, 0.75) {$g$};
		\node [style=object] (2) at (7, 0) {$B$};
		\node [style=object] (3) at (7, 3.5) {$\oc (X \times A)$};
		\node [style=differential] (4) at (7, 2.75) {{\bf =\!=\!=\!=}};
		\node [style=component] (5) at (7.75, 1.75) {$\pi_1$};
		\node [style=function2] (6) at (6.25, 1.75) {$\pi_0$};
	\end{pgfonlayer}
	\begin{pgfonlayer}{edgelayer}
		\draw [style=wire] (0) to (2);
		\draw [style=wire] (3) to (4);
		\draw [style=wire, in=90, out=-150, looseness=1.25] (4) to (6);
		\draw [style=wire, in=135, out=-90] (6) to (0);
		\draw [style=wire, in=-90, out=45, looseness=1.25] (0) to (5);
		\draw [style=wire, in=90, out=-30, looseness=1.25] (4) to (5);
	\end{pgfonlayer}
\end{tikzpicture}
   \end{array} \right) ; \left(\begin{array}[c]{c}
\begin{tikzpicture}
	\begin{pgfonlayer}{nodelayer}
		\node [style=object] (17) at (9.5, 1) {$\oc Y$};
		\node [style=object] (20) at (9.5, -1) {$Z$};
		\node [style=component] (21) at (9.5, 0) {$h$};
	\end{pgfonlayer}
	\begin{pgfonlayer}{edgelayer}
		\draw [style=wire] (17) to (21);
		\draw [style=wire] (21) to (20);
	\end{pgfonlayer}
\end{tikzpicture}
   \end{array},
   \begin{array}[c]{c}
\begin{tikzpicture}
	\begin{pgfonlayer}{nodelayer}
		\node [style=component] (0) at (7, 0.75) {$k$};
		\node [style=object] (2) at (7, 0) {$C$};
		\node [style=object] (3) at (7, 3.5) {$\oc (Y \times B)$};
		\node [style=differential] (4) at (7, 2.75) {{\bf =\!=\!=\!=}};
		\node [style=component] (5) at (7.75, 1.75) {$\pi_1$};
		\node [style=function2] (6) at (6.25, 1.75) {$\pi_0$};
	\end{pgfonlayer}
	\begin{pgfonlayer}{edgelayer}
		\draw [style=wire] (0) to (2);
		\draw [style=wire] (3) to (4);
		\draw [style=wire, in=90, out=-150, looseness=1.25] (4) to (6);
		\draw [style=wire, in=135, out=-90] (6) to (0);
		\draw [style=wire, in=-90, out=45, looseness=1.25] (0) to (5);
		\draw [style=wire, in=90, out=-30, looseness=1.25] (4) to (5);
	\end{pgfonlayer}
\end{tikzpicture}
   \end{array} \right) 
\end{align*}
\begin{align*}   
   &=  \left(\begin{array}[c]{c}
\begin{tikzpicture}
	\begin{pgfonlayer}{nodelayer}
		\node [style=object] (17) at (9.5, 1.75) {$\oc X$};
		\node [style=function2] (18) at (9.5, 0) {$f$};
		\node [style=component] (19) at (9.5, -1) {$h$};
		\node [style=object] (20) at (9.5, -1.75) {$Z$};
		\node [style=component] (21) at (9.5, 1) {$\delta$};
	\end{pgfonlayer}
	\begin{pgfonlayer}{edgelayer}
		\draw [style=wire] (18) to (19);
		\draw [style=wire] (19) to (20);
		\draw [style=wire] (17) to (21);
		\draw [style=wire] (21) to (18);
	\end{pgfonlayer}
\end{tikzpicture}
   \end{array},
   \begin{array}[c]{c}\resizebox{!}{6cm}{%
\begin{tikzpicture}
	\begin{pgfonlayer}{nodelayer}
		\node [style=component] (0) at (4.75, 1.5) {$\delta$};
		\node [style=object] (2) at (6, 5.25) {$\oc (X \times A)$};
		\node [style=object] (6) at (5.5, -6.5) {$C$};
		\node [style=function2] (7) at (4.75, 0) {$f$};
		\node [style=differential] (23) at (6, 4.25) {{\bf =\!=\!=\!=}};
		\node [style=component] (24) at (6.75, 3.25) {$\pi_1$};
		\node [style=function2] (25) at (5.25, 3.25) {$\pi_0$};
		\node [style=duplicate] (26) at (5.25, 2.25) {$\Delta$};
		\node [style=differential] (27) at (6.25, 0.75) {{\bf =\!=\!=\!=}};
		\node [style=component] (28) at (6.75, 1.5) {$\iota_1$};
		\node [style=function2] (29) at (5.75, 1.5) {$\iota_0$};
		\node [style=differential] (30) at (5.5, -3.25) {{\bf =\!=\!=\!=}};
		\node [style=component] (31) at (6.25, -2.5) {$\iota_1$};
		\node [style=function2] (32) at (4.75, -2.5) {$\iota_0$};
		\node [style=component] (35) at (5.5, -5.75) {$k$};
		\node [style=differential] (38) at (5.5, -3.75) {{\bf =\!=\!=\!=}};
		\node [style=component] (39) at (6.25, -4.75) {$\pi_1$};
		\node [style=function2] (40) at (4.75, -4.75) {$\pi_0$};
		\node [style=component] (41) at (6.25, -1.75) {$g$};
		\node [style=differential] (44) at (6.25, 0.25) {{\bf =\!=\!=\!=}};
		\node [style=component] (45) at (7, -0.75) {$\pi_1$};
		\node [style=function2] (46) at (5.5, -0.75) {$\pi_0$};
	\end{pgfonlayer}
	\begin{pgfonlayer}{edgelayer}
		\draw [style=wire] (0) to (7);
		\draw [style=wire, in=90, out=-150, looseness=1.25] (23) to (25);
		\draw [style=wire, in=90, out=-30, looseness=1.25] (23) to (24);
		\draw [style=wire] (2) to (23);
		\draw [style=wire] (25) to (26);
		\draw [style=wire, in=90, out=-150] (26) to (0);
		\draw [style=wire, in=135, out=-90] (29) to (27);
		\draw [style=wire, in=-90, out=45, looseness=1.25] (27) to (28);
		\draw [style=wire, in=135, out=-90] (32) to (30);
		\draw [style=wire, in=-90, out=45, looseness=1.25] (30) to (31);
		\draw [style=wire] (7) to (32);
		\draw [style=wire, in=90, out=-30, looseness=1.25] (26) to (29);
		\draw [style=wire] (24) to (28);
		\draw [style=wire, in=90, out=-150, looseness=1.25] (38) to (40);
		\draw [style=wire, in=135, out=-90] (40) to (35);
		\draw [style=wire, in=-90, out=45, looseness=1.25] (35) to (39);
		\draw [style=wire, in=90, out=-30, looseness=1.25] (38) to (39);
		\draw [style=wire] (30) to (38);
		\draw [style=wire] (35) to (6);
		\draw [style=wire, in=90, out=-150, looseness=1.25] (44) to (46);
		\draw [style=wire, in=135, out=-90] (46) to (41);
		\draw [style=wire, in=-90, out=45, looseness=1.25] (41) to (45);
		\draw [style=wire, in=90, out=-30, looseness=1.25] (44) to (45);
		\draw [style=wire] (27) to (44);
		\draw [style=wire] (41) to (31);
	\end{pgfonlayer}
\end{tikzpicture} }%
   \end{array} \right)
   = \left(\begin{array}[c]{c}
\begin{tikzpicture}
	\begin{pgfonlayer}{nodelayer}
		\node [style=object] (17) at (9.5, 1.75) {$\oc X$};
		\node [style=function2] (18) at (9.5, 0) {$f$};
		\node [style=component] (19) at (9.5, -1) {$h$};
		\node [style=object] (20) at (9.5, -1.75) {$Z$};
		\node [style=component] (21) at (9.5, 1) {$\delta$};
	\end{pgfonlayer}
	\begin{pgfonlayer}{edgelayer}
		\draw [style=wire] (18) to (19);
		\draw [style=wire] (19) to (20);
		\draw [style=wire] (17) to (21);
		\draw [style=wire] (21) to (18);
	\end{pgfonlayer}
\end{tikzpicture}
   \end{array},
   \begin{array}[c]{c} 
\begin{tikzpicture}
	\begin{pgfonlayer}{nodelayer}
		\node [style=object] (2) at (6, 5.25) {$\oc (X \times A)$};
		\node [style=differential] (23) at (6, 4.25) {{\bf =\!=\!=\!=}};
		\node [style=component] (24) at (6.75, 3.25) {$\pi_1$};
		\node [style=function2] (25) at (5.25, 3.25) {$\pi_0$};
		\node [style=component] (42) at (4.75, 1.25) {$\delta$};
		\node [style=duplicate] (43) at (5.25, 2.25) {$\Delta$};
		\node [style=component] (45) at (6, 1.25) {$g$};
		\node [style=component] (47) at (5.5, -0.5) {$k$};
		\node [style=object] (48) at (5.5, -1.25) {$C$};
		\node [style=function2] (49) at (4.75, 0.25) {$f$};
	\end{pgfonlayer}
	\begin{pgfonlayer}{edgelayer}
		\draw [style=wire, in=90, out=-150, looseness=1.25] (23) to (25);
		\draw [style=wire, in=90, out=-30, looseness=1.25] (23) to (24);
		\draw [style=wire] (2) to (23);
		\draw [style=wire, bend right] (43) to (42);
		\draw [style=wire, in=150, out=-30, looseness=1.25] (43) to (45);
		\draw [style=wire] (47) to (48);
		\draw [style=wire, in=30, out=-90] (45) to (47);
		\draw [style=wire] (42) to (49);
		\draw [style=wire, bend right, looseness=1.25] (49) to (47);
		\draw [style=wire] (25) to (43);
		\draw [style=wire, in=30, out=-90] (24) to (45);
	\end{pgfonlayer}
\end{tikzpicture}
   \end{array} \right)   
\end{align*}
\begin{align*}   
&= \mathsf{E}\left(\begin{array}[c]{c}
\begin{tikzpicture}
	\begin{pgfonlayer}{nodelayer}
		\node [style=object] (17) at (9.5, 1.75) {$\oc X$};
		\node [style=function2] (18) at (9.5, 0) {$f$};
		\node [style=component] (19) at (9.5, -1) {$h$};
		\node [style=object] (20) at (9.5, -1.75) {$Z$};
		\node [style=component] (21) at (9.5, 1) {$\delta$};
	\end{pgfonlayer}
	\begin{pgfonlayer}{edgelayer}
		\draw [style=wire] (18) to (19);
		\draw [style=wire] (19) to (20);
		\draw [style=wire] (17) to (21);
		\draw [style=wire] (21) to (18);
	\end{pgfonlayer}
\end{tikzpicture}
   \end{array},
   \begin{array}[c]{c}
\begin{tikzpicture}
	\begin{pgfonlayer}{nodelayer}
		\node [style=component] (15) at (9, 0) {$\delta$};
		\node [style=duplicate] (16) at (9.5, 1) {$\Delta$};
		\node [style=object] (17) at (9.5, 1.75) {$\oc X$};
		\node [style=component] (18) at (10.25, 0) {$g$};
		\node [style=object] (19) at (10.75, 1.75) {$A$};
		\node [style=component] (20) at (9.75, -1.75) {$k$};
		\node [style=object] (21) at (9.75, -2.5) {$C$};
		\node [style=function2] (23) at (9, -1) {$f$};
	\end{pgfonlayer}
	\begin{pgfonlayer}{edgelayer}
		\draw [style=wire, bend right] (16) to (15);
		\draw [style=wire] (17) to (16);
		\draw [style=wire, in=-90, out=15, looseness=0.75] (18) to (19);
		\draw [style=wire, in=150, out=-30, looseness=1.25] (16) to (18);
		\draw [style=wire] (20) to (21);
		\draw [style=wire, in=30, out=-90] (18) to (20);
		\draw [style=wire] (15) to (23);
		\draw [style=wire, bend right, looseness=1.25] (23) to (20);
	\end{pgfonlayer}
\end{tikzpicture}
   \end{array} \right) = \mathsf{E} \left( 
   \left(\begin{array}[c]{c}
\begin{tikzpicture}
	\begin{pgfonlayer}{nodelayer}
		\node [style=object] (17) at (9.5, 1) {$\oc X$};
		\node [style=object] (20) at (9.5, -1) {$Y$};
		\node [style=component] (21) at (9.5, 0) {$f$};
	\end{pgfonlayer}
	\begin{pgfonlayer}{edgelayer}
		\draw [style=wire] (17) to (21);
		\draw [style=wire] (21) to (20);
	\end{pgfonlayer}
\end{tikzpicture}
   \end{array},
   \begin{array}[c]{c}
\begin{tikzpicture}
	\begin{pgfonlayer}{nodelayer}
		\node [style=object] (17) at (9.75, 1.75) {$\oc X$};
		\node [style=object] (19) at (10.75, 1.75) {$A$};
		\node [style=component] (20) at (10.25, 0.75) {$g$};
		\node [style=object] (21) at (10.25, -0.25) {$B$};
	\end{pgfonlayer}
	\begin{pgfonlayer}{edgelayer}
		\draw [style=wire] (20) to (21);
		\draw [style=wire, in=165, out=-90] (17) to (20);
		\draw [style=wire, in=-90, out=15] (20) to (19);
	\end{pgfonlayer}
\end{tikzpicture}
   \end{array} \right) ; \left(\begin{array}[c]{c}
\begin{tikzpicture}
	\begin{pgfonlayer}{nodelayer}
		\node [style=object] (17) at (9.5, 1) {$\oc Y$};
		\node [style=object] (20) at (9.5, -1) {$Z$};
		\node [style=component] (21) at (9.5, 0) {$h$};
	\end{pgfonlayer}
	\begin{pgfonlayer}{edgelayer}
		\draw [style=wire] (17) to (21);
		\draw [style=wire] (21) to (20);
	\end{pgfonlayer}
\end{tikzpicture}
   \end{array},
   \begin{array}[c]{c}
\begin{tikzpicture}
	\begin{pgfonlayer}{nodelayer}
		\node [style=object] (17) at (9.75, 1.75) {$\oc Y$};
		\node [style=object] (19) at (10.75, 1.75) {$B$};
		\node [style=component] (20) at (10.25, 0.75) {$k$};
		\node [style=object] (21) at (10.25, -0.25) {$C$};
	\end{pgfonlayer}
	\begin{pgfonlayer}{edgelayer}
		\draw [style=wire] (20) to (21);
		\draw [style=wire, in=165, out=-90] (17) to (20);
		\draw [style=wire, in=-90, out=15] (20) to (19);
	\end{pgfonlayer}
\end{tikzpicture}
   \end{array} \right) \right) 
\end{align*}
Next we show that $\mathsf{E}$ preserves identities. First note that the identity in $\mathcal{L}[\mathbb{X}_\oc]$ is easily computed out to be $(\llbracket 1_X \rrbracket, \llbracket \pi_1 \rrbracket) = (\varepsilon_X, \varepsilon_{X \times A}; \pi_1)$. Then we compute: 
\[ \mathsf{E}\left(\begin{array}[c]{c}
\begin{tikzpicture}
	\begin{pgfonlayer}{nodelayer}
		\node [style=object] (17) at (9.5, 1) {$\oc X$};
		\node [style=object] (20) at (9.5, -1) {$X$};
		\node [style=component] (21) at (9.5, 0) {$\varepsilon$};
	\end{pgfonlayer}
	\begin{pgfonlayer}{edgelayer}
		\draw [style=wire] (17) to (21);
		\draw [style=wire] (21) to (20);
	\end{pgfonlayer}
\end{tikzpicture}
   \end{array},
   \begin{array}[c]{c}
\begin{tikzpicture}
	\begin{pgfonlayer}{nodelayer}
		\node [style=object] (17) at (9.75, 1.75) {$\oc X$};
		\node [style=object] (19) at (10.5, 1.75) {$A$};
		\node [style=object] (21) at (10.5, -0.25) {$A$};
		\node [style=component] (22) at (9.75, 0.5) {$e$};
	\end{pgfonlayer}
	\begin{pgfonlayer}{edgelayer}
		\draw [style=wire] (19) to (21);
		\draw [style=wire] (17) to (22);
	\end{pgfonlayer}
\end{tikzpicture}
   \end{array} \right) = \left(\begin{array}[c]{c}
\begin{tikzpicture}
	\begin{pgfonlayer}{nodelayer}
		\node [style=object] (17) at (9.5, 1) {$\oc X$};
		\node [style=object] (20) at (9.5, -1) {$X$};
		\node [style=component] (21) at (9.5, 0) {$\varepsilon$};
	\end{pgfonlayer}
	\begin{pgfonlayer}{edgelayer}
		\draw [style=wire] (17) to (21);
		\draw [style=wire] (21) to (20);
	\end{pgfonlayer}
\end{tikzpicture}
   \end{array},
   \begin{array}[c]{c}
\begin{tikzpicture}
	\begin{pgfonlayer}{nodelayer}
		\node [style=object] (1) at (7.75, 0) {$A$};
		\node [style=object] (2) at (7, 3.5) {$\oc(X \times A)$};
		\node [style=differential] (3) at (7, 2.75) {{\bf =\!=\!=\!=}};
		\node [style=component] (4) at (7.75, 1.75) {$\pi_1$};
		\node [style=function2] (5) at (6.25, 1.75) {$\pi_0$};
		\node [style=component] (6) at (6.25, 1) {$e$};
	\end{pgfonlayer}
	\begin{pgfonlayer}{edgelayer}
		\draw [style=wire] (2) to (3);
		\draw [style=wire, in=90, out=-150, looseness=1.25] (3) to (5);
		\draw [style=wire, in=90, out=-30, looseness=1.25] (3) to (4);
		\draw [style=wire] (5) to (6);
		\draw [style=wire] (4) to (1);
	\end{pgfonlayer}
\end{tikzpicture}
   \end{array} \right) =   \left(\begin{array}[c]{c}
\begin{tikzpicture}
	\begin{pgfonlayer}{nodelayer}
		\node [style=object] (17) at (9.5, 1) {$\oc X$};
		\node [style=object] (20) at (9.5, -1) {$X$};
		\node [style=component] (21) at (9.5, 0) {$\varepsilon$};
	\end{pgfonlayer}
	\begin{pgfonlayer}{edgelayer}
		\draw [style=wire] (17) to (21);
		\draw [style=wire] (21) to (20);
	\end{pgfonlayer}
\end{tikzpicture}
   \end{array},
   \begin{array}[c]{c}
\begin{tikzpicture}
	\begin{pgfonlayer}{nodelayer}
		\node [style=object] (1) at (7.75, 0.75) {$A$};
		\node [style=object] (2) at (7, 3.5) {$\oc(X \times A)$};
		\node [style=differential] (3) at (7, 2.75) {{\bf =\!=\!=\!=}};
		\node [style=component] (4) at (7.75, 1.75) {$\pi_1$};
		\node [style=component] (5) at (6.25, 1.75) {$e$};
	\end{pgfonlayer}
	\begin{pgfonlayer}{edgelayer}
		\draw [style=wire] (2) to (3);
		\draw [style=wire, in=90, out=-150, looseness=1.25] (3) to (5);
		\draw [style=wire, in=90, out=-30, looseness=1.25] (3) to (4);
		\draw [style=wire] (4) to (1);
	\end{pgfonlayer}
\end{tikzpicture}
   \end{array} \right) =   \left(\begin{array}[c]{c}
\begin{tikzpicture}
	\begin{pgfonlayer}{nodelayer}
		\node [style=object] (17) at (9.5, 1) {$\oc X$};
		\node [style=object] (20) at (9.5, -1) {$X$};
		\node [style=component] (21) at (9.5, 0) {$\varepsilon$};
	\end{pgfonlayer}
	\begin{pgfonlayer}{edgelayer}
		\draw [style=wire] (17) to (21);
		\draw [style=wire] (21) to (20);
	\end{pgfonlayer}
\end{tikzpicture}
   \end{array},
   \begin{array}[c]{c}
\begin{tikzpicture}
	\begin{pgfonlayer}{nodelayer}
		\node [style=object] (1) at (7.75, 1) {$A$};
		\node [style=object] (2) at (7.75, 4) {$\oc(X \times A)$};
		\node [style=component] (4) at (7.75, 2) {$\pi_1$};
		\node [style=component] (5) at (7.75, 3) {$\varepsilon$};
	\end{pgfonlayer}
	\begin{pgfonlayer}{edgelayer}
		\draw [style=wire] (4) to (1);
		\draw [style=wire] (2) to (5);
		\draw [style=wire] (5) to (4);
	\end{pgfonlayer}
\end{tikzpicture}
   \end{array} \right)  \] 
   So $\mathsf{E}$ is a functor. To show that $\mathsf{E}$ is a fibration morphism, we must show that $\mathsf{p} \circ \mathsf{E} = \mathsf{p}_\oc$ and that $\mathsf{E}$ preserves Cartesian maps. Starting with the former, this is straightforward on both maps and objects since $\mathsf{p}\mathsf{E}(X,A) = X = \mathsf{p}_\oc(X,A)$ and $\mathsf{p}\mathsf{E}(\llbracket f \rrbracket, g) = \llbracket f \rrbracket = \mathsf{p}_\oc(\llbracket f \rrbracket,g)$. Next we must show that $\mathsf{E}$ also preserves Cartesian maps. By Proposition \ref{prop:linear_fibration}, note that in $\mathcal{L}[\mathbb{X}_\oc]$ these are easily computed out to be the maps of the form $(\llbracket f \rrbracket, \llbracket \pi_1 \rrbracket) = (\llbracket f \rrbracket, \varepsilon_{X \times A}; \pi_1)$, while we recall that by Proposition \ref{prop:context_fibration}, Cartesian maps in $\mathcal{L}_\oc[\mathbb{X}]$ are of the form $(\llbracket f \rrbracket, e_X \otimes 1_A)$. By a similar calculation as the one above, we easily compute that $\mathsf{E} (\llbracket f \rrbracket, e_X \otimes 1_A) = (\llbracket f \rrbracket, \varepsilon_{X \times A}; \pi_1)$. So $\mathsf{E}$ preserves Cartesian maps, and we conclude that $\mathsf{E}$ is a fibration morphism. 
   
Next we show that  $\mathsf{E}$ and $\mathsf{E}^{-1}$ are inverses of each other. Starting with $\mathsf{E} \circ \mathsf{E}^{-1}$, clearly on objects $\mathsf{E}\mathsf{E}^{-1}(X,A) = (X,A)$, while on maps we use Lemma \ref{lem:cokleisli-linearcontext}.(\ref{lem:cokleisli-linearcontext.ii}): 
\[ \mathsf{E}\mathsf{E}^{-1}\left(\begin{array}[c]{c}
\begin{tikzpicture}
	\begin{pgfonlayer}{nodelayer}
		\node [style=object] (17) at (9.5, 1) {$\oc X$};
		\node [style=object] (20) at (9.5, -1) {$Y$};
		\node [style=component] (21) at (9.5, 0) {$f$};
	\end{pgfonlayer}
	\begin{pgfonlayer}{edgelayer}
		\draw [style=wire] (17) to (21);
		\draw [style=wire] (21) to (20);
	\end{pgfonlayer}
\end{tikzpicture}
   \end{array},
   \begin{array}[c]{c}
\begin{tikzpicture}
	\begin{pgfonlayer}{nodelayer}
		\node [style=object] (17) at (9.5, 1) {$\oc (X \times A)$};
		\node [style=object] (20) at (9.5, -1) {$B$};
		\node [style=component] (21) at (9.5, 0) {$g$};
	\end{pgfonlayer}
	\begin{pgfonlayer}{edgelayer}
		\draw [style=wire] (17) to (21);
		\draw [style=wire] (21) to (20);
	\end{pgfonlayer}
\end{tikzpicture}
   \end{array} \right) = \mathsf{E} \left(\begin{array}[c]{c}
\begin{tikzpicture}
	\begin{pgfonlayer}{nodelayer}
		\node [style=object] (17) at (9.5, 1) {$\oc X$};
		\node [style=object] (20) at (9.5, -1) {$Y$};
		\node [style=component] (21) at (9.5, 0) {$f$};
	\end{pgfonlayer}
	\begin{pgfonlayer}{edgelayer}
		\draw [style=wire] (17) to (21);
		\draw [style=wire] (21) to (20);
	\end{pgfonlayer}
\end{tikzpicture}
   \end{array},
   \begin{array}[c]{c}
\begin{tikzpicture}
	\begin{pgfonlayer}{nodelayer}
		\node [style=differential] (0) at (7, 1) {{\bf =\!=\!=\!=}};
		\node [style=object] (1) at (7, -0.5) {$B$};
		\node [style=component] (2) at (7, 0.25) {$g$};
		\node [style=component] (5) at (7.75, 1.75) {$\iota_1$};
		\node [style=function2] (6) at (6.25, 1.75) {$\iota_0$};
		\node [style=object] (7) at (6.25, 2.5) {$\oc X$};
		\node [style=object] (8) at (7.75, 2.5) {$A$};
	\end{pgfonlayer}
	\begin{pgfonlayer}{edgelayer}
		\draw [style=wire] (0) to (2);
		\draw [style=wire] (2) to (1);
		\draw [style=wire, in=135, out=-90] (6) to (0);
		\draw [style=wire, in=-90, out=45, looseness=1.25] (0) to (5);
		\draw [style=wire] (7) to (6);
		\draw [style=wire] (8) to (5);
	\end{pgfonlayer}
\end{tikzpicture}
   \end{array} \right) =  \left(\begin{array}[c]{c}
\begin{tikzpicture}
	\begin{pgfonlayer}{nodelayer}
		\node [style=object] (17) at (9.5, 1) {$\oc X$};
		\node [style=object] (20) at (9.5, -1) {$Y$};
		\node [style=component] (21) at (9.5, 0) {$f$};
	\end{pgfonlayer}
	\begin{pgfonlayer}{edgelayer}
		\draw [style=wire] (17) to (21);
		\draw [style=wire] (21) to (20);
	\end{pgfonlayer}
\end{tikzpicture}
   \end{array},
   \begin{array}[c]{c}
\begin{tikzpicture}
	\begin{pgfonlayer}{nodelayer}
		\node [style=object] (0) at (2.75, 5.5) {$\oc(X \times A)$};
		\node [style=differential] (1) at (2.75, 4.75) {{\bf =\!=\!=\!=}};
		\node [style=component] (2) at (3.5, 3.75) {$\pi_1$};
		\node [style=function2] (3) at (2, 3.75) {$\pi_0$};
		\node [style=differential] (4) at (2.75, 1.75) {{\bf =\!=\!=\!=}};
		\node [style=object] (5) at (2.75, 0.25) {$B$};
		\node [style=component] (6) at (2.75, 1) {$g$};
		\node [style=component] (7) at (3.5, 2.75) {$\iota_1$};
		\node [style=function2] (8) at (2, 2.75) {$\iota_0$};
	\end{pgfonlayer}
	\begin{pgfonlayer}{edgelayer}
		\draw [style=wire, in=90, out=-150, looseness=1.25] (1) to (3);
		\draw [style=wire, in=90, out=-30, looseness=1.25] (1) to (2);
		\draw [style=wire] (0) to (1);
		\draw [style=wire] (4) to (6);
		\draw [style=wire] (6) to (5);
		\draw [style=wire, in=150, out=-90] (8) to (4);
		\draw [style=wire, in=-90, out=30] (4) to (7);
		\draw [style=wire] (3) to (8);
		\draw [style=wire] (2) to (7);
	\end{pgfonlayer}
\end{tikzpicture}
   \end{array} \right) =\left(\begin{array}[c]{c}
\begin{tikzpicture}
	\begin{pgfonlayer}{nodelayer}
		\node [style=object] (17) at (9.5, 1) {$\oc X$};
		\node [style=object] (20) at (9.5, -1) {$Y$};
		\node [style=component] (21) at (9.5, 0) {$f$};
	\end{pgfonlayer}
	\begin{pgfonlayer}{edgelayer}
		\draw [style=wire] (17) to (21);
		\draw [style=wire] (21) to (20);
	\end{pgfonlayer}
\end{tikzpicture}
   \end{array},
   \begin{array}[c]{c}
\begin{tikzpicture}
	\begin{pgfonlayer}{nodelayer}
		\node [style=object] (17) at (9.5, 1) {$\oc (X \times A)$};
		\node [style=object] (20) at (9.5, -1) {$B$};
		\node [style=component] (21) at (9.5, 0) {$g$};
	\end{pgfonlayer}
	\begin{pgfonlayer}{edgelayer}
		\draw [style=wire] (17) to (21);
		\draw [style=wire] (21) to (20);
	\end{pgfonlayer}
\end{tikzpicture}
   \end{array} \right)  \]
So  $\mathsf{E} \circ \mathsf{E}^{-1} = 1_{ \mathcal{L}[\mathbb{X}_\oc]}$. Next for $\mathsf{E}^{-1} \circ \mathsf{E}$, again this is clear on objects since $\mathsf{E}^{-1}\mathsf{E}(X,A) = (X,A)$, while on maps we use Lemma \ref{lem:cokleisli-linearcontext}.(\ref{lem:cokleisli-linearcontext.i}): 
\[  \mathsf{E}^{-1}\mathsf{E}\left(\begin{array}[c]{c}
\begin{tikzpicture}
	\begin{pgfonlayer}{nodelayer}
		\node [style=object] (17) at (9.5, 1) {$\oc X$};
		\node [style=object] (20) at (9.5, -1) {$Y$};
		\node [style=component] (21) at (9.5, 0) {$f$};
	\end{pgfonlayer}
	\begin{pgfonlayer}{edgelayer}
		\draw [style=wire] (17) to (21);
		\draw [style=wire] (21) to (20);
	\end{pgfonlayer}
\end{tikzpicture}
   \end{array},
   \begin{array}[c]{c}
\begin{tikzpicture}
	\begin{pgfonlayer}{nodelayer}
		\node [style=object] (17) at (9.75, 1.75) {$\oc X$};
		\node [style=object] (19) at (10.75, 1.75) {$A$};
		\node [style=component] (20) at (10.25, 0.75) {$g$};
		\node [style=object] (21) at (10.25, -0.25) {$B$};
	\end{pgfonlayer}
	\begin{pgfonlayer}{edgelayer}
		\draw [style=wire] (20) to (21);
		\draw [style=wire, in=165, out=-90] (17) to (20);
		\draw [style=wire, in=-90, out=15] (20) to (19);
	\end{pgfonlayer}
\end{tikzpicture}
   \end{array} \right) 
 =  \mathsf{E}^{-1} \left(\begin{array}[c]{c}
\begin{tikzpicture}
	\begin{pgfonlayer}{nodelayer}
		\node [style=object] (17) at (9.5, 1) {$\oc X$};
		\node [style=object] (20) at (9.5, -1) {$Y$};
		\node [style=component] (21) at (9.5, 0) {$f$};
	\end{pgfonlayer}
	\begin{pgfonlayer}{edgelayer}
		\draw [style=wire] (17) to (21);
		\draw [style=wire] (21) to (20);
	\end{pgfonlayer}
\end{tikzpicture}
   \end{array},
   \begin{array}[c]{c}
\begin{tikzpicture}
	\begin{pgfonlayer}{nodelayer}
		\node [style=component] (0) at (7, 0.75) {$g$};
		\node [style=object] (2) at (7, 0) {$B$};
		\node [style=object] (3) at (7, 3.5) {$\oc (X \times A)$};
		\node [style=differential] (4) at (7, 2.75) {{\bf =\!=\!=\!=}};
		\node [style=component] (5) at (7.75, 1.75) {$\pi_1$};
		\node [style=function2] (6) at (6.25, 1.75) {$\pi_0$};
	\end{pgfonlayer}
	\begin{pgfonlayer}{edgelayer}
		\draw [style=wire] (0) to (2);
		\draw [style=wire] (3) to (4);
		\draw [style=wire, in=90, out=-150, looseness=1.25] (4) to (6);
		\draw [style=wire, in=135, out=-90] (6) to (0);
		\draw [style=wire, in=-90, out=45, looseness=1.25] (0) to (5);
		\draw [style=wire, in=90, out=-30, looseness=1.25] (4) to (5);
	\end{pgfonlayer}
\end{tikzpicture}
   \end{array} \right) =  \left(\begin{array}[c]{c}
\begin{tikzpicture}
	\begin{pgfonlayer}{nodelayer}
		\node [style=object] (17) at (9.5, 1) {$\oc X$};
		\node [style=object] (20) at (9.5, -1) {$Y$};
		\node [style=component] (21) at (9.5, 0) {$f$};
	\end{pgfonlayer}
	\begin{pgfonlayer}{edgelayer}
		\draw [style=wire] (17) to (21);
		\draw [style=wire] (21) to (20);
	\end{pgfonlayer}
\end{tikzpicture}
   \end{array},
   \begin{array}[c]{c}
\begin{tikzpicture}
	\begin{pgfonlayer}{nodelayer}
		\node [style=component] (9) at (7, 1) {$g$};
		\node [style=object] (10) at (7, 0.25) {$B$};
		\node [style=differential] (12) at (7, 3) {{\bf =\!=\!=\!=}};
		\node [style=component] (13) at (7.75, 2) {$\pi_1$};
		\node [style=function2] (14) at (6.25, 2) {$\pi_0$};
		\node [style=differential] (15) at (7, 3.75) {{\bf =\!=\!=\!=}};
		\node [style=component] (16) at (7.75, 4.75) {$\pi_1$};
		\node [style=function2] (17) at (6.25, 4.75) {$\pi_0$};
		\node [style=object] (18) at (6.25, 5.5) {$\oc X$};
		\node [style=object] (19) at (7.75, 5.5) {$A$};
	\end{pgfonlayer}
	\begin{pgfonlayer}{edgelayer}
		\draw [style=wire] (9) to (10);
		\draw [style=wire, in=90, out=-150, looseness=1.25] (12) to (14);
		\draw [style=wire, in=135, out=-90] (14) to (9);
		\draw [style=wire, in=-90, out=45, looseness=1.25] (9) to (13);
		\draw [style=wire, in=90, out=-30, looseness=1.25] (12) to (13);
		\draw [style=wire, in=-90, out=150, looseness=1.25] (15) to (17);
		\draw [style=wire, in=-90, out=30, looseness=1.25] (15) to (16);
		\draw [style=wire] (15) to (12);
		\draw [style=wire] (18) to (17);
		\draw [style=wire] (19) to (16);
	\end{pgfonlayer}
\end{tikzpicture}
   \end{array} \right) = \left(\begin{array}[c]{c}
\begin{tikzpicture}
	\begin{pgfonlayer}{nodelayer}
		\node [style=object] (17) at (9.5, 1) {$\oc X$};
		\node [style=object] (20) at (9.5, -1) {$Y$};
		\node [style=component] (21) at (9.5, 0) {$f$};
	\end{pgfonlayer}
	\begin{pgfonlayer}{edgelayer}
		\draw [style=wire] (17) to (21);
		\draw [style=wire] (21) to (20);
	\end{pgfonlayer}
\end{tikzpicture}
   \end{array},
   \begin{array}[c]{c}
\begin{tikzpicture}
	\begin{pgfonlayer}{nodelayer}
		\node [style=object] (17) at (9.75, 1.75) {$\oc X$};
		\node [style=object] (19) at (10.75, 1.75) {$A$};
		\node [style=component] (20) at (10.25, 0.75) {$g$};
		\node [style=object] (21) at (10.25, -0.25) {$B$};
	\end{pgfonlayer}
	\begin{pgfonlayer}{edgelayer}
		\draw [style=wire] (20) to (21);
		\draw [style=wire, in=165, out=-90] (17) to (20);
		\draw [style=wire, in=-90, out=15] (20) to (19);
	\end{pgfonlayer}
\end{tikzpicture}
   \end{array} \right) \] 
   So $\mathsf{E}^{-1} \circ \mathsf{E} = 1_{ \mathcal{L}_\oc[\mathbb{X}]}$. As a consequence, it follows that $\mathsf{E}^{-1}$ is also a fibration morphism. Therefore, $\mathsf{E}$ and $\mathsf{E}^{-1}$ are fibration isomorphisms and inverses of each other, and so we conclude that the fibrations ${\mathsf{p}_\oc: \mathcal{L}_\oc[\mathbb{X}] \to \mathbb{X}_\oc}$ and ${\mathsf{p}: \mathcal{L}[\mathbb{X}_\oc] \to \mathbb{X}_\oc}$ are isomorphic.
\end{proof}

As an immediate consequence, we have that fibres over the same object are isomorphic. 

\begin{corollary}\label{cor:fibres-equiv} Let $\mathbb{X}$ be a monoidal differential category with coalgebra modality $(\oc, \delta, \varepsilon, \Delta, e)$ and deriving transformation $\mathsf{d}: \oc A \otimes A \to \oc A$, and finite (bi)products. Then for each object $X$, $\mathcal{L}_\oc[X]$ is isomorphic to $\mathcal{L}[\mathbb{X}]$ via the functors $\mathsf{E}_X: \mathcal{L}_\oc[X] \to \mathcal{L}[X]$ and $\mathsf{E}^{-1}_X: \mathcal{L}[X] \to  \mathcal{L}_\oc[X]$ where: 
\begin{enumerate}[{\em (i)}]
\item $\mathsf{E}_X$ is defined on objects as $\mathsf{E}_X(A) = A$, and on maps $g: \oc X \otimes A \to B$ as follows: 
\begin{align*}
\mathsf{E}_X(g) = \left(\xymatrixcolsep{3pc}\xymatrix{\oc(X \times A) \ar[r]^-{\mathsf{d}^\circ_{X \times A}} & \oc (X \times A) \times (X \times A) \ar[r]^-{\oc(\pi_0) \times \pi_1} & \oc X \otimes A \ar[r]^-{g} & B } \right) \end{align*}
\begin{align*} \mathsf{E}\left(   \begin{array}[c]{c}
\begin{tikzpicture}
	\begin{pgfonlayer}{nodelayer}
		\node [style=object] (17) at (9.75, 1.75) {$\oc X$};
		\node [style=object] (19) at (10.75, 1.75) {$A$};
		\node [style=component] (20) at (10.25, 0.75) {$g$};
		\node [style=object] (21) at (10.25, -0.25) {$B$};
	\end{pgfonlayer}
	\begin{pgfonlayer}{edgelayer}
		\draw [style=wire] (20) to (21);
		\draw [style=wire, in=165, out=-90] (17) to (20);
		\draw [style=wire, in=-90, out=15] (20) to (19);
	\end{pgfonlayer}
\end{tikzpicture}
   \end{array} \right)  =    \begin{array}[c]{c}
\begin{tikzpicture}
	\begin{pgfonlayer}{nodelayer}
		\node [style=component] (0) at (7, 0.75) {$g$};
		\node [style=object] (2) at (7, 0) {$B$};
		\node [style=object] (3) at (7, 3.5) {$\oc (X \times A)$};
		\node [style=differential] (4) at (7, 2.75) {{\bf =\!=\!=\!=}};
		\node [style=component] (5) at (7.75, 1.75) {$\pi_1$};
		\node [style=function2] (6) at (6.25, 1.75) {$\pi_0$};
	\end{pgfonlayer}
	\begin{pgfonlayer}{edgelayer}
		\draw [style=wire] (0) to (2);
		\draw [style=wire] (3) to (4);
		\draw [style=wire, in=90, out=-150, looseness=1.25] (4) to (6);
		\draw [style=wire, in=135, out=-90] (6) to (0);
		\draw [style=wire, in=-90, out=45, looseness=1.25] (0) to (5);
		\draw [style=wire, in=90, out=-30, looseness=1.25] (4) to (5);
	\end{pgfonlayer}
\end{tikzpicture}
   \end{array} 
\end{align*}
\item $\mathsf{E}_X^{-1}$ is defined on objects as $\mathsf{E}^{-1}_X(A) = A$, and on maps $\llbracket g \rrbracket: \oc(X \times A) \to B$ as follows: 
\begin{align*}
\mathsf{E}_X^{-1}(\llbracket g \rrbracket) = \left( \xymatrixcolsep{3pc}\xymatrix{\oc X \otimes A \ar[r]^-{\oc(\iota_0) \otimes \iota_1} & \oc(X \times A) \times (X \times A) \ar[r]^-{\mathsf{d}_{X \times A}} & \oc (X \times A) \ar[r]^-{\llbracket g \rrbracket} & B } \right) \end{align*} \begin{align*}\mathsf{E}_X^{-1}\left(
   \begin{array}[c]{c}
\begin{tikzpicture}
	\begin{pgfonlayer}{nodelayer}
		\node [style=object] (17) at (9.5, 1) {$\oc (X \times A)$};
		\node [style=object] (20) at (9.5, -1) {$B$};
		\node [style=component] (21) at (9.5, 0) {$g$};
	\end{pgfonlayer}
	\begin{pgfonlayer}{edgelayer}
		\draw [style=wire] (17) to (21);
		\draw [style=wire] (21) to (20);
	\end{pgfonlayer}
\end{tikzpicture}
   \end{array} \right) =    \begin{array}[c]{c}
\begin{tikzpicture}
	\begin{pgfonlayer}{nodelayer}
		\node [style=differential] (0) at (7, 1) {{\bf =\!=\!=\!=}};
		\node [style=object] (1) at (7, -0.5) {$B$};
		\node [style=component] (2) at (7, 0.25) {$g$};
		\node [style=component] (5) at (7.75, 1.75) {$\iota_1$};
		\node [style=function2] (6) at (6.25, 1.75) {$\iota_0$};
		\node [style=object] (7) at (6.25, 2.5) {$\oc X$};
		\node [style=object] (8) at (7.75, 2.5) {$A$};
	\end{pgfonlayer}
	\begin{pgfonlayer}{edgelayer}
		\draw [style=wire] (0) to (2);
		\draw [style=wire] (2) to (1);
		\draw [style=wire, in=135, out=-90] (6) to (0);
		\draw [style=wire, in=-90, out=45, looseness=1.25] (0) to (5);
		\draw [style=wire] (7) to (6);
		\draw [style=wire] (8) to (5);
	\end{pgfonlayer}
\end{tikzpicture}
   \end{array}
\end{align*}
\end{enumerate}
As such, a coKleisli map $\llbracket f \rrbracket: \oc(X \times A) \to B$ is linear in context $X$ if and only if there exists a (necessarily unique) map $g: \oc X \otimes A \to B$ such that $\llbracket f \rrbracket = \mathsf{d}^\circ_{X \times A}; (\oc(\pi_0) \otimes \pi_1); g$. 
\end{corollary}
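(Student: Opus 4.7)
The plan is to derive this corollary essentially for free from Theorem \ref{thm:fibration_equivalence} by restricting the fibration isomorphism $\mathsf{E} \colon \L_\oc[\mathbb{X}] \to \L[\mathbb{X}_\oc]$ and its inverse $\mathsf{E}^{-1}$ to the fibres over a fixed object $X \in \mathbb{X}$. The only content of the corollary beyond Theorem \ref{thm:fibration_equivalence} is therefore (a) computing explicitly what these restrictions do, and (b) reading off the characterization of maps linear in context $X$ as those of the form $\mathsf{d}^\circ_{X \times A};(\oc(\pi_0) \otimes \pi_1);g$.

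First I would recall that for any morphism of fibrations $\mathsf{F} \colon \mathsf{p}_1 \to \mathsf{p}_2$ over a common base $\mathbb{B}$, restriction to the fibre over a chosen object $X$ gives a functor between the fibres: indeed, since $\mathsf{p}_2 \circ \mathsf{F} = \mathsf{p}_1$, $\mathsf{F}$ sends vertical maps (maps mapping to an identity in $\mathbb{B}$) to vertical maps. In our setting, the vertical maps in $\L_\oc[\mathbb{X}]$ over $X$ are precisely the pairs $(\llbracket 1_X \rrbracket, g) = (\varepsilon_X, g)$ for $g \colon \oc X \otimes A \to B$, and the vertical maps in $\L[\mathbb{X}_\oc]$ over $X$ are the pairs $(\llbracket 1_X \rrbracket, \llbracket f \rrbracket) = (\varepsilon_X, \llbracket f \rrbracket)$ for $\llbracket f \rrbracket \colon \oc(X \times A) \to B$ linear in context $X$ (by Lemma \ref{lemma:L_fibres} and the definition of $\L_\oc[X]$). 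Thus $\mathsf{E}_X$ and $\mathsf{E}^{-1}_X$ are obtained simply by forgetting the first component, and the explicit formulas stated in the corollary are read off directly from the definitions of $\mathsf{E}$ and $\mathsf{E}^{-1}$ in Theorem \ref{thm:fibration_equivalence}.

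Next I would verify that these restrictions remain functorial and mutually inverse: functoriality follows because composition in a fibre is the restriction of composition in the total category to vertical maps, and the identity $\mathsf{E} \circ \mathsf{E}^{-1} = 1_{\L[\mathbb{X}_\oc]}$ and $\mathsf{E}^{-1} \circ \mathsf{E} = 1_{\L_\oc[\mathbb{X}]}$ of Theorem \ref{thm:fibration_equivalence} restricts on the nose to $\mathsf{E}_X \circ \mathsf{E}^{-1}_X = 1_{\L[X]}$ and $\mathsf{E}^{-1}_X \circ \mathsf{E}_X = 1_{\L_\oc[X]}$. No new calculation is required here beyond invoking the theorem.

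Finally, for the characterization of linear-in-context coKleisli maps: given $\llbracket f \rrbracket \colon \oc(X \times A) \to B$ linear in context $X$, it represents an object/morphism in $\L[X]$, so applying $\mathsf{E}^{-1}_X$ produces the unique $g \colon \oc X \otimes A \to B$ with $\mathsf{E}_X(g) = \llbracket f \rrbracket$, i.e.\ $\llbracket f \rrbracket = \mathsf{d}^\circ_{X \times A};(\oc(\pi_0) \otimes \pi_1);g$. Conversely, Lemma \ref{lem:cokleisli-linearcontext}(\ref{lem:cokleisli-linearcontext.iii}) already guarantees that any map of this form is linear in context $X$, which establishes the equivalence. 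There is no genuine obstacle in this corollary; the only care needed is bookkeeping to confirm that the restriction of the formulas in Theorem \ref{thm:fibration_equivalence} to vertical maps matches the stated definitions of $\mathsf{E}_X$ and $\mathsf{E}^{-1}_X$.
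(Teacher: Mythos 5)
Your proposal is correct and follows essentially the same route as the paper, which presents this corollary as an immediate consequence of Theorem \ref{thm:fibration_equivalence} obtained by restricting the fibration isomorphism $\mathsf{E}$, $\mathsf{E}^{-1}$ to the fibres over $X$ (where maps are the vertical ones, i.e.\ those whose first component is the identity on $X$). Your bookkeeping — that fibration morphisms preserve vertical maps, that the inverse identities restrict on the nose, and that the final characterization combines the fibre bijection with Lemma \ref{lem:cokleisli-linearcontext}.(\ref{lem:cokleisli-linearcontext.iii}) — is exactly the intended argument.
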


\section{Cartesian reverse differential categories}\label{sec:CRDC}

In this section we recall the key definitions and results on Cartesian reverse differential categories from \cite[]{cockett_et_al:LIPIcs:2020:11661}.  

\subsection{Definition}

\begin{definition}\label{cartrevdiffdef}\cite[Definition 13]{cockett_et_al:LIPIcs:2020:11661} A \textbf{Cartesian reverse differential category} (CRDC) is a Cartesian left additive category $\mathbb{X}$ equipped with a \textbf{reverse differential combinator} $\mathsf{R}$, which is a family of operators ${\mathsf{R}: \mathbb{X}(A,B) \to \mathbb{X}(A \times B,A)}$, $f \mapsto \mathsf{R}[f]$, where $\mathsf{R}[f]$ is called the \textbf{reverse derivative} of $f$, such that the following seven axioms hold:  
\begin{enumerate}[{\bf [RD.1]}]
\item Additivity of reverse differentiation: 
\begin{align*}
\mathsf{R}[f+g] = \mathsf{R}[f] + \mathsf{R}[g] && \mathsf{R}[0]=0
\end{align*}
\item Additivity of the reverse derivative in its second variable: 
\begin{align*} (1_A \times +_B); \mathsf{R}[f] = (1_A \times \pi_0);\mathsf{R}[f] + (1_A \times \pi_1)\mathsf{R}[f] && \iota_0; \mathsf{R}[f]=0
\end{align*}
\item Coherence with identities and projections: 
\begin{align*} \mathsf{R}[1_A]=\pi_1 && \mathsf{R}[\pi_0] = \pi_1;\iota_0 && \mathsf{R}[\pi_1] = \pi_1;\iota_1 \end{align*}
\item Coherence with pairings: 
\begin{align*} \mathsf{R}[\langle f, g \rangle] = (1_A \times \pi_0); \mathsf{R}[f] + (1_A \times \pi_1);\mathsf{R}[g]
\end{align*}
\item Reverse chain rule: 
\begin{align*} \mathsf{R}[fg] = \left \langle \pi_0, \langle \pi_0; f, \pi_1 \rangle; \mathsf{R}[g] \right \rangle; \mathsf{R}[f] \end{align*}
\item Linearity of the reverse derivative in its second variable: 
\begin{align*} (\iota_0 \times \iota_1); (\iota_0 \times 1_{A \times B});  \mathsf{R}\!\left[\mathsf{R}\!\left[\mathsf{R}[f] \right] \right]; \pi_1 = \mathsf{R}[f]
\end{align*}
\item Symmetry of mixed partial derivatives: 
\begin{align*} & c_A; (\iota_0 \times 1_{A \times A}); \mathsf{R}\!\left[\mathsf{R}\!\left[(\iota_0 \times 1_A); \mathsf{R}\!\left[\mathsf{R}[f] \right]; \pi_1 \right] \right]; \pi_1 \\
&= (\iota_0 \times 1_{A \times A});\mathsf{R}\!\left[\mathsf{R}\!\left[(\iota_0 \times 1_A); \mathsf{R}\!\left[\mathsf{R}[f] \right]; \pi_1 \right] \right]; \pi_1 \end{align*}
\end{enumerate}
\end{definition}

For more discussion on the definition and examples, see \cite[]{cockett_et_al:LIPIcs:2020:11661}.  One of the central results of that paper is that any CRDC also has the structure of a CDC:

\begin{therm}\label{thm:crdc_to_cdc}\cite[Theorem 16]{cockett_et_al:LIPIcs:2020:11661} If $\mathbb{X}$ is a Cartesian reverse differential category with reverse differential combinator $\mathsf{R}$, then $\mathbb{X}$ is also a Cartesian differential category, where for a map $f: A \to B$, its derivative $D[f]: A \times A \to B$ is defined as follows:
  \[  D[f] :=  \xymatrixcolsep{5pc}\xymatrix{ A \times A \ar[r]^-{\iota_0 \times 1_A} & A \times B \times A  \ar[r]^-{R[R[f]]} & A \times B \ar[r]^-{\pi_1} & B
  } \]
\end{therm}

In general, however, there is no reason why a CDC should have the structure of a CRDC.  In the next two sections we look at what additional structure is needed on a CDC to get a CRDC.

\subsection{Dagger fibrations}

In this section, we review the structure necessary to go from a Cartesian differential category to a reverse Cartesian differential category: a \emph{dagger fibration} structure on its fibration of linear maps.

The idea of a dagger fibration is to capture (from the fibrational point of view) the idea of each fibre being a dagger category.  Recalling that a dagger category involves a functor from a category to its opposite, from the fibrational point of view, this must then involve a map from a fibration to its \emph{dual fibration}, a fibration which takes the original fibration and takes the opposite category in each fibre.  This fibration can be defined directly as follows:

\begin{definition}\cite[Defn. 1.10.11]{jacobs1999categorical}
If $p: \mathbb{E} \to \mathbb{B}$ is a fibration, its \textbf{dual fibration} is the fibration $p^*: \mathbb{E}^* \to \mathbb{B}$ given by: 
\begin{enumerate}[{\em (i)}]
    \item The objects of $\mathbb{E}^\ast$ are the same as the objects of $\mathbb{E}$; that is, $Ob(\mathbb{E}^\ast) = Ob(\mathbb{E})$; 
    \item A map from $E$ to $E'$ in $\mathbb{E}^*$ consists of an equivalence class of spans
    \[ \xymatrix{ & S \ar[dl]_{v} \ar[dr]^{c} & \\
    E & & E'} \]
    where $v$ is vertical and $c$ Cartesian. Such a span is equivalent to $(S',v',c')$ if there is a vertical isomorphism $\alpha: S \to S'$ which makes the relevant triangles commute.   
\end{enumerate}
\end{definition}

The following are our two primary examples of the dual fibration construction.  

\begin{example} \normalfont
The dual fibration of the fibration of Proposition \ref{prop:context_fibration} has objects pairs $(X,A)$, with a map from $(X,A)$ to $(Y,B)$ consisting of a coKleisli map $\llbracket f \rrbracket: \oc X \to Y$ and a map
    \[ g: \oc X \otimes B \to A \]
\end{example}

\begin{example} \normalfont
The dual fibration of the fibration of Proposition \ref{prop:linear_fibration} has objects pairs $(X,A)$, with a map from $(X,A)$ to $(Y,B)$ consisting of a map $f: X \to Y$ and a map
    \[ g: X \times B \to A \]
\end{example}

\begin{lemma}\cite[Lemma 1.10.12]{jacobs1999categorical}
If $p: \mathbb{E} \to \mathbb{B}$ is a fibration, then:
\begin{enumerate}[{\em (i)}]
    \item For each object $B$ of $\mathbb{B}$ there is an isomorphism of categories
    \[ [p^{-1}(B)]^{\mathsf{op}} \cong (p^*)^{-1}(B) \]
which is natural in $B$; 
    \item There is an isomorphism of fibrations $(\mathbb{E}^*)^* \cong \mathbb{E}$ over $\mathbb{B}$.  
\end{enumerate}
\end{lemma}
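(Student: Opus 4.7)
The plan is to unpack the definition of the dual fibration carefully and, in each part, construct the relevant isomorphism by hand, then verify it is well-defined on equivalence classes of spans.

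For part (i), I would first identify the vertical maps of $p^*$ over $B$. A map $E \to E'$ in $\E^*$ is represented by a span $E \xleftarrow{v} S \xrightarrow{c} E'$ with $v$ vertical and $c$ Cartesian; this map lies in $(p^*)^{-1}(B)$ iff $p(c)=1_B$, and Cartesian maps over identities are vertical isomorphisms, so $c$ may be taken to be a vertical isomorphism. I then define $\Phi_B \colon (p^*)^{-1}(B) \to [p^{-1}(B)]^{\op}$ to be the identity on objects and to send the equivalence class $[v,c]$ to the vertical map $v \circ c^{-1} \colon E' \to E$ of $p^{-1}(B)$, viewed as a map $E \to E'$ in the opposite. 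The inverse sends a vertical map $u \colon E' \to E$ in $\E$ to the class $[u,1_{E'}]$. Well-definedness on equivalence classes is immediate from the definition of the equivalence relation, and functoriality reduces to the fact that composition of spans in $\E^*$ (via Cartesian lifts along identities) collapses to ordinary composition of vertical maps in $\E$.

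Naturality in $B$ requires comparing the substitution functors. For $f \colon B \to B'$, I would describe $f^{\ast}_{p^*}$ on a vertical $[v,c]$ by choosing Cartesian lifts of $f$ in $\E$ (which are also Cartesian in $\E^*$ by the description of Cartesian maps in the dual), and then check that $\Phi_B \circ f^{\ast}_{p^*} = (f^{\ast}_p)^{\op} \circ \Phi_{B'}$. This is a routine diagram chase once one notes that Cartesian lifts in $\E^*$ of $f$ correspond to Cartesian lifts of $f$ in $\E$ (up to the equivalence on spans).

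For part (ii), I would identify the Cartesian maps in $\E^*$ first: a map $[v,c] \colon E \to E'$ in $\E^*$ is Cartesian over $f \colon p(E) \to p(E')$ exactly when $v$ is a vertical isomorphism, so modulo equivalence these are precisely the Cartesian maps of $\E$ over $f$. Consequently, a map in $(\E^*)^*$ from $E$ to $E'$ is represented by a span $E \xleftarrow{v^*} T \xrightarrow{c^*} E'$ in $\E^*$ with $v^*$ vertical in $p^*$ and $c^*$ Cartesian in $p^*$; applying part (i) to $v^*$ and the identification of Cartesian maps just made to $c^*$, such a span reassembles into a single map of $\E$, and this assignment is easily seen to be an isomorphism of fibrations over $\B$.

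The main technical obstacle is bookkeeping around equivalence classes: one must check that every construction (the bijection on morphisms, the composition laws, and compatibility with substitution functors) respects the equivalence relation on spans and does not depend on the choice of Cartesian lifts used to compose spans. Once this bookkeeping is in place, both isomorphisms are essentially forced by the "double-opposite is identity" slogan for categories applied fibrewise.
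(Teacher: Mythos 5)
Your argument is correct; note that the paper does not prove this lemma at all but imports it from \cite[Lemma 1.10.12]{jacobs1999categorical}, and your construction (normalizing vertical spans of $p^*$ so the Cartesian leg is a vertical isomorphism, reading off $v;c^{-1}$ to get the fibrewise opposite, and then using the vertical--Cartesian factorization to collapse $(\E^*)^*$ back to $\E$) is essentially the standard proof given there. The only point worth making explicit is the well-definedness/normalization step you already flag: every vertical span is equivalent to one with apex $E'$ and Cartesian leg $1_{E'}$, which is what makes $\Phi_B$ and its inverse mutually inverse on the nose.
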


The following does not appear in any published accounts on the dual fibration, but is straightforward:

\begin{lemma}
If $(p: \mathbb{E} \to \mathbb{B})$ and $(p': \mathbb{E}' \to \mathbb{B}')$ are fibrations and 
    \[ \xymatrix{\mathbb{E} \ar[r]^{F} \ar[d]_p  & \mathbb{E}' \ar[d]^{p'} \\ \mathbb{B} \ar[r]_G & \mathbb{B}'} \]
is a morphism of fibrations, then there is a morphism of fibrations
    \[ \xymatrix{\mathbb{E}^* \ar[r]^{F^*} \ar[d]_{p^*}  & (\mathbb{E}')^* \ar[d]^{(p')^*} \\ \mathbb{B} \ar[r]_G & \mathbb{B}'} \]
which sends a span $(S, v, C): X \to X'$ to $(FS,F(v),F(c)): FX \to FX'$.  
\end{lemma}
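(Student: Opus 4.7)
The strategy is to verify the four things required: that $F^\ast$ is well-defined on spans, that it respects the equivalence relation, that it is functorial, and that it commutes with the projections to the base.

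First, I would check that the image span is of the correct form. Given a span $(S,v,c)$ with $v\colon S\to E$ vertical over $B$ and $c\colon S\to E'$ Cartesian over some map $u\colon B\to B''$ in $\B$, the fact that $(F,G)$ is a morphism of fibrations means $p'\circ F = G\circ p$, so $Fv$ is vertical over $GB$ (since $p'(Fv) = G(pv) = G(1_B) = 1_{GB}$), and $F$ preserves Cartesian arrows, so $Fc$ is Cartesian over $Gu$. Hence $(FS, Fv, Fc)$ is a legitimate span in $(\E')^\ast$ from $FE$ to $FE'$. For the equivalence relation, if $\alpha\colon S\to S'$ is a vertical iso witnessing $(S,v,c)\sim(S',v',c')$, then $F\alpha$ is a vertical iso (vertical by the same argument as above, iso because $F$ is a functor), and makes the two triangles commute by functoriality of $F$. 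Thus $F^\ast$ descends to equivalence classes.

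Next, for functoriality, identities in $\E^\ast$ are represented by the span $(E,1_E,1_E)$, and $F$ sends this to $(FE,1_{FE},1_{FE})$, which represents the identity in $(\E')^\ast$. For composition, recall that to compose two spans $(S,v,c)\colon E\to E'$ and $(T,w,d)\colon E'\to E''$ in the dual fibration, one uses a Cartesian lift $\widetilde{c}\colon \widehat{T}\to T$ of $pc$ at $S$ (equivalently, produces $\widehat{T}$ via the universal property of the Cartesian arrow $c$ together with $w$) and forms the composite span using the resulting vertical and Cartesian components. Since $F$ preserves both vertical and Cartesian maps, and since $F$ preserves the universal factorizations coming from Cartesian arrows (this is exactly what a morphism of fibrations guarantees), the image under $F$ of the span obtained by composition in $\E^\ast$ agrees up to a unique vertical iso with the span obtained by composing the $F$-images in $(\E')^\ast$. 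Well-definedness on equivalence classes then yields $F^\ast((T,w,d)\circ(S,v,c)) = F^\ast(T,w,d)\circ F^\ast(S,v,c)$.

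Finally, commutativity of the square is immediate: on objects $(p')^\ast F^\ast E = p'(FE) = G(pE) = G\, p^\ast E$, and on a morphism represented by $(S,v,c)$ with $c$ Cartesian over $u$, both $(p')^\ast F^\ast$ and $G\,p^\ast$ return $Gu$. The fact that $F^\ast$ preserves Cartesian maps of the dual fibration is automatic from the description above, since Cartesian maps in $\E^\ast$ are precisely spans of the form $(E,1_E,c)$ with $c$ Cartesian, and $F$ sends these to spans of the same form. The step that requires the most care, and that I expect to be the main obstacle, is the compatibility with composition: one must unwind the definition of composition in a dual fibration (which is a pullback-like construction using the universal property of Cartesian lifts) and appeal explicitly to the fact that $F$, being a morphism of fibrations, preserves the relevant Cartesian factorizations up to the canonical vertical isomorphism, which is then absorbed into the equivalence class.
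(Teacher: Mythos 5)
Your plan is correct and is exactly the routine verification the paper has in mind (the paper states this lemma without proof, calling it straightforward): well-definedness on spans and on equivalence classes follows from $p'F=Gp$ and preservation of Cartesian arrows, and compatibility with the span composition follows from $F$ preserving the Cartesian lifts and their universal factorizations up to the canonical vertical isomorphism, which is absorbed by the equivalence relation. No gaps.
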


We can now succinctly define what it means for a fibration to have dagger category structure in each fibre.

\begin{definition}\cite[Definition 33]{cockett_et_al:LIPIcs:2020:11661} A \textbf{dagger fibration} consists of a fibration $p: \mathbb{E} \to \mathbb{B}$ together with a morphism of fibrations
    \[ (-)^{\dagger}: \mathbb{E} \to \mathbb{E}^* \]
which is stationary on objects and ``is its own inverse''; that is, the composite
    \[ \mathbb{E} \to^{(-)^{\dagger}} \mathbb{E}^* \to^{((-)^{\dagger})^*} (\mathbb{E}^*)^* \cong \mathbb{E} \]
is the identity functor.  
\end{definition}

\begin{example}\label{ex:daggerlin} \normalfont
Let us consider what it would mean to have dagger structure on the linear fibration $\mathcal{L}[X]$ (Definition \ref{defn:linear_fibration}) of a Cartesian differential category $\mathbb{X}$.  In particular, this would mean that for each map $f: X \times A \to B$ which is linear in context $X$, we would need to give a map $f^{\dagger[X]}: X \times B \to A$ which is also linear in context $X$. The axioms for a dagger fibration are then equivalent to asking that each fibre $\mathcal{L}[X]$ (Lemma \ref{lemma:L_fibres}) be a dagger category with dagger $\dagger[X]: \mathcal{L}[X]^{\mathsf{op}} \to \mathcal{L}[X]$ such that each substitution functor preserves the dagger. Explicitly, the operation $\dagger[-]$ satisfies the following: 
\begin{enumerate}[{\em (i)}]
  \item Contravariant functoriality: $(\langle \pi_0, f \rangle; g)^{\dagger[X]} = \langle \pi_0, g^{\dagger[X]} \rangle; f^{\dagger[X]}$ and $\pi_1^{\dagger[X]} = \pi_1$
    \item Involutive: ${f^{\dagger[X]}}^{\dagger[X]} =f$
    \item Change of Base: for every map $h: Y \to X$ in $\mathbb{X}$, its associated substitution functor $h^\ast: \mathcal{L}[X] \to \mathcal{L}[Y]$ preserves the dagger, that is, $(h^\ast(f))^{\dagger[Y]} = h^\ast\left( f^{\dagger[X]} \right)$. 
\end{enumerate}
\end{example}

If $\mathbb{X}$ is a Cartesian \emph{reverse} differential category, then by Theorem \ref{thm:crdc_to_cdc}, it is also a Cartesian differential category, and in this case its associated linear fibration has dagger structure: 

\begin{therm}\label{thm:crdc_dagger_fibration}\cite[Theorem 37]{cockett_et_al:LIPIcs:2020:11661}
If $\mathbb{X}$ is a CRDC, then its associated fibration of linear maps $\mathcal{L}[\mathbb{X}]$ is a dagger fibration, where for a map $f: X \times A \to B$ which is linear in context $X$, 
    \[ f^{\dagger[X]} :=  X \times B \to^{\iota_0 \times 1} X \times A \times B \to^{R[f]} X \times A \to^{\pi_1} A \]
\end{therm}

It will also be useful (see Lemma \ref{lemma:compactclosed_to_dagger}) to characterize when the fibration associated to any coalgebra modality has dagger fibration structure.  

\begin{example} \normalfont \label{ex:!daggerfibration} To give a dagger fibration structure on the fibration of Proposition \ref{prop:context_fibration} corresponds to associating every map $f: \oc X \otimes A \to B$ to a map $f^{\dagger[X]}: \oc X \otimes B \to A$, which we draw in the graphical calculus simply as: 
\[ f^\dagger[X] := \left( \begin{array}[c]{c}
\begin{tikzpicture}
	\begin{pgfonlayer}{nodelayer}
		\node [style=object] (17) at (9.75, 1.75) {$\oc X$};
		\node [style=object] (19) at (10.75, 1.75) {$A$};
		\node [style=component] (20) at (10.25, 0.75) {$f$};
		\node [style=object] (21) at (10.25, -0.25) {$B$};
	\end{pgfonlayer}
	\begin{pgfonlayer}{edgelayer}
		\draw [style=wire] (20) to (21);
		\draw [style=wire, in=165, out=-90] (17) to (20);
		\draw [style=wire, in=-90, out=15] (20) to (19);
	\end{pgfonlayer}
\end{tikzpicture}
   \end{array} \right)^{\dagger[X]} = \begin{array}[c]{c}
\begin{tikzpicture}
	\begin{pgfonlayer}{nodelayer}
		\node [style=object] (17) at (9.75, 1.75) {$\oc X$};
		\node [style=object] (19) at (10.75, 1.75) {$B$};
		\node [style=component] (20) at (10.25, 0.75) {$f^\dagger$};
		\node [style=object] (21) at (10.25, -0.25) {$A$};
	\end{pgfonlayer}
	\begin{pgfonlayer}{edgelayer}
		\draw [style=wire] (20) to (21);
		\draw [style=wire, in=165, out=-90] (17) to (20);
		\draw [style=wire, in=-90, out=15] (20) to (19);
	\end{pgfonlayer}
\end{tikzpicture}
   \end{array} \] 
 Once again, the axioms for a dagger fibration in this case are equivalent to asking that each of the fibres $\mathcal{L}_\oc[X]$ be a dagger category with dagger $\dagger[X]: \mathcal{L}_\oc[X]^{\mathsf{op}} \to \mathcal{L}_\oc[X]$ such that each substitution functor preserves the dagger.  Explicitly, the operation $\dagger[-]$ satisfies the following: 
\begin{enumerate}[{\em (i)}]
\item Contravariant functorality: 
\begin{align*}
\left((\Delta_X \otimes 1_A);(1_{\oc X} \otimes f);g \right)^{\dagger[X]} = (\Delta_X \otimes 1_C);(1_{\oc X} \otimes g^{\dagger[X]}); f^{\dagger[X]} && {(e_X \otimes 1_A)^{\dagger[X]} = e_X \otimes 1_A}
\end{align*}
\begin{align*}
\left(    \begin{array}[c]{c}
\begin{tikzpicture}
	\begin{pgfonlayer}{nodelayer}
		\node [style=duplicate] (1) at (9.5, 1) {$\Delta$};
		\node [style=object] (2) at (9.5, 1.75) {$\oc X$};
		\node [style=component] (3) at (10.5, 0) {$f$};
		\node [style=object] (4) at (11, 1.75) {$A$};
		\node [style=component] (5) at (9.75, -1.25) {$g$};
		\node [style=object] (6) at (9.75, -2) {$C$};
	\end{pgfonlayer}
	\begin{pgfonlayer}{edgelayer}
		\draw [style=wire] (2) to (1);
		\draw [style=wire, in=-90, out=15, looseness=0.75] (3) to (4);
		\draw [style=wire] (5) to (6);
		\draw [style=wire, in=30, out=-90] (3) to (5);
		\draw [style=wire, in=165, out=-150] (1) to (5);
		\draw [style=wire, in=165, out=-15, looseness=1.25] (1) to (3);
	\end{pgfonlayer}
\end{tikzpicture}
   \end{array} \right)^{\dagger[X]} =     \begin{array}[c]{c}
\begin{tikzpicture}
	\begin{pgfonlayer}{nodelayer}
		\node [style=duplicate] (1) at (9.5, 1) {$\Delta$};
		\node [style=object] (2) at (9.5, 1.75) {$\oc X$};
		\node [style=component] (3) at (10.5, 0) {$g^\dagger$};
		\node [style=object] (4) at (11, 1.75) {$C$};
		\node [style=component] (5) at (9.75, -1.25) {$f^\dagger$};
		\node [style=object] (6) at (9.75, -2.25) {$A$};
	\end{pgfonlayer}
	\begin{pgfonlayer}{edgelayer}
		\draw [style=wire] (2) to (1);
		\draw [style=wire, in=-90, out=15, looseness=0.75] (3) to (4);
		\draw [style=wire] (5) to (6);
		\draw [style=wire, in=30, out=-90] (3) to (5);
		\draw [style=wire, in=165, out=-150] (1) to (5);
		\draw [style=wire, in=165, out=-15, looseness=1.25] (1) to (3);
	\end{pgfonlayer}
\end{tikzpicture}
   \end{array} && \left( \begin{array}[c]{c}
\begin{tikzpicture}
	\begin{pgfonlayer}{nodelayer}
		\node [style=object] (17) at (9.75, 1.75) {$\oc X$};
		\node [style=object] (19) at (10.5, 1.75) {$A$};
		\node [style=object] (21) at (10.5, -0.25) {$A$};
		\node [style=component] (22) at (9.75, 0.5) {$e$};
	\end{pgfonlayer}
	\begin{pgfonlayer}{edgelayer}
		\draw [style=wire] (19) to (21);
		\draw [style=wire] (17) to (22);
	\end{pgfonlayer}
\end{tikzpicture}
   \end{array}  \right)^{\dagger[X]} = \begin{array}[c]{c}
\begin{tikzpicture}
	\begin{pgfonlayer}{nodelayer}
		\node [style=object] (17) at (9.75, 1.75) {$\oc X$};
		\node [style=object] (19) at (10.5, 1.75) {$A$};
		\node [style=object] (21) at (10.5, -0.25) {$A$};
		\node [style=component] (22) at (9.75, 0.5) {$e$};
	\end{pgfonlayer}
	\begin{pgfonlayer}{edgelayer}
		\draw [style=wire] (19) to (21);
		\draw [style=wire] (17) to (22);
	\end{pgfonlayer}
\end{tikzpicture}
   \end{array} 
\end{align*}
\item Involution: ${f^{\dagger[X]}}^{\dagger[X]} =f$
\[ \left( \begin{array}[c]{c}
\begin{tikzpicture}
	\begin{pgfonlayer}{nodelayer}
		\node [style=object] (17) at (9.75, 1.75) {$\oc X$};
		\node [style=object] (19) at (10.75, 1.75) {$B$};
		\node [style=component] (20) at (10.25, 0.75) {$f^\dagger$};
		\node [style=object] (21) at (10.25, -0.25) {$A$};
	\end{pgfonlayer}
	\begin{pgfonlayer}{edgelayer}
		\draw [style=wire] (20) to (21);
		\draw [style=wire, in=165, out=-90] (17) to (20);
		\draw [style=wire, in=-90, out=15] (20) to (19);
	\end{pgfonlayer}
\end{tikzpicture}
   \end{array} \right)^{\dagger[X]} = \begin{array}[c]{c}
\begin{tikzpicture}
	\begin{pgfonlayer}{nodelayer}
		\node [style=object] (17) at (9.75, 1.75) {$\oc X$};
		\node [style=object] (19) at (10.75, 1.75) {$A$};
		\node [style=component] (20) at (10.25, 0.75) {$f$};
		\node [style=object] (21) at (10.25, -0.25) {$B$};
	\end{pgfonlayer}
	\begin{pgfonlayer}{edgelayer}
		\draw [style=wire] (20) to (21);
		\draw [style=wire, in=165, out=-90] (17) to (20);
		\draw [style=wire, in=-90, out=15] (20) to (19);
	\end{pgfonlayer}
\end{tikzpicture}
   \end{array} \] 
\item Change of Base: For every coKleisli map $\llbracket h \rrbracket: \oc Y \to X$, the substitution functor ${\llbracket h \rrbracket^\ast: \mathcal{L}_\oc[Y] \to \mathcal{L}_\oc[X]}$ preserves the dagger, that is, $ \left( (\delta \otimes 1) (\oc(h) \otimes 1) f \right)^{\dagger[Y]} = (\delta \otimes 1) (\oc(h) \otimes 1)f^{\dagger[X]}$
\begin{align*}
\left( \begin{array}[c]{c}
\begin{tikzpicture}
	\begin{pgfonlayer}{nodelayer}
		\node [style=component] (0) at (9, 0.25) {$\delta$};
		\node [style=object] (1) at (10.5, 1.25) {$A$};
		\node [style=component] (2) at (9.75, -1.75) {$f$};
		\node [style=object] (3) at (9.75, -2.75) {$B$};
		\node [style=function2] (4) at (9, -0.75) {$h$};
		\node [style=object] (5) at (9, 1.25) {$\oc X$};
	\end{pgfonlayer}
	\begin{pgfonlayer}{edgelayer}
		\draw [style=wire] (2) to (3);
		\draw [style=wire, bend right, looseness=1.25] (4) to (2);
		\draw [style=wire] (0) to (4);
		\draw [style=wire, in=30, out=-90, looseness=0.75] (1) to (2);
		\draw [style=wire] (5) to (0);
	\end{pgfonlayer}
\end{tikzpicture}
   \end{array} \right)^{\dagger[X]} =  \begin{array}[c]{c}
\begin{tikzpicture}
	\begin{pgfonlayer}{nodelayer}
		\node [style=component] (0) at (9, 0.25) {$\delta$};
		\node [style=object] (1) at (10.5, 1.25) {$B$};
		\node [style=component] (2) at (9.75, -1.75) {$f^\dagger$};
		\node [style=object] (3) at (9.75, -2.75) {$A$};
		\node [style=function2] (4) at (9, -0.75) {$h$};
		\node [style=object] (5) at (9, 1.25) {$\oc X$};
	\end{pgfonlayer}
	\begin{pgfonlayer}{edgelayer}
		\draw [style=wire] (2) to (3);
		\draw [style=wire, bend right, looseness=1.25] (4) to (2);
		\draw [style=wire] (0) to (4);
		\draw [style=wire, in=30, out=-90, looseness=0.75] (1) to (2);
		\draw [style=wire] (5) to (0);
	\end{pgfonlayer}
\end{tikzpicture}
   \end{array} 
\end{align*} 
\end{enumerate}
\end{example}

\subsection{Characterization theorem for CRDCs}

With one extra ingredient, the structure described in Theorem \ref{thm:crdc_dagger_fibration} is enough to characterize CRDCs.

\begin{definition} A CDC $\mathbb{X}$ has a \textbf{contextual linear dagger} if $\mathcal{L}[\mathbb{X}]$ has a dagger fibration structure for which each fibre $\mathcal{L}[\mathbb{X}]$ has dagger biproducts.  
\end{definition}

\begin{therm}\label{thm:characterization_of_crdc}\cite[Theorem 42] {cockett_et_al:LIPIcs:2020:11661}
A Cartesian reverse differential category is precisely the same as a Cartesian differential $\mathbb{X}$ with a contextual linear dagger. 
\end{therm}

In particular, given such a CDC, its reverse combinator is given by taking the dagger of its (forward) derivative $\mathsf{D}$. So for a map $f: A \to B$, its derivative is $\mathsf{D}[f]: A \times A \to B$ (which is linear in its second variable by \textbf{[CD.6]}), so we define the reverse derivative as follows:
    \[ R[f] := A \times B \to^{D[f]^{\dagger[A]}} A \]

\section{Monoidal Reverse Differential Categories}\label{sec:mrdc}

This section introduces the main subject of the article: monoidal reverse differential categories.  As noted in the introduction, we would like these structures to satisfy several requirements:
\begin{enumerate}
    \item Just as every Cartesian reverse differential category is a Cartesian differential category, so should every monoidal reverse differential category be a monoidal differential category.
    \item Just as every monoidal differential storage category has Cartesian differential category structure on its coKleisli category, so should every monoidal reverse differential storage category have Cartesian reverse differential structure on its coKleisli category.
    \item Examples of this structure should be interesting and varied.  
\end{enumerate}

In the next section, we will see that requirements 1 and 2 will force monoidal reverse differential categories to be self-dual compact closed.  

\subsection{Monoidal reverse differential categories should be self-dual compact closed}\label{sec:mrdc_is_sdcc}

First, let us recall the relevant definitions.

\begin{definition}\label{SDCC} In a symmetric monoidal category, a \textbf{self-dual object} \cite[Definition 3.1]{heunen2019categories} is a triple $(A, \cup_A, \cap_A)$ consisting of an object $A$ and two maps $\cup_A: A \otimes A \to k$ and $\cap_A: k \to A \otimes A$, drawn in the graphical calculus as follows: 
\begin{align*}
  \begin{array}[c]{c} \cup_A    \end{array} : =    \begin{array}[c]{c}
\begin{tikzpicture}
	\begin{pgfonlayer}{nodelayer}
		\node [style=object] (1) at (1.25, 1.25) {$A$};
		\node [style=object] (7) at (2.5, 1.25) {$A$};
	\end{pgfonlayer}
	\begin{pgfonlayer}{edgelayer}
		\draw [style=wire, bend right=90, looseness=2.00] (1) to (7);
	\end{pgfonlayer}
\end{tikzpicture}
   \end{array} &&   \begin{array}[c]{c} \cap_A    \end{array} : =    \begin{array}[c]{c}
\begin{tikzpicture}
	\begin{pgfonlayer}{nodelayer}
		\node [style=object] (1) at (1.25, 1.25) {$A$};
		\node [style=object] (7) at (2.5, 1.25) {$A$};
	\end{pgfonlayer}
	\begin{pgfonlayer}{edgelayer}
		\draw [style=wire, bend left=90, looseness=2.00] (1) to (7);
	\end{pgfonlayer}
\end{tikzpicture}
   \end{array}
\end{align*}
 such that the following diagram commutes (often called the \textbf{snake equations)}:
 \begin{align*}
   \begin{array}[c]{c}
 \xymatrixcolsep{5pc}\xymatrix{ A \ar@{=}[dr]_-{} \ar[d]_-{\cap_A \otimes 1_A} \ar[r]^-{1_A \otimes \cap_A} & A \otimes A \otimes A \ar[d]^-{\cup_A \otimes 1_A} \\
 A \otimes A \otimes A  \ar[r]_-{1_A \otimes \cup_A} & A   
  } 
   \end{array} &&    \begin{array}[c]{c}
\begin{tikzpicture}
	\begin{pgfonlayer}{nodelayer}
		\node [style=none] (0) at (1.75, 2.75) {};
		\node [style=none] (1) at (0.5, 2.75) {};
		\node [style=object] (3) at (0.5, 4) {$A$};
		\node [style=none] (4) at (3, 2.75) {};
		\node [style=object] (5) at (3, 1.25) {$A$};
		\node [style=none] (7) at (1.75, 2.75) {};
		\node [style=object] (8) at (4, 2.75) {$=$};
		\node [style=object] (9) at (5, 4) {$A$};
		\node [style=object] (10) at (5, 1.25) {$A$};
		\node [style=object] (11) at (6, 2.75) {$=$};
		\node [style=none] (12) at (8.25, 2.75) {};
		\node [style=none] (13) at (9.5, 2.75) {};
		\node [style=object] (14) at (9.5, 4) {$A$};
		\node [style=none] (15) at (7, 2.75) {};
		\node [style=object] (16) at (7, 1.25) {$A$};
		\node [style=none] (17) at (8.25, 2.75) {};
	\end{pgfonlayer}
	\begin{pgfonlayer}{edgelayer}
		\draw [style=wire, bend left=90, looseness=2.00] (0.center) to (4.center);
		\draw [style=wire] (4.center) to (5);
		\draw [style=wire, bend right=90, looseness=2.00] (1.center) to (7.center);
		\draw [style=wire] (3) to (1.center);
		\draw [style=wire] (9) to (10);
		\draw [style=wire, bend right=90, looseness=2.00] (12.center) to (15.center);
		\draw [style=wire] (15.center) to (16);
		\draw [style=wire, bend left=90, looseness=2.00] (13.center) to (17.center);
		\draw [style=wire] (14) to (13.center);
	\end{pgfonlayer}
\end{tikzpicture}
   \end{array}
\end{align*}
A self-dual object $(A, \cup_A, \cap_A)$ is said to satisfy the \textbf{twist equations} if the cup and cap are symmetry invariant, that is, the following diagram commutes: 
 \begin{align*}
   \begin{array}[c]{c}
 \xymatrixcolsep{5pc}\xymatrix{ A \otimes A \ar[dr]_-{\cup_A}  \ar[r]^-{\sigma_{A,A}} & A \otimes A \ar[d]^-{\cup_A} \\
& k   
  } 
   \end{array} &&    \begin{array}[c]{c}
\begin{tikzpicture}
	\begin{pgfonlayer}{nodelayer}
		\node [style=none] (1) at (1.5, 1) {};
		\node [style=none] (7) at (2.75, 1) {};
		\node [style=object] (8) at (1.5, 2.25) {$A$};
		\node [style=object] (9) at (2.75, 2.25) {$A$};
		\node [style=none] (10) at (-1.25, 1) {};
		\node [style=none] (11) at (0, 1) {};
		\node [style=object] (12) at (-1.25, 2.25) {$A$};
		\node [style=object] (13) at (0, 2.25) {$A$};
		\node [style=object] (14) at (0.75, 1.25) {$=$};
	\end{pgfonlayer}
	\begin{pgfonlayer}{edgelayer}
		\draw [style=wire, bend right=90, looseness=2.00] (1.center) to (7.center);
		\draw [style=wire, in=-90, out=90] (1.center) to (9);
		\draw [style=wire, in=90, out=-90] (8) to (7.center);
		\draw [style=wire, bend right=90, looseness=2.00] (10.center) to (11.center);
		\draw [style=wire] (12) to (10.center);
		\draw [style=wire] (13) to (11.center);
	\end{pgfonlayer}
\end{tikzpicture}
   \end{array}
\end{align*} 
 \begin{align*}
   \begin{array}[c]{c}
 \xymatrixcolsep{5pc}\xymatrix{ k \ar[dr]_-{\cap_A}  \ar[r]^-{\cap_A} & A \otimes A \ar[d]^-{\sigma_{A,A}} \\
& k   
  } 
   \end{array} &&    \begin{array}[c]{c}
\begin{tikzpicture}
	\begin{pgfonlayer}{nodelayer}
		\node [style=none] (1) at (1.5, 2.25) {};
		\node [style=none] (7) at (2.75, 2.25) {};
		\node [style=object] (8) at (1.5, 1) {$A$};
		\node [style=object] (9) at (2.75, 1) {$A$};
		\node [style=none] (10) at (-1.25, 2.25) {};
		\node [style=none] (11) at (0, 2.25) {};
		\node [style=object] (12) at (-1.25, 1) {$A$};
		\node [style=object] (13) at (0, 1) {$A$};
		\node [style=object] (14) at (0.75, 2) {$=$};
	\end{pgfonlayer}
	\begin{pgfonlayer}{edgelayer}
		\draw [style=wire, bend left=90, looseness=2.00] (1.center) to (7.center);
		\draw [style=wire, in=90, out=-90] (1.center) to (9);
		\draw [style=wire, in=-90, out=90] (8) to (7.center);
		\draw [style=wire, bend left=90, looseness=2.00] (10.center) to (11.center);
		\draw [style=wire] (12) to (10.center);
		\draw [style=wire] (13) to (11.center);
	\end{pgfonlayer}
\end{tikzpicture}
   \end{array}
\end{align*} 
  A \textbf{self-dual compact closed category} \cite[Section 5]{selinger2010autonomous} is a symmetric monoidal category $\mathbb{X}$ equipped with a family of maps $\cup_A: A \otimes A \to k$ and $\cap_A: k \to A \otimes A$ such that for each object $A$, $(A, \cup_A, \cap_A)$ is a self-dual object which satisfies the twist equations.
\end{definition}

Without loss of generality, in a self-dual compact closed category, we use the convention that for each pair of objects $A$ and $B$: \begin{align*}
  \begin{array}[c]{c} \cup_{A \otimes B}    \end{array} : =    \begin{array}[c]{c}
\begin{tikzpicture}
	\begin{pgfonlayer}{nodelayer}
		\node [style=object] (135) at (7, -4.5) {$A$};
		\node [style=object] (136) at (8.25, -4.5) {$A$};
		\node [style=object] (137) at (7.5, -4.5) {$B$};
		\node [style=object] (138) at (8.75, -4.5) {$B$};
	\end{pgfonlayer}
	\begin{pgfonlayer}{edgelayer}
		\draw [style=wire, bend right=90, looseness=2.00] (135) to (136);
		\draw [style=wire, bend right=90, looseness=2.00] (137) to (138);
	\end{pgfonlayer}
\end{tikzpicture}
   \end{array} &&   \begin{array}[c]{c} \cap_{A \otimes B}    \end{array} : =    \begin{array}[c]{c}
\begin{tikzpicture}
	\begin{pgfonlayer}{nodelayer}
		\node [style=object] (135) at (7, -4.5) {$A$};
		\node [style=object] (136) at (8.25, -4.5) {$A$};
		\node [style=object] (137) at (7.5, -4.5) {$B$};
		\node [style=object] (138) at (8.75, -4.5) {$B$};
	\end{pgfonlayer}
	\begin{pgfonlayer}{edgelayer}
		\draw [style=wire, bend left=90, looseness=2.00] (135) to (136);
		\draw [style=wire, bend left=90, looseness=2.00] (137) to (138);
	\end{pgfonlayer}
\end{tikzpicture}
   \end{array}
\end{align*}
and that for the unit $k$, $\cup_k = \cap_k = 1_k$ (which are just empty drawings in the graphical calculus). We also point out that the twist equations are not strictly necessary for the story of this paper, or for defining a compact closed category where $A = A^\ast$ as in \cite[]{selinger2010autonomous}. However, in this paper we have elected to include them in the definition as it is more practical, greatly simplifying our string diagram computations, and all of the examples of monoidal reverse differential categories that we have discovered so far satisfy this twist equation. Furthermore, in most of the literature when considering self-dual compact closed categories, the twist equations are often taken as axioms such as for the ZX calculus \cite[]{Coecke08interactingquantum}, quantum computing (which considers the free self-dual compact closed PROP) \cite[]{7174913}, and hypergraph categories \cite[]{fong2019hypergraph}. 

We now discuss the induced dagger functor of a self-dual compact closed category: 

\begin{lemma} \cite[Remark 4.5]{selinger2010autonomous} \label{sliding} Let $\mathbb{X}$ be a self-dual compact closed category with cups $\cup$ and caps $\cap$. Then $\mathbb{X}$ is a dagger category whose dagger functor $(\_)^\ast: \mathbb{X}^{\mathsf{op}} \to \mathbb{X}$ is defined on objects as $A^\ast = A$ and for a map $f: A \to B$, $f^\ast: B \to A$ \cite[Definition 3.9]{heunen2019categories} is defined as follows: 
\begin{align*}
   \begin{array}[c]{c}
f^{\ast} := \xymatrixcolsep{5pc}\xymatrix{B \ar[r]^-{\cap_A \otimes 1_A} & \oc A \otimes A \otimes B  \ar[r]^-{1_A \otimes f \otimes 1_A} & A \otimes B \otimes B \ar[r]^-{1_A \otimes \cap_A} & A   
  }
   \end{array} 
   \end{align*}  
   \begin{align*}
   \begin{array}[c]{c}
\begin{tikzpicture}
	\begin{pgfonlayer}{nodelayer}
		\node [style=object] (9) at (5, 3.25) {$B$};
		\node [style=object] (10) at (5, 0.75) {$A$};
		\node [style=object] (11) at (6, 2) {$=$};
		\node [style=none] (12) at (8.25, 2.75) {};
		\node [style=none] (13) at (9.5, 1.5) {};
		\node [style=object] (14) at (9.5, 4) {$B$};
		\node [style=none] (15) at (7, 2.75) {};
		\node [style=object] (16) at (7, 0.5) {$A$};
		\node [style=none] (17) at (8.25, 1.5) {};
		\node [style=component] (18) at (5, 2) {$f^\ast$};
		\node [style=component] (19) at (8.25, 2) {$f$};
	\end{pgfonlayer}
	\begin{pgfonlayer}{edgelayer}
		\draw [style=wire, bend right=90, looseness=2.00] (12.center) to (15.center);
		\draw [style=wire] (15.center) to (16);
		\draw [style=wire, bend left=90, looseness=2.00] (13.center) to (17.center);
		\draw [style=wire] (14) to (13.center);
		\draw [style=wire] (9) to (18);
		\draw [style=wire] (18) to (10);
		\draw [style=wire] (12.center) to (19);
		\draw [style=wire] (19) to (17.center);
	\end{pgfonlayer}
\end{tikzpicture}
   \end{array}
\end{align*}  
Explicitly, $1_A^\ast = 1_A$, $(f;g)^\ast = g^\ast;f^\ast$ and $f^{\ast\ast} = f$. Furthermore, for any map $f: A \to B$, the following diagrams commute \cite[Lemma 3.12 \& 3.26]{heunen2019categories}: 
\begin{align*}
   \begin{array}[c]{c}
  \xymatrixcolsep{5pc}\xymatrix{ k \ar[r]^-{\cap_A} \ar[d]_-{\cap_B} & A \otimes A \ar[d]^-{1_A \otimes f}  \\
  B \otimes B \ar[r]_-{f^\ast \otimes 1_B} & A \otimes B 
  } 
   \end{array} && 
   \begin{array}[c]{c}
\begin{tikzpicture}
	\begin{pgfonlayer}{nodelayer}
		\node [style=object] (11) at (-3, 4.5) {$=$};
		\node [style=none] (12) at (-4, 5.25) {};
		\node [style=none] (15) at (-5.25, 5.25) {};
		\node [style=object] (16) at (-5.25, 3.5) {$A$};
		\node [style=component] (19) at (-4, 4.5) {$f$};
		\node [style=object] (20) at (-4, 3.5) {$B$};
		\node [style=none] (21) at (-2, 5.25) {};
		\node [style=none] (22) at (-0.75, 5.25) {};
		\node [style=object] (23) at (-0.75, 3.5) {$B$};
		\node [style=component] (24) at (-2, 4.5) {$f^\ast$};
		\node [style=object] (25) at (-2, 3.5) {$A$};
	\end{pgfonlayer}
	\begin{pgfonlayer}{edgelayer}
		\draw [style=wire, bend right=90, looseness=2.00] (12.center) to (15.center);
		\draw [style=wire] (15.center) to (16);
		\draw [style=wire] (12.center) to (19);
		\draw [style=wire] (19) to (20);
		\draw [style=wire, bend left=90, looseness=2.00] (21.center) to (22.center);
		\draw [style=wire] (22.center) to (23);
		\draw [style=wire] (21.center) to (24);
		\draw [style=wire] (24) to (25);
	\end{pgfonlayer}
\end{tikzpicture}
   \end{array} \\
   \begin{array}[c]{c}
  \xymatrixcolsep{5pc}\xymatrix{ B \otimes A \ar[d]_-{f^\ast \otimes 1_A} \ar[r]^-{1_B \otimes f} & B \otimes B \ar[d]^-{\cup_B}  \\
  A \otimes A \ar[r]_-{\cup_A} & k 
  } 
   \end{array}   &&  \begin{array}[c]{c}
\begin{tikzpicture}
	\begin{pgfonlayer}{nodelayer}
		\node [style=object] (0) at (2.75, 4.25) {$=$};
		\node [style=none] (1) at (1.75, 3.5) {};
		\node [style=none] (2) at (0.5, 3.5) {};
		\node [style=object] (3) at (0.5, 5.25) {$A$};
		\node [style=component] (4) at (1.75, 4.25) {$f$};
		\node [style=object] (5) at (1.75, 5.25) {$B$};
		\node [style=none] (6) at (3.75, 3.5) {};
		\node [style=none] (7) at (5, 3.5) {};
		\node [style=object] (8) at (5, 5.25) {$B$};
		\node [style=component] (9) at (3.75, 4.25) {$f^\ast$};
		\node [style=object] (10) at (3.75, 5.25) {$A$};
	\end{pgfonlayer}
	\begin{pgfonlayer}{edgelayer}
		\draw [style=wire, bend left=90, looseness=2.00] (1.center) to (2.center);
		\draw [style=wire] (2.center) to (3);
		\draw [style=wire] (1.center) to (4);
		\draw [style=wire] (4) to (5);
		\draw [style=wire, bend right=90, looseness=2.00] (6.center) to (7.center);
		\draw [style=wire] (7.center) to (8);
		\draw [style=wire] (6.center) to (9);
		\draw [style=wire] (9) to (10);
	\end{pgfonlayer}
\end{tikzpicture}
   \end{array} 
\end{align*}
The above identities are sometimes referred to as the \textbf{sliding equations}. 
\end{lemma}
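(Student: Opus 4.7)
The plan is to prove each claim by working in the graphical calculus of the self-dual compact closed structure, with the snake equations and twist equations as the only substantive tools, together with bifunctoriality of $\otimes$.

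First I would verify functoriality of $(-)^\ast$. For $1_A^\ast = 1_A$, substituting $f = 1_A$ in the definition produces a diagram that is literally a cap composed with a cup along one strand, which is equal to $1_A$ by one of the snake equations. For contravariant functoriality $(f;g)^\ast = g^\ast; f^\ast$, I would draw the right-hand side and insert a cup-cap identity on the strand between $g^\ast$ and $f^\ast$; two applications of bifunctoriality then let one slide the pieces past one another to collapse to the left-hand side. For involution $f^{\ast\ast} = f$, substituting the definition of $(-)^\ast$ into itself produces a diagram with two caps and two cups framing a single $f$ box; two snake equations eliminate them and leave $f$. These are routine string-diagram manipulations.

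For the sliding equations, I would start from the right-hand side $\cap_B; (f^\ast \otimes 1_B)$, expand $f^\ast$ according to its definition, and then apply the snake equation to the cap-cup pair that arises along the $B$ strand; this collapses the diagram to $\cap_A; (1_A \otimes f)$. The second sliding equation is dual: starting from $(f^\ast \otimes 1_A); \cup_A$, substitute the definition of $f^\ast$ and collapse a snake on the $A$ strand. In both cases the twist equations are invoked implicitly to identify $\cup_A$ with $\sigma_{A,A};\cup_A$ (and similarly for $\cap$) whenever the bookkeeping of symmetry swaps needs rectifying; strictly, however, if one uses the convention that $\cup_{A\otimes B}$ and $\cap_{A\otimes B}$ are defined as in the indicated diagrams, the calculation only touches one strand at a time and the twists play no explicit role.

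There is no real obstacle here; the entire lemma is folklore and its proof is a textbook exercise in the topological nature of the graphical calculus for compact closed categories. The only mild subtlety is dimensional/variance bookkeeping (e.g.\ making sure the cup on a $B$ strand appearing in the definition of $f^\ast$ matches the cap on a $B$ strand appearing elsewhere, so that the snake equation applies). I would carry out the proof purely diagrammatically, noting at the end that the verifications specialize to the equations stated in the lemma, and reference the analogous arguments in \cite{heunen2019categories,selinger2010autonomous} for full detail.
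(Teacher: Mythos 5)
The paper does not prove this lemma at all --- it is imported verbatim from \cite[Remark 4.5]{selinger2010autonomous} and \cite[Lemmas 3.12 \& 3.26]{heunen2019categories} --- so there is no in-paper proof to compare against; your sketch is the standard argument from those sources and is correct. One point worth making explicit rather than leaving ``implicit'': with the paper's definition $f^\ast = (\cap_A \otimes 1_B);(1_A \otimes f \otimes 1_B);(1_A \otimes \cup_B)$ (the displayed composite in the statement has typos --- the string diagram is the authoritative definition), the involution $f^{\ast\ast}=f$ genuinely requires the twist equations and not just the snakes, since expanding $f^{\ast\ast}$ pairs the right leg of a cap with the right leg of a cup; the two sliding equations, by contrast, really do reduce to a single snake each, as you say.
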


We will now justify that monoidal reverse differential categories should be self-dual compact closed. We begin in the Seely case.  By requirement 1. at the beginning of this section, if we start with an MRDC which satisfies the Seely requirements, then it should be a differential storage category $\mathbb{X}$, with coalgebra modality $(\oc, \delta, \varepsilon, \Delta, e)$ (with the Seely isomorphisms), and deriving transformation $\mathsf{d}: \oc A \otimes A \to \oc A$ (or equivalently codereliction $\eta_A: A \to \oc A$). Of course this means that the coKleisli category $\mathbb{X}_\oc$ is a Cartesian differential category. But then by requirement (2), $\mathbb{X}_\oc$ should be a Cartesian reverse differential category, which means its fibration of linear maps $\mathcal{L}[\mathbb{X}_\oc]$ is a dagger fibration. As explained in Example \ref{ex:daggerlin}, each of the fibres $\mathcal{L}[X]$ is then a $\dagger$-category. By Corollary \ref{cor:seely-lin}, we also have the isomorphisms $\mathcal{L}[0] \cong \mathsf{LIN}[\mathbb{X}_\oc] \cong \mathbb{X}$, which implies that the base category $\mathbb{X}$ is itself a $\dagger$-category. To distinguish between the two dagger structures, we will use $\dagger$ for the daggers in $\mathbb{X}_\oc$, and $\ast$ for the daggers in $\mathbb{X}$. If we assume that $\dagger$ is monoidal, then using the comonad counit $\varepsilon$ and the codereliction $\eta$, we can now build cups and caps to make every object of $\mathbb{X}$ a self-dual object. Unfortunately, since the twist equation is non-canonical, it does not appear to come for free from this approach, and so $\mathbb{X}$ is not a self-dual compact closed category as defined in this paper. Of course this should somewhat be expected since we asked for the twist equation for practical reasons. Nevertheless, this still justifies the link between reverse differentiation and self-duality. 

Using Theorem \ref{thm:fibration_equivalence}, pushing the dagger through the fibration equivalence $\mathcal{L}[\mathbb{X}_\oc] \cong \mathcal{L}_\oc[\mathbb{X}]$, we also have that $\mathcal{L}_\oc[\mathbb{X}]$ is a dagger fibration as in Example \ref{ex:!daggerfibration}. Consider then the map $\varepsilon_A: \oc A \to A$ interpreted as a map in the fibre $\mathcal{L}_\oc[\mathbb{X}](k,A)$. Taking its dagger we obtain a map $\varepsilon_A^{\dagger[A]}: \oc A \otimes A \to k$. Precomposing this map with the codereliction we obtain our cup $\cup_A: A \otimes A \to k$: 
\[ \cup_A :=  \xymatrixcolsep{5pc}\xymatrix{  A \otimes A \ar[r]^-{\eta_A \otimes 1_A} & \oc A \otimes A  \ar[r]^-{\varepsilon_A^{\dagger[A]}} & k } \]
To build the cap $\cap_A: k \to A \otimes A$, we use the dagger $\ast$ on the base category $\mathbb{X}$:
\[ \cap_A :=  \xymatrixcolsep{5pc}\xymatrix{  k \ar[r]^-{{\varepsilon_A^{\dagger[A]}}^\ast} & \oc A \otimes A  \ar[r]^-{\eta^\ast_A \otimes 1_A} & A \otimes A } \]

\begin{lemma} Let $\mathbb{X}$ be a differential storage category. Suppose $\mathbb{X}_\oc$ is a Cartesian reverse differential category, and therefore has a contextual linear dagger $\dagger$. If $\dagger$ is (strict) monoidal then every object of $\mathbb{X}$ is a self-dual object where the cups and caps are defined as above. 
\end{lemma}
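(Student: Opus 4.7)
The plan is to verify the two snake equations
\[
(1_A \otimes \cap_A);(\cup_A \otimes 1_A) \;=\; 1_A \;=\; (\cap_A \otimes 1_A);(1_A \otimes \cup_A)
\]
defining a self-dual object. My first move is to translate the hypothesis that the fibre dagger $\dagger$ is strict monoidal into strict monoidality of the induced base dagger $\ast$ on $\mathbb{X}$. Under the Seely isomorphism $\oc\top \cong k$, the tensor structure of the fibre $\mathcal{L}_\oc[\top]$ (which is built from $\Delta_\top$ and $e_\top$, both of which become essentially trivial under the Seely identification) collapses onto the tensor structure of $\mathbb{X}$, so strict monoidality of $\dagger[\top]$ transports to strict monoidality of $\ast$ along the isomorphism $\mathbb{X} \cong \mathcal{L}_\oc[\top]$ coming from Corollary~\ref{cor:seely-lin}.

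Next, I would observe that the two snake equations are exchanged by $\ast$. Applying contravariance and strict monoidality of $\ast$ to the definition $\cup_A = (\eta_A \otimes 1_A); \varepsilon_A^{\dagger[A]}$ gives $(\cup_A)^\ast = (\varepsilon_A^{\dagger[A]})^\ast; (\eta_A^\ast \otimes 1_A) = \cap_A$, so $\ast$ sends one snake equation to the other. It therefore suffices to verify just a single snake identity, say $(1_A \otimes \cap_A);(\cup_A \otimes 1_A) = 1_A$.

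To verify the remaining snake identity, I would expand $\cup_A$ and $\cap_A$, reorganise the resulting composite using bifunctoriality of $\otimes$, and then rewrite it via the coKleisli composition formula in the fibre $\mathcal{L}_\oc[A]$ of Lemma~\ref{lem:linfibre}. The aim is to recognise the composite as arising from the identity $e_A \otimes 1_A$ on $A$ in $\mathcal{L}_\oc[A]$ through alternating applications of the dagger $\dagger[A]$, which can then be peeled off via the contravariant functoriality and involutiveness axioms of $\dagger[A]$ recorded in Example~\ref{ex:!daggerfibration}. The linear rule $\eta_A; \varepsilon_A = 1_A$ should then collapse the remaining expression to $1_A$.

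The principal obstacle is that $\cap_A$ mixes the base dagger $\ast$ with the fibre dagger $\dagger[A]$ in a nontrivial way: the factor $(\varepsilon_A^{\dagger[A]})^\ast$ applies $\ast$ to a map that originated in the fibre over $A$, not over $\top$, so the two daggers act on different categories. To bridge them I expect to invoke the change-of-base axiom of the dagger fibration applied to the canonical coKleisli morphism $!_A : A \to \top$: the associated substitution functor $\llbracket !_A \rrbracket^\ast_\oc : \mathcal{L}_\oc[\top] \to \mathcal{L}_\oc[A]$ preserves daggers and therefore supplies the required compatibility between $\ast$-daggers of base morphisms and $\dagger[A]$-daggers in the fibre $\mathcal{L}_\oc[A]$. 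Combined with the universal property of the codereliction $\eta_A$, this should close the snake computation.
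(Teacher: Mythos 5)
Your overall strategy coincides with the paper's: expand the cups and caps and reduce the snake identity to the contravariance and involution axioms of $\dagger[A]$ together with the linear rule $\eta_A;\varepsilon_A=1_A$. Two of your refinements are genuinely worthwhile. First, once strict monoidality of $\ast$ has been transported from the fibre over $\top$, the observation that $(\cup_A)^\ast=\cap_A$ exchanges the two snake equations, so only one of them needs to be computed; the paper proves one and merely asserts the other is similar. Second, your identification of change of base along $!_A$ as the bridge between $\ast$ and $\dagger[A]$ is exactly how one justifies the identity $(f;g)^{\dagger[X]}=(1_{\oc X}\otimes g^\ast);f^{\dagger[X]}$ (for $f$ a fibre map and $g$ a base map), which together with monoidality of $\dagger[A]$ drives most of the computation.

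There is, however, a genuine gap: the toolkit you list is not sufficient to close the computation. Carrying out your plan, after interchange, monoidality of $\dagger[A]$, and two applications of the contravariance/change-of-base identity, the snake composite reduces to
\[ \eta_A;\Bigl(\bigl((\varepsilon_A;\eta_A)\otimes 1_A\bigr);\varepsilon_A^{\dagger[A]}\Bigr)^{\dagger[A]}, \]
and to reach $\eta_A;\varepsilon_A=1_A$ one must know that
\[ \bigl((\varepsilon_A;\eta_A)\otimes 1_A\bigr);\varepsilon_A^{\dagger[A]}=\varepsilon_A^{\dagger[A]}, \]
i.e.\ that $\varepsilon_A^{\dagger[A]}$ is unchanged by precomposing its \emph{context} strand with the idempotent $\varepsilon_A;\eta_A$. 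This is not an instance of change of base: the substitution functors act by precomposition with coKleisli liftings $\delta;\oc(\llbracket h\rrbracket)$, and $\varepsilon_A;\eta_A\colon\oc A\to\oc A$ is not of that form; nor does it follow from contravariance or involution, since the analogous equation fails for a general fibre map (for instance $((\varepsilon_A;\eta_A)\otimes 1);(e_A\otimes 1)=(\varepsilon_A;\eta_A;e_A)\otimes 1=0$ by the constant rule). The paper supplies it as a third preliminary identity, $((\varepsilon_X;\eta_X)\otimes 1);f^{\dagger[X]}=(((\varepsilon_X;\eta_X)\otimes 1);f)^{\dagger[X]}$, extracted from the fact that the contextual dagger preserves linearity in context together with the characterization of linear coKleisli maps in Corollary \ref{cor:seely-lin}; combined with $\varepsilon_A;\eta_A;\varepsilon_A=\varepsilon_A$ this yields the needed equation. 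Your appeal to ``the universal property of the codereliction'' does not provide this. Add that identity to your list of preliminary facts and the rest of your argument goes through.
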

\begin{proof} We must show that the cups and caps satisfy the snake equations. To do so we will need some simple identities. First observe that for a map $f: \oc X \otimes A \to B$ and a map $g: B \to D$, since the dagger is contravariant, it straightforward to show that: 
\[(f;g)^{\dagger[X]} = (1_X \otimes g^\ast); f^{\dagger[X]}\]
Next, using the assumption that $\dagger$ is monoidal, we have that for any map $f: \oc X \otimes A \to B$ and any object $C$: 
\[ (f \otimes 1_C)^{\dagger[X]} = f^{\dagger[X]} \otimes 1_C\]
The last required identity comes from the fact that in any Cartesian differential category with a contextual linear dagger, the dagger preserves linearity in context. Translating this in terms of the fibration $\mathcal{L}_\oc[\mathbb{X}]$, by Corollary \ref{cor:seely-lin} we have that for any map $f: \oc X \otimes A \to B$: 
\[ \left( (\varepsilon_X; \eta_X) \otimes 1_A \right) ;f^{\dagger[X]} = \left( \left( (\varepsilon_X; \eta_X) \otimes 1_A \right); f \right)^{\dagger[X]}  \]
Now we compute one of the snake equations: 
\begin{align*}
(1_A \otimes \cap_A);(\cup_A \otimes 1_A) &=~ (1_A \otimes {\varepsilon_A^{\dagger[A]}}^\ast);  (1_A \otimes \eta^\ast_A \otimes 1_A) ;(\eta_A \otimes 1_A \otimes 1_A); (\varepsilon^{\dagger[A]}_A \otimes 1_A) \\
&=~ \eta_A; (1_{\oc A} \otimes {\varepsilon_A^{\dagger[A]}}^\ast); (1_A \otimes \eta^\ast_A \otimes 1_A); (\varepsilon^{\dagger[A]}_A \otimes 1_A) \\
&=~ \eta_A; (1_{\oc A} \otimes {\varepsilon_A^{\dagger[A]}}^\ast); (1_A \otimes (\varepsilon_A \otimes 1_A)^\ast) ; (\eta_A \otimes 1_A)^{\dagger[A]} \\
&=~ \eta_A; (1_{\oc A} \otimes {\varepsilon_A^{\dagger[A]}}^\ast); \left( (\varepsilon_A \otimes 1_A);(\eta_A \otimes 1_A)  \right)^{\dagger[A]} \\
&=~ \eta_A; \left(  \left( (\varepsilon_A; \eta_A) \otimes 1_A \right) ; \varepsilon_A^{\dagger[A]} \right)^{\dagger[A]} \\ 
&=~ \eta_A; \left( \left( \varepsilon_A; \eta_A; \varepsilon_A \right)^{\dagger[A]} \right)^{\dagger[A]} \\
&=~ \eta_A ; {\varepsilon_A^{\dagger[A]}}^{\dagger[A]} \\
&=~ \eta_A; \varepsilon_A \\
&=~ 1_A 
\end{align*}
The proof for the other snake equation is similar. So we conclude that $(A, \cup_A, \cap_A)$ is a self-dual object, and that $\mathbb{X}$ is a self-dual compact closed category. 
\end{proof}

Even if the coalgebra modality does not have the Seely isomorphisms, it is still possible to show that in each of the fibres $\mathcal{L}_\oc[X]$ every object is self-dual. However, the computations in the proof are more complicated and not necessarily more enlightening. So we will omit the proof and simply provide the construction. In the fibre $\mathcal{L}_\oc[X]$, the cup is the map $\cup^X_A: \oc X \otimes A \otimes A \to k$ defined as follows: 
\[ \cup^X_A :=  \xymatrixcolsep{5pc}\xymatrix{ \oc X \otimes A \otimes A \ar[r]^-{\oc(0) \otimes 1_A \otimes 1_A} & \oc A \otimes A \otimes A \ar[r]^-{\mathsf{d}_A \otimes 1_A} & \oc A \otimes A  \ar[r]^-{\varepsilon_A^{\dagger[A]}} & k } \]
while the cap $\cap^X_A: \oc X \to A \otimes A$ is defined as the dagger in the fibre of the cap: $\cap^X_A := {\cup^X_A}^{\dagger[X]}$.

\subsection{Definition and examples}

Given the discussion of the previous section, monoidal reverse differential categories should at least be self-dual compact closed. We now give the full definition.  

\begin{definition} \label{MRDC} A \textbf{monoidal reverse differential category} (MRDC) is an additive symmetric monoidal category $\mathbb{X}$, such that $\mathbb{X}$ is a self-dual compact closed category, equipped with a coalgebra modality $(\oc, \delta, \varepsilon, \Delta, e)$ and a \textbf{reverse deriving transformation} which is a family of maps $\mathsf{r}_A: \oc A \otimes \oc A \to A$ drawn in the graphical calculus as: 
\[\mathsf{r}:= \begin{array}[c]{c} 
\begin{tikzpicture}
	\begin{pgfonlayer}{nodelayer}
		\node [style=object] (0) at (1.75, 2.75) {$A$};
		\node [style=object] (1) at (1.25, 1.25) {$\oc A$};
		\node [style=integral] (2) at (1.25, 2) {{\bf \aquarius\!\aquarius\!\aquarius}};
		\node [style=object] (3) at (0.75, 2.75) {$\oc A$};
	\end{pgfonlayer}
	\begin{pgfonlayer}{edgelayer}
		\draw [style=wire, bend right] (2) to (0);
		\draw [style=wire] (1) to (2);
		\draw [style=wire, bend left] (2) to (3);
	\end{pgfonlayer}
\end{tikzpicture}
   \end{array}\] 
such that the following axioms hold: 
\begin{description}
 \item[{\bf [r.N]}]  Reverse Naturality Rule: 
\begin{align*}
   \begin{array}[c]{c}
 \xymatrixcolsep{5pc}\xymatrix{\oc A \otimes \oc B \ar[d]_-{1_{\oc A} \otimes \oc(f)^\ast} \ar[r]^-{\oc(f) \otimes 1_{\oc B}} & \oc B \otimes \oc B \ar[r]^-{\mathsf{r}_B} & B  \ar[d]^-{f^\ast} \\
 \oc A \otimes \oc A \ar[rr]_-{\mathsf{r}_A}  && A
  }
   \end{array} &&    \begin{array}[c]{c}
\begin{tikzpicture}
	\begin{pgfonlayer}{nodelayer}
		\node [style=integral] (28) at (0, 8) {{\bf \aquarius\!\aquarius\!\aquarius}};
		\node [style=function] (29) at (-0.5, 9) {$f$};
		\node [style=component] (30) at (0, 7.25) {$f^\ast$};
		\node [style=object] (31) at (0, 6.25) {$A$};
		\node [style=object] (32) at (-0.5, 10) {$\oc A$};
		\node [style=object] (33) at (0.5, 10) {$\oc B$};
		\node [style=object] (34) at (1.5, 8) {$=$};
		\node [style=integral] (35) at (3, 8) {{\bf \aquarius\!\aquarius\!\aquarius}};
		\node [style=component] (36) at (3.5, 9) {$\oc(f)^\ast$};
		\node [style=object] (38) at (3, 6.25) {$A$};
		\node [style=object] (39) at (3.5, 10) {$\oc B$};
		\node [style=object] (40) at (2.5, 10) {$\oc A$};
	\end{pgfonlayer}
	\begin{pgfonlayer}{edgelayer}
		\draw [style=wire] (32) to (29);
		\draw [style=wire, in=150, out=-90] (29) to (28);
		\draw [style=wire, in=-90, out=45] (28) to (33);
		\draw [style=wire] (28) to (30);
		\draw [style=wire] (30) to (31);
		\draw [style=wire] (39) to (36);
		\draw [style=wire, in=30, out=-90] (36) to (35);
		\draw [style=wire, in=-90, out=135] (35) to (40);
		\draw [style=wire] (35) to (38);
	\end{pgfonlayer}
\end{tikzpicture}
   \end{array}
\end{align*}
\item[{\bf [r.1]}] Reverse Constant Rule:
\begin{align*}
   \begin{array}[c]{c}
 \xymatrixcolsep{5pc}\xymatrix{\oc A  \ar[dr]_-{0} \ar[r]^-{1_{\oc A} \otimes e^\ast_A} & \oc A \otimes \oc A \ar[d]^-{\mathsf{r}_A} \\
  & A }
   \end{array}&&   \begin{array}[c]{c}
\begin{tikzpicture}
	\begin{pgfonlayer}{nodelayer}
		\node [style=object] (41) at (5.25, 0) {$A$};
		\node [style=component] (42) at (5.75, 2) {$e^\ast$};
		\node [style=differential] (43) at (5.25, 1) {{\bf \aquarius\!\aquarius\!\aquarius}};
		\node [style=object] (44) at (4.75, 2.5) {$\oc A$};
		\node [style=port] (45) at (6.25, 1) {$=$};
		\node [style=port] (46) at (7, 1) {$0$};
	\end{pgfonlayer}
	\begin{pgfonlayer}{edgelayer}
		\draw [style=wire, in=-90, out=30] (43) to (42);
		\draw [style=wire, in=-90, out=135] (43) to (44);
		\draw [style=wire] (43) to (41);
	\end{pgfonlayer}
\end{tikzpicture}
   \end{array}
\end{align*}
\item[{\bf [r.2]}] Reverse Leibniz Rule (or Reverse Product Rule):
  \begin{align*}
     \begin{array}[c]{c}
\xymatrixcolsep{11pc}\xymatrix{\oc A \otimes \oc A \otimes \oc A \ar[d]_-{\Delta_A \otimes 1_{\oc A} \otimes 1_{\oc A}} \ar[r]^-{1_{\oc A} \otimes \Delta^\ast_A} &  \oc A \otimes \oc A \ar[dd]^-{\mathsf{r}_A} \\
 \oc A \otimes \oc A \otimes \oc A \ar[d]_-{(1_{\oc A} \otimes \cup_{\oc A}) + (1_{\oc A} \otimes \sigma_{\oc A, \oc A})(1_{\oc A} \otimes \cup_{\oc A})} \\ 
    \oc A \otimes \oc A \ar[r]_-{\mathsf{r}_A} & A }
   \end{array}
\end{align*}
\begin{align*}
  \begin{array}[c]{c}
\begin{tikzpicture}
	\begin{pgfonlayer}{nodelayer}
		\node [style=object] (41) at (-3.5, 2.75) {$A$};
		\node [style=differential] (43) at (-3.5, 3.75) {{\bf \aquarius\!\aquarius\!\aquarius}};
		\node [style=object] (44) at (-4.75, 6) {$\oc A$};
		\node [style=object] (67) at (-3.5, 6) {$\oc A$};
		\node [style=duplicate] (68) at (-2.75, 4.75) {$\Delta^\ast$};
		\node [style=object] (69) at (-2, 6) {$\oc A$};
		\node [style=object] (70) at (-1.75, 4.25) {$=$};
		\node [style=object] (71) at (-0.5, 6) {$\oc A$};
		\node [style=differential] (72) at (0, 3.25) {{\bf \aquarius\!\aquarius\!\aquarius}};
		\node [style=object] (73) at (1.75, 6) {$\oc A$};
		\node [style=duplicate] (74) at (-0.5, 5) {$\Delta$};
		\node [style=object] (75) at (0, 2.5) {$A$};
		\node [style=object] (76) at (0.75, 6) {$\oc A$};
		\node [style=port] (83) at (2.25, 4.25) {$+$};
		\node [style=none] (84) at (0.75, 4.5) {};
		\node [style=none] (85) at (0, 4.5) {};
		\node [style=object] (86) at (3.25, 6) {$\oc A$};
		\node [style=differential] (87) at (3.75, 3.25) {{\bf \aquarius\!\aquarius\!\aquarius}};
		\node [style=object] (88) at (4.5, 6) {$\oc A$};
		\node [style=duplicate] (89) at (3.25, 5) {$\Delta$};
		\node [style=object] (90) at (3.75, 2.5) {$A$};
		\node [style=object] (91) at (5.5, 6) {$\oc A$};
		\node [style=none] (92) at (5.5, 4.75) {};
		\node [style=none] (93) at (3.75, 4.5) {};
	\end{pgfonlayer}
	\begin{pgfonlayer}{edgelayer}
		\draw [style=wire, in=-90, out=135] (43) to (44);
		\draw [style=wire] (43) to (41);
		\draw [style=wire, bend left] (68) to (67);
		\draw [style=wire, bend right] (68) to (69);
		\draw [style=wire, in=45, out=-90, looseness=1.25] (68) to (43);
		\draw [style=wire, in=-90, out=30, looseness=1.25] (72) to (73);
		\draw [style=wire, in=150, out=-135, looseness=1.50] (74) to (72);
		\draw [style=wire] (71) to (74);
		\draw [style=wire] (72) to (75);
		\draw [style=wire, bend left=90, looseness=2.00] (84.center) to (85.center);
		\draw [style=wire] (76) to (84.center);
		\draw [style=wire, in=90, out=-30] (74) to (85.center);
		\draw [style=wire, in=-90, out=45, looseness=1.25] (87) to (88);
		\draw [style=wire, in=150, out=-135, looseness=1.50] (89) to (87);
		\draw [style=wire] (86) to (89);
		\draw [style=wire] (87) to (90);
		\draw [style=wire, bend left=90, looseness=2.00] (92.center) to (93.center);
		\draw [style=wire] (91) to (92.center);
		\draw [style=wire, in=90, out=-30] (89) to (93.center);
	\end{pgfonlayer}
\end{tikzpicture}
   \end{array}    
\end{align*}
\item[{\bf [r.3]}] Reverse Linear Rule:
  \begin{align*}
     \begin{array}[c]{c}
 \xymatrixcolsep{5pc}\xymatrix{\oc A \otimes A \ar[r]^-{1_{\oc A} \otimes \varepsilon^\ast_A} \ar[dr]_-{e_A \otimes 1_A} & \oc A \otimes \oc A \ar[d]^-{\mathsf{r}_A} \\
  & A    }
   \end{array} && 
   \begin{array}[c]{c}
\begin{tikzpicture}
	\begin{pgfonlayer}{nodelayer}
		\node [style=object] (94) at (-11.25, 11.5) {$\oc A$};
		\node [style=object] (95) at (-10.5, 11.5) {$A$};
		\node [style=object] (96) at (-10.5, 8.75) {$A$};
		\node [style=component] (97) at (-11.25, 10.25) {$e$};
		\node [style=port] (103) at (-12, 10.25) {$=$};
		\node [style=object] (104) at (-13.25, 8.75) {$A$};
		\node [style=component] (105) at (-12.75, 10.75) {$\varepsilon^\ast$};
		\node [style=differential] (106) at (-13.25, 9.75) {{\bf \aquarius\!\aquarius\!\aquarius}};
		\node [style=object] (107) at (-13.75, 11.5) {$\oc A$};
		\node [style=object] (110) at (-12.75, 11.5) {$\oc A$};
	\end{pgfonlayer}
	\begin{pgfonlayer}{edgelayer}
		\draw [style=wire] (94) to (97);
		\draw [style=wire] (95) to (96);
		\draw [style=wire, in=-90, out=30] (106) to (105);
		\draw [style=wire, in=-90, out=135] (106) to (107);
		\draw [style=wire] (106) to (104);
		\draw [style=wire] (110) to (105);
	\end{pgfonlayer}
\end{tikzpicture}
   \end{array}
\end{align*}
\item[{\bf [r.4]}] Reverse Chain Rule: 
\begin{align*}
   \begin{array}[c]{c}
 \xymatrixcolsep{4pc}\xymatrix{\oc A \otimes \oc \oc A \ar[d]_-{\Delta_A \otimes 1_{\oc \oc A}} \ar[rrr]^-{1_{\oc A} \otimes \delta^\ast_A} &&& \oc A \otimes \oc A \ar[d]^-{\mathsf{r}_A} \\
    \oc A \otimes \oc A \otimes \oc \oc A \ar[r]_-{1_{\oc A} \otimes \delta_A \otimes 1_{\oc \oc A}} & \oc A \otimes \oc \oc A \otimes \oc \oc A \ar[r]_-{1_{\oc A} \otimes \mathsf{r}_{\oc A}} & \oc A \otimes \oc A \ar[r]_-{\mathsf{r}_A}  & A
  } 
   \end{array} 
\end{align*}
\begin{align*}
   \begin{array}[c]{c}
\begin{tikzpicture}
	\begin{pgfonlayer}{nodelayer}
		\node [style=port] (103) at (-12, 10.25) {$=$};
		\node [style=object] (104) at (-13.25, 8.75) {$A$};
		\node [style=component] (105) at (-12.75, 10.75) {$\delta^\ast$};
		\node [style=differential] (106) at (-13.25, 9.75) {{\bf \aquarius\!\aquarius\!\aquarius}};
		\node [style=object] (107) at (-13.75, 11.5) {$\oc A$};
		\node [style=object] (110) at (-12.75, 11.5) {$\oc \oc A$};
		\node [style=duplicate] (117) at (-10.75, 11.75) {$\Delta$};
		\node [style=object] (118) at (-10.75, 12.5) {$\oc A$};
		\node [style=differential] (121) at (-10.25, 9) {{\bf \aquarius\!\aquarius\!\aquarius}};
		\node [style=object] (122) at (-10.25, 8.25) {$A$};
		\node [style=component] (124) at (-10.25, 10.75) {$\delta$};
		\node [style=differential] (125) at (-9.75, 10) {{\bf \aquarius\!\aquarius\!\aquarius}};
		\node [style=object] (126) at (-9.25, 12.5) {$\oc \oc A$};
	\end{pgfonlayer}
	\begin{pgfonlayer}{edgelayer}
		\draw [style=wire, in=-90, out=30] (106) to (105);
		\draw [style=wire, in=-90, out=135] (106) to (107);
		\draw [style=wire] (106) to (104);
		\draw [style=wire] (110) to (105);
		\draw [style=wire] (118) to (117);
		\draw [style=wire] (121) to (122);
		\draw [style=wire, in=-90, out=150] (125) to (124);
		\draw [style=wire, in=-90, out=45] (125) to (126);
		\draw [style=wire, in=90, out=-30, looseness=1.25] (117) to (124);
		\draw [style=wire, in=150, out=-150] (117) to (121);
		\draw [style=wire, in=30, out=-90] (125) to (121);
	\end{pgfonlayer}
\end{tikzpicture}
   \end{array}    
\end{align*}
     \item[{\bf [r.5]}] Reverse Interchange Rule:
     \begin{align*}
        \begin{array}[c]{c}
 \xymatrixcolsep{4pc}\xymatrix{\oc A \otimes \oc A  \ar[d]_-{1_{\oc A} \otimes \cap_{\oc A}  \otimes 1_{\oc A} }\ar[r]^-{1_{\oc A} \otimes \cap_{\oc A}  \otimes 1_{\oc A} } & \oc A \otimes \oc A \otimes \oc A \otimes \oc A \ar[r]^-{\mathsf{r}_A \otimes \mathsf{r}_A} & A \otimes A \ar[d]^-{\sigma_{A,A}} \\
         \oc A \otimes \oc A \otimes \oc A \otimes \oc A \ar[rr]_-{\mathsf{r}_A \otimes \mathsf{r}_A} && A \otimes A }
   \end{array} 
\end{align*}
\begin{align*}
 \begin{array}[c]{c}
\begin{tikzpicture}
	\begin{pgfonlayer}{nodelayer}
		\node [style=port] (103) at (3.25, 3.25) {$=$};
		\node [style=object] (104) at (0.25, 2) {$A$};
		\node [style=differential] (106) at (0.25, 3) {{\bf \aquarius\!\aquarius\!\aquarius}};
		\node [style=object] (107) at (-0.25, 5) {$\oc A$};
		\node [style=object] (127) at (2, 2) {$A$};
		\node [style=differential] (128) at (2, 3) {{\bf \aquarius\!\aquarius\!\aquarius}};
		\node [style=object] (129) at (2.5, 5) {$\oc A$};
		\node [style=none] (130) at (1.5, 3.75) {};
		\node [style=none] (131) at (0.75, 3.75) {};
		\node [style=object] (132) at (4.75, 1.75) {$A$};
		\node [style=differential] (133) at (4.75, 3.5) {{\bf \aquarius\!\aquarius\!\aquarius}};
		\node [style=object] (134) at (4.25, 5.5) {$\oc A$};
		\node [style=object] (135) at (6.5, 1.75) {$A$};
		\node [style=differential] (136) at (6.5, 3.5) {{\bf \aquarius\!\aquarius\!\aquarius}};
		\node [style=object] (137) at (7, 5.5) {$\oc A$};
		\node [style=none] (138) at (6, 4.25) {};
		\node [style=none] (139) at (5.25, 4.25) {};
	\end{pgfonlayer}
	\begin{pgfonlayer}{edgelayer}
		\draw [style=wire, in=-90, out=135] (106) to (107);
		\draw [style=wire] (106) to (104);
		\draw [style=wire, in=-90, out=45] (128) to (129);
		\draw [style=wire] (128) to (127);
		\draw [style=wire, bend right=90, looseness=2.00] (130.center) to (131.center);
		\draw [style=wire, in=45, out=-90] (131.center) to (106);
		\draw [style=wire, in=150, out=-90] (130.center) to (128);
		\draw [style=wire, in=-90, out=135] (133) to (134);
		\draw [style=wire, in=-90, out=45] (136) to (137);
		\draw [style=wire, bend right=90, looseness=2.00] (138.center) to (139.center);
		\draw [style=wire, in=45, out=-90] (139.center) to (133);
		\draw [style=wire, in=150, out=-90] (138.center) to (136);
		\draw [style=wire, in=90, out=-90, looseness=1.25] (136) to (132);
		\draw [style=wire, in=90, out=-90, looseness=1.25] (133) to (135);
	\end{pgfonlayer}
\end{tikzpicture}
   \end{array}    
\end{align*}
\end{description}
\end{definition}

Before we get to examples, it will be useful to describe the relationship between monoidal differential categories and monoidal reverse differential categories.  In particular, it shows that our definition satisfies requirement 1 described in the introduction to this section.

\begin{therm}\label{thm:rdc_to_dc} A reverse differential category is precisely a differential category which is also self-dual compact closed. Explicitly:
\begin{enumerate}[{\em (i)}]
    \item If $\mathbb{X}$ is a reverse differential category, then $\mathbb{X}$ is a differential category where the deriving transformation $\mathsf{d}_A: \oc A \otimes A \to \oc A$ is defined as: 
  \begin{align*}
   \begin{array}[c]{c}
\mathsf{d}_A := \xymatrixcolsep{3.5pc}\xymatrix{\oc A \otimes A \ar[r]^-{1_{\oc A} \otimes \cap_{\oc A} \otimes 1_A} & \oc A \otimes \oc A \otimes \oc A \otimes A  \ar[r]^-{\mathsf{r}_A \otimes \sigma_{\oc A, A}} & A \otimes A \otimes \oc A  \ar[r]^-{\cup_A \otimes 1_{\oc A}} & \oc A   
  }
   \end{array} 
\end{align*}
\begin{align*}
\begin{array}[c]{c} 
\begin{tikzpicture}
	\begin{pgfonlayer}{nodelayer}
		\node [style=object] (0) at (1.75, 2.75) {$A$};
		\node [style=object] (1) at (1.25, 1.25) {$\oc A$};
		\node [style=integral] (2) at (1.25, 2) {{\bf =\!=\!=\!=}};
		\node [style=object] (3) at (0.75, 2.75) {$\oc A$};
	\end{pgfonlayer}
	\begin{pgfonlayer}{edgelayer}
		\draw [style=wire, bend right] (2) to (0);
		\draw [style=wire] (1) to (2);
		\draw [style=wire, bend left] (2) to (3);
	\end{pgfonlayer}
\end{tikzpicture}
   \end{array} =    \begin{array}[c]{c}
\begin{tikzpicture}
	\begin{pgfonlayer}{nodelayer}
		\node [style=none] (0) at (1.5, 3.25) {};
		\node [style=none] (1) at (1, 2.25) {};
		\node [style=integral] (2) at (1, 2.5) {{\bf \aquarius\!\aquarius\!\aquarius}};
		\node [style=object] (3) at (0.5, 4.25) {$\oc A$};
		\node [style=none] (4) at (2, 3.25) {};
		\node [style=port] (5) at (2, 0.5) {$\oc A$};
		\node [style=object] (6) at (2.5, 4.25) {$A$};
		\node [style=none] (7) at (2.5, 2.25) {};
	\end{pgfonlayer}
	\begin{pgfonlayer}{edgelayer}
		\draw [style=wire, in=-90, out=30, looseness=1.50] (2) to (0.center);
		\draw [style=wire] (1.center) to (2);
		\draw [style=wire, in=-90, out=150] (2) to (3);
		\draw [style=wire, bend left=90, looseness=2.00] (0.center) to (4.center);
		\draw [style=wire] (4.center) to (5);
		\draw [style=wire, bend right=90, looseness=2.00] (1.center) to (7.center);
		\draw [style=wire] (6) to (7.center);
	\end{pgfonlayer}
\end{tikzpicture}
   \end{array}    
\end{align*}
\item If $\mathbb{X}$ is a differential category which is also a self-dual compact closed category, then $\mathbb{X}$ is a reverse differential category where the reverse deriving transformation is defined as: 
    \begin{align*}
   \begin{array}[c]{c}
 \mathsf{r}_A := \xymatrixcolsep{3.5pc}\xymatrix{\oc A \otimes \oc A \ar[r]^-{1_{\oc A} \otimes \cap_{A} \otimes 1_{\oc A}} & \oc A \otimes  A \otimes A \otimes \oc A  \ar[r]^-{\mathsf{d}_A \otimes \sigma_{A, \oc A}} & \oc A \otimes \oc A \otimes A  \ar[r]^-{\cup_{\oc A} \otimes 1_A} & \oc A    }
   \end{array} 
\end{align*}
\begin{align*}
 \begin{array}[c]{c} 
\begin{tikzpicture}
	\begin{pgfonlayer}{nodelayer}
		\node [style=object] (0) at (1.75, 2.75) {$\oc A$};
		\node [style=object] (1) at (1.25, 1.25) {$A$};
		\node [style=integral] (2) at (1.25, 2) {{\bf \aquarius\!\aquarius\!\aquarius}};
		\node [style=object] (3) at (0.75, 2.75) {$\oc A$};
	\end{pgfonlayer}
	\begin{pgfonlayer}{edgelayer}
		\draw [style=wire, bend right] (2) to (0);
		\draw [style=wire] (1) to (2);
		\draw [style=wire, bend left] (2) to (3);
	\end{pgfonlayer}
\end{tikzpicture}
   \end{array} =    \begin{array}[c]{c}
\begin{tikzpicture}
	\begin{pgfonlayer}{nodelayer}
		\node [style=none] (0) at (1.5, 3.25) {};
		\node [style=none] (1) at (1, 2.25) {};
		\node [style=integral] (2) at (1, 2.5) {{\bf =\!=\!=\!=}};
		\node [style=object] (3) at (0.5, 4.25) {$\oc A$};
		\node [style=none] (4) at (2, 3.25) {};
		\node [style=port] (5) at (2, 0.5) {$A$};
		\node [style=object] (6) at (2.5, 4.25) {$\oc A$};
		\node [style=none] (7) at (2.5, 2.25) {};
	\end{pgfonlayer}
	\begin{pgfonlayer}{edgelayer}
		\draw [style=wire, in=-90, out=30, looseness=1.50] (2) to (0.center);
		\draw [style=wire] (1.center) to (2);
		\draw [style=wire, in=-90, out=150] (2) to (3);
		\draw [style=wire, bend left=90, looseness=2.00] (0.center) to (4.center);
		\draw [style=wire] (4.center) to (5);
		\draw [style=wire, bend right=90, looseness=2.00] (1.center) to (7.center);
		\draw [style=wire] (6) to (7.center);
	\end{pgfonlayer}
\end{tikzpicture}
   \end{array}    
\end{align*}
\end{enumerate}
Furthermore, these constructions are inverses of each other. 
\end{therm}

\begin{remark}\label{twistremark} \normalfont As noted above, the story of these monoidal reverse differential categories could, in theory, be told without assuming the twist equation. If we drop that axiom, then the above constructions include an extra twist (depending on the convention of if $A^\ast$ if on the left/right for the cup/cap): 
\begin{align*}
   \begin{array}[c]{c} 
\begin{tikzpicture}
	\begin{pgfonlayer}{nodelayer}
		\node [style=object] (0) at (1.75, 2.75) {$A$};
		\node [style=object] (1) at (1.25, 1.25) {$\oc A$};
		\node [style=integral] (2) at (1.25, 2) {{\bf =\!=\!=\!=}};
		\node [style=object] (3) at (0.75, 2.75) {$\oc A$};
	\end{pgfonlayer}
	\begin{pgfonlayer}{edgelayer}
		\draw [style=wire, bend right] (2) to (0);
		\draw [style=wire] (1) to (2);
		\draw [style=wire, bend left] (2) to (3);
	\end{pgfonlayer}
\end{tikzpicture}
   \end{array} =    \begin{array}[c]{c}
\begin{tikzpicture}
	\begin{pgfonlayer}{nodelayer}
		\node [style=none] (1) at (5.75, 0.75) {};
		\node [style=integral] (2) at (5.75, 1) {{\bf \aquarius\!\aquarius\!\aquarius}};
		\node [style=object] (3) at (5.25, 3.25) {$\oc A$};
		\node [style=object] (5) at (7, -1.5) {$\oc A$};
		\node [style=object] (6) at (8, 3.25) {$A$};
		\node [style=none] (7) at (8, 0.75) {};
		\node [style=none] (8) at (6.25, 2.5) {};
		\node [style=none] (9) at (7, 2.5) {};
		\node [style=none] (10) at (6.25, 1.5) {};
		\node [style=none] (11) at (7, 1.5) {};
	\end{pgfonlayer}
	\begin{pgfonlayer}{edgelayer}
		\draw [style=wire] (1.center) to (2);
		\draw [style=wire, in=-90, out=150, looseness=0.75] (2) to (3);
		\draw [style=wire, bend right=90, looseness=2.00] (1.center) to (7.center);
		\draw [style=wire] (6) to (7.center);
		\draw [style=wire, bend left=90, looseness=2.00] (8.center) to (9.center);
		\draw [style=wire, in=90, out=-90] (8.center) to (11.center);
		\draw [style=wire, in=-90, out=90] (10.center) to (9.center);
		\draw [style=wire, in=-105, out=30] (2) to (10.center);
		\draw [style=wire] (11.center) to (5);
	\end{pgfonlayer}
\end{tikzpicture}
   \end{array} &&  \begin{array}[c]{c} 
\begin{tikzpicture}
	\begin{pgfonlayer}{nodelayer}
		\node [style=object] (0) at (1.75, 2.75) {$\oc A$};
		\node [style=object] (1) at (1.25, 1.25) {$A$};
		\node [style=integral] (2) at (1.25, 2) {{\bf \aquarius\!\aquarius\!\aquarius}};
		\node [style=object] (3) at (0.75, 2.75) {$\oc A$};
	\end{pgfonlayer}
	\begin{pgfonlayer}{edgelayer}
		\draw [style=wire, bend right] (2) to (0);
		\draw [style=wire] (1) to (2);
		\draw [style=wire, bend left] (2) to (3);
	\end{pgfonlayer}
\end{tikzpicture}
   \end{array} =    \begin{array}[c]{c}
\begin{tikzpicture}
	\begin{pgfonlayer}{nodelayer}
		\node [style=none] (1) at (5.75, 0.75) {};
		\node [style=integral] (2) at (5.75, 1) {{\bf =\!=\!=\!=}};
		\node [style=object] (3) at (5.25, 3.25) {$\oc A$};
		\node [style=object] (5) at (7, -1.5) {$A$};
		\node [style=object] (6) at (8, 3.25) {$\oc A$};
		\node [style=none] (7) at (8, 0.75) {};
		\node [style=none] (8) at (6.25, 2.5) {};
		\node [style=none] (9) at (7, 2.5) {};
		\node [style=none] (10) at (6.25, 1.5) {};
		\node [style=none] (11) at (7, 1.5) {};
	\end{pgfonlayer}
	\begin{pgfonlayer}{edgelayer}
		\draw [style=wire] (1.center) to (2);
		\draw [style=wire, in=-90, out=150, looseness=0.75] (2) to (3);
		\draw [style=wire, bend right=90, looseness=2.00] (1.center) to (7.center);
		\draw [style=wire] (6) to (7.center);
		\draw [style=wire, bend left=90, looseness=2.00] (8.center) to (9.center);
		\draw [style=wire, in=90, out=-90] (8.center) to (11.center);
		\draw [style=wire, in=-90, out=90] (10.center) to (9.center);
		\draw [style=wire, in=-105, out=30] (2) to (10.center);
		\draw [style=wire] (11.center) to (5);
	\end{pgfonlayer}
\end{tikzpicture}
   \end{array}
\end{align*}
However, as explained above, we have elected to assume the twist equation to simplify our string diagrams. 
\end{remark}

\begin{proof} The axioms of a reverse deriving transformation $\mathsf{r}$ correspond precisely to the axioms of a deriving transformation $\mathsf{d}$. Naturality of $\mathsf{d}$ corresponds to the reverse naturality rule \textbf{[r.N]}, while \textbf{[d.n]} corresponds to \textbf{[r.n]} for $n = 1, 2, \hdots, 5$. The correspondence follows from using the snake equations (Definition \ref{SDCC}) and the sliding equations (Lemma \ref{sliding}). Since the computations are all similar, we will not work out the full proof in detail and will instead provide two examples, prove that the constructions are inverses of each other, and then leave the rest as an exercise for the reader. 

Starting with a reverse deriving transformation $\mathsf{r}$, we will show that the constructed $\mathsf{d}$ satisfies the Leibniz rule \textbf{[d.2]} by using the snake equations and sliding equations on the reverse Leibniz rule \textbf{[r.2]}: 
\begin{align*}
   \begin{array}[c]{c}
\begin{tikzpicture}
	\begin{pgfonlayer}{nodelayer}
		\node [style=differential] (0) at (13.75, 11) {{\bf =\!=\!=\!=}};
		\node [style=object] (1) at (14.5, 12) {$A$};
		\node [style=object] (2) at (13, 12) {$\oc A$};
		\node [style=object] (3) at (13, 9.25) {$\oc A$};
		\node [style=duplicate] (4) at (13.75, 10.25) {$\Delta$};
		\node [style=object] (5) at (14.5, 9.25) {$\oc A$};
		\node [style=object] (6) at (20.25, -6.5) {$\oc A$};
		\node [style=differential] (7) at (20, -8.5) {{\bf =\!=\!=\!=}};
		\node [style=object] (8) at (21.75, -6.5) {$A$};
		\node [style=duplicate] (9) at (20.25, -7.5) {$\Delta$};
		\node [style=object] (10) at (20, -9.25) {$\oc A$};
		\node [style=object] (11) at (21.5, -9.25) {$\oc A$};
		\node [style=object] (12) at (24.5, -6.5) {$A$};
		\node [style=differential] (13) at (24, -8.5) {{\bf =\!=\!=\!=}};
		\node [style=object] (14) at (24, -9.25) {$\oc A$};
		\node [style=object] (15) at (22.75, -9.25) {$\oc A$};
		\node [style=object] (16) at (23.25, -6.5) {$\oc A$};
		\node [style=duplicate] (17) at (23.25, -7.5) {$\Delta$};
		\node [style=port] (18) at (14.75, 10.5) {$=$};
		\node [style=port] (19) at (22.25, -8) {$+$};
		\node [style=none] (20) at (16.5, 11.75) {};
		\node [style=none] (21) at (16, 10.75) {};
		\node [style=integral] (22) at (16, 11) {{\bf \aquarius\!\aquarius\!\aquarius}};
		\node [style=object] (23) at (15.5, 12.75) {$\oc A$};
		\node [style=none] (24) at (17, 11.75) {};
		\node [style=object] (25) at (17.5, 12.75) {$A$};
		\node [style=none] (26) at (17.5, 10.75) {};
		\node [style=object] (27) at (16.25, 8.25) {$\oc A$};
		\node [style=duplicate] (28) at (17, 9.25) {$\Delta$};
		\node [style=object] (29) at (17.75, 8.25) {$\oc A$};
		\node [style=port] (30) at (19, -8) {$=$};
		\node [style=differential] (31) at (20.25, 10) {{\bf \aquarius\!\aquarius\!\aquarius}};
		\node [style=object] (32) at (19, 12.25) {$\oc A$};
		\node [style=none] (33) at (20.25, 12.25) {};
		\node [style=duplicate] (34) at (21, 11) {$\Delta^\ast$};
		\node [style=none] (35) at (21.75, 12.25) {};
		\node [style=object] (36) at (13.5, 3.75) {$=$};
		\node [style=object] (37) at (14.75, 5.5) {$\oc A$};
		\node [style=differential] (38) at (15.25, 2.75) {{\bf \aquarius\!\aquarius\!\aquarius}};
		\node [style=none] (39) at (17, 5.5) {};
		\node [style=duplicate] (40) at (14.75, 4.5) {$\Delta$};
		\node [style=none] (41) at (16, 5.5) {};
		\node [style=port] (42) at (19.5, 3.75) {$+$};
		\node [style=none] (43) at (16, 4) {};
		\node [style=none] (44) at (15.25, 4) {};
		\node [style=object] (45) at (20.75, 5.5) {$\oc A$};
		\node [style=differential] (46) at (21.25, 2.75) {{\bf \aquarius\!\aquarius\!\aquarius}};
		\node [style=none] (47) at (22, 5.5) {};
		\node [style=duplicate] (48) at (20.75, 4.5) {$\Delta$};
		\node [style=none] (49) at (23, 5.5) {};
		\node [style=none] (50) at (23, 4.25) {};
		\node [style=none] (51) at (21.25, 4) {};
		\node [style=object] (52) at (22.5, 8.25) {$\oc A$};
		\node [style=object] (53) at (23.5, 8.25) {$\oc A$};
		\node [style=none] (54) at (22.5, 12.25) {};
		\node [style=none] (55) at (23.5, 12.25) {};
		\node [style=object] (56) at (17.75, 1.5) {$\oc A$};
		\node [style=object] (57) at (18.75, 1.5) {$\oc A$};
		\node [style=none] (58) at (17.75, 5.5) {};
		\node [style=none] (59) at (18.75, 5.5) {};
		\node [style=none] (60) at (20.25, 9.75) {};
		\node [style=object] (61) at (21.5, 14) {$A$};
		\node [style=none] (62) at (21.5, 9.75) {};
		\node [style=none] (63) at (15.25, 2.5) {};
		\node [style=object] (64) at (16.75, 7.25) {$A$};
		\node [style=none] (65) at (16.75, 2.5) {};
		\node [style=none] (66) at (21.25, 2.5) {};
		\node [style=object] (67) at (22.75, 7.25) {$A$};
		\node [style=none] (68) at (22.75, 2.5) {};
		\node [style=none] (69) at (23, 5.5) {};
		\node [style=none] (70) at (22, 5.5) {};
		\node [style=object] (71) at (23.75, 1.5) {$\oc A$};
		\node [style=object] (72) at (24.75, 1.5) {$\oc A$};
		\node [style=none] (73) at (23.75, 5.5) {};
		\node [style=none] (74) at (24.75, 5.5) {};
		\node [style=port] (75) at (18.75, -2.5) {$+$};
		\node [style=object] (76) at (15, 0.5) {$\oc A$};
		\node [style=duplicate] (77) at (15, -0.5) {$\Delta$};
		\node [style=object] (78) at (14, -5.25) {$\oc A$};
		\node [style=none] (79) at (17, -2.5) {};
		\node [style=none] (80) at (16.5, -3.5) {};
		\node [style=integral] (81) at (16.5, -3.25) {{\bf \aquarius\!\aquarius\!\aquarius}};
		\node [style=none] (82) at (17.5, -2.5) {};
		\node [style=port] (83) at (17.5, -5.25) {$\oc A$};
		\node [style=object] (84) at (18, 0.5) {$A$};
		\node [style=none] (85) at (18, -3.5) {};
		\node [style=port] (86) at (13.5, -2.5) {$=$};
		\node [style=object] (87) at (20, 0.5) {$\oc A$};
		\node [style=duplicate] (88) at (20, -0.5) {$\Delta$};
		\node [style=object] (89) at (22.75, -5.25) {$\oc A$};
		\node [style=none] (90) at (20.5, -2.5) {};
		\node [style=none] (91) at (20, -3.5) {};
		\node [style=integral] (92) at (20, -3.25) {{\bf \aquarius\!\aquarius\!\aquarius}};
		\node [style=none] (93) at (21, -2.5) {};
		\node [style=port] (94) at (21, -5.25) {$\oc A$};
		\node [style=object] (95) at (21.5, 0.5) {$A$};
		\node [style=none] (96) at (21.5, -3.5) {};
		\node [style=object] (97) at (17.25, -6.5) {$\oc A$};
		\node [style=differential] (98) at (17, -8.5) {{\bf =\!=\!=\!=}};
		\node [style=object] (99) at (18.75, -6.5) {$A$};
		\node [style=duplicate] (100) at (17.25, -7.5) {$\Delta$};
		\node [style=object] (101) at (17, -9.25) {$\oc A$};
		\node [style=object] (102) at (18.5, -9.25) {$\oc A$};
		\node [style=object] (103) at (15.25, -6.5) {$A$};
		\node [style=differential] (104) at (14.75, -8.5) {{\bf =\!=\!=\!=}};
		\node [style=object] (105) at (14.75, -9.25) {$\oc A$};
		\node [style=object] (106) at (13.5, -9.25) {$\oc A$};
		\node [style=object] (107) at (14, -6.5) {$\oc A$};
		\node [style=duplicate] (108) at (14, -7.5) {$\Delta$};
		\node [style=port] (109) at (16, -8) {$+$};
		\node [style=port] (110) at (13, -8) {$=$};
		\node [style=object] (111) at (18.5, 10.5) {$=$};
	\end{pgfonlayer}
	\begin{pgfonlayer}{edgelayer}
		\draw [style=wire, bend right] (0) to (1);
		\draw [style=wire, bend left] (0) to (2);
		\draw [style=wire, bend right] (4) to (3);
		\draw [style=wire, bend left] (4) to (5);
		\draw [style=wire] (0) to (4);
		\draw [style=wire, in=-90, out=45] (7) to (8);
		\draw [style=wire, in=150, out=-150, looseness=1.50] (9) to (7);
		\draw [style=wire] (6) to (9);
		\draw [style=wire] (7) to (10);
		\draw [style=wire, bend left, looseness=1.25] (9) to (11);
		\draw [style=wire, in=-90, out=60, looseness=1.25] (13) to (12);
		\draw [style=wire, in=91, out=-135, looseness=0.75] (17) to (15);
		\draw [style=wire, in=150, out=-30] (17) to (13);
		\draw [style=wire] (16) to (17);
		\draw [style=wire] (13) to (14);
		\draw [style=wire, in=-90, out=30, looseness=1.50] (22) to (20.center);
		\draw [style=wire] (21.center) to (22);
		\draw [style=wire, in=-90, out=150] (22) to (23);
		\draw [style=wire, bend left=90, looseness=2.00] (20.center) to (24.center);
		\draw [style=wire, bend right=90, looseness=2.00] (21.center) to (26.center);
		\draw [style=wire] (25) to (26.center);
		\draw [style=wire, bend right] (28) to (27);
		\draw [style=wire, bend left] (28) to (29);
		\draw [style=wire] (24.center) to (28);
		\draw [style=wire, in=-90, out=135] (31) to (32);
		\draw [style=wire, bend left] (34) to (33.center);
		\draw [style=wire, bend right] (34) to (35.center);
		\draw [style=wire, in=45, out=-90, looseness=1.25] (34) to (31);
		\draw [style=wire, in=-90, out=30, looseness=1.25] (38) to (39.center);
		\draw [style=wire, in=150, out=-135, looseness=1.50] (40) to (38);
		\draw [style=wire] (37) to (40);
		\draw [style=wire, bend left=90, looseness=2.00] (43.center) to (44.center);
		\draw [style=wire] (41.center) to (43.center);
		\draw [style=wire, in=90, out=-30] (40) to (44.center);
		\draw [style=wire, in=-90, out=45, looseness=1.25] (46) to (47.center);
		\draw [style=wire, in=150, out=-135, looseness=1.50] (48) to (46);
		\draw [style=wire] (45) to (48);
		\draw [style=wire, bend left=90, looseness=2.00] (50.center) to (51.center);
		\draw [style=wire] (49.center) to (50.center);
		\draw [style=wire, in=90, out=-30] (48) to (51.center);
		\draw [style=wire, bend left=90, looseness=2.00] (35.center) to (55.center);
		\draw [style=wire, bend left=90, looseness=1.50] (33.center) to (54.center);
		\draw [style=wire] (54.center) to (52);
		\draw [style=wire] (55.center) to (53);
		\draw [style=wire] (58.center) to (56);
		\draw [style=wire] (59.center) to (57);
		\draw [style=wire, bend left=90, looseness=1.75] (41.center) to (58.center);
		\draw [style=wire, bend left=90, looseness=1.75] (39.center) to (59.center);
		\draw [style=wire, bend right=90, looseness=2.00] (60.center) to (62.center);
		\draw [style=wire] (61) to (62.center);
		\draw [style=wire] (31) to (60.center);
		\draw [style=wire, bend right=90, looseness=2.00] (63.center) to (65.center);
		\draw [style=wire] (64) to (65.center);
		\draw [style=wire] (38) to (63.center);
		\draw [style=wire, bend right=90, looseness=2.00] (66.center) to (68.center);
		\draw [style=wire] (67) to (68.center);
		\draw [style=wire] (46) to (66.center);
		\draw [style=wire] (73.center) to (71);
		\draw [style=wire] (74.center) to (72);
		\draw [style=wire, bend left=90, looseness=1.75] (70.center) to (73.center);
		\draw [style=wire, bend left=90, looseness=1.75] (69.center) to (74.center);
		\draw [style=wire] (76) to (77);
		\draw [style=wire, in=-90, out=30, looseness=1.50] (81) to (79.center);
		\draw [style=wire] (80.center) to (81);
		\draw [style=wire, bend left=90, looseness=2.00] (79.center) to (82.center);
		\draw [style=wire] (82.center) to (83);
		\draw [style=wire, bend right=90, looseness=2.00] (80.center) to (85.center);
		\draw [style=wire] (84) to (85.center);
		\draw [style=wire, in=90, out=-150, looseness=0.75] (77) to (78);
		\draw [style=wire, in=150, out=-15] (77) to (81);
		\draw [style=wire] (87) to (88);
		\draw [style=wire, in=90, out=-15, looseness=1.25] (88) to (89);
		\draw [style=wire, in=-90, out=30, looseness=1.50] (92) to (90.center);
		\draw [style=wire] (91.center) to (92);
		\draw [style=wire, bend left=90, looseness=2.00] (90.center) to (93.center);
		\draw [style=wire] (93.center) to (94);
		\draw [style=wire, bend right=90, looseness=2.00] (91.center) to (96.center);
		\draw [style=wire] (95) to (96.center);
		\draw [style=wire, bend right=60] (88) to (92);
		\draw [style=wire, in=-90, out=45] (98) to (99);
		\draw [style=wire, in=150, out=-150, looseness=1.50] (100) to (98);
		\draw [style=wire] (97) to (100);
		\draw [style=wire] (98) to (101);
		\draw [style=wire, bend left, looseness=1.25] (100) to (102);
		\draw [style=wire, in=-90, out=60, looseness=1.25] (104) to (103);
		\draw [style=wire, in=91, out=-135, looseness=0.75] (108) to (106);
		\draw [style=wire, in=150, out=-30] (108) to (104);
		\draw [style=wire] (107) to (108);
		\draw [style=wire] (104) to (105);
	\end{pgfonlayer}
\end{tikzpicture}
   \end{array}
\end{align*}
To show that $\mathsf{d}$ is natural and satisfies the rest of the deriving transformation axioms is similar. Conversely, starting with a deriving transformation $\mathsf{d}$, we will show that the constructed $\mathsf{r}$ satisfies the reverse chain rule \textbf{[r.4]} by using the snake equations and sliding equations on the chain rule \textbf{[d.4]}: 
\begin{align*}
   \begin{array}[c]{c}
\begin{tikzpicture}
	\begin{pgfonlayer}{nodelayer}
		\node [style=port] (0) at (24, 12.5) {$=$};
		\node [style=object] (1) at (22.75, 11) {$A$};
		\node [style=component] (2) at (23.25, 13) {$\delta^\ast$};
		\node [style=differential] (3) at (22.75, 12) {{\bf \aquarius\!\aquarius\!\aquarius}};
		\node [style=object] (4) at (22.25, 13.75) {$\oc A$};
		\node [style=object] (5) at (23.25, 13.75) {$\oc \oc A$};
		\node [style=none] (6) at (25.5, 13.5) {};
		\node [style=none] (7) at (25, 12.5) {};
		\node [style=integral] (8) at (25, 12.75) {{\bf =\!=\!=\!=}};
		\node [style=object] (9) at (24.5, 14.5) {$\oc A$};
		\node [style=none] (10) at (26, 13.5) {};
		\node [style=port] (11) at (26, 10.75) {$A$};
		\node [style=object] (12) at (26.5, 14.5) {$\oc A$};
		\node [style=none] (13) at (26.5, 12.5) {};
		\node [style=component] (14) at (26.5, 13.25) {$\delta^\ast$};
		\node [style=port] (15) at (27.25, 12.5) {$=$};
		\node [style=none] (16) at (28.75, 14.25) {};
		\node [style=none] (17) at (28.25, 12.25) {};
		\node [style=integral] (18) at (28.25, 13.5) {{\bf =\!=\!=\!=}};
		\node [style=object] (19) at (27.75, 15.25) {$\oc A$};
		\node [style=none] (20) at (29.25, 14.25) {};
		\node [style=port] (21) at (29.25, 10.5) {$A$};
		\node [style=object] (22) at (29.75, 15.25) {$\oc A$};
		\node [style=none] (23) at (29.75, 12.25) {};
		\node [style=component] (24) at (28.25, 12.75) {$\delta$};
		\node [style=port] (25) at (30.5, 12.5) {$=$};
		\node [style=component] (26) at (31.25, 13.5) {$\delta$};
		\node [style=duplicate] (27) at (31.75, 14.5) {$\Delta$};
		\node [style=object] (28) at (31.75, 15.25) {$\oc A$};
		\node [style=differential] (29) at (32.5, 13.5) {{\bf =\!=\!=\!=}};
		\node [style=differential] (30) at (32, 12.5) {{\bf =\!=\!=\!=}};
		\node [style=none] (31) at (32, 12) {};
		\node [style=none] (32) at (34.25, 12.25) {};
		\node [style=none] (33) at (33, 14.25) {};
		\node [style=none] (34) at (33.75, 14.25) {};
		\node [style=port] (35) at (33.75, 10) {$A$};
		\node [style=object] (36) at (34.25, 15.25) {$\oc A$};
		\node [style=port] (37) at (35, 12.5) {$=$};
	\end{pgfonlayer}
	\begin{pgfonlayer}{edgelayer}
		\draw [style=wire, in=-90, out=30] (3) to (2);
		\draw [style=wire, in=-90, out=135] (3) to (4);
		\draw [style=wire] (3) to (1);
		\draw [style=wire] (5) to (2);
		\draw [style=wire, in=-90, out=30, looseness=1.50] (8) to (6.center);
		\draw [style=wire] (7.center) to (8);
		\draw [style=wire, in=-90, out=150] (8) to (9);
		\draw [style=wire, bend left=90, looseness=2.00] (6.center) to (10.center);
		\draw [style=wire] (10.center) to (11);
		\draw [style=wire, bend right=90, looseness=2.00] (7.center) to (13.center);
		\draw [style=wire] (12) to (14);
		\draw [style=wire] (14) to (13.center);
		\draw [style=wire, in=-90, out=30, looseness=1.50] (18) to (16.center);
		\draw [style=wire, in=-90, out=150] (18) to (19);
		\draw [style=wire, bend left=90, looseness=2.00] (16.center) to (20.center);
		\draw [style=wire] (20.center) to (21);
		\draw [style=wire, bend right=90, looseness=2.00] (17.center) to (23.center);
		\draw [style=wire] (22) to (23.center);
		\draw [style=wire] (18) to (24);
		\draw [style=wire] (24) to (17.center);
		\draw [style=wire, bend right] (27) to (26);
		\draw [style=wire] (28) to (27);
		\draw [style=wire, in=150, out=-30, looseness=1.25] (27) to (29);
		\draw [style=wire, in=30, out=-90] (29) to (30);
		\draw [style=wire, in=150, out=-90] (26) to (30);
		\draw [style=wire, bend right=90, looseness=2.00] (31.center) to (32.center);
		\draw [style=wire] (30) to (31.center);
		\draw [style=wire, bend left=90, looseness=2.00] (33.center) to (34.center);
		\draw [style=wire, in=-90, out=30] (29) to (33.center);
		\draw [style=wire] (34.center) to (35);
		\draw [style=wire] (36) to (32.center);
	\end{pgfonlayer}
\end{tikzpicture}
   \end{array} \\
    \begin{array}[c]{c}
    \begin{tikzpicture}
	\begin{pgfonlayer}{nodelayer}
		\node [style=port] (119) at (45.25, 12.5) {$=$};
		\node [style=duplicate] (120) at (46.5, 14) {$\Delta$};
		\node [style=object] (121) at (46.5, 14.75) {$\oc A$};
		\node [style=differential] (122) at (47, 11.25) {{\bf \aquarius\!\aquarius\!\aquarius}};
		\node [style=object] (123) at (47, 10.5) {$A$};
		\node [style=component] (124) at (47, 13) {$\delta$};
		\node [style=differential] (125) at (47.5, 12.25) {{\bf \aquarius\!\aquarius\!\aquarius}};
		\node [style=object] (126) at (48, 14.75) {$\oc \oc A$};
		\node [style=none] (127) at (43.25, 11.75) {};
		\node [style=none] (128) at (42.75, 10.75) {};
		\node [style=integral] (129) at (42.75, 11) {{\bf =\!=\!=\!=}};
		\node [style=none] (130) at (43.75, 11.75) {};
		\node [style=none] (131) at (44.25, 10.75) {};
		\node [style=duplicate] (132) at (42, 15.25) {$\Delta$};
		\node [style=object] (133) at (42, 16) {$\oc A$};
		\node [style=object] (134) at (43.75, 8.75) {$A$};
		\node [style=component] (135) at (42.5, 14.25) {$\delta$};
		\node [style=object] (136) at (44.75, 16) {$\oc \oc A$};
		\node [style=none] (137) at (43.75, 14.25) {};
		\node [style=none] (138) at (43.25, 13.25) {};
		\node [style=integral] (139) at (43.25, 13.5) {{\bf =\!=\!=\!=}};
		\node [style=none] (140) at (44.25, 14.25) {};
		\node [style=none] (141) at (44.75, 13.25) {};
		\node [style=component] (143) at (35.75, 14) {$\delta$};
		\node [style=duplicate] (144) at (36.75, 15.5) {$\Delta$};
		\node [style=object] (145) at (36.75, 16.25) {$\oc A$};
		\node [style=differential] (146) at (38.25, 14.5) {{\bf =\!=\!=\!=}};
		\node [style=differential] (147) at (36.25, 12.5) {{\bf =\!=\!=\!=}};
		\node [style=none] (148) at (36.25, 12) {};
		\node [style=none] (149) at (40, 12) {};
		\node [style=none] (150) at (38.75, 15.25) {};
		\node [style=none] (151) at (39.5, 15.25) {};
		\node [style=port] (152) at (39.5, 8.75) {$A$};
		\node [style=object] (153) at (40, 16.25) {$\oc A$};
		\node [style=port] (154) at (40.75, 12.5) {$=$};
		\node [style=none] (155) at (37.5, 13.5) {};
		\node [style=none] (156) at (38.25, 13.5) {};
		\node [style=none] (157) at (37, 13.5) {};
		\node [style=none] (158) at (37.5, 13.5) {};
	\end{pgfonlayer}
	\begin{pgfonlayer}{edgelayer}
		\draw [style=wire] (121) to (120);
		\draw [style=wire] (122) to (123);
		\draw [style=wire, in=-90, out=150] (125) to (124);
		\draw [style=wire, in=-90, out=45] (125) to (126);
		\draw [style=wire, in=90, out=-30, looseness=1.25] (120) to (124);
		\draw [style=wire, in=150, out=-150] (120) to (122);
		\draw [style=wire, in=30, out=-90] (125) to (122);
		\draw [style=wire, in=-90, out=30, looseness=1.50] (129) to (127.center);
		\draw [style=wire] (128.center) to (129);
		\draw [style=wire, bend left=90, looseness=2.00] (127.center) to (130.center);
		\draw [style=wire, bend right=90, looseness=2.00] (128.center) to (131.center);
		\draw [style=wire] (133) to (132);
		\draw [style=wire, in=90, out=-30, looseness=1.25] (132) to (135);
		\draw [style=wire, in=-90, out=30, looseness=1.50] (139) to (137.center);
		\draw [style=wire] (138.center) to (139);
		\draw [style=wire, bend left=90, looseness=2.00] (137.center) to (140.center);
		\draw [style=wire, bend right=90, looseness=2.00] (138.center) to (141.center);
		\draw [style=wire, in=150, out=-90] (135) to (139);
		\draw [style=wire] (136) to (141.center);
		\draw [style=wire] (140.center) to (131.center);
		\draw [style=wire, in=-150, out=150] (129) to (132);
		\draw [style=wire] (130.center) to (134);
		\draw [style=wire, bend right] (144) to (143);
		\draw [style=wire] (145) to (144);
		\draw [style=wire, in=150, out=-15, looseness=1.25] (144) to (146);
		\draw [style=wire, in=150, out=-90] (143) to (147);
		\draw [style=wire, bend right=90, looseness=2.00] (148.center) to (149.center);
		\draw [style=wire] (147) to (148.center);
		\draw [style=wire, bend left=90, looseness=2.00] (150.center) to (151.center);
		\draw [style=wire, in=-90, out=30] (146) to (150.center);
		\draw [style=wire] (151.center) to (152);
		\draw [style=wire] (153) to (149.center);
		\draw [style=wire, bend right=90, looseness=2.00] (155.center) to (157.center);
		\draw [style=wire, bend left=90, looseness=2.00] (156.center) to (158.center);
		\draw [style=wire] (146) to (156.center);
		\draw [style=wire, in=30, out=-90, looseness=1.25] (157.center) to (147);
	\end{pgfonlayer}
\end{tikzpicture}
     \end{array}
\end{align*}
To show that $\mathsf{r}$ satisfies the rest of the reverse deriving transformations axioms is similar. Lastly, we use the snake equations to prove that these constructions are inverses of each other: 
\begin{align*}
   \begin{array}[c]{c}
\begin{tikzpicture}
	\begin{pgfonlayer}{nodelayer}
		\node [style=none] (0) at (7, 9.75) {};
		\node [style=none] (1) at (6.5, 8.25) {};
		\node [style=integral] (2) at (6.5, 8.75) {{\bf \aquarius\!\aquarius\!\aquarius}};
		\node [style=object] (3) at (5.75, 12) {$\oc A$};
		\node [style=none] (4) at (8, 9.75) {};
		\node [style=none] (5) at (8.5, 8.25) {};
		\node [style=none] (6) at (8.5, 11) {};
		\node [style=none] (7) at (8, 7.25) {};
		\node [style=none] (8) at (9.25, 11) {};
		\node [style=port] (9) at (9.25, 5.25) {$A$};
		\node [style=object] (10) at (9.75, 12) {$\oc A$};
		\node [style=none] (11) at (9.75, 7.25) {};
		\node [style=object] (12) at (16, 8.75) {$\oc A$};
		\node [style=object] (13) at (15.5, 7.25) {$A$};
		\node [style=integral] (14) at (15.5, 8) {{\bf \aquarius\!\aquarius\!\aquarius}};
		\node [style=object] (15) at (15, 8.75) {$\oc A$};
		\node [style=port] (16) at (14.25, 8) {$=$};
		\node [style=none] (17) at (12.25, 7) {};
		\node [style=none] (18) at (11.5, 7) {};
		\node [style=none] (19) at (13, 7) {};
		\node [style=object] (20) at (13, 6) {$A$};
		\node [style=none] (21) at (12.25, 7) {};
		\node [style=object] (22) at (10.25, 8) {$=$};
		\node [style=none] (23) at (12.75, 8.75) {};
		\node [style=none] (24) at (13.5, 8.75) {};
		\node [style=object] (25) at (13.5, 9.75) {$\oc A$};
		\node [style=none] (26) at (12, 8.75) {};
		\node [style=none] (27) at (12.75, 8.75) {};
		\node [style=integral] (28) at (11.5, 7.75) {{\bf \aquarius\!\aquarius\!\aquarius}};
		\node [style=object] (29) at (10.75, 9.75) {$\oc A$};
	\end{pgfonlayer}
	\begin{pgfonlayer}{edgelayer}
		\draw [style=wire, in=-90, out=30] (2) to (0.center);
		\draw [style=wire] (1.center) to (2);
		\draw [style=wire, in=-90, out=150, looseness=0.75] (2) to (3);
		\draw [style=wire, bend left=90, looseness=2.00] (0.center) to (4.center);
		\draw [style=wire, bend right=90, looseness=2.00] (1.center) to (5.center);
		\draw [style=wire, bend left=90, looseness=2.00] (6.center) to (8.center);
		\draw [style=wire] (8.center) to (9);
		\draw [style=wire, bend right=90, looseness=2.00] (7.center) to (11.center);
		\draw [style=wire] (10) to (11.center);
		\draw [style=wire] (4.center) to (7.center);
		\draw [style=wire] (5.center) to (6.center);
		\draw [style=wire, bend right] (14) to (12);
		\draw [style=wire] (13) to (14);
		\draw [style=wire, bend left] (14) to (15);
		\draw [style=wire, bend left=90, looseness=2.00] (17.center) to (19.center);
		\draw [style=wire] (19.center) to (20);
		\draw [style=wire, bend right=90, looseness=2.00] (18.center) to (21.center);
		\draw [style=wire, bend right=90, looseness=2.00] (23.center) to (26.center);
		\draw [style=wire, bend left=90, looseness=2.00] (24.center) to (27.center);
		\draw [style=wire] (25) to (24.center);
		\draw [style=wire, in=-90, out=150, looseness=0.75] (28) to (29);
		\draw [style=wire] (28) to (18.center);
		\draw [style=wire, in=-90, out=30] (28) to (26.center);
	\end{pgfonlayer}
\end{tikzpicture}
   \end{array}
   \end{align*}
   \begin{align*}
  \begin{array}[c]{c}
\begin{tikzpicture}
	\begin{pgfonlayer}{nodelayer}
		\node [style=none] (0) at (7, 9.75) {};
		\node [style=none] (1) at (6.5, 8.25) {};
		\node [style=integral] (2) at (6.5, 8.75) {{\bf =\!=\!=\!=}};
		\node [style=object] (3) at (5.75, 12) {$\oc A$};
		\node [style=none] (4) at (8, 9.75) {};
		\node [style=none] (5) at (8.5, 8.25) {};
		\node [style=none] (6) at (8.5, 11) {};
		\node [style=none] (7) at (8, 7.25) {};
		\node [style=none] (8) at (9.25, 11) {};
		\node [style=port] (9) at (9.25, 5.25) {$\oc A$};
		\node [style=object] (10) at (9.75, 12) {$A$};
		\node [style=none] (11) at (9.75, 7.25) {};
		\node [style=object] (12) at (16, 8.75) {$A$};
		\node [style=object] (13) at (15.5, 7.25) {$\oc A$};
		\node [style=integral] (14) at (15.5, 8) {{\bf =\!=\!=\!=}};
		\node [style=object] (15) at (15, 8.75) {$\oc A$};
		\node [style=port] (16) at (14.25, 8) {$=$};
		\node [style=none] (17) at (12.25, 7) {};
		\node [style=none] (18) at (11.5, 7) {};
		\node [style=none] (19) at (13, 7) {};
		\node [style=object] (20) at (13, 6) {$\oc A$};
		\node [style=none] (21) at (12.25, 7) {};
		\node [style=object] (22) at (10.25, 8) {$=$};
		\node [style=none] (23) at (12.75, 8.75) {};
		\node [style=none] (24) at (13.5, 8.75) {};
		\node [style=object] (25) at (13.5, 9.75) {$A$};
		\node [style=none] (26) at (12, 8.75) {};
		\node [style=none] (27) at (12.75, 8.75) {};
		\node [style=integral] (28) at (11.5, 7.75) {{\bf =\!=\!=\!=}};
		\node [style=object] (29) at (10.75, 9.75) {$\oc A$};
	\end{pgfonlayer}
	\begin{pgfonlayer}{edgelayer}
		\draw [style=wire, in=-90, out=30] (2) to (0.center);
		\draw [style=wire] (1.center) to (2);
		\draw [style=wire, in=-90, out=150, looseness=0.75] (2) to (3);
		\draw [style=wire, bend left=90, looseness=2.00] (0.center) to (4.center);
		\draw [style=wire, bend right=90, looseness=2.00] (1.center) to (5.center);
		\draw [style=wire, bend left=90, looseness=2.00] (6.center) to (8.center);
		\draw [style=wire] (8.center) to (9);
		\draw [style=wire, bend right=90, looseness=2.00] (7.center) to (11.center);
		\draw [style=wire] (10) to (11.center);
		\draw [style=wire] (4.center) to (7.center);
		\draw [style=wire] (5.center) to (6.center);
		\draw [style=wire, bend right] (14) to (12);
		\draw [style=wire] (13) to (14);
		\draw [style=wire, bend left] (14) to (15);
		\draw [style=wire, bend left=90, looseness=2.00] (17.center) to (19.center);
		\draw [style=wire] (19.center) to (20);
		\draw [style=wire, bend right=90, looseness=2.00] (18.center) to (21.center);
		\draw [style=wire, bend right=90, looseness=2.00] (23.center) to (26.center);
		\draw [style=wire, bend left=90, looseness=2.00] (24.center) to (27.center);
		\draw [style=wire] (25) to (24.center);
		\draw [style=wire, in=-90, out=150, looseness=0.75] (28) to (29);
		\draw [style=wire] (28) to (18.center);
		\draw [style=wire, in=-90, out=30] (28) to (26.center);
	\end{pgfonlayer}
\end{tikzpicture}
   \end{array}
\end{align*}
So we conclude that a reverse differential category is precisely the same thing as differential category which is also self-dual compact closed. 
\end{proof} 

We note that a self-dual compact closed differential category is also a codifferential category (the dual of a differential category). Indeed, observe that if $(\oc, \delta, \varepsilon, \Delta, e)$ is a coalgebra modality on a self-dual compact closed category $\mathbb{X}$, then we can define an algebra modality (the dual of a coalgebra modality). The monad endofunctor $\wn: \mathbb{X} \to \mathbb{X}$ is defined on objects as $\wn A = \oc A$ and on maps $\wn f = \left( \oc(f^\ast) \right)^\ast$, while the remaining natural transformations are the duals of the coalgebra modality natural transformations. Explicitly, $(\wn, \delta^\ast, \varepsilon^\ast, \Delta^\ast, e^\ast)$ is an algebra modality on $\mathbb{X}$. It is crucial to observe that $\oc$ and $\wn$ are equal on objects but not maps, and that $\delta^\ast$, $\varepsilon^\ast$, $\Delta^\ast$ and $e^\ast$ are not necessarily natural with respect to $\oc$. For example, $\wn(f); \Delta^\ast = \Delta^\ast; (\wn(f) \otimes \wn(f))$ but $\Delta^\ast; \oc(f) $ may not be equal to $(\oc(f) \otimes \oc(f)); \Delta^\ast$. Furthermore, if $\mathbb{X}$ is also a differential category, where $\mathsf{d}$ is a deriving transformation for $(\oc, \delta, \varepsilon, \Delta, e)$, then $\mathbb{X}$ is also a codifferential category where $\mathsf{d}^\ast$ is a deriving transformation for $(\wn, \delta^\ast, \varepsilon^\ast, \Delta^\ast, e^\ast)$.

Before providing examples of reverse differential categories, let us first discuss the relation between the reverse deriving transformation and the coderiving transformation. Indeed, observe that if $(\oc, \delta, \varepsilon, \Delta, e)$ is a coalgebra modality on a self-dual compact closed category $\mathbb{X}$, then there is a canonical map of the desired type $\oc A \otimes \oc A \to A$ defined as the composite: 
\begin{align*}
   \begin{array}[c]{c}
\xymatrixcolsep{5pc}\xymatrix{\oc A \otimes \oc A \ar[r]^-{1_{\oc A} \otimes \mathsf{d}^\circ_A} & \oc A \otimes \oc A \otimes A  \ar[r]^-{\cup_{\oc A} \otimes 1_A} & A}    \end{array} &&    \begin{array}[c]{c}
\begin{tikzpicture}
	\begin{pgfonlayer}{nodelayer}
		\node [style=none] (93) at (3, -13) {};
		\node [style=none] (96) at (3.75, -13) {};
		\node [style=object] (101) at (3, -10.5) {$\oc A$};
		\node [style=object] (117) at (4.25, -10.5) {$\oc A$};
		\node [style=differential] (118) at (4.25, -11.25) {{\bf =\!=\!=\!=}};
		\node [style=object] (119) at (5.25, -14) {$A$};
		\node [style=none] (120) at (3.75, -12.25) {};
	\end{pgfonlayer}
	\begin{pgfonlayer}{edgelayer}
		\draw [style=wire, bend right=90, looseness=2.00] (93.center) to (96.center);
		\draw [style=wire] (101) to (93.center);
		\draw [style=wire] (117) to (118);
		\draw [style=wire, in=90, out=-150, looseness=1.25] (118) to (120.center);
		\draw [style=wire, in=90, out=-30, looseness=1.25] (118) to (119);
		\draw [style=wire] (120.center) to (96.center);
	\end{pgfonlayer}
\end{tikzpicture}
   \end{array}
\end{align*}
While this is a map of the right type, this is not automatically a reverse deriving transformation. This map is a reverse deriving transformation if and only if the coderiving transformation is a deriving transformation for the induced algebra modality.  

\begin{lemma}\label{lem:coder-rev} If $\mathbb{X}$ is a self-dual compact closed category, which is additive symmetric monoidal and equipped with a coalgebra modality $(\oc, \delta, \varepsilon, \Delta, e)$, the following are equivalent: 
\begin{enumerate}
    \item $\mathsf{r} := (1 \otimes \mathsf{d}^\circ);(\cup \otimes 1)$ is a reverse deriving transformation;  
   \item $\mathsf{d}^\circ$ is a deriving transformation for the algebra modality $(\wn, \delta^\ast, \varepsilon^\ast, \Delta^\ast, e^\ast)$; that is, the dual diagrams of Definition \ref{def:diffcat}  commute. 
    \end{enumerate} 
    Furthermore, in this case, $\mathsf{d}^\ast = \mathsf{d}^\circ$. 
\end{lemma}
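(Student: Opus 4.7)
The plan is to reduce this equivalence to the bijection between deriving transformations and reverse deriving transformations established in Theorem \ref{thm:rdc_to_dc}, followed by a dualization argument for the algebra modality.

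First, I would use Theorem \ref{thm:rdc_to_dc} to translate statement (1). Under the self-dual compact closed structure, $\mathsf{r}$ is a reverse deriving transformation for $(\oc, \delta, \varepsilon, \Delta, e)$ if and only if the map
\[ \mathsf{d}_A := (1_{\oc A} \otimes \cap_{\oc A} \otimes 1_A);(\mathsf{r}_A \otimes \sigma_{\oc A, A});(\cup_A \otimes 1_{\oc A}) \]
is a deriving transformation for $(\oc, \delta, \varepsilon, \Delta, e)$. Substituting the definition $\mathsf{r}_A = (1_{\oc A} \otimes \mathsf{d}^\circ_A);(\cup_{\oc A} \otimes 1_A)$ into this formula and applying the snake equations of Definition \ref{SDCC} to cancel a cup/cap pair on the $\oc A$ wires, I expect the result to simplify to the explicit dual map
\[ \mathsf{d}_A = (\cap_{\oc A} \otimes 1_{\oc A} \otimes 1_A);(1_{\oc A} \otimes \mathsf{d}^\circ_A \otimes 1_{\oc A} \otimes 1_A);(1_{\oc A} \otimes \cup_{\oc A \otimes A}) = (\mathsf{d}^\circ_A)^\ast, \]
so that $\mathsf{d}^\ast = \mathsf{d}^\circ$. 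This calculation is the core computational step and immediately gives the final ``furthermore'' claim of the lemma.

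Second, I would observe that in a self-dual compact closed category, applying $(-)^\ast$ to each of the diagrams defining a deriving transformation (naturality together with \textbf{[d.1]}–\textbf{[d.5]} of Definition \ref{def:diffcat}) converts them exactly into the corresponding diagrams for a deriving transformation on the induced algebra modality $(\wn, \delta^\ast, \varepsilon^\ast, \Delta^\ast, e^\ast)$. This is because the dagger functor $(-)^\ast$ is an isomorphism of categories $\mathbb{X}^{\mathsf{op}} \cong \mathbb{X}$ which carries $\oc$ to $\wn$, the comonad/comonoid structure to the monad/monoid structure, and $\mathsf{d}: \oc A \otimes A \to \oc A$ to $\mathsf{d}^\ast: \oc A \to \oc A \otimes A$. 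Hence $\mathsf{d}$ is a deriving transformation for $(\oc, \delta, \varepsilon, \Delta, e)$ if and only if $\mathsf{d}^\ast$ is a deriving transformation for $(\wn, \delta^\ast, \varepsilon^\ast, \Delta^\ast, e^\ast)$.

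Chaining the two equivalences, $\mathsf{r} = (1 \otimes \mathsf{d}^\circ);(\cup \otimes 1)$ is a reverse deriving transformation $\iff$ $\mathsf{d} = (\mathsf{d}^\circ)^\ast$ is a deriving transformation for the coalgebra modality $\iff$ $\mathsf{d}^\ast = \mathsf{d}^\circ$ is a deriving transformation for the algebra modality. The main obstacle is the diagrammatic verification that the composite $(1 \otimes \cap \otimes 1);(\mathsf{r} \otimes \sigma);(\cup \otimes 1)$, once $\mathsf{r}$ is expanded, really collapses via the snake and sliding equations to the dual $(\mathsf{d}^\circ)^\ast$; this requires tracking the interaction of the crossed wire $\sigma_{\oc A, A}$ with the two cup/cap pairs, but no new ideas beyond Theorem \ref{thm:rdc_to_dc} and Lemma \ref{sliding} are needed.
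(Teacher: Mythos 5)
Your proposal is correct and follows essentially the same route as the paper: both directions go through the bijection of Theorem \ref{thm:rdc_to_dc}, the observation that $(-)^\ast$ exchanges deriving transformations for the coalgebra modality with deriving transformations for the dual algebra modality, and the snake/sliding computation showing that the $\mathsf{d}$ associated to $\mathsf{r}=(1\otimes\mathsf{d}^\circ);(\cup\otimes 1)$ satisfies $\mathsf{d}^\ast=\mathsf{d}^\circ$. The only difference is organizational (you chain the two equivalences explicitly rather than treating the two implications separately), which changes nothing of substance.
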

\begin{proof} For $(i) \Rightarrow (ii)$: by Theorem \ref{thm:rdc_to_dc}, we obtain a deriving transformation $\mathsf{d}_A: \oc A \otimes A \to \oc A$. We will now show that $\mathsf{d}^\ast = \mathsf{d}^\circ$. So using the snake equations and Theorem \ref{thm:rdc_to_dc}, we compute: 
\begin{align*}
   \begin{array}[c]{c}
\begin{tikzpicture}
	\begin{pgfonlayer}{nodelayer}
		\node [style=none] (93) at (13, 12.25) {};
		\node [style=none] (96) at (13.75, 12.25) {};
		\node [style=object] (117) at (14.25, 14.75) {$\oc A$};
		\node [style=differential] (118) at (14.25, 14) {{\bf =\!=\!=\!=}};
		\node [style=object] (119) at (15.25, 11) {$A$};
		\node [style=none] (120) at (13.75, 13) {};
		\node [style=none] (121) at (5.25, 14) {};
		\node [style=none] (122) at (4.75, 12.5) {};
		\node [style=integral] (123) at (4.75, 13) {{\bf =\!=\!=\!=}};
		\node [style=none] (125) at (6.5, 14) {};
		\node [style=none] (126) at (7, 12.5) {};
		\node [style=object] (127) at (7, 15.25) {$\oc A$};
		\node [style=object] (128) at (6.5, 10.25) {$A$};
		\node [style=object] (143) at (7.75, 12.5) {$=$};
		\node [style=none] (151) at (4.25, 14) {};
		\node [style=none] (152) at (5.75, 14) {};
		\node [style=object] (153) at (5.75, 10.25) {$\oc A$};
		\node [style=object] (166) at (17.75, 11.5) {$A$};
		\node [style=object] (167) at (17.25, 13) {$\oc A$};
		\node [style=integral] (168) at (17.25, 12.25) {{\bf =\!=\!=\!=}};
		\node [style=object] (169) at (16.75, 11.5) {$\oc A$};
		\node [style=port] (170) at (16, 12.25) {$=$};
		\node [style=object] (180) at (10, 11) {$A$};
		\node [style=none] (182) at (8.75, 13.25) {};
		\node [style=object] (184) at (8.75, 11) {$\oc A$};
		\node [style=none] (185) at (9.5, 13.25) {};
		\node [style=none] (186) at (8.75, 13.25) {};
		\node [style=integral] (187) at (10, 12.25) {{\bf \aquarius\!\aquarius\!\aquarius}};
		\node [style=object] (188) at (10.75, 14.25) {$\oc A$};
		\node [style=none] (190) at (12.25, 13.25) {};
		\node [style=object] (191) at (12.25, 11) {$\oc A$};
		\node [style=none] (192) at (13, 13.25) {};
		\node [style=none] (193) at (12.25, 13.25) {};
		\node [style=object] (194) at (11.5, 12.5) {$=$};
	\end{pgfonlayer}
	\begin{pgfonlayer}{edgelayer}
		\draw [style=wire, bend right=90, looseness=2.00] (93.center) to (96.center);
		\draw [style=wire] (117) to (118);
		\draw [style=wire, in=90, out=-150, looseness=1.25] (118) to (120.center);
		\draw [style=wire, in=90, out=-30] (118) to (119);
		\draw [style=wire] (120.center) to (96.center);
		\draw [style=wire, in=-90, out=30] (123) to (121.center);
		\draw [style=wire] (122.center) to (123);
		\draw [style=wire, bend left=90, looseness=2.00] (121.center) to (125.center);
		\draw [style=wire, bend right=90, looseness=2.00] (122.center) to (126.center);
		\draw [style=wire] (125.center) to (128);
		\draw [style=wire] (126.center) to (127);
		\draw [style=wire, bend left=90, looseness=2.00] (151.center) to (152.center);
		\draw [style=wire, in=150, out=-90] (151.center) to (123);
		\draw [style=wire] (152.center) to (153);
		\draw [style=wire, bend left] (168) to (166);
		\draw [style=wire] (167) to (168);
		\draw [style=wire, bend right] (168) to (169);
		\draw [style=wire, bend left=90, looseness=2.00] (182.center) to (185.center);
		\draw [style=wire, in=-90, out=30, looseness=0.75] (187) to (188);
		\draw [style=wire, in=-90, out=150] (187) to (185.center);
		\draw [style=wire] (187) to (180);
		\draw [style=wire] (186.center) to (184);
		\draw [style=wire, bend left=90, looseness=2.00] (190.center) to (192.center);
		\draw [style=wire] (193.center) to (191);
		\draw [style=wire] (192.center) to (93.center);
	\end{pgfonlayer}
\end{tikzpicture}
   \end{array}
\end{align*}
So $\mathsf{d}^\ast = \mathsf{d}^\circ$. Therefore, by the above discussion, $\mathsf{d}^\ast = \mathsf{d}^\circ$ is a deriving transformation for the algebra modality $(\wn, \delta^\ast, \varepsilon^\ast, \Delta^\ast, e^\ast)$. Conversely, for $(ii) \Rightarrow (i)$: by the dual of the above discussion, we have that ${\mathsf{d}^\circ}^\ast$ is a deriving transformation for the coalgebra modality $(\oc, \delta, \varepsilon, \Delta, e)$. Then by Theorem \ref{thm:rdc_to_dc}, we obtain a reverse deriving transformation $\mathsf{r}_A: \oc A \otimes \oc A \to A$. Expanding out the construction, we compute: 
\begin{align*}
   \begin{array}[c]{c}
\begin{tikzpicture}
	\begin{pgfonlayer}{nodelayer}
		\node [style=none] (121) at (18.5, 2) {};
		\node [style=none] (122) at (18, 3.5) {};
		\node [style=integral] (123) at (18, 3) {{\bf =\!=\!=\!=}};
		\node [style=none] (125) at (19.75, 2) {};
		\node [style=none] (126) at (20.25, 3.5) {};
		\node [style=none] (151) at (17.5, 2) {};
		\node [style=none] (152) at (19, 2) {};
		\node [style=object] (153) at (19, 5.75) {$\oc A$};
		\node [style=port] (170) at (13.5, 2.5) {$=$};
		\node [style=object] (194) at (16.75, 2.5) {$=$};
		\node [style=none] (195) at (15.25, 3.5) {};
		\node [style=none] (196) at (14.75, 2.5) {};
		\node [style=integral] (197) at (14.75, 2.75) {{\bf =\!=\!=\!=}};
		\node [style=object] (198) at (14.25, 4.5) {$\oc A$};
		\node [style=none] (199) at (15.75, 3.5) {};
		\node [style=port] (200) at (15.75, 0.75) {$A$};
		\node [style=object] (201) at (16.25, 4.5) {$\oc A$};
		\node [style=none] (202) at (16.25, 2.5) {};
		\node [style=object] (203) at (13, 3.25) {$\oc A$};
		\node [style=object] (204) at (12.5, 1.75) {$A$};
		\node [style=integral] (205) at (12.5, 2.5) {{\bf \aquarius\!\aquarius\!\aquarius}};
		\node [style=object] (206) at (12, 3.25) {$\oc A$};
		\node [style=none] (207) at (20.25, 2.5) {};
		\node [style=none] (208) at (22, 2.25) {};
		\node [style=object] (209) at (22, 5.25) {$\oc A$};
		\node [style=none] (210) at (22, 2.25) {};
		\node [style=none] (211) at (19.75, 3.5) {};
		\node [style=none] (212) at (21.25, 3.5) {};
		\node [style=none] (213) at (19.75, 3.5) {};
		\node [style=none] (214) at (21.25, 3.5) {};
		\node [style=port] (215) at (21.25, 0.5) {$A$};
		\node [style=object] (216) at (22.75, 2.5) {$=$};
		\node [style=none] (224) at (23.5, 2) {};
		\node [style=none] (225) at (24.25, 2) {};
		\node [style=object] (226) at (23.5, 4.5) {$\oc A$};
		\node [style=object] (227) at (24.75, 4.5) {$\oc A$};
		\node [style=differential] (228) at (24.75, 3.75) {{\bf =\!=\!=\!=}};
		\node [style=object] (229) at (25.75, 1) {$A$};
		\node [style=none] (230) at (24.25, 2.75) {};
	\end{pgfonlayer}
	\begin{pgfonlayer}{edgelayer}
		\draw [style=wire, in=90, out=-30] (123) to (121.center);
		\draw [style=wire] (122.center) to (123);
		\draw [style=wire, bend right=90, looseness=2.00] (121.center) to (125.center);
		\draw [style=wire, bend left=90, looseness=2.00] (122.center) to (126.center);
		\draw [style=wire, bend right=90, looseness=2.00] (151.center) to (152.center);
		\draw [style=wire, in=-150, out=90] (151.center) to (123);
		\draw [style=wire] (152.center) to (153);
		\draw [style=wire, in=-90, out=30, looseness=1.50] (197) to (195.center);
		\draw [style=wire] (196.center) to (197);
		\draw [style=wire, in=-90, out=150] (197) to (198);
		\draw [style=wire, bend left=90, looseness=2.00] (195.center) to (199.center);
		\draw [style=wire] (199.center) to (200);
		\draw [style=wire, bend right=90, looseness=2.00] (196.center) to (202.center);
		\draw [style=wire] (201) to (202.center);
		\draw [style=wire, bend right] (205) to (203);
		\draw [style=wire] (204) to (205);
		\draw [style=wire, bend left] (205) to (206);
		\draw [style=wire, bend right=90, looseness=2.00] (207.center) to (208.center);
		\draw [style=wire] (126.center) to (207.center);
		\draw [style=wire] (209) to (210.center);
		\draw [style=wire, bend left=90, looseness=2.00] (211.center) to (212.center);
		\draw [style=wire] (213.center) to (125.center);
		\draw [style=wire] (214.center) to (215);
		\draw [style=wire, bend right=90, looseness=2.00] (224.center) to (225.center);
		\draw [style=wire] (226) to (224.center);
		\draw [style=wire] (227) to (228);
		\draw [style=wire, in=90, out=-150, looseness=1.25] (228) to (230.center);
		\draw [style=wire, in=90, out=-30, looseness=1.25] (228) to (229);
		\draw [style=wire] (230.center) to (225.center);
	\end{pgfonlayer}
\end{tikzpicture}
   \end{array}
\end{align*}
So we conclude that $\mathsf{r} := (1 \otimes \mathsf{d}^\circ);(\cup \otimes 1)$. 
\end{proof}

In the presence of Seely isomorphisms, if all the important structure maps are duals of one another, then the reverse deriving transformation is of the above form. 

\begin{definition} A reverse differential storage category is a reverse differential category with finite products whose coalgebra modality has Seely isomorphisms and such that $\eta = \varepsilon^\ast$, $\nabla= \Delta^\ast$, and $u = e^\ast$ (where $\eta$ is the induced codereliction, and $\nabla$ and $\mathsf{u}$ are the induced natural monoid structure). 
\end{definition}

\begin{corollary} A reverse differential storage category is precisely a differential storage category which is also self-dual compact closed and such that $\eta = \varepsilon^\ast$, $\nabla= \Delta^\ast$, and $u = e^\ast$. Furthermore, in a reverse differential storage category, the reverse deriving transformation is of the form $\mathsf{r} = (1 \otimes \mathsf{d}^\circ);(\cup \otimes 1)$. 
\end{corollary}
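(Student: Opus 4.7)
The plan is to reduce this corollary to the already-established Theorem \ref{thm:rdc_to_dc} together with Lemma \ref{lem:coder-rev}. First, I would observe that the hypotheses of a reverse differential storage category and of a differential storage category which is self-dual compact closed with $\eta = \varepsilon^\ast$, $\nabla = \Delta^\ast$, $u = e^\ast$ match up under the correspondence of Theorem \ref{thm:rdc_to_dc}: both structures share the same underlying additive symmetric monoidal category with coalgebra modality, and the additional data (Seely isomorphisms, finite products, the identifications of $\eta$, $\nabla$, $u$ with duals of $\varepsilon$, $\Delta$, $e$) are explicitly assumed on both sides. Since Theorem \ref{thm:rdc_to_dc} already establishes the bijection between reverse deriving transformations and deriving transformations in the self-dual compact closed setting, the ``precisely'' part of the corollary follows immediately once one checks that these extra data pass unchanged through the bijection.

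The substantive content is the second claim: that the reverse deriving transformation has the form $\mathsf{r} = (1 \otimes \mathsf{d}^\circ);(\cup \otimes 1)$. By Lemma \ref{lem:coder-rev}, this is equivalent to showing that the coderiving transformation $\mathsf{d}^\circ$ coincides with $\mathsf{d}^\ast$, the dagger (induced by the self-dual compact structure) of the deriving transformation. So the key step is the identity $\mathsf{d}^\ast = \mathsf{d}^\circ$.

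To verify this, I would use the fact that in any differential storage category the deriving transformation decomposes as $\mathsf{d}_A = (1_{\oc A} \otimes \eta_A);\nabla_A$, and that $\mathsf{d}^\circ_A = \Delta_A;(1_{\oc A} \otimes \varepsilon_A)$ by Definition \ref{dcircdef}. Taking the dagger of the first expression and using that $(\_)^\ast$ is a contravariant involutive monoidal functor (Lemma \ref{sliding}), one computes
\[ \mathsf{d}^\ast_A \;=\; \nabla_A^\ast;(1_{\oc A} \otimes \eta_A^\ast). \]
The assumptions $\nabla = \Delta^\ast$ and $\eta = \varepsilon^\ast$, combined with the involution $f^{\ast\ast}=f$, give $\nabla^\ast = \Delta$ and $\eta^\ast = \varepsilon$, so $\mathsf{d}^\ast_A = \Delta_A;(1_{\oc A} \otimes \varepsilon_A) = \mathsf{d}^\circ_A$ as required. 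Applying Lemma \ref{lem:coder-rev} then yields $\mathsf{r} = (1 \otimes \mathsf{d}^\circ);(\cup \otimes 1)$.

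The main obstacle, I expect, will be bookkeeping around the Seely isomorphisms: one must ensure that the compatibility conditions $\eta=\varepsilon^\ast$, $\nabla=\Delta^\ast$, $u=e^\ast$ are preserved strictly under Theorem \ref{thm:rdc_to_dc}'s correspondence, and in particular that passing from $\mathsf{r}$ constructed as in Theorem \ref{thm:rdc_to_dc} to the simpler expression using $\mathsf{d}^\circ$ really requires these extra compatibility axioms (they are what distinguishes a reverse differential storage category from a generic reverse differential category with Seely). Once these identifications are made explicit, the computation above is essentially algebraic manipulation via the snake and sliding equations, and no further subtlety arises.
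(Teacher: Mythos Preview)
Your proposal is correct and follows essentially the same route as the paper: the ``precisely'' clause is reduced to Theorem \ref{thm:rdc_to_dc} with the extra storage data carried along verbatim, and the identity $\mathsf{r} = (1 \otimes \mathsf{d}^\circ);(\cup \otimes 1)$ is obtained by computing $\mathsf{d}^\ast = \nabla^\ast;(1\otimes\eta^\ast) = \Delta;(1\otimes\varepsilon) = \mathsf{d}^\circ$ from the hypotheses $\nabla=\Delta^\ast$, $\eta=\varepsilon^\ast$ and then invoking Lemma \ref{lem:coder-rev}. The only minor imprecision is that Lemma \ref{lem:coder-rev} does not literally state the equivalence with $\mathsf{d}^\ast=\mathsf{d}^\circ$; the paper makes explicit the intermediate observation that $\mathsf{d}^\ast$ is always a deriving transformation for the induced algebra modality, so $\mathsf{d}^\ast=\mathsf{d}^\circ$ yields condition (ii) of the lemma, but this is exactly the step your argument implicitly uses.
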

\begin{proof} The first part of the statement is simply an extension of Theorem \ref{thm:rdc_to_dc}. For the second part, recall that in a differential storage category the deriving transformation is of the form $\mathsf{d} = (1 \otimes \eta); \nabla$. By assumption, the dual of the deriving transformation is computed out to be: 
\[\mathsf{d}^\ast = \left(  (1 \otimes \eta); \nabla \right)^\ast = \nabla^\ast ; (1 \otimes \eta)^\ast = \nabla^\ast ; (1 \otimes \eta^\ast) = \Delta ; (1 \otimes \varepsilon) = \mathsf{d}^\circ \] 
So $\mathsf{d}^\ast = \mathsf{d}^\circ$. However, recall that since we are in the self-dual case, $\mathsf{d}^\ast = \mathsf{d}^\circ$ is a deriving transformation for the algebra modality $(\wn, \delta^\ast, \varepsilon^\ast, \Delta^\ast, e^\ast)$. Then by Lemma \ref{lem:coder-rev}, it follows that $\mathsf{r} = (1 \otimes \mathsf{d}^\circ);(\cup \otimes 1)$. 
\end{proof}

We conclude this section with examples of reverse differential categories. 

\begin{example} \normalfont Let $\mathsf{REL}$ be the category of sets and relations, where we recall that the objects are sets and the maps are relations between them; that is, a relation from a set $X$ to a set $Y$, denoted $R: X \to Y$, is a subset $R \subseteq X \times Y$. $\mathsf{REL}$ is a symmetric monoidal category where the monoidal product is given by the Cartesian product of sets, $X \otimes Y = X \times Y$, and where the monoidal unit is a chosen singleton $\lbrace \ast \rbrace$. With this monoidal structure, $\mathsf{REL}$ is also a self-dual compact closed category where for a set $X$, its cup $\cup_X: X \times X \to \lbrace \ast \rbrace$ and cap $\cap_X: \lbrace \ast \rbrace \to X \times X$ are the dual relations which relate the single element to all pairs of copies of elements of $X$: 
\begin{align*}
\cup_X = \left \lbrace \left( (x,x), \ast \right) \vert~ \forall. x \in X \right \rbrace \subset (X \times X) \times \lbrace \ast \rbrace && \cap_X = \left \lbrace \left( \ast, (x,x) \right) \vert~ \forall. x \in X \right \rbrace \subset \lbrace \ast \rbrace  (X \times X) 
\end{align*}
$\mathsf{REL}$ is also a (monoidal) differential category. The additive symmetric monoidal structure is induced by the biproduct, which is given by the disjoint union of sets, $X \sqcup Y$, and where the terminal object is the empty set $\emptyset$. As such, the sum of parallel relations ${R: X \to Y}$ and $S: X \to Y$ is defined as their union $R + S := R \cup S$, while the zero map $0: X \to Y$ is the empty relation $0 := \emptyset$. The coalgebra modality on $\mathsf{REL}$ is given by finite bags (also called finite multisets). So for a set $X$, let $\oc X$ be the set of all finite bags of $X$. This coalgebra modality has the Seely isomorphisms so $\oc(X \sqcup Y) \cong \oc X \times \oc Y$ and $ \oc \emptyset \cong \lbrace \ast \rbrace$. The deriving transformation ${\mathsf{d}_X: \oc X \times X \to \oc X}$ is defined as the relation which adds an element into the bag: 
\begin{align*}
\mathsf{d}_X := \left \lbrace \left( (B, x), B \sqcup \llbracket x \rrbracket \right) \vert~ \forall. B \in \oc X, x \in X \right \rbrace \subset (\oc X \times X ) \times \oc X 
\end{align*}
where $\llbracket x \rrbracket$ is the one element bag and $\sqcup$ is the (necessarily disjoint) union of finite bags. For more details on this differential category, see \cite[Section 2.5.1]{blute2006differential}. By applying Theorem \ref{thm:rdc_to_dc}, $\mathsf{REL}$ is also a reverse differential category where the reverse deriving transformation $\mathsf{r}_X: \oc X \times \oc X \to X$ is the relation that relates two bags that differ by one element to that said element: 
\begin{align*}
\mathsf{r}_X := \left \lbrace \left( (B, B \sqcup \llbracket x \rrbracket), x \right) \vert~ \forall. B \in \oc X, x \in X \right \rbrace \subset (\oc X \times X ) \times \oc X 
\end{align*}
\end{example}

\begin{example} \normalfont The above example generalizes to the weighted relational model \cite[]{journal:weighted-relational,ong2017quantitative}. The underlying category is the biproduct completion of a complete commutative semiring. Briefly recall that a complete commutative semiring is a commutative semiring where one can have sums indexed by arbitrary sets $I$, which we denote by $\sum \limits_{i \in I}$, such that these summation operations satisfy certain distributivity and partitions axioms, see \cite[Section III.B]{ong2017quantitative}. For a complete commutative semiring $R$, define the category $R^\Pi$ whose objects are sets $X$ and where a map from $X$ to $Y$ is a set function $f: X \times Y \to R$, and where composition and identities are defined as in \cite[Section III.B]{ong2017quantitative}. Note that when we take the two-element Boolean algebra $B = \lbrace 0, 1 \rbrace$, then $B^\Pi$ is isomorphic to $\mathsf{REL}$. For any complete commutative semiring $R$, $R^\Pi$ is a symmetric monoidal category where the monoidal product is given by the Cartesian product of sets, $X \otimes Y = X \times Y$, and where the monoidal unit is a chosen singleton $\lbrace \ast \rbrace$. $R^\Pi$ is also a self-dual compact closed category where for a set $X$, its cup $\cup_X: X \times X \to \lbrace \ast \rbrace$ and cap $\cap_X: \lbrace \ast \rbrace \to X \times X$ are the functions defined as follows: 
\begin{align*}
\cup_X: (X \times X) \times \lbrace \ast \rbrace &\to R & \cap_X: \lbrace \ast \rbrace \times (X \times X) &\to R \\
\left( (x,y), \ast) \right) &\mapsto \begin{cases} 0 & \text{ if } x \neq y \\
1 & \text{ if } x =y 
\end{cases} & \left(\ast, (x,y) \right) &\mapsto \begin{cases} 0 & \text{ if } x \neq y \\
1 & \text{ if } x =y 
\end{cases}
\end{align*}
$R^\Pi$ is also a differential category. The additive symmetric monoidal structure is induced by the biproduct, which is given by the disjoint union of sets, $X \sqcup Y$, and where the terminal object is the empty set $\emptyset$. The sum of ${f: X \to Y}$ and $g: X \to Y$ (which recall are functions $X \times Y \to R$) is defined pointwise, $(f+g)(x,y) = f(x,y) + g(x,y)$, while the zero map $0: X \to Y$ is the function which maps everything to zero, $0(x,y) = 0$. The coalgebra modality on $R^\Pi$ is again given by finite bags, that is, for a set $X$, let $\oc X$ be the set of all finite bags of $X$, and this coalgebra modality has the Seely isomorphisms as in the previous example. The deriving transformation ${\mathsf{d}_X: \oc X \times X \to \oc X}$ is defined as follows: 
\begin{align*}
\mathsf{d}_X: (\oc X \times X) \times \oc X &\to R \\
\left( (B,x), B^\prime \right) &\mapsto \begin{cases} 0 & \text{ if } B \sqcup \llbracket x \rrbracket \neq B^\prime \\
\vert B^\prime \vert = \vert B \vert + 1 & \text{ if } B \sqcup \llbracket x \rrbracket = B^\prime
\end{cases}
\end{align*} 
where $\vert B \vert$ is the cardinality of the finite bag. The image by $\vert B^\prime \vert = \vert B \vert + 1$ takes into account that if we were in the unordered case, there would be $n+1$ possible ways of putting an element into a bag of size $n$. Of course the factor disappears in the case that the semiring is additively idempotent (i.e. $1+1 = 1$), such as the two-element Boolean algebra $B$. Which is why the factor does not appear in the differential structure of $\mathsf{REL}$ as described in the previous example.  For more details on this differential category, see \cite[Section 6]{lemay2020convenient}. By applying Theorem \ref{thm:rdc_to_dc}, $R^\Pi$ is also a reverse differential category where the reverse deriving transformation $\mathsf{r}_X: \oc X \times \oc X \to X$ is defined as follows: \begin{align*}
\mathsf{r}_X: (\oc X \times \oc X) \times X &\to R \\
\left( (B,B^\prime), x \right) &\mapsto \begin{cases} 0 & \text{ if } B \sqcup \llbracket x \rrbracket \neq B^\prime \\
\vert B^\prime \vert = \vert B \vert + 1 & \text{ if } B \sqcup \llbracket x \rrbracket = B^\prime
\end{cases}
\end{align*} 
\end{example}

\begin{example} \normalfont Let $k$ be a field and $\mathsf{FVEC}_k$ the category of finite dimensional $k$-vector spaces and $k$-linear maps between them. While $\mathsf{FVEC}_k$ is a compact closed category, it is not canonically self-dual compact closed, since to give a self-dual structure corresponds to providing a basis. So let $\mathsf{FVEC}^{\mathcal{B}}_k$ be the category whose objects are pairs $(V, B_V)$ consisting of a finite dimensional $k$-vector space and a basis $B_V$ of $V$, and whose maps are arbitrary $k$-linear maps between the underlying vector spaces. $\mathsf{FVEC}^{\mathcal{B}}_k$ is a self-dual compact closed category, where the tensor product is defined as: 
\[ (V, B_V) \otimes (W, B_W) = (V \otimes W, B_V \otimes B_W = \lbrace v \otimes w \vert~ \forall. v \in B_V, w \in B_W \rbrace) \]
the monoidal unit is $(k, \lbrace 1 \rbrace)$, and where the cup $\cup_{(V,B_V)}: (V, B_V) \otimes (V, B_V) \to (k, \lbrace 1 \rbrace)$ is defined as on basis elements $v, w \in B_V$ as follows: 
\begin{align*}
\cup_{(V,B_V)}(v,w) = \begin{cases} 1 & \text{if } v = w \\
0 & \text{if } v \neq w
 \end{cases}
\end{align*}
and the cap $\cap_{(V,B_V)}: (k, \lbrace 1 \rbrace) \to (V, B_V) \otimes (V, B_V)$ is the $k$-linear map defined as: 
\begin{align*}
\cap_{(V,B_V)}(1) = \sum_{v \in B_V} v \otimes v
\end{align*}
which is well-defined since $B_V$ is a finite set. $\mathsf{FVEC}^{\mathcal{B}}_k$ is also an additive symmetric monoidal category where the additive structure is induced by the direct sum of vector spaces (which is the categorical biproduct): 
\[ (V, B_V) \oplus (W, B_W) = (V \oplus W, B_V \oplus B_W = \lbrace v \oplus w \vert~ \forall. v \in B_V, w \in B_W \rbrace )  \]
and where $(0, \emptyset)$ is the zero object. Unfortunately, however, as explained in \cite[]{lemayfhilb}, $\mathsf{FVEC}^{\mathcal{B}}_k$ does not usually have a (non-trivial) differential category structure. This problem is solved when we consider $k = \mathbb{Z}_2$, as was done in \cite[]{hyland2003glueing,lemayfhilb}. $\mathsf{FVEC}^{\mathcal{B}}_{\mathbb{Z}_2}$ is then a differential category where the coalgebra modality is induced by the exterior algebra, which is defined as follows:
\[ \oc(V, B_V) = \left( \mathsf{E}(V) =  \bigoplus^{\mathsf{dim}(V)}_{n=0} \bigwedge^n V , \mathsf{E}(B_V)  = \lbrace v_1 \wedge \hdots \wedge v_n \vert~ \forall n\in \mathbb{N}, v_i \in B_V \rbrace \right)   \]
Recall that the wedge product satisfies that $v \wedge v = 0$. Usually, one also has that the wedge product is anticommutative, that is, $v \wedge w = -w \wedge v$. But in the case of $\mathbb{Z}_2$, $1=-1$ and therefore $v \wedge w = w \wedge v$, which is key to obtaining a coalgebra modality. The deriving transformation $\mathsf{d}_{(V,B_V)}: \oc(V, B_V) \otimes (V, B_V) \to \oc(V, B_V)$ is defined on basis elements as follows: 
\begin{align*}
\mathsf{d}_{(V,B_V)} \left( (v_1 \wedge \hdots \wedge v_n) \otimes v \right) = v_1 \wedge \hdots \wedge v_n \wedge v
\end{align*}
See \cite[Example 2.6.(iii)]{lemayfhilb} for more details on this differential category. By applying Theorem \ref{thm:rdc_to_dc}, $\mathsf{FVEC}^{\mathcal{B}}_{\mathbb{Z}_2}$ is also a reverse differential category where the reverse deriving transformation $\mathsf{r}_{(V,B_V)}:  \oc(V, B_V) \otimes \oc(V, B_V) \to (V, B_V)$ is defined on basis elements as follows: 
\begin{align*}
\mathsf{r}_{(V,B_V)} \left( (v_1 \wedge \hdots \wedge v_n) \otimes (w_1 \wedge \hdots \wedge w_m) \right) = \begin{cases} v & \text{if }  v_1 \wedge \hdots \wedge v_n \wedge v = w_1 \wedge \hdots \wedge w_m \\
& \text{for a $v \in B_V$}\\
& \\ 
0 & \text{otherwise }
 \end{cases}
\end{align*}
\end{example}

\begin{example}\label{ex:quantum1} \normalfont Pagani, Selinger, and Valiron's categorical model of a quantum lambda calculus, $\overline{\mathbf{CPMs}}^\oplus$ \cite[Section 4.2]{journal:selinger-valiron-fully-abstract-quantum}, is a reverse differential storage category. The objects of $\overline{\mathbf{CPMs}}^\oplus$ are families of pairs of natural numbers and subgroups of permutations, while the maps of $\overline{\mathbf{CPMs}}^\oplus$ can be interpreted as completely positive (continuous) module homomorphisms \cite[Propostion 16]{journal:selinger-valiron-fully-abstract-quantum}. $\overline{\mathbf{CPMs}}^\oplus$ is self-dual compact closed \cite[Section 4.3.3]{journal:selinger-valiron-fully-abstract-quantum} and has (in)finite biproducts \cite[Section 4.3.1]{journal:selinger-valiron-fully-abstract-quantum}, and so is an additive symmetric monoidal category as well. Furthermore, $\overline{\mathbf{CPMs}}^\oplus$ is a Lafont category, that is, $\overline{\mathbf{CPMs}}^\oplus$ has a coalgebra modality $\oc$ which is given by cofree cocommutative comonoids, and these coalgebra modalities are called free exponential modalities \cite[Section 4.3.4]{journal:selinger-valiron-fully-abstract-quantum}. By \cite[Theorem 21]{lemay:LIPIcs.CALCO.2021.19}, in the presence of biproducts, any free exponential modality has a (canonical) deriving transformation, and so any Lafont category with biproducts is a differential (storage) category. Therefore, $\overline{\mathbf{CPMs}}^\oplus$ is a differential (storage) category. Since $\overline{\mathbf{CPMs}}^\oplus$ is also self-dual compact closed, by applying Theorem \ref{thm:rdc_to_dc}, we conclude that $\overline{\mathbf{CPMs}}^\oplus$ is also a reverse differential category. In future work, it would be interesting to study in more detail the consequence of reverse differential structure in this model of quantum lambda calculus. 
\end{example}

\begin{example}\label{ex:quantum2} \normalfont There is another interesting relationship between reverse differential categories and categorical quantum mechanics.  Every reverse differential storage category whose coalgebra modality is a free exponential modality is a model of Vicary's categorical quantum harmonic oscillator \cite[Defintion 3.1]{vicary2008categorical}. We note, however, that the converse is not necessarily true since the required base category for a categorical quantum harmonic oscillator need only be a $\dagger$-symmetric monoidal category instead of a ($\dagger$-)compact closed category. That said, as discussed in \cite[Section 6]{vicary2008categorical}, in future work it would be interesting to revisit Vicary's categorical quantum harmonic oscillators from the point of view of (reverse) differential categories. 
\end{example}

\subsection{From MRDCs to CRDCs}\label{sec:cokleisliRDC}

In this section we prove that our definition satisfies requirement 2 of an MRDC, that is, that the coKleisli category of an MRDC is a CRDC. 

First, note that we already have part of what we need.  By Theorem \ref{thm:characterization_of_crdc}, to give a CRDC is equivalent to giving a CDC and a contextual linear dagger.  Moreover, by Theorem \ref{thm:rdc_to_dc}, any MRDC is a differential category, and by Theorem \ref{coKleisliCDC}, if $\mathbb{X}$ is a differential category then its coKleisli category has the structure of a CDC.  Thus, putting this together, for any MRDC, its coKleisli category is a CDC. 

Thus, all that remains to show is that the coKleisli category has a contextual linear dagger, and for this, we need its linear fibration, $\mathcal{L}[\mathbb{X}_{\oc}]$ to be a dagger fibration.  However, Theorem \ref{thm:fibration_equivalence} showed that there is an isomorphism of fibrations $\mathcal{L}[\mathbb{X}_{\oc}] \cong \mathcal{L}_{\oc}[\mathbb{X}]$. Thus it will suffice to give a dagger fibration structure on $\mathcal{L}_{\oc}[\mathbb{X}]$:  

\begin{lemma}\label{lemma:compactclosed_to_dagger} If $\mathbb{X}$ is a self-dual compact closed category, then the fibration $\mathcal{L}_{\oc}[\mathbb{X}]$ has dagger fibration structure, where for a map $f: \oc X \otimes A \to B$, its dagger $f^{\dagger[X]}: \oc X \otimes B \to A$ is defined as the following composite: 
\begin{align*}
   \begin{array}[c]{c}
f^{\dagger[X]} := \xymatrixcolsep{5pc}\xymatrix{\oc X \otimes B \ar[r]^-{1_{\oc X} \otimes \cap_A \otimes 1_B} & \oc X \otimes A \otimes A \otimes B  \ar[r]^-{f \otimes \sigma_{A,B}} & B \otimes B \otimes A \ar[r]^-{\cup_B \otimes 1_A} & A   }
   \end{array}    \end{align*}
   \begin{align*}
   \begin{array}[c]{c}
\begin{tikzpicture}
	\begin{pgfonlayer}{nodelayer}
		\node [style=none] (0) at (1.5, 3.25) {};
		\node [style=none] (1) at (1, 2.25) {};
		\node [style=component] (2) at (1, 2.5) {$f$};
		\node [style=object] (3) at (0.5, 4.25) {$\oc X$};
		\node [style=none] (4) at (2, 3.25) {};
		\node [style=object] (5) at (2, 0.5) {$A$};
		\node [style=object] (6) at (2.5, 4.25) {$B$};
		\node [style=none] (7) at (2.5, 2.25) {};
		\node [style=object] (8) at (-1.5, 3) {$\oc X$};
		\node [style=object] (9) at (-0.5, 3) {$B$};
		\node [style=component] (10) at (-1, 2) {$f^\dagger$};
		\node [style=object] (11) at (-1, 1) {$A$};
		\node [style=port] (12) at (0, 2) {$=$};
	\end{pgfonlayer}
	\begin{pgfonlayer}{edgelayer}
		\draw [style=wire, in=-90, out=30, looseness=1.50] (2) to (0.center);
		\draw [style=wire] (1.center) to (2);
		\draw [style=wire, in=-90, out=150] (2) to (3);
		\draw [style=wire, bend left=90, looseness=2.00] (0.center) to (4.center);
		\draw [style=wire] (4.center) to (5);
		\draw [style=wire, bend right=90, looseness=2.00] (1.center) to (7.center);
		\draw [style=wire] (6) to (7.center);
		\draw [style=wire] (10) to (11);
		\draw [style=wire, in=165, out=-90] (8) to (10);
		\draw [style=wire, in=-90, out=15] (10) to (9);
	\end{pgfonlayer}
\end{tikzpicture}
   \end{array} 
\end{align*}
 Furthermore, for any map $f: A \to B$ in $\mathbb{X}$, the dagger of $e_X \otimes f: \oc X \otimes A \to B$ is $(e_X \otimes f)^{\dagger[X]} = e_X \otimes f^\ast: \oc X \otimes B \to A$, where $f^\ast: B \to A$ is defined as in Lemma \ref{sliding}. 
\end{lemma}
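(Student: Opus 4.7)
The strategy is to verify the three dagger fibration axioms listed in Example \ref{ex:!daggerfibration} for the operation $f \mapsto f^{\dagger[X]}$, and then to deduce the final claim by direct computation. All verifications are string-diagrammatic, relying only on the snake equations (Definition \ref{SDCC}), the sliding equations (Lemma \ref{sliding}), and the (co)monoid axioms for $(\oc X, \Delta_X, e_X)$.

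For the identity axiom, $(e_X \otimes 1_A)^{\dagger[X]} = e_X \otimes 1_A$, I would unfold the definition and observe that $e_X$ disconnects the $\oc X$-wire while the $A$-wire passes through a cap followed by a cup; a single application of the snake equation on $A$ collapses this configuration to the identity. The involution axiom $(f^{\dagger[X]})^{\dagger[X]} = f$ is analogous: unfolding $\dagger[X]$ twice inserts two cap-cup pairs, one on each of the $A$ and $B$-wires, and both collapse via the snake equations to recover $f$.

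The main calculation is contravariant functoriality on composition. Given composable $f : \oc X \otimes A \to B$ and $g : \oc X \otimes B \to C$ in $\L_\oc[X]$, the dagger of the fibre-composite $(\Delta_X \otimes 1_A);(1_{\oc X} \otimes f);g$ produces a diagram containing a cap on $A$, the composite of $f$ and $g$ (drawing on $\Delta_X$ for two copies of $\oc X$), and a cup on $C$. On the other hand, the fibre-composite $(\Delta_X \otimes 1_C);(1_{\oc X} \otimes g^{\dagger[X]});f^{\dagger[X]}$ of the daggers contains the same ingredients together with an additional cap-cup pair on $B$ bridging $f$ and $g$. The snake equation on this $B$-wire eliminates the extra cap-cup pair, leaving a direct wire from $f$ to $g$, and the two diagrams then agree after using cocommutativity of $\Delta_X$ to align the two copies of $\oc X$. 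This is the most intricate step, but it is entirely routine graphically.

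For the change-of-base axiom, which asks that each substitution functor $\llbracket h \rrbracket^\ast_\oc$ commute with $\dagger[-]$, I would note that the substitution only modifies the $\oc X$-wire by prepending $\delta; \oc(\llbracket h \rrbracket)$, whereas the dagger construction only modifies the $A$ and $B$-wires via cups and caps; the two operations therefore commute by functoriality of the tensor product. Finally, for the concluding assertion, substituting $f = e_X \otimes g$ into the definition of $\dagger[X]$ lets $e_X$ decouple from the remainder of the diagram, and the residual sub-diagram is precisely the graphical definition of $g^\ast$ given in Lemma \ref{sliding}, yielding $(e_X \otimes g)^{\dagger[X]} = e_X \otimes g^\ast$ as claimed. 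I foresee no serious obstacle --- the composition step is the most careful calculation, but it is nonetheless routine.
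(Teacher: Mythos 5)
Your proposal is correct and follows essentially the same route as the paper: verifying contravariant functoriality, involution, and change of base via the snake and sliding equations (with the extra cap--cup pair on $B$ cancelling in the composition step, and the substitution functor commuting with the dagger essentially by definition), and then specializing to $e_X \otimes f$ to recover $f^\ast$. No gaps.
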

\begin{proof} Per Example \ref{ex:!daggerfibration}, it suffices to prove that the dagger operation satisfies contravariant functoriality, involution, and change of base. We begin by showing that the dagger is contravariant on composition: 
\begin{align*}
& \left(  \begin{array}[c]{c}
\begin{tikzpicture}
	\begin{pgfonlayer}{nodelayer}
		\node [style=duplicate] (1) at (9.5, 1) {$\Delta$};
		\node [style=object] (2) at (9.5, 1.75) {$\oc X$};
		\node [style=component] (3) at (10.5, 0) {$f$};
		\node [style=object] (4) at (11, 1.75) {$A$};
		\node [style=component] (5) at (9.75, -1.25) {$g$};
		\node [style=object] (6) at (9.75, -2) {$C$};
	\end{pgfonlayer}
	\begin{pgfonlayer}{edgelayer}
		\draw [style=wire] (2) to (1);
		\draw [style=wire, in=-90, out=15, looseness=0.75] (3) to (4);
		\draw [style=wire] (5) to (6);
		\draw [style=wire, in=30, out=-90] (3) to (5);
		\draw [style=wire, in=165, out=-150] (1) to (5);
		\draw [style=wire, in=165, out=-15, looseness=1.25] (1) to (3);
	\end{pgfonlayer}
\end{tikzpicture}
   \end{array} \right)^{\dagger[X]} =    \begin{array}[c]{c}
\begin{tikzpicture}
	\begin{pgfonlayer}{nodelayer}
		\node [style=duplicate] (0) at (0.5, -3.25) {$\Delta$};
		\node [style=object] (1) at (0.5, -2.5) {$\oc X$};
		\node [style=component] (2) at (1.5, -4.25) {$f$};
		\node [style=component] (3) at (0.75, -5.5) {$g$};
		\node [style=none] (4) at (2, -3.5) {};
		\node [style=none] (5) at (2.5, -3.5) {};
		\node [style=port] (6) at (2.5, -8.25) {$A$};
		\node [style=none] (7) at (0.75, -6) {};
		\node [style=object] (8) at (3, -2.5) {$B$};
		\node [style=none] (9) at (3, -6) {};
		\node [style=duplicate] (24) at (5.5, -2) {$\Delta$};
		\node [style=object] (25) at (5.5, -1.25) {$\oc X$};
		\node [style=component] (26) at (7.25, -3.25) {$f$};
		\node [style=component] (27) at (5.25, -5.5) {$g$};
		\node [style=none] (28) at (7.75, -2.75) {};
		\node [style=none] (29) at (8.25, -2.75) {};
		\node [style=port] (30) at (8.25, -9.25) {$A$};
		\node [style=none] (31) at (5.25, -6) {};
		\node [style=object] (32) at (9, -1.25) {$B$};
		\node [style=none] (33) at (9, -6.25) {};
		\node [style=none] (34) at (6.5, -4.25) {};
		\node [style=none] (35) at (7.25, -4.25) {};
		\node [style=none] (36) at (6, -4.25) {};
		\node [style=none] (37) at (6.5, -4.25) {};
		\node [style=object] (38) at (3.75, -5) {$=$};
	\end{pgfonlayer}
	\begin{pgfonlayer}{edgelayer}
		\draw [style=wire] (1) to (0);
		\draw [style=wire, in=30, out=-90] (2) to (3);
		\draw [style=wire, in=165, out=-150] (0) to (3);
		\draw [style=wire, in=165, out=-15, looseness=1.25] (0) to (2);
		\draw [style=wire, bend left=90, looseness=2.00] (4.center) to (5.center);
		\draw [style=wire] (5.center) to (6);
		\draw [style=wire, in=15, out=-90] (4.center) to (2);
		\draw [style=wire, bend right=90, looseness=2.00] (7.center) to (9.center);
		\draw [style=wire] (8) to (9.center);
		\draw [style=wire] (3) to (7.center);
		\draw [style=wire] (25) to (24);
		\draw [style=wire, in=165, out=-150] (24) to (27);
		\draw [style=wire, in=165, out=-15, looseness=1.25] (24) to (26);
		\draw [style=wire, bend left=90, looseness=2.00] (28.center) to (29.center);
		\draw [style=wire] (29.center) to (30);
		\draw [style=wire, in=15, out=-90] (28.center) to (26);
		\draw [style=wire, bend right=90, looseness=2.00] (31.center) to (33.center);
		\draw [style=wire] (32) to (33.center);
		\draw [style=wire] (27) to (31.center);
		\draw [style=wire, bend right=90, looseness=2.00] (34.center) to (36.center);
		\draw [style=wire, bend left=90, looseness=2.00] (35.center) to (37.center);
		\draw [style=wire] (26) to (35.center);
		\draw [style=wire, in=15, out=-90] (36.center) to (27);
	\end{pgfonlayer}
\end{tikzpicture}
   \end{array} \\
&    \begin{array}[c]{c}
\begin{tikzpicture}
	\begin{pgfonlayer}{nodelayer}
		\node [style=none] (10) at (12.25, -6) {};
		\node [style=none] (11) at (11.75, -7) {};
		\node [style=component] (12) at (11.75, -6.75) {$f$};
		\node [style=none] (13) at (12.75, -6) {};
		\node [style=port] (14) at (12.75, -8.75) {$A$};
		\node [style=none] (15) at (13.25, -7) {};
		\node [style=none] (16) at (12.75, -3.25) {};
		\node [style=none] (17) at (12.25, -4.25) {};
		\node [style=component] (18) at (12.25, -4) {$g$};
		\node [style=none] (19) at (13.25, -3.25) {};
		\node [style=object] (20) at (13.75, -2.25) {$B$};
		\node [style=none] (21) at (13.75, -4.25) {};
		\node [style=duplicate] (22) at (11, -3) {$\Delta$};
		\node [style=object] (23) at (11, -2.25) {$\oc X$};
		\node [style=object] (39) at (9.75, -5) {$=$};
		\node [style=duplicate] (40) at (15.75, -3.75) {$\Delta$};
		\node [style=object] (41) at (15.75, -3) {$\oc X$};
		\node [style=component] (42) at (16.75, -4.75) {$f^\dagger$};
		\node [style=object] (43) at (17.25, -3) {$B$};
		\node [style=component] (44) at (16, -6) {$g^\dagger$};
		\node [style=object] (45) at (16, -7) {$A$};
		\node [style=object] (46) at (14.5, -5) {$=$};
	\end{pgfonlayer}
	\begin{pgfonlayer}{edgelayer}
		\draw [style=wire, in=-90, out=30, looseness=1.50] (12) to (10.center);
		\draw [style=wire] (11.center) to (12);
		\draw [style=wire, bend left=90, looseness=2.00] (10.center) to (13.center);
		\draw [style=wire] (13.center) to (14);
		\draw [style=wire, bend right=90, looseness=2.00] (11.center) to (15.center);
		\draw [style=wire, in=-90, out=30, looseness=1.50] (18) to (16.center);
		\draw [style=wire] (17.center) to (18);
		\draw [style=wire, bend left=90, looseness=2.00] (16.center) to (19.center);
		\draw [style=wire, bend right=90, looseness=2.00] (17.center) to (21.center);
		\draw [style=wire] (20) to (21.center);
		\draw [style=wire] (19.center) to (15.center);
		\draw [style=wire] (23) to (22);
		\draw [style=wire, in=165, out=-30] (22) to (18);
		\draw [style=wire, in=165, out=-135] (22) to (12);
		\draw [style=wire] (41) to (40);
		\draw [style=wire, in=-90, out=15, looseness=0.75] (42) to (43);
		\draw [style=wire] (44) to (45);
		\draw [style=wire, in=30, out=-90] (42) to (44);
		\draw [style=wire, in=165, out=-150] (40) to (44);
		\draw [style=wire, in=165, out=-15, looseness=1.25] (40) to (42);
	\end{pgfonlayer}
\end{tikzpicture}
   \end{array} 
\end{align*}
Next we show that the dagger preserves identities: 
\begin{align*}
\left( \begin{array}[c]{c}
\begin{tikzpicture}
	\begin{pgfonlayer}{nodelayer}
		\node [style=object] (17) at (9.75, 1.75) {$\oc X$};
		\node [style=object] (19) at (10.5, 1.75) {$A$};
		\node [style=object] (21) at (10.5, -0.25) {$A$};
		\node [style=component] (22) at (9.75, 0.5) {$e$};
	\end{pgfonlayer}
	\begin{pgfonlayer}{edgelayer}
		\draw [style=wire] (19) to (21);
		\draw [style=wire] (17) to (22);
	\end{pgfonlayer}
\end{tikzpicture}
   \end{array}  \right)^{\dagger[X]} = \begin{array}[c]{c}
\begin{tikzpicture}
	\begin{pgfonlayer}{nodelayer}
		\node [style=object] (82) at (2.25, 9.75) {$\oc X$};
		\node [style=object] (83) at (4, 9.75) {$A$};
		\node [style=component] (85) at (2.25, 8.5) {$e$};
		\node [style=object] (86) at (9.5, 9.75) {$\oc X$};
		\node [style=object] (87) at (10.25, 9.75) {$A$};
		\node [style=object] (88) at (10.25, 7.75) {$A$};
		\node [style=component] (89) at (9.5, 8.5) {$e$};
		\node [style=none] (90) at (3, 9) {};
		\node [style=none] (91) at (3, 8.25) {};
		\node [style=none] (93) at (3.5, 9) {};
		\node [style=port] (94) at (3.5, 6.25) {$A$};
		\node [style=none] (95) at (4, 8.25) {};
		\node [style=object] (96) at (4.75, 8) {$=$};
		\node [style=object] (97) at (5.5, 9.75) {$\oc X$};
		\node [style=component] (98) at (5.5, 8.5) {$e$};
		\node [style=none] (99) at (7, 8.5) {};
		\node [style=none] (100) at (7.75, 8.5) {};
		\node [style=object] (101) at (7.75, 9.75) {$A$};
		\node [style=none] (102) at (6.25, 8.5) {};
		\node [style=object] (103) at (6.25, 7) {$A$};
		\node [style=none] (104) at (7, 8.5) {};
		\node [style=object] (105) at (8.5, 8) {$=$};
	\end{pgfonlayer}
	\begin{pgfonlayer}{edgelayer}
		\draw [style=wire] (82) to (85);
		\draw [style=wire] (87) to (88);
		\draw [style=wire] (86) to (89);
		\draw [style=wire, bend left=90, looseness=2.00] (90.center) to (93.center);
		\draw [style=wire] (93.center) to (94);
		\draw [style=wire, bend right=90, looseness=2.00] (91.center) to (95.center);
		\draw [style=wire] (90.center) to (91.center);
		\draw [style=wire] (83) to (95.center);
		\draw [style=wire] (97) to (98);
		\draw [style=wire, bend right=90, looseness=2.00] (99.center) to (102.center);
		\draw [style=wire] (102.center) to (103);
		\draw [style=wire, bend left=90, looseness=2.00] (100.center) to (104.center);
		\draw [style=wire] (101) to (100.center);
	\end{pgfonlayer}
\end{tikzpicture}
   \end{array}
\end{align*}
Next we show that the dagger is involutive using the snake equations: 
\begin{align*}
   \begin{array}[c]{c}
\begin{tikzpicture}
	\begin{pgfonlayer}{nodelayer}
		\node [style=none] (0) at (1.5, 3.25) {};
		\node [style=none] (1) at (1, 2.25) {};
		\node [style=component] (2) at (1, 2.5) {$f^\dagger$};
		\node [style=object] (3) at (0.5, 4.25) {$\oc X$};
		\node [style=none] (4) at (2, 3.25) {};
		\node [style=port] (5) at (2, 0.5) {$B$};
		\node [style=object] (6) at (2.5, 4.25) {$A$};
		\node [style=none] (7) at (2.5, 2.25) {};
		\node [style=object] (8) at (-1.5, 3.25) {$\oc X$};
		\node [style=object] (9) at (-0.5, 3.25) {$A$};
		\node [style=component] (10) at (-1, 2) {${f^\dagger}^\dagger$};
		\node [style=object] (11) at (-1, 0.75) {$B$};
		\node [style=port] (12) at (0, 2) {$=$};
		\node [style=none] (106) at (4.75, 4) {};
		\node [style=none] (107) at (4.25, 3) {};
		\node [style=component] (108) at (4.25, 3.25) {$f$};
		\node [style=object] (109) at (3.75, 5) {$\oc X$};
		\node [style=none] (110) at (5.25, 4) {};
		\node [style=none] (113) at (5.75, 3) {};
		\node [style=none] (114) at (5.75, 3.5) {};
		\node [style=none] (115) at (6.25, 3.5) {};
		\node [style=port] (116) at (6.25, -0.5) {$B$};
		\node [style=none] (117) at (5.25, 1.75) {};
		\node [style=object] (118) at (6.75, 5) {$A$};
		\node [style=none] (119) at (6.75, 1.75) {};
		\node [style=port] (120) at (3.25, 2) {$=$};
		\node [style=object] (121) at (8, 3.25) {$\oc X$};
		\node [style=object] (122) at (9, 3.25) {$A$};
		\node [style=component] (123) at (8.5, 2) {$f$};
		\node [style=object] (124) at (8.5, 0.75) {$B$};
	\end{pgfonlayer}
	\begin{pgfonlayer}{edgelayer}
		\draw [style=wire, in=-90, out=30, looseness=1.50] (2) to (0.center);
		\draw [style=wire] (1.center) to (2);
		\draw [style=wire, in=-90, out=150] (2) to (3);
		\draw [style=wire, bend left=90, looseness=2.00] (0.center) to (4.center);
		\draw [style=wire] (4.center) to (5);
		\draw [style=wire, bend right=90, looseness=2.00] (1.center) to (7.center);
		\draw [style=wire] (6) to (7.center);
		\draw [style=wire] (10) to (11);
		\draw [style=wire, in=165, out=-90] (8) to (10);
		\draw [style=wire, in=-90, out=15] (10) to (9);
		\draw [style=wire, in=-90, out=30, looseness=1.50] (108) to (106.center);
		\draw [style=wire] (107.center) to (108);
		\draw [style=wire, in=-90, out=150] (108) to (109);
		\draw [style=wire, bend left=90, looseness=2.00] (106.center) to (110.center);
		\draw [style=wire, bend right=90, looseness=2.00] (107.center) to (113.center);
		\draw [style=wire, bend left=90, looseness=2.00] (114.center) to (115.center);
		\draw [style=wire] (115.center) to (116);
		\draw [style=wire] (114.center) to (113.center);
		\draw [style=wire, bend right=90, looseness=2.00] (117.center) to (119.center);
		\draw [style=wire] (118) to (119.center);
		\draw [style=wire] (110.center) to (117.center);
		\draw [style=wire] (123) to (124);
		\draw [style=wire, in=165, out=-90] (121) to (123);
		\draw [style=wire, in=-90, out=15] (123) to (122);
	\end{pgfonlayer}
\end{tikzpicture}
   \end{array}
\end{align*}
Lastly we show that the substitution functors preserves the dagger, which is automatic by definition: 
\begin{align*}
\left( \begin{array}[c]{c}
\begin{tikzpicture}
	\begin{pgfonlayer}{nodelayer}
		\node [style=component] (0) at (9, 0.25) {$\delta$};
		\node [style=object] (1) at (10.5, 1.25) {$A$};
		\node [style=component] (2) at (9.75, -1.75) {$f$};
		\node [style=object] (3) at (9.75, -2.75) {$B$};
		\node [style=function2] (4) at (9, -0.75) {$h$};
		\node [style=object] (5) at (9, 1.25) {$\oc X$};
	\end{pgfonlayer}
	\begin{pgfonlayer}{edgelayer}
		\draw [style=wire] (2) to (3);
		\draw [style=wire, bend right, looseness=1.25] (4) to (2);
		\draw [style=wire] (0) to (4);
		\draw [style=wire, in=30, out=-90, looseness=0.75] (1) to (2);
		\draw [style=wire] (5) to (0);
	\end{pgfonlayer}
\end{tikzpicture}
   \end{array} \right)^{\dagger[X]} =    \begin{array}[c]{c}
\begin{tikzpicture}
	\begin{pgfonlayer}{nodelayer}
		\node [style=none] (131) at (-1.25, -4.25) {};
		\node [style=none] (132) at (-1.75, -5.25) {};
		\node [style=component] (133) at (-1.75, -5) {$f$};
		\node [style=none] (135) at (-0.75, -4.25) {};
		\node [style=port] (136) at (-0.75, -7) {$A$};
		\node [style=object] (137) at (-0.25, -2.25) {$B$};
		\node [style=none] (138) at (-0.25, -5.25) {};
		\node [style=component] (139) at (-2.5, -3.25) {$\delta$};
		\node [style=function2] (140) at (-2.5, -4.25) {$h$};
		\node [style=object] (141) at (-2.5, -2.25) {$\oc X$};
	\end{pgfonlayer}
	\begin{pgfonlayer}{edgelayer}
		\draw [style=wire, in=-90, out=30, looseness=1.50] (133) to (131.center);
		\draw [style=wire] (132.center) to (133);
		\draw [style=wire, bend left=90, looseness=2.00] (131.center) to (135.center);
		\draw [style=wire] (135.center) to (136);
		\draw [style=wire, bend right=90, looseness=2.00] (132.center) to (138.center);
		\draw [style=wire] (137) to (138.center);
		\draw [style=wire] (139) to (140);
		\draw [style=wire] (141) to (139);
		\draw [style=wire, in=165, out=-90] (140) to (133);
	\end{pgfonlayer}
\end{tikzpicture}
   \end{array}  =  \begin{array}[c]{c}
\begin{tikzpicture}
	\begin{pgfonlayer}{nodelayer}
		\node [style=component] (0) at (9, 0.25) {$\delta$};
		\node [style=object] (1) at (10.5, 1.25) {$B$};
		\node [style=component] (2) at (9.75, -1.75) {$f^\dagger$};
		\node [style=object] (3) at (9.75, -2.75) {$A$};
		\node [style=function2] (4) at (9, -0.75) {$h$};
		\node [style=object] (5) at (9, 1.25) {$\oc X$};
	\end{pgfonlayer}
	\begin{pgfonlayer}{edgelayer}
		\draw [style=wire] (2) to (3);
		\draw [style=wire, bend right, looseness=1.25] (4) to (2);
		\draw [style=wire] (0) to (4);
		\draw [style=wire, in=30, out=-90, looseness=0.75] (1) to (2);
		\draw [style=wire] (5) to (0);
	\end{pgfonlayer}
\end{tikzpicture}
   \end{array} 
\end{align*} 
So we conclude that $\mathcal{L}_{\oc}[\mathbb{X}]$ has dagger fibration. Next, for any map $f: A \to B$, using the snake equations and the sliding equations, we compute: 
\begin{align*}
\left(    \begin{array}[c]{c}
\begin{tikzpicture}
	\begin{pgfonlayer}{nodelayer}
		\node [style=object] (8) at (1, 3) {$A$};
		\node [style=component] (10) at (1, 2) {$f$};
		\node [style=object] (11) at (1, 1) {$B$};
		\node [style=object] (24) at (0, 3) {$\oc X$};
		\node [style=component] (25) at (0, 2) {$e$};
	\end{pgfonlayer}
	\begin{pgfonlayer}{edgelayer}
		\draw [style=wire] (10) to (11);
		\draw [style=wire] (8) to (10);
		\draw [style=wire] (24) to (25);
	\end{pgfonlayer}
\end{tikzpicture}
   \end{array} \right)^{\dagger[X]} =    \begin{array}[c]{c}
\begin{tikzpicture}
	\begin{pgfonlayer}{nodelayer}
		\node [style=none] (0) at (-7.5, 3.25) {};
		\node [style=none] (1) at (-7.5, 2.25) {};
		\node [style=none] (4) at (-7, 3.25) {};
		\node [style=none] (7) at (-6.75, 2.25) {};
		\node [style=port] (12) at (-6, 2.75) {$=$};
		\node [style=component] (21) at (-7.5, 2.75) {$f$};
		\node [style=object] (24) at (-8.25, 4.5) {$\oc X$};
		\node [style=component] (25) at (-8.25, 2.75) {$e$};
		\node [style=object] (26) at (2.75, 3.75) {$B$};
		\node [style=component] (27) at (2.75, 2.75) {$f^\ast$};
		\node [style=object] (28) at (2.75, 1.75) {$A$};
		\node [style=object] (29) at (1.75, 3.75) {$\oc X$};
		\node [style=component] (30) at (1.75, 2.75) {$e$};
		\node [style=object] (31) at (-6.75, 4.5) {$B$};
		\node [style=object] (32) at (-7, 0.75) {$A$};
		\node [style=none] (33) at (-4.25, 3.25) {};
		\node [style=none] (34) at (-4.25, 2.25) {};
		\node [style=none] (35) at (-3.75, 3.25) {};
		\node [style=none] (36) at (-3.25, 2.25) {};
		\node [style=component] (37) at (-3.25, 2.75) {$f^\ast$};
		\node [style=object] (38) at (-5, 4.5) {$\oc X$};
		\node [style=component] (39) at (-5, 2.75) {$e$};
		\node [style=object] (40) at (-3.25, 4.5) {$B$};
		\node [style=object] (41) at (-3.75, 0.75) {$A$};
		\node [style=port] (42) at (-2.5, 2.75) {$=$};
		\node [style=none] (43) at (-0.25, 3.25) {};
		\node [style=none] (44) at (-0.25, 2.25) {};
		\node [style=none] (45) at (-0.75, 3.25) {};
		\node [style=none] (46) at (0.25, 2.25) {};
		\node [style=component] (47) at (0.25, 2.75) {$f^\ast$};
		\node [style=object] (48) at (-1.5, 4.5) {$\oc X$};
		\node [style=component] (49) at (-1.5, 2.75) {$e$};
		\node [style=object] (50) at (0.25, 4.5) {$B$};
		\node [style=object] (51) at (-0.75, 0.75) {$A$};
		\node [style=port] (52) at (1, 2.75) {$=$};
	\end{pgfonlayer}
	\begin{pgfonlayer}{edgelayer}
		\draw [style=wire, bend left=90, looseness=2.00] (0.center) to (4.center);
		\draw [style=wire, bend right=90, looseness=2.00] (1.center) to (7.center);
		\draw [style=wire] (21) to (1.center);
		\draw [style=wire] (24) to (25);
		\draw [style=wire] (0.center) to (21);
		\draw [style=wire] (27) to (28);
		\draw [style=wire] (26) to (27);
		\draw [style=wire] (29) to (30);
		\draw [style=wire] (31) to (7.center);
		\draw [style=wire] (4.center) to (32);
		\draw [style=wire, bend left=90, looseness=2.00] (33.center) to (35.center);
		\draw [style=wire, bend right=90, looseness=2.00] (34.center) to (36.center);
		\draw [style=wire] (38) to (39);
		\draw [style=wire] (35.center) to (41);
		\draw [style=wire] (40) to (37);
		\draw [style=wire] (37) to (36.center);
		\draw [style=wire] (33.center) to (34.center);
		\draw [style=wire, bend right=90, looseness=2.00] (43.center) to (45.center);
		\draw [style=wire, bend right=90, looseness=2.00] (44.center) to (46.center);
		\draw [style=wire] (48) to (49);
		\draw [style=wire] (45.center) to (51);
		\draw [style=wire] (50) to (47);
		\draw [style=wire] (47) to (46.center);
		\draw [style=wire] (43.center) to (44.center);
	\end{pgfonlayer}
\end{tikzpicture}
   \end{array}
\end{align*}
So the desired equality holds. 
\end{proof}

The following then follows from the remarks above:

\begin{corollary}\label{coKlielsicondag} Let $\mathbb{X}$ be a differential category which is self-dual compact closed. Then the coKleisli category $\mathbb{X}_\oc$ is a Cartesian differential category with a contextual linear dagger, where for a map $\llbracket f \rrbracket: \oc(X \times A) \to B$ which is linear in context $X$, its dagger $\llbracket f^{\dagger[X]} \rrbracket: \oc (X \times B) \to A$ is defined as follows: 
\begin{align*}
   \begin{array}[c]{c}
\llbracket f^{\dagger[X]} \rrbracket = \mathsf{E}_X\left( \mathsf{E}^{-1}_X(\llbracket f \rrbracket)^{\dagger[X]} \right)    \end{array}&&    \begin{array}[c]{c}
\begin{tikzpicture}
	\begin{pgfonlayer}{nodelayer}
		\node [style=none] (0) at (4.75, 2.75) {};
		\node [style=none] (1) at (4, 0.25) {};
		\node [style=none] (4) at (5.75, 2.75) {};
		\node [style=port] (5) at (5.75, -0.75) {$A$};
		\node [style=none] (7) at (5.25, 0.25) {};
		\node [style=object] (8) at (1, 3) {$\oc(X \times B)$};
		\node [style=component] (10) at (1, 2) {$f^\dagger$};
		\node [style=object] (11) at (1, 1) {$A$};
		\node [style=port] (12) at (2.25, 2) {$=$};
		\node [style=object] (15) at (4.25, 5.75) {$\oc (X \times B)$};
		\node [style=differential] (16) at (4.25, 5) {{\bf =\!=\!=\!=}};
		\node [style=component] (17) at (5.25, 4) {$\pi_1$};
		\node [style=function2] (18) at (3.25, 4) {$\pi_0$};
		\node [style=differential] (19) at (4, 1.5) {{\bf =\!=\!=\!=}};
		\node [style=component] (21) at (4, 0.75) {$f$};
		\node [style=component] (22) at (4.75, 2.5) {$\iota_1$};
		\node [style=function2] (23) at (3.25, 2.5) {$\iota_0$};
	\end{pgfonlayer}
	\begin{pgfonlayer}{edgelayer}
		\draw [style=wire, bend left=90, looseness=2.00] (0.center) to (4.center);
		\draw [style=wire] (4.center) to (5);
		\draw [style=wire, bend right=90, looseness=2.00] (1.center) to (7.center);
		\draw [style=wire] (10) to (11);
		\draw [style=wire] (8) to (10);
		\draw [style=wire] (15) to (16);
		\draw [style=wire, in=90, out=-150, looseness=1.25] (16) to (18);
		\draw [style=wire, in=90, out=-30, looseness=1.25] (16) to (17);
		\draw [style=wire] (19) to (21);
		\draw [style=wire, in=135, out=-90] (23) to (19);
		\draw [style=wire, in=-90, out=45, looseness=1.25] (19) to (22);
		\draw [style=wire] (21) to (1.center);
		\draw [style=wire] (0.center) to (22);
		\draw [style=wire] (18) to (23);
		\draw [style=wire] (17) to (7.center);
	\end{pgfonlayer}
\end{tikzpicture}
   \end{array}
\end{align*}
where $\mathsf{E}_X$ and $\mathsf{E}^{-1}_X$ are defined on in Corollary \ref{cor:fibres-equiv}, and the $\dagger[X]$ on the right-hand side is defined as in Lemma \ref{lemma:compactclosed_to_dagger}. 
\end{corollary}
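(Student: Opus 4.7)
The plan is to assemble the result from the three main ingredients already developed in the paper: the coKleisli CDC structure of Proposition \ref{coKleisliCDC}, the dagger fibration on $\L_\oc[\mathbb{X}]$ from Lemma \ref{lemma:compactclosed_to_dagger}, and the fibration isomorphism of Theorem \ref{thm:fibration_equivalence}. So first, since $\mathbb{X}$ is a differential category (with finite biproducts, which is implicit here in order to run the coKleisli construction), Proposition \ref{coKleisliCDC} gives that $\mathbb{X}_\oc$ is a Cartesian differential category with the Cartesian left additive structure induced by the biproducts. The task is then precisely to equip $\mathbb{X}_\oc$ with a contextual linear dagger, which by definition amounts to giving the linear fibration $\mathsf{p}: \L[\mathbb{X}_\oc] \to \mathbb{X}_\oc$ a dagger fibration structure whose fibres have dagger biproducts.

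Next, since $\mathbb{X}$ is self-dual compact closed, Lemma \ref{lemma:compactclosed_to_dagger} equips the context fibration $\mathsf{p}_\oc: \L_\oc[\mathbb{X}] \to \mathbb{X}_\oc$ with dagger fibration structure, where for $g: \oc X \otimes A \to B$ the dagger $g^{\dagger[X]}: \oc X \otimes B \to A$ is built using the cup and cap of $A$ and $B$. By Theorem \ref{thm:fibration_equivalence}, the fibrations $\mathsf{p}_\oc$ and $\mathsf{p}$ are isomorphic over $\mathbb{X}_\oc$ via $\mathsf{E}$ and $\mathsf{E}^{-1}$. Transporting the dagger along this isomorphism gives a dagger fibration structure on $\L[\mathbb{X}_\oc]$: for $\llbracket f \rrbracket: \oc(X \times A) \to B$ linear in context $X$, define
\[
\llbracket f^{\dagger[X]} \rrbracket \;:=\; \mathsf{E}_X\!\left( \mathsf{E}^{-1}_X(\llbracket f \rrbracket)^{\dagger[X]} \right).
\]
That this is again linear in context $X$ is automatic because $\mathsf{E}_X$ lands in $\L[X]$ by Corollary \ref{cor:fibres-equiv}. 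Involution, contravariant functoriality, and preservation under change of base are all inherited from the corresponding properties on $\L_\oc[\mathbb{X}]$ because $\mathsf{E}$ is a fibration isomorphism; concretely, $(\mathsf{E}\circ\dagger\circ \mathsf{E}^{-1})\circ(\mathsf{E}\circ\dagger\circ \mathsf{E}^{-1}) = \mathsf{E}\circ\dagger\circ\dagger\circ \mathsf{E}^{-1} = \mathrm{id}$, and similarly for the contravariant composition identity and for compatibility with substitution functors. Unpacking the formula with the explicit descriptions of $\mathsf{E}_X$ and $\mathsf{E}^{-1}_X$ from Corollary \ref{cor:fibres-equiv} yields exactly the string diagram displayed in the statement.

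It remains to verify that each fibre $\L[X]$ has dagger biproducts. The biproduct on $\L[X]$ is induced from the biproduct on $\mathbb{X}$ (through the coKleisli construction and the fibre description of Lemma \ref{lemma:L_fibres}). Transporting to $\L_\oc[X]$ through $\mathsf{E}^{-1}_X$, it suffices to check the dagger biproduct condition there, and for this the relevant injections and projections are built from the biproduct injections and projections of $\mathbb{X}$ tensored with counits $e_X$. The sliding equations of Lemma \ref{sliding} applied to the biproduct injections and projections of $\mathbb{X}$ (which are duals of each other by the standard identity $\iota_i^\ast = \pi_i$ in any self-dual compact closed category with biproducts) show that the dagger in each fibre swaps injections with projections in the required way. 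Finally, invoking Theorem \ref{thm:characterization_of_crdc} gives the Cartesian reverse differential category structure on $\mathbb{X}_\oc$.

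The main obstacle is bookkeeping rather than conceptual: one has to carry out a careful string-diagram computation to confirm that the composite $\mathsf{E}_X\circ(-)^{\dagger[X]}\circ \mathsf{E}^{-1}_X$ simplifies to the displayed formula, using the snake equations together with the interaction of $\mathsf{d}$, $\mathsf{d}^\circ$, and the biproduct injections/projections. Once this identification is done, all the dagger axioms and the dagger biproduct condition follow directly by transport along the fibration isomorphism $\mathsf{E}$.
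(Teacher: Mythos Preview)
Your proposal is correct and follows essentially the same route as the paper's proof: transport the dagger fibration structure from $\L_\oc[\mathbb{X}]$ (Lemma \ref{lemma:compactclosed_to_dagger}) along the fibration isomorphism of Theorem \ref{thm:fibration_equivalence}, then verify dagger biproducts by computing that $\mathsf{E}^{-1}_X$ sends the coKleisli projections to $e_X \otimes \pi_i$, whose dagger is $e_X \otimes \iota_i$ since $\pi_i^\ast = \iota_i$, and $\mathsf{E}_X$ sends this back to the coKleisli injection. Two minor remarks: the displayed string diagram is literally the unpacked definition of $\mathsf{E}_X \circ (-)^{\dagger[X]} \circ \mathsf{E}^{-1}_X$, so no further simplification is needed; and your final invocation of Theorem \ref{thm:characterization_of_crdc} goes slightly beyond the statement of the corollary, which only asserts the contextual linear dagger (the CRDC conclusion is drawn separately in Theorem \ref{coKleisliCRDC}).
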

\begin{proof} It follows from the equivalence of Theorem \ref{thm:fibration_equivalence}, that by giving a dagger fibration on $\mathcal{L}_\oc[\mathbb{X}]$ (Lemma \ref{lemma:compactclosed_to_dagger}), we obtain a dagger fibration on $\mathcal{L}[\mathbb{X}_\oc]$ defined as follows: 
  \[  \xymatrixcolsep{5pc}\xymatrix{ \mathcal{L}[\mathbb{X}_\oc]  \ar[r]^-{\mathsf{E}^{-1}} & \mathcal{L}_\oc[\mathbb{X}] \ar[r]^-{(-)^\dagger} & \mathcal{L}_\oc[\mathbb{X}]^\ast \ar[r]^-{\mathsf{E}^\ast} &  \mathcal{L}[\mathbb{X}_\oc]^\ast 
  } \]
Zooming in on the fibres, we have that the dagger on maps which are linear in context is defined as $\llbracket f^{\dagger[X]} \rrbracket = \mathsf{E}_X\left( \mathsf{E}^{-1}_X(\llbracket f \rrbracket)^{\dagger[X]} \right)$. It remains to show that each fibre also has dagger-biproducts. First note that in the fibres, the projections maps and injections maps are respectively: 
\begin{align*}
\llbracket 1_X \times \pi_i \rrbracket = \varepsilon_{X \times (A_0 \times A_1)}; (1_X \times \pi_X): \oc\left( X \times (A_0 \times A_1) \right) \to A_i \\
 \llbracket 1_X \times \iota_i \rrbracket = \varepsilon_{X \times A_i}; (1_X \times \iota_i): \oc (X \times A_i) \to A_0 \times A_1
\end{align*}
By applying $\mathsf{E}^{-1}_X$ to the projection we obtain the following:
\begin{align*}
\mathsf{E}^{-1}_X \left( \llbracket 1_X \times \pi_i \rrbracket \right) = e_X \otimes \pi_i: \oc X \otimes (A_0 \times A_1) \to A_i  \end{align*}
By Lemma \ref{lemma:compactclosed_to_dagger}, their dagger is given the dual: 
\begin{align*}
\mathsf{E}^{-1}_X \left( \llbracket 1_X \times \pi_i \rrbracket \right)^{\dagger[X]} = (e_X \otimes \pi_i)^{\dagger[X]} = e_X \otimes \pi_i^\ast: \oc X \otimes A_i \to A_0 \times A_1  \end{align*}
However by \cite[]{houston2008finite}, the dual of the projections are the injections (and vice-versa). So: 
\begin{align*}
\mathsf{E}^{-1}_X \left( \llbracket 1_X \times \pi_i \rrbracket \right)^{\dagger[X]} = e_X \otimes \pi_i^\ast = e_X \otimes \iota_i  \end{align*}
Lastly, applying $\mathsf{E}_X$ we finally obtain that: 
\begin{align*}
\mathsf{E}\left( \mathsf{E}^{-1}_X \left( \llbracket 1_X \times \pi_i \rrbracket \right)^{\dagger[X]}  \right) = \mathsf{E}\left( e_X \otimes \iota_i  \right) =  \varepsilon_{X \times A_i}; (1_X \times \iota_i) =  \llbracket 1_X \times \iota_i \rrbracket 
\end{align*}
So we conclude that $\llbracket 1_X \times \pi_i \rrbracket^{\dagger[X]} =   \llbracket 1_X \times \iota_i \rrbracket$, and that therefore each fibre $\mathcal{L}[X]$ has dagger biproducts. Thus, $\mathbb{X}_\oc$ is a Cartesian differential category with a contextual linear dagger. 
\end{proof}

We then obtain one of the main results of this paper: 

\begin{therm}\label{coKleisliCRDC} Let $\mathbb{X}$ be a reverse differential category with coalgebra modality $(\oc, \delta, \varepsilon, \Delta, e)$ and reverse deriving transformation $\mathsf{r}: \oc A \otimes \oc A \to A$, and finite (bi)products (which we denote here using the product notation). Then the coKleisli category $\mathbb{X}_\oc$ is a Cartesian reverse differential category with Cartesian left additive structure defined in Section \ref{cokleislisection} and reverse differential combinator $\mathsf{R}$ defined as follows on a coKleisli map $\llbracket f \rrbracket: \oc A \to B$: 
 \begin{align*}
 \llbracket \mathsf{R}[f] \rrbracket := \xymatrixcolsep{4pc}\xymatrix{\oc(A \times B) \ar[r]^-{\chi_{A \times B}} & \oc A \otimes \oc B \ar[r]^-{1_{\oc A} \otimes \varepsilon_B} & \oc A \otimes B \ar[r]^-{1_{\oc A} \otimes \llbracket f \rrbracket^\ast} & \oc A \otimes \oc A \ar[r]^-{\mathsf{r}_A} & A}
       \end{align*}
    \begin{align*}
 \begin{array}[c]{c}
\llbracket \mathsf{R}[f] \rrbracket 
   \end{array} =  \begin{array}[c]{c} 
\begin{tikzpicture}
	\begin{pgfonlayer}{nodelayer}
		\node [style=component] (0) at (7.5, 1.25) {$\varepsilon$};
		\node [style=differential] (1) at (7, -1) {{\bf \aquarius\!\aquarius\!\aquarius}};
		\node [style=object] (2) at (7, -1.75) {$A$};
		\node [style=duplicate] (3) at (7, 2.5) {$\chi$};
		\node [style=object] (4) at (7, 3.25) {$\oc (A \times B)$};
		\node [style=component] (5) at (7.5, 0) {$f^\ast$};
	\end{pgfonlayer}
	\begin{pgfonlayer}{edgelayer}
		\draw [style=wire] (4) to (3);
		\draw [style=wire, in=90, out=-30, looseness=1.25] (3) to (0);
		\draw [style=wire, in=150, out=-150] (3) to (1);
		\draw [style=wire] (1) to (2);
		\draw [style=wire] (0) to (5);
		\draw [style=wire, in=45, out=-90, looseness=0.75] (5) to (1);
	\end{pgfonlayer}
\end{tikzpicture}
   \end{array}
\end{align*}
where $\chi_{A \times B}: \oc (A \times B) \to \oc A \otimes \oc B$ is defined as in Definition \ref{Seelydef} and $(\_)^\ast$ is defined as in Lemma \ref{sliding}. Furthermore, the induced differential combinator is precisely that of Proposition \ref{coKleisliCDC}, and the induced contextual linear dagger is precisely that of Corollary \ref{coKlielsicondag}. 
\end{therm} 
\begin{proof} By Proposition \ref{coKleisliCDC}, $\mathbb{X}_\oc$ is a Cartesian differential category, and since $\mathbb{X}$ is compact closed, by Corollary \ref{coKlielsicondag}, $\mathbb{X}_\oc$ also has a contextual linear dagger. Therefore by Theorem \ref{thm:characterization_of_crdc}, $\mathbb{X}_\oc$ is a Cartesian reverse differential category where for a coKleisli map $\llbracket f \rrbracket: \oc X \to B$, its reverse derivative $\llbracket \mathsf{R}[f] \rrbracket: \oc (A \times B) \to A$ is defined as $\llbracket \mathsf{R}[f] \rrbracket = \llbracket \mathsf{D}[f]^{\dagger[A]} \rrbracket$. Expanding this out, we compute: 
\begin{align*}
   \begin{array}[c]{c}
\llbracket \mathsf{R}[f] \rrbracket = \llbracket \mathsf{D}[f]^{\dagger[A]} \rrbracket
   \end{array} =    \begin{array}[c]{c}
\begin{tikzpicture}
	\begin{pgfonlayer}{nodelayer}
		\node [style=port] (42) at (-3, -7.5) {$=$};
		\node [style=none] (53) at (-4.5, -4.5) {};
		\node [style=none] (54) at (-5.25, -10) {};
		\node [style=none] (55) at (-3.5, -4.5) {};
		\node [style=object] (56) at (-3.5, -12.25) {$A$};
		\node [style=none] (57) at (-4, -10) {};
		\node [style=object] (62) at (-5, -1.5) {$\oc (A \times B)$};
		\node [style=differential] (63) at (-5, -2.25) {{\bf =\!=\!=\!=}};
		\node [style=component] (64) at (-4, -3.25) {$\pi_1$};
		\node [style=function2] (65) at (-6, -3.25) {$\pi_0$};
		\node [style=differential] (66) at (-5.25, -5.75) {{\bf =\!=\!=\!=}};
		\node [style=component] (68) at (-4.5, -4.75) {$\iota_1$};
		\node [style=function2] (69) at (-6, -4.75) {$\iota_0$};
		\node [style=differential] (70) at (-5.25, -8.75) {{\bf =\!=\!=\!=}};
		\node [style=component] (72) at (-5.25, -9.5) {$f$};
		\node [style=differential] (74) at (-5.25, -6.75) {{\bf =\!=\!=\!=}};
		\node [style=component] (75) at (-4.5, -7.75) {$\pi_1$};
		\node [style=function2] (76) at (-6, -7.75) {$\pi_0$};
		\node [style=none] (77) at (-0.75, -6.75) {};
		\node [style=none] (78) at (-1.5, -9.25) {};
		\node [style=none] (79) at (0.25, -6.75) {};
		\node [style=object] (80) at (0.25, -11.5) {$A$};
		\node [style=none] (81) at (-0.25, -9.25) {};
		\node [style=object] (82) at (-1.25, -3.75) {$\oc (A \times B)$};
		\node [style=differential] (83) at (-1.25, -4.5) {{\bf =\!=\!=\!=}};
		\node [style=component] (84) at (-0.25, -5.5) {$\pi_1$};
		\node [style=function2] (85) at (-2.25, -5.5) {$\pi_0$};
		\node [style=differential] (89) at (-1.5, -8) {{\bf =\!=\!=\!=}};
		\node [style=component] (90) at (-1.5, -8.75) {$f$};
		\node [style=port] (91) at (1, -7.5) {$=$};
		\node [style=none] (92) at (3.25, -7.75) {};
		\node [style=none] (93) at (2.5, -9.25) {};
		\node [style=none] (94) at (4.25, -7.75) {};
		\node [style=object] (95) at (4.25, -11.5) {$A$};
		\node [style=none] (96) at (3.75, -9.25) {};
		\node [style=object] (97) at (2.75, -3.75) {$\oc(A \times B)$};
		\node [style=differential] (98) at (2.75, -4.5) {{\bf =\!=\!=\!=}};
		\node [style=component] (99) at (3.75, -5.5) {$\pi_1$};
		\node [style=function2] (100) at (1.75, -5.5) {$\pi_0$};
		\node [style=differential] (101) at (2.5, -8.75) {{\bf =\!=\!=\!=}};
		\node [style=component] (102) at (3.75, -6.5) {$f^\ast$};
		\node [style=component] (103) at (7, -6.75) {$\varepsilon$};
		\node [style=differential] (104) at (6.5, -9) {{\bf \aquarius\!\aquarius\!\aquarius}};
		\node [style=object] (105) at (6.5, -9.75) {$A$};
		\node [style=duplicate] (106) at (6.5, -5.5) {$\chi$};
		\node [style=object] (107) at (6.5, -4.75) {$\oc (A \times B)$};
		\node [style=component] (108) at (7, -8) {$f^\ast$};
		\node [style=port] (109) at (5, -7.5) {$=$};
	\end{pgfonlayer}
	\begin{pgfonlayer}{edgelayer}
		\draw [style=wire, bend left=90, looseness=2.00] (53.center) to (55.center);
		\draw [style=wire] (55.center) to (56);
		\draw [style=wire, bend right=90, looseness=2.00] (54.center) to (57.center);
		\draw [style=wire] (62) to (63);
		\draw [style=wire, in=90, out=-150, looseness=1.25] (63) to (65);
		\draw [style=wire, in=90, out=-30, looseness=1.25] (63) to (64);
		\draw [style=wire, in=135, out=-90] (69) to (66);
		\draw [style=wire, in=-90, out=45, looseness=1.25] (66) to (68);
		\draw [style=wire] (53.center) to (68);
		\draw [style=wire] (65) to (69);
		\draw [style=wire] (64) to (57.center);
		\draw [style=wire] (70) to (72);
		\draw [style=wire, in=90, out=-150, looseness=1.25] (74) to (76);
		\draw [style=wire, in=135, out=-90] (76) to (70);
		\draw [style=wire, in=-90, out=45, looseness=1.25] (70) to (75);
		\draw [style=wire, in=90, out=-30, looseness=1.25] (74) to (75);
		\draw [style=wire] (66) to (74);
		\draw [style=wire] (72) to (54.center);
		\draw [style=wire, bend left=90, looseness=2.00] (77.center) to (79.center);
		\draw [style=wire] (79.center) to (80);
		\draw [style=wire, bend right=90, looseness=2.00] (78.center) to (81.center);
		\draw [style=wire] (82) to (83);
		\draw [style=wire, in=90, out=-150, looseness=1.25] (83) to (85);
		\draw [style=wire, in=90, out=-30, looseness=1.25] (83) to (84);
		\draw [style=wire] (84) to (81.center);
		\draw [style=wire] (89) to (90);
		\draw [style=wire] (90) to (78.center);
		\draw [style=wire, in=135, out=-90, looseness=1.25] (85) to (89);
		\draw [style=wire, in=30, out=-90, looseness=1.50] (77.center) to (89);
		\draw [style=wire, bend left=90, looseness=2.00] (92.center) to (94.center);
		\draw [style=wire] (94.center) to (95);
		\draw [style=wire, bend right=90, looseness=2.00] (93.center) to (96.center);
		\draw [style=wire] (97) to (98);
		\draw [style=wire, in=90, out=-150, looseness=1.25] (98) to (100);
		\draw [style=wire, in=90, out=-30, looseness=1.25] (98) to (99);
		\draw [style=wire, in=150, out=-90, looseness=0.75] (100) to (101);
		\draw [style=wire, in=30, out=-90, looseness=1.25] (92.center) to (101);
		\draw [style=wire] (101) to (93.center);
		\draw [style=wire] (96.center) to (102);
		\draw [style=wire] (99) to (102);
		\draw [style=wire] (107) to (106);
		\draw [style=wire, in=90, out=-30, looseness=1.25] (106) to (103);
		\draw [style=wire, in=150, out=-150] (106) to (104);
		\draw [style=wire] (104) to (105);
		\draw [style=wire] (103) to (108);
		\draw [style=wire, in=45, out=-90, looseness=0.75] (108) to (104);
	\end{pgfonlayer}
\end{tikzpicture}
   \end{array}
\end{align*}
So we conclude that the reverse differential combinator of $\mathbb{X}_\oc$ is induced by the reverse deriving transformation of $\mathbb{X}$. 
\end{proof}

Similarly to the differential combinator, the reverse differential combinator can also be expressed in terms of the coderiving transformation as follows on a coKleisli map $\llbracket f \rrbracket: \oc A \to B$: 
 \begin{align*} \llbracket \mathsf{R}[f] \rrbracket := \xymatrixcolsep{3pc}\xymatrix{\oc(A \times B) \ar[r]^-{\mathsf{d}^\circ_{A \times B}} & \oc (A \times B) \!\otimes\! (A \times B) \ar[r]^-{\oc(\pi_0) \otimes \pi_1} & \oc A \otimes B \ar[r]^-{1_{\oc A} \otimes \llbracket f \rrbracket^\ast} & \oc A \!\otimes\! \oc A \ar[r]^-{\mathsf{r}_A} & A
 }  \end{align*} 
  \begin{align*} 
\begin{array}[c]{c}
\llbracket \mathsf{R}[f] \rrbracket 
   \end{array} =
   \begin{array}[c]{c}
\begin{tikzpicture}
	\begin{pgfonlayer}{nodelayer}
		\node [style=differential] (104) at (6.5, -9) {{\bf \aquarius\!\aquarius\!\aquarius}};
		\node [style=object] (105) at (6.5, -9.75) {$A$};
		\node [style=component] (108) at (7, -8) {$f^\ast$};
		\node [style=object] (117) at (6.25, -5) {$\oc (A \times B)$};
		\node [style=differential] (118) at (6.25, -5.75) {{\bf =\!=\!=\!=}};
		\node [style=component] (119) at (7, -6.75) {$\pi_1$};
		\node [style=function2] (120) at (5.5, -6.75) {$\pi_0$};
	\end{pgfonlayer}
	\begin{pgfonlayer}{edgelayer}
		\draw [style=wire] (104) to (105);
		\draw [style=wire, in=45, out=-90, looseness=0.75] (108) to (104);
		\draw [style=wire] (117) to (118);
		\draw [style=wire, in=90, out=-150, looseness=1.25] (118) to (120);
		\draw [style=wire, in=90, out=-30, looseness=1.25] (118) to (119);
		\draw [style=wire] (119) to (108);
		\draw [style=wire, in=150, out=-90] (120) to (104);
	\end{pgfonlayer}
\end{tikzpicture}
   \end{array}
\end{align*}


\subsection{Other constructions of CRDCs}

The inconvenience of monoidal reverse differential categories is that the self-dual compact closed requirement is quite strong. Indeed, there are not many interesting or well-studied models of differential linear logic in the literature that are self-dual compact closed. In fact, from a linear logic perspective, such models are often considered somewhat ``degenerate'' \cite[Definition 3]{hyland2003glueing}. Therefore, examples of Cartesian reverse differential categories arising from monoidal reverse differential categories will often not appear naturally. There is, however, another way of construction Cartesian reverse differential categories from coKleisli categories. In particular, this slightly altered construction can be done with any monoidal differential category. Instead of requiring all objects in the base category to be self-dual, we can instead take the full subcategory of the coKleisli category of self-dual objects in the base category. The proof that this subcategory is a Cartesian reverse differential category is essentially the same as Theorem \ref{coKleisliCRDC}. Being a full subcategory of a Cartesian differential category that is closed under finite products implies that said subcategory is also a Cartesian differential category. Then using the self-duality, we can build a contextual linear dagger and we conclude that we have a Cartesian reverse differential category. 

\begin{definition} Let $(\oc, \delta, \varepsilon)$ be a comonad on a symmetric monoidal category $\mathbb{X}$. Define $\mathsf{R}[\mathbb{X}_\oc]$ as the full subcategory of the coKleisli category $\mathbb{X}_\oc$ whose objects are self-dual objects (Definition \ref{SDCC}) of $\mathbb{X}$, so triples $(A, \cup_A, \cap_A)$. Recall that by a full subcategory, the maps of $\mathsf{R}[\mathbb{X}_\oc]$ are all those of $\mathbb{X}_\oc$ between the underlying objects, that is, $\mathsf{R}[\mathbb{X}_\oc]\left( (A, \cup, \cap), (B, \cup, \cap) \right) = \mathbb{X}_\oc(A,B) = \mathbb{X}(\oc A, B)$, and both composition and identities are the same as in $\mathbb{X}_\oc$. 
\end{definition}

Suppose that the base symmetric monoidal category $\mathbb{X}$ has finite biproducts. The zero object is self-dual, where the cups and caps are simply the zero morphisms \cite[Lemma 3.19]{heunen2019categories}, and the biproduct of self-dual objects is again self-dual \cite[Lemma 3.23]{heunen2019categories}. Explicitly, if $(A, \cup_A, \cap_A)$ and $(B, \cup_B, \cap_B)$ are self-dual objects, then $(A \times B, \cup_{A \times B}, \cap_{A \times B})$ is also a self-dual object where the cup and cap are defined respectively as follows: 
\begin{align*}
  \begin{array}[c]{c} \cup_{A \times B}    \end{array} : =    \begin{array}[c]{c}
\begin{tikzpicture}
	\begin{pgfonlayer}{nodelayer}
		\node [style=component] (0) at (7, -4.5) {$\pi_0$};
		\node [style=component] (1) at (8.25, -4.5) {$\pi_0$};
		\node [style=object] (4) at (9, -4.5) {$+$};
		\node [style=object] (5) at (7, -3.5) {$A \times B$};
		\node [style=object] (6) at (8.25, -3.5) {$A \times B$};
		\node [style=component] (7) at (9.75, -4.5) {$\pi_1$};
		\node [style=component] (8) at (11, -4.5) {$\pi_1$};
		\node [style=object] (9) at (9.75, -3.5) {$A \times B$};
		\node [style=object] (10) at (11, -3.5) {$A \times B$};
	\end{pgfonlayer}
	\begin{pgfonlayer}{edgelayer}
		\draw [style=wire, bend right=90, looseness=2.00] (0) to (1);
		\draw [style=wire] (5) to (0);
		\draw [style=wire] (6) to (1);
		\draw [style=wire, bend right=90, looseness=2.00] (7) to (8);
		\draw [style=wire] (9) to (7);
		\draw [style=wire] (10) to (8);
	\end{pgfonlayer}
\end{tikzpicture}
   \end{array} &&   \begin{array}[c]{c} \cap_{A \times B}    \end{array} : =    \begin{array}[c]{c}
\begin{tikzpicture}
	\begin{pgfonlayer}{nodelayer}
		\node [style=component] (0) at (7, -3.75) {$\iota_0$};
		\node [style=component] (1) at (8.25, -3.75) {$\iota_0$};
		\node [style=object] (4) at (9, -3.75) {$+$};
		\node [style=object] (5) at (7, -4.75) {$A \times B$};
		\node [style=object] (6) at (8.25, -4.75) {$A \times B$};
		\node [style=component] (7) at (9.75, -3.75) {$\iota_1$};
		\node [style=component] (8) at (11, -3.75) {$\iota_1$};
		\node [style=object] (9) at (9.75, -4.75) {$A \times B$};
		\node [style=object] (10) at (11, -4.75) {$A \times B$};
	\end{pgfonlayer}
	\begin{pgfonlayer}{edgelayer}
		\draw [style=wire, bend left=90, looseness=2.00] (0) to (1);
		\draw [style=wire] (5) to (0);
		\draw [style=wire] (6) to (1);
		\draw [style=wire, bend left=90, looseness=2.00] (7) to (8);
		\draw [style=wire] (9) to (7);
		\draw [style=wire] (10) to (8);
	\end{pgfonlayer}
\end{tikzpicture}
   \end{array}
\end{align*}
Therefore it follows that $\mathsf{R}[\mathbb{X}_\oc]$ has finite products. Note that any full subcategory of a Cartesian differential category whose objects are closed under finite products is again a Cartesian differential category. Therefore, if the starting base category $\mathbb{X}$ is a differential category, $\mathsf{R}[\mathbb{X}_\oc]$ will be a Cartesian differential category. 

\begin{lemma}  Let $\mathbb{X}$ be a differential category with finite biproducts. Then $\mathsf{R}[\mathbb{X}_\oc]$ is a Cartesian differential category where the differential combinator is defined as in Proposition \ref{coKleisliCDC}. 
\end{lemma}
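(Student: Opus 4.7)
The plan is to deduce this result from Proposition \ref{coKleisliCDC} together with a closure argument: once we know that $\mathsf{R}[\mathbb{X}_\oc]$ is closed in $\mathbb{X}_\oc$ under finite products, the Cartesian differential structure on $\mathbb{X}_\oc$ will simply restrict to the full subcategory. So the first step is to verify that the collection of self-dual objects of $\mathbb{X}$ is closed under the biproduct structure in $\mathbb{X}$. As indicated in the discussion preceding the statement, this is immediate: the zero object is self-dual with $\cup = 0$ and $\cap = 0$ (which satisfy the snake and twist equations trivially because all relevant maps factor through the zero object), and given self-dual objects $(A, \cup_A, \cap_A)$ and $(B, \cup_B, \cap_B)$, the biproduct $A \times B$ is self-dual with $\cup_{A \times B}$ and $\cap_{A \times B}$ defined as displayed in the text above the statement, using that $\pi_i^\ast = \iota_i$ and $\iota_i^\ast = \pi_i$ for the induced self-dual structure on biproducts (cf.\ \cite[Lemmas 3.19 and 3.23]{heunen2019categories}). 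Checking the snake equations for the biproduct cup/cap reduces to the snake equations for $A$ and $B$ separately together with the biproduct identities $\pi_i;\iota_j = \delta_{ij}$ (where $\delta_{ij}$ is $1$ if $i = j$ and $0$ otherwise) and $\iota_0;\pi_0 + \iota_1;\pi_1 = 1_{A \times B}$; the twist equations follow similarly.

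Second, I would verify that these self-dual structures on the zero object and on biproducts make the forgetful assignment $\mathsf{R}[\mathbb{X}_\oc] \to \mathbb{X}_\oc$ preserve finite products. The terminal object of $\mathbb{X}_\oc$ is the zero object of $\mathbb{X}$ (since $\mathbb{X}_\oc$ inherits its finite products from $\mathbb{X}$, as reviewed in Section \ref{sec:context_fibration}), and the product of two self-dual objects in $\mathbb{X}_\oc$ is computed as the biproduct in $\mathbb{X}$ equipped with the above self-dual structure. Since $\mathsf{R}[\mathbb{X}_\oc]$ is the full subcategory on self-dual objects, the projection maps in $\mathbb{X}_\oc$ land inside $\mathsf{R}[\mathbb{X}_\oc]$, and the universal property of products in $\mathbb{X}_\oc$ transfers verbatim to $\mathsf{R}[\mathbb{X}_\oc]$.

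Third, I would invoke the general fact that a full subcategory of a Cartesian differential category which is closed under finite products is again a Cartesian differential category, with Cartesian left additive structure and differential combinator inherited by restriction. Concretely, the hom-set commutative monoid structure on $\mathbb{X}_\oc(A,B)$ restricts to $\mathsf{R}[\mathbb{X}_\oc]\big((A,\cup_A,\cap_A), (B,\cup_B,\cap_B)\big)$ because $\mathsf{R}[\mathbb{X}_\oc]$ is full, so zero maps and sums lie in the subcategory. The seven axioms \textbf{[CD.1]}--\textbf{[CD.7]} are equational in maps that can be built from the given maps using composition, pairing, projections, and the differential combinator; since all these operations are performed on maps between self-dual objects and the result is again a map in $\mathsf{R}[\mathbb{X}_\oc]$, the axioms inherited from $\mathbb{X}_\oc$ hold automatically in $\mathsf{R}[\mathbb{X}_\oc]$.

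No serious obstacle is expected: the only thing requiring a small check is closure of self-dual objects under biproduct, which is essentially \cite[Lemma 3.23]{heunen2019categories}, with the twist equations holding because the symmetry on $A \times B$ decomposes componentwise along the biproduct injections and projections. Everything else is transfer of structure along the full inclusion.
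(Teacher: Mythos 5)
Your proposal is correct and follows essentially the same route as the paper, which likewise derives the lemma from the closure of self-dual objects under the zero object and biproducts (citing the same lemmas of Heunen--Vicary) and then invokes the fact that a full subcategory of a Cartesian differential category closed under finite products is again a Cartesian differential category. The extra details you supply on the snake and twist equations for the biproduct cup/cap are consistent with what the paper leaves implicit.
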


Now that we have established that $\mathsf{R}[\mathbb{X}_\oc]$ is a Cartesian differential category, to show that it is also a Cartesian reverse differential category, it remains only to show that $\mathsf{R}[\mathbb{X}_\oc]$ has a contextual linear dagger. However, we may define the dagger in the same way that it was done in Corollary \ref{coKlielsicondag}. 

\begin{lemma} Let $\mathbb{X}$ be a differential category with finite biproducts. Then $\mathsf{R}[\mathbb{X}_\oc]$ is a Cartesian differential category with a contextual linear dagger, where the dagger is defined in the same way as in Corollary \ref{coKlielsicondag}.
\end{lemma}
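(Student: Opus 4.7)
The plan is to mimic the proof structure of Corollary \ref{coKlielsicondag}, restricting everything to the full subcategory of self-dual objects. First I would set up the analogue of the context fibration $\mathsf{p}_\oc : \mathcal{L}_\oc[\mathbb{X}] \to \mathbb{X}_\oc$ restricted to self-dual objects: let $\mathcal{L}_\oc[\mathsf{R}]$ be the full subcategory of $\mathcal{L}_\oc[\mathbb{X}]$ on pairs $((X, \cup_X, \cap_X), (A, \cup_A, \cap_A))$ where both components are self-dual. Because the tensor of self-dual objects is self-dual (componentwise cups and caps), this is closed under the fibrewise tensor, and the same calculation as in Theorem \ref{thm:fibration_equivalence} gives a fibration isomorphism $\mathcal{L}[\mathsf{R}[\mathbb{X}_\oc]] \cong \mathcal{L}_\oc[\mathsf{R}]$ via the restricted functors $\mathsf{E}_X$ and $\mathsf{E}^{-1}_X$. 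Thus it suffices to put a dagger fibration structure on $\mathcal{L}_\oc[\mathsf{R}]$ and transport it.

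Next I would define the dagger fibrewise exactly as in Lemma \ref{lemma:compactclosed_to_dagger}: for a morphism $f : \oc X \otimes A \to B$ in the fibre over a self-dual $(X, \cup_X, \cap_X)$ with $A, B$ self-dual, set
\[ f^{\dagger[X]} := (1_{\oc X} \otimes \cap_A \otimes 1_B);(f \otimes \sigma_{A,B});(\cup_B \otimes 1_A) : \oc X \otimes B \to A. \]
Since only $\cap_A$ and $\cup_B$ appear, the construction stays inside $\mathcal{L}_\oc[\mathsf{R}]$, and the snake and sliding equations (Definition \ref{SDCC} and Lemma \ref{sliding}), which hold pointwise for each self-dual object, are the only ingredients used in Lemma \ref{lemma:compactclosed_to_dagger} to verify involution, contravariant functoriality with respect to $(\Delta_X \otimes 1); (1 \otimes f); g$, and preservation of the identity $e_X \otimes 1_A$. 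Those proofs therefore transfer verbatim. Change of base along a coKleisli map $\llbracket h \rrbracket : \oc Y \to X$ between self-dual objects is handled as in Lemma \ref{lemma:compactclosed_to_dagger}: the formula for $\llbracket h \rrbracket^\ast_\oc$ commutes with $(-)^{\dagger[X]}$ because $h$ never meets the cups or caps used to build the dagger.

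Then I would transport this fibrewise dagger to $\mathcal{L}[\mathsf{R}[\mathbb{X}_\oc]]$ by the recipe of Corollary \ref{coKlielsicondag}, defining $\llbracket f^{\dagger[X]} \rrbracket := \mathsf{E}_X\bigl(\mathsf{E}^{-1}_X(\llbracket f \rrbracket)^{\dagger[X]}\bigr)$ for each map $\llbracket f \rrbracket : \oc(X \times A) \to B$ in $\mathsf{R}[\mathbb{X}_\oc]$ which is linear in context $X$. Since $\mathsf{E}$ and $\mathsf{E}^{-1}$ are isomorphisms of fibrations, all dagger axioms carry across. Finally, for the dagger-biproduct condition, I would repeat the computation at the end of Corollary \ref{coKlielsicondag}: applying $\mathsf{E}^{-1}_X$ to $\llbracket 1_X \times \pi_i \rrbracket$ gives $e_X \otimes \pi_i$, whose fibre dagger is $e_X \otimes \pi_i^\ast = e_X \otimes \iota_i$ (using that in a category with biproducts the dual of a projection is the corresponding injection, per \cite{houston2008finite}; this applies because self-dual biproducts are built componentwise from the biproduct projections and injections, as noted above). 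Pushing back through $\mathsf{E}_X$ yields $\llbracket 1_X \times \iota_i \rrbracket$, so the projections and injections are daggers of one another.

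The main obstacle, and the one subtlety that requires checking rather than simply copying earlier proofs, is making sure every object that the dagger construction touches really is self-dual: this forces working with the restricted fibration $\mathcal{L}_\oc[\mathsf{R}]$ and checking that it remains a fibration in its own right (Cartesian lifts along $\llbracket f \rrbracket$ stay within self-dual objects because their codomain component is unchanged). Once one observes that the cups, caps, and induced biproduct-duality structure are all compatible with the full-subcategory restriction, the rest of the verification reduces to re-running the diagrammatic computations of Lemma \ref{lemma:compactclosed_to_dagger} and Corollary \ref{coKlielsicondag} without modification.
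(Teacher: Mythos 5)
Your proposal is correct and follows essentially the same route as the paper, whose own proof is just the one-line remark that the arguments of Section \ref{sec:cokleisliRDC} carry over by self-duality. You have usefully filled in the details the paper leaves implicit, in particular the observation that the dagger of $f\colon \oc X \otimes A \to B$ only invokes $\cap_A$ and $\cup_B$, so restricting to the full subfibration on self-dual objects is all that is needed for the computations of Lemma \ref{lemma:compactclosed_to_dagger} and Corollary \ref{coKlielsicondag} to go through unchanged.
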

\begin{proof} Using essentially the same proof as throughout Section \ref{sec:cokleisliRDC}, it follows from self-duality that we obtain a contextual linear dagger. 
\end{proof}

As a result, it follows that $\mathsf{R}[\mathbb{X}_\oc]$ is a Cartesian reverse differential category. Since the base category does not necessarily have a reverse deriving transformation, we will explicitly write the reverse differential combinator of $\mathsf{R}[\mathbb{X}_\oc]$ in terms of the deriving transformation and the cups and caps. 

\begin{proposition} Let $\mathbb{X}$ be a differential category with finite products. Then $\mathsf{R}[\mathbb{X}_\oc]$ is a Cartesian reverse differential category, where the reverse differential combinator is defined as follows for a coKleisli map ${\llbracket f \rrbracket: \oc A \to B}$: 
 \begin{align*}
    \begin{array}[c]{c}
 \llbracket \mathsf{R}[f] \rrbracket   \end{array}  :=      \begin{array}[c]{c}\xymatrixcolsep{4pc}\xymatrix{\oc(A \times B) \ar[r]^-{\chi_{A,B}} & \oc A \otimes \oc B \ar[r]^-{1_{\oc A} \otimes \varepsilon_B} & \oc A \otimes B \ar[r]^-{1_{\oc A} \otimes \cap_A \otimes 1_B} & \\
\oc A \otimes A \otimes A \otimes B \ar[r]^-{\mathsf{d}_A \otimes \sigma_{A,B}} & \oc A \otimes B \otimes A \ar[r]^-{\llbracket f \rrbracket \otimes 1_B \otimes 1_A} & B \otimes B \otimes A \ar[r]^-{\cup_B \otimes 1_A} & A 
 }
   \end{array} 
\end{align*}
\begin{align*}
     \begin{array}[c]{c}
\llbracket \mathsf{R}[f] \rrbracket 
   \end{array} =    \begin{array}[c]{c}
\begin{tikzpicture}
	\begin{pgfonlayer}{nodelayer}
		\node [style=none] (29) at (9.25, -6.75) {};
		\node [style=none] (30) at (8.5, -9.25) {};
		\node [style=none] (31) at (10.25, -6.75) {};
		\node [style=object] (32) at (10.25, -11.5) {$A$};
		\node [style=none] (33) at (9.75, -9.25) {};
		\node [style=object] (34) at (8.75, -3.75) {$\oc (A \times B)$};
		\node [style=component] (35) at (8.75, -4.5) {$\chi$};
		\node [style=component] (36) at (9.75, -5.5) {$\varepsilon$};
		\node [style=differential] (38) at (8.5, -8) {{\bf =\!=\!=\!=}};
		\node [style=component] (39) at (8.5, -8.75) {$f$};
	\end{pgfonlayer}
	\begin{pgfonlayer}{edgelayer}
		\draw [style=wire, bend left=90, looseness=2.00] (29.center) to (31.center);
		\draw [style=wire] (31.center) to (32);
		\draw [style=wire, bend right=90, looseness=2.00] (30.center) to (33.center);
		\draw [style=wire] (34) to (35);
		\draw [style=wire, in=90, out=-30, looseness=1.25] (35) to (36);
		\draw [style=wire] (36) to (33.center);
		\draw [style=wire] (38) to (39);
		\draw [style=wire] (39) to (30.center);
		\draw [style=wire, in=30, out=-90, looseness=1.50] (29.center) to (38);
		\draw [style=wire, in=150, out=-150] (35) to (38);
	\end{pgfonlayer}
\end{tikzpicture}
   \end{array}
\end{align*}
\end{proposition}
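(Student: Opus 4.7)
My plan is to deduce this proposition by combining the two preceding lemmas with the general characterization theorem of CRDCs, and then to explicitly unfold the resulting formula.

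First, by the two preceding lemmas, $\mathsf{R}[\mathbb{X}_\oc]$ is a Cartesian differential category equipped with a contextual linear dagger, where the dagger is constructed exactly as in Corollary \ref{coKlielsicondag} (using the fibration equivalence $\mathsf{E}_A \dashv \mathsf{E}^{-1}_A$ from Corollary \ref{cor:fibres-equiv} together with the fibre-wise dagger built from cups and caps as in Lemma \ref{lemma:compactclosed_to_dagger}). Note that although the full base category $\mathbb{X}$ need not be self-dual compact closed, the dagger construction from Lemma \ref{lemma:compactclosed_to_dagger} only requires cups/caps on the specific objects appearing in the domain and codomain, which are self-dual in $\mathsf{R}[\mathbb{X}_\oc]$ by definition. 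Thus, by Theorem \ref{thm:characterization_of_crdc}, $\mathsf{R}[\mathbb{X}_\oc]$ is a Cartesian reverse differential category, and its reverse differential combinator is obtained by taking the dagger of the (forward) differential combinator:
\[ \llbracket \mathsf{R}[f] \rrbracket \;=\; \llbracket \mathsf{D}[f]^{\dagger[A]} \rrbracket. \]

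Next I would verify the explicit formula by unfolding this composite. By Proposition \ref{coKleisliCDC}, $\llbracket \mathsf{D}[f] \rrbracket = \chi_{A,A};(1_{\oc A} \otimes \varepsilon_A);\mathsf{d}_A;\llbracket f \rrbracket : \oc(A \times A) \to B$. To compute its $\dagger[A]$, I would follow the three-stage recipe of Corollary \ref{coKlielsicondag}: first apply $\mathsf{E}^{-1}_A$ to the coKleisli map $\llbracket \mathsf{D}[f] \rrbracket$, yielding a map $\oc A \otimes A \to B$ in the fibre $\L_{\oc}[A]$; then dagger this map using cups of $B$ and caps of $A$ as in Lemma \ref{lemma:compactclosed_to_dagger}; and finally apply $\mathsf{E}_A$ to return to the coKleisli presentation.

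The middle step is the key calculation. Using Lemma \ref{lem:cokleisli-linearcontext}.(\ref{lem:cokleisli-linearcontext.i}) (precomposition by $\mathsf{d}^\circ_{A \times A};(\oc(\pi_0)\otimes \pi_1)$ cancels with $\oc(\iota_0) \otimes \iota_1; \mathsf{d}_{A \times A}$ on linear-in-context maps), the action of $\mathsf{E}^{-1}_A$ on $\llbracket \mathsf{D}[f] \rrbracket$ reduces to precomposing with $\oc(\iota_0) \otimes \iota_1$. Combined with the Seely-map identities relating $\chi$ and $\oc(\iota_i)$ (via the biproduct structure), this simplifies to the map $\mathsf{d}_A;\llbracket f \rrbracket : \oc A \otimes A \to B$. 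Its dagger in the fibre over $A$, via the formula in Lemma \ref{lemma:compactclosed_to_dagger}, is exactly
\[ \oc A \otimes B \xrightarrow{1 \otimes \cap_A \otimes 1} \oc A \otimes A \otimes A \otimes B \xrightarrow{\mathsf{d}_A \otimes \sigma_{A,B}} \oc A \otimes B \otimes A \xrightarrow{\llbracket f \rrbracket \otimes 1 \otimes 1} B \otimes B \otimes A \xrightarrow{\cup_B \otimes 1} A. \]
Applying $\mathsf{E}_A$ then prepends $\chi_{A,B};(1 \otimes \varepsilon_B)$ (via the identification of $\mathsf{d}^\circ_{A\times B};(\oc(\pi_0)\otimes\pi_1)$ with this composite, analogous to how this replacement was used in the proof of Theorem \ref{coKleisliCRDC}), yielding precisely the displayed formula.

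The main obstacle is the bookkeeping for the $\mathsf{E}^{-1}_A$ step: one must carefully track how the maps $\oc(\iota_i)$, the Seely isomorphisms $\chi$, and the biproduct projections/injections interact, which is essentially the content of Lemma \ref{lem:cokleisli-linearcontext}.(\ref{lem:cokleisli-linearcontext.i}). Once this simplification is in hand, the rest is a direct application of the cup/cap dagger formula from Lemma \ref{lemma:compactclosed_to_dagger}, mirroring the string-diagrammatic chain of equalities at the end of the proof of Theorem \ref{coKleisliCRDC}, but with $\cup_B$ and $\cap_A$ in place of the reverse deriving transformation and the sliding identity $\llbracket f \rrbracket^\ast = (\cap_A \otimes 1)(1 \otimes \llbracket f \rrbracket \otimes 1)(1 \otimes \cup_B)$.
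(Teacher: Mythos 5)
Your proposal is correct and takes essentially the same route as the paper: the paper likewise invokes the two preceding lemmas together with Theorem \ref{thm:characterization_of_crdc} and then notes that the explicit formula follows ``by essentially the same calculations as in the proof of Theorem \ref{coKleisliCRDC}'', which is precisely the unfolding you carry out. One cosmetic slip: $\mathsf{E}_A$ and $\mathsf{E}^{-1}_A$ are mutually inverse isomorphisms, not an adjoint pair, so the notation $\mathsf{E}_A \dashv \mathsf{E}^{-1}_A$ should be avoided.
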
 
\begin{proof} Since $\mathsf{R}[\mathbb{X}_\oc]$ is a Cartesian differential category with a contextual linear dagger, then by Theorem \ref{thm:characterization_of_crdc}, $\mathsf{R}[\mathbb{X}_\oc]$ is a Cartesian reverse differential category. By essentially the same calculations as in the proof of Theorem \ref{coKleisliCRDC}, we can show that the resulting reverse differential combinator is precisely the desired one.  
\end{proof}

We could have also expressed the reverse differential combinator in terms of the coderiving transformation as follows: 
 \begin{align*}
       \begin{array}[c]{c} \llbracket \mathsf{R}[f] \rrbracket    \end{array}  :=        \begin{array}[c]{c} \xymatrixcolsep{2.75pc}\xymatrix{\oc(A \times B) \ar[r]^-{\mathsf{d}^\circ_{A \times B}} & \oc (A \times B) \otimes (A \times B) \ar[r]^-{\oc(\pi_0) \times \pi_1} & \oc A \otimes B \ar[r]^-{1_{\oc A} \otimes \cap_A \otimes 1_B} & \\
\oc A \otimes A \otimes A \otimes B \ar[r]^-{\mathsf{d}_A \otimes \sigma_{A,B}} & \oc A \otimes B \otimes A \ar[r]^-{\llbracket f \rrbracket \otimes 1_B \otimes 1_A} & B \otimes B \otimes A \ar[r]^-{\cup_B \otimes 1_A} & A 
 }
   \end{array}
   \end{align*}
   \begin{align*}
       \begin{array}[c]{c}
\llbracket \mathsf{R}[f] \rrbracket 
   \end{array} =    \begin{array}[c]{c}
\begin{tikzpicture}
	\begin{pgfonlayer}{nodelayer}
		\node [style=none] (18) at (5.75, -6.75) {};
		\node [style=none] (19) at (5, -9.25) {};
		\node [style=none] (20) at (6.75, -6.75) {};
		\node [style=object] (21) at (6.75, -11.5) {$A$};
		\node [style=none] (22) at (6.25, -9.25) {};
		\node [style=object] (23) at (5.25, -3.75) {$\oc (A \times B)$};
		\node [style=differential] (24) at (5.25, -4.5) {{\bf =\!=\!=\!=}};
		\node [style=component] (25) at (6.25, -5.5) {$\pi_1$};
		\node [style=function2] (26) at (4.25, -5.5) {$\pi_0$};
		\node [style=differential] (27) at (5, -8) {{\bf =\!=\!=\!=}};
		\node [style=component] (28) at (5, -8.75) {$f$};
	\end{pgfonlayer}
	\begin{pgfonlayer}{edgelayer}
		\draw [style=wire, bend left=90, looseness=2.00] (18.center) to (20.center);
		\draw [style=wire] (20.center) to (21);
		\draw [style=wire, bend right=90, looseness=2.00] (19.center) to (22.center);
		\draw [style=wire] (23) to (24);
		\draw [style=wire, in=90, out=-150, looseness=1.25] (24) to (26);
		\draw [style=wire, in=90, out=-30, looseness=1.25] (24) to (25);
		\draw [style=wire] (25) to (22.center);
		\draw [style=wire] (27) to (28);
		\draw [style=wire] (28) to (19.center);
		\draw [style=wire, in=135, out=-90, looseness=1.25] (26) to (27);
		\draw [style=wire, in=30, out=-90, looseness=1.50] (18.center) to (27);
	\end{pgfonlayer}
\end{tikzpicture}
   \end{array} 
\end{align*}

As mentioned above, the advantage of this construction is that we construct a Cartesian reverse differential category from any differential category, with or without the Seely isomorphisms. In most cases, the self-dual objects of a differential category are of a ``finite-dimensional'' flavour. We conclude this section by applying this construction to well-known examples of differential categories to reconstruct some of the main examples of Cartesian reverse differential categories. 

\begin{example} \normalfont This example recaptures the reverse differentiation of polynomials from \cite[Example 14.1]{cockett_et_al:LIPIcs:2020:11661}. For simplicity, we will work with vector fields over a field but we note that this example can be generalized to the category of modules over any commutative semiring.  Let $k$ be a field and $\mathsf{VEC}_k$ be the category of $k$-vector spaces and $k$-linear maps between them. Then $\mathsf{VEC}^{\mathsf{op}}_k$ is a differential category where $\oc V$ is the free symmetric algebra over $V$: 
\[ \oc V = \bigoplus^{\infty}_{n=0} V^{\otimes_s^n}\]
where $\otimes_s^n$ is the $n$-fold symmetrized tensor product of $V$. If $X$ is a basis of $V$, then $\oc V \cong k[X]$, where $k[X]$ is the polynomial ring over $X$. From this point of view, the deriving transformation can be described as a map $\mathsf{d}_V: k[X] \to k[X] \otimes V$ which maps a polynomial to the sum of its partial derivatives: 
\[ \mathsf{d}(p(\vec x) = \sum \limits^n_{i=1} \frac{\partial p(\vec x)}{\partial x_i} \otimes x_i \]
Thus $\mathsf{VEC}^{\mathsf{op}}_{k}$ is a differential category, whose differential structure captures polynomial differentiation. For more details on this (co)differential category, see \cite[Section 2.5.3]{blute2006differential}. The self-dual objects in $\mathsf{VEC}^{\mathsf{op}}_k$ are precisely the finite-dimensional vector spaces, and since self-dual objects are self-dual, the same is true in $\mathsf{VEC}^{\mathsf{op}}_k$. Therefore, $\mathsf{R}[{\mathsf{VEC}^{\mathsf{op}}_k}_\oc]$ is equivalent, as Cartesian reverse differential categories, to $\mathsf{POLY}_k$ from \cite[Example 14.1]{cockett_et_al:LIPIcs:2020:11661}.
\end{example}

\begin{example} \normalfont This example recaptures the reverse differentiation of smooth functions from \cite[Example 14.2]{cockett_et_al:LIPIcs:2020:11661}. Let $\mathbb{R}$ be the field of real numbers. While the differential structure on $\mathsf{VEC}^{\mathsf{op}}_\mathbb{R}$ from the above example captures polynomial differentiation, $\mathsf{VEC}^{\mathsf{op}}_\mathbb{R}$ has another differential structure where this time the deriving transformation corresponds to differentiating (real) smooth functions. The key to this example is the notion of $C^\infty$-rings, which recall are defined as the algebras of the Lawvere theory whose morphisms are smooth maps between the Euclidean spaces $\mathbb{R}^n$. Equivalently, a $C^\infty$-ring is a set $A$ equipped with a family of functions ${\Phi_f: A^n \to A}$ indexed by the smooth functions $f: \mathbb{R}^n \to \mathbb{R}$ and which satisfies certain coherence equations. For example, $C^\infty(\mathbb{R}^n) = {\lbrace f: \mathbb{R}^n \to \mathbb{R} \vert~ f \text{ is smooth} \rbrace}$
 is a $C^\infty$-ring. For every $\mathbb{R}$-vector space $V$, there exists a free $C^\infty$-ring over $V$ \cite[Section 4]{cruttwell2019integral}, which we denote as $\mathsf{S}^\infty(V)$. If $V$ is finite dimensional of dimension $n$, then $\mathsf{S}^\infty(V) \cong C^\infty(\mathbb{R}^n)$ as $C^\infty$-rings, and  in particular, $\mathsf{S}^\infty(\mathbb{R}^n) = C^\infty(\mathbb{R}^n)$. Then $\mathsf{VEC}^{\mathsf{op}}_\mathbb{R}$ is a differential category with respect to the coalgebra modality $\mathsf{S}^\infty$ and whose deriving transformation is induced by differentiating smooth functions. In particular for $\mathbb{R}^n$, the deriving transformation $\mathsf{d}: C^\infty(\mathbb{R}^n) \to C^\infty(\mathbb{R}^n) \otimes \mathbb{R}^n$ maps a smooth function $f: \mathbb{R}^n \to \mathbb{R}$ to the sum of its partial derivatives: 
\[ \mathsf{d}(f) = \sum_{i=1}^{n} \frac{\partial f}{\partial x_i} \otimes x_i \] 
Hence $\mathsf{VEC}^{\mathsf{op}}_\mathbb{R}$ is a monoidal differential category, whose differential structure captures smooth function differentiation. For more details on this differential category, see \cite[]{cruttwell2019integral}. As explained in the above example, the self-dual objects of $\mathsf{VEC}^{\mathsf{op}}_\mathbb{R}$ are the finite-dimensional vector spaces. Therefore, $\mathsf{R}[{\mathsf{VEC}^{\mathsf{op}}_\mathbb{R}}_{\mathsf{S}^\infty}]$ is equivalent as a Cartesian reverse differential category to the example $\mathsf{SMOOTH}$ from \cite[Example 14.2]{cockett_et_al:LIPIcs:2020:11661}.
\end{example}

\section{Conclusions and Future Work}\label{sec:future_work}
In this paper we have filled in a gap in the literature on categorical differential structures by providing a definition of a \emph{monoidal reverse differential category}.  We have also provided key results to relate this structure to others, showing how monoidal reverse differential categories relate to monoidal differential categories, Cartesian differential categories, and Cartesian reverse differential categories.  This work provides many additional avenues for exploration; we briefly discuss some of them here.  
\begin{itemize}
    \item To understand what the structure of MRDCs should be, this paper started from an MDC and looked at what would happen if it's associated CDC was a CRDC.  However, there is another approach one could take.  In \cite[]{blute2015cartesian}, the authors look at what additional structure on a CDC would be necessary to form an MDC.  Thus, alternatively, one could start with a CRDC with such structure, and show that one gets an MRDC.  We leave this for future work.    
    \item In \cite[]{garner2021cartesian}, the authors describe how CDCs can be seen as a type of skew-enriched category, and use this result to demonstrate how every CDC embeds into a CDC associated to an MDC.  Similar results for CRDCS and MRDCs would be very useful.  
    \item In the world of ``reverse'' differential structures, the analog of tangent structures \cite[]{cockettCruttwellTangent} has yet to be described.  Such a structure would axiomatize the cotangent bundle in differential geometry.  Understanding such a structure's relationship to MRDC and CRDCs will then help further bridge the gap between differential geometry and differentiation in computer science.  
    \item All of the above items are theoretical; however, there is an important applied avenue which this work allows one to pursue.  As examples \ref{ex:quantum1} and \ref{ex:quantum2} demonstrated, several abstract models of quantum computation are MRDCs.  Then in particular, by Theorem \ref{coKleisliCRDC}, the coKleisli category associated to these models is a CRDC.  By the results of \cite[]{gradientBasedLearning}, this means that one could apply supervised learning techniques to these examples.  This possibility of combining quantum computation with supervised learning is an exciting direction we hope will be pursued in the future. 
\end{itemize}

\paragraph*{Acknowledgements:} For the work on this research project: Geoff Cruttwell was supported by an NSERC Discovery grant; Jonathan Gallagher was financially supported by an AARMS postdoctoral fellowship; Jean-Simon Pacaud Lemay was financially supported by an NSERC Postdoctoral Fellowship (PDF) - Award \#: 456414649; and Dorette Pronk was supported by an NSERC Discovery grant.

\bibliographystyle{msclike}      
\bibliography{reversebib}   

\begin{thebibliography}{}

\bibitem[Blute et~al., 2020]{Blute2019}
{\bf Blute, R.}, {\bf Cockett, J.}, {\bf Lemay, J.-S.~P.}, \textbf{and} {\bf
  Seely, R.} 2020.
\newblock Differential categories revisited.
\newblock {\em Applied Categorical Structures}, 28:171--235.

\bibitem[Blute et~al., 2006]{blute2006differential}
{\bf Blute, R.}, {\bf Cockett, J.}, \textbf{and} {\bf Seely, R.} 2006.
\newblock Differential categories.
\newblock {\em Mathematical Structures in Computer Science}, 16(06):1049--1083.

\bibitem[Blute et~al., 2009]{blute2009cartesian}
{\bf Blute, R.}, {\bf Cockett, J.}, \textbf{and} {\bf Seely, R.} 2009.
\newblock Cartesian differential categories.
\newblock {\em Theory and Applications of Categories}, 22(23):622--672.

\bibitem[Blute et~al., 2015]{blute2015cartesian}
{\bf Blute, R.}, {\bf Cockett, J.}, \textbf{and} {\bf Seely, R.} 2015.
\newblock Cartesian differential storage categories.
\newblock {\em Theory and Applications of Categories}, 30(18):620--686.

\bibitem[Cockett and Cruttwell, 2013]{cockettCruttwellTangent}
{\bf Cockett, J.} \textbf{and} {\bf Cruttwell, G.} 2013.
\newblock Differential structure, tangent structure, and sdg.
\newblock {\em Applied Categorical Structures}, 22:331--417.

\bibitem[Cockett et~al., 2020]{cockett_et_al:LIPIcs:2020:11661}
{\bf Cockett, J.}, {\bf Cruttwell, G.}, {\bf Gallagher, J.}, {\bf Lemay,
  J.-S.~P.}, {\bf MacAdam, B.}, {\bf Plotkin, G.}, \textbf{and} {\bf Pronk, D.}
  2020.
\newblock Reverse derivative categories.
\newblock {\em LIPIcs}, 152(CSL 2020):18:1--18:16.

\bibitem[Cockett and Lemay, 2018]{cockett_lemay_2018}
{\bf Cockett, J.} \textbf{and} {\bf Lemay, J.-S.~P.} 2018.
\newblock Integral categories and calculus categories.
\newblock {\em Mathematical Structures in Computer Science}, pp. 1--66.

\bibitem[Cockett and Lemay, 2022]{cockett2020linearizing}
{\bf Cockett, J.} \textbf{and} {\bf Lemay, J.-S.~P.} 2022.
\newblock Linearizing combinators.
\newblock {\em Theory and Applications of Categories}, 38(13):374--431.

\bibitem[Coecke and Duncan, 2008]{Coecke08interactingquantum}
{\bf Coecke, B.} \textbf{and} {\bf Duncan, R.} 2008.
\newblock Interacting quantum observables.
\newblock In {\em Lecture Notes in Computer Science}, pp. 298--310. Springer.

\bibitem[Cruttwell et~al., 2022]{gradientBasedLearning}
{\bf Cruttwell, G.}, {\bf Gavranovi\'c, B.}, {\bf Ghani, N.}, {\bf Wilson, P.},
  \textbf{and} {\bf Zanasi, F.} 2022.
\newblock Categorical foundations of gradient-based learning.
\newblock In {\em Proceedings of the 31st European Symposium on Programming}.

\bibitem[Cruttwell et~al., 2021]{cruttwell2019integral}
{\bf Cruttwell, G.}, {\bf Lemay, J.-S.~P.}, \textbf{and} {\bf Lucyshyn-Wright,
  R.~B.} 2021.
\newblock Integral and differential structure on the free
  $\ensuremath{C^\infty}$-ring modality.
\newblock {\em Cahiers de Topologie et G{\'e}om{\'e}trie Diff{\'e}rentielle
  Cat{\'e}goriques}, 62(2):116--176.

\bibitem[Duncan, 2006]{duncan2006types}
{\bf Duncan, R.} 2006.
\newblock {\em Types for Quantum Computing}.
\newblock PhD thesis, University of Oxford.

\bibitem[Ehrhard and Jafarrahmani, 2021]{ehrhard2021categorical}
{\bf Ehrhard, T.} \textbf{and} {\bf Jafarrahmani, F.} 2021.
\newblock Categorical models of linear logic with fixed points of formulas.
\newblock In {\em 2021 36th Annual ACM/IEEE Symposium on Logic in Computer
  Science (LICS)}, pp. 1--13. IEEE.

\bibitem[Fong and Spivak, 2019]{fong2019hypergraph}
{\bf Fong, B.} \textbf{and} {\bf Spivak, D.} 2019.
\newblock Hypergraph categories.
\newblock {\em Journal of Pure and Applied Algebra}, 223(11):4746--4777.

\bibitem[Garner and Lemay, 2021]{garner2021cartesian}
{\bf Garner, R.} \textbf{and} {\bf Lemay, J.-S.~P.} 2021.
\newblock Cartesian differential categories as skew enriched categories.
\newblock {\em Applied Categorical Structures}, pp. 1--52.

\bibitem[Hadzihasanovic, 2015]{7174913}
{\bf Hadzihasanovic, A.} 2015.
\newblock A diagrammatic axiomatisation for qubit entanglement.
\newblock In {\em 2015 30th Annual ACM/IEEE Symposium on Logic in Computer
  Science}, pp. 573--584.

\bibitem[Heunen and Vicary, 2019]{heunen2019categories}
{\bf Heunen, C.} \textbf{and} {\bf Vicary, J.} 2019.
\newblock {\em Categories for Quantum Theory: an Introduction}.
\newblock Oxford University Press.

\bibitem[Houston, 2008]{houston2008finite}
{\bf Houston, R.} 2008.
\newblock Finite products are biproducts in a compact closed category.
\newblock {\em Journal of Pure and Applied Algebra}, 212(2):394--400.

\bibitem[Hyland and A, 1999]{HYLAND1999127}
{\bf Hyland, M.} \textbf{and} {\bf A, S.} 1999.
\newblock Abstract games for linear logic.
\newblock {\em Electronic Notes in Theoretical Computer Science}, 29:127--150.
\newblock CTCS '99, Conference on Category Theory and Computer Science.

\bibitem[Hyland and Schalk, 2003]{hyland2003glueing}
{\bf Hyland, M.} \textbf{and} {\bf Schalk, A.} 2003.
\newblock Glueing and orthogonality for models of linear logic.
\newblock {\em Theoretical Computer Science}, 294(1-2):183--231.

\bibitem[Jacobs, 1999]{jacobs1999categorical}
{\bf Jacobs, B.} 1999.
\newblock {\em Categorical Logic and Type Theory}.
\newblock Elsevier.

\bibitem[Laird et~al., 2013]{journal:weighted-relational}
{\bf Laird, J.}, {\bf Manzonetto, G.}, {\bf McCusker, G.}, \textbf{and} {\bf
  Pagani, M.} 2013.
\newblock Weighted relational models of typed lambda-calculi.
\newblock In {\em Proceedings of the 2013 28th Annual ACM/IEEE Symposium on
  Logic in Computer Science}, LICS '13, pp. 301--310, USA. IEEE Computer
  Society.

\bibitem[Lemay, 2019]{lemayfhilb}
{\bf Lemay, J.-S.~P.} 2019.
\newblock Why fhilb is not an interesting (co) differential category.
\newblock In {\em In the Proceedings of Quantum Programming and Logic
  (QPL2019)}.

\bibitem[Lemay, 2020]{lemay2020convenient}
{\bf Lemay, J.-S.~P.} 2020.
\newblock Convenient antiderivatives for differential linear categories.
\newblock {\em Mathematical Structures in Computer Science}, 30(5):545--569.

\bibitem[Lemay, 2021]{lemay:LIPIcs.CALCO.2021.19}
{\bf Lemay, J.-S.~P.} 2021.
\newblock {Coderelictions for free exponential modalities}.
\newblock In {\bf Gadducci, F.} \textbf{and} {\bf Silva, A.}, editors, {\em 9th
  Conference on Algebra and Coalgebra in Computer Science (CALCO 2021)}, volume
  211 of {\em Leibniz International Proceedings in Informatics (LIPIcs)}, pp.
  19:1--19:21, Dagstuhl, Germany. Schloss Dagstuhl -- Leibniz-Zentrum f{\"u}r
  Informatik.

\bibitem[Mac~Lane, 2013]{mac2013categories}
{\bf Mac~Lane, S.} 1971, revised 2013.
\newblock {\em Categories for the Working Mathematician}.
\newblock Springer-Verlag, New York, Berlin, Heidelberg.

\bibitem[Moeller and Vasilakopoulou, 2020]{monoidal_fibrations}
{\bf Moeller, J.} \textbf{and} {\bf Vasilakopoulou, C.} 2020.
\newblock Monoidal {G}rothendieck construction.
\newblock {\em Theory and Applications of Categories}, 35(31):1159--1207.

\bibitem[Ong, 2017]{ong2017quantitative}
{\bf Ong, C.-H.~L.} 2017.
\newblock Quantitative semantics of the lambda calculus: Some generalisations
  of the relational model.
\newblock In {\em Logic in Computer Science (LICS), 2017 32nd Annual ACM/IEEE
  Symposium on}, pp. 1--12. IEEE.

\bibitem[Selinger, 2010]{selinger2010autonomous}
{\bf Selinger, P.} 2010.
\newblock Autonomous categories in which $\ensuremath{A= A^\ast}$.
\newblock In {\em 7th workshop on Quantum Physics and Logic (QPL 2010)}.
  Citeseer.

\bibitem[Selinger and Valiron,
  2008]{journal:selinger-valiron-fully-abstract-quantum}
{\bf Selinger, P.} \textbf{and} {\bf Valiron, B.} 2008.
\newblock On a fully abstract model for a quantum linear functional language.
\newblock {\em Electron. Notes Theor. Comput. Sci.}, 210:123--137.

\bibitem[Shirahata, 1996]{shirahata1996sequent}
{\bf Shirahata, M.} 1996.
\newblock A sequent calculus for compact closed categories.

\bibitem[Shulman, 2008]{shulman_monoidal_fibrations}
{\bf Shulman, M.} 2008.
\newblock Framed bicategories and monoidal fibrations.
\newblock {\em Theory and Applications of Categories}, 20(18):650--738.

\bibitem[Srinivasan et~al., 2021]{srinivasan2021dagger}
{\bf Srinivasan, P.}, {\bf Comfort, C.}, \textbf{and} {\bf Cockett, J.} 2021.
\newblock Dagger linear logic for categorical quantum mechanics.
\newblock {\em Logical Methods in Computer Science}, 17.

\bibitem[V{\'a}k{\'a}r and Smeding, 2022]{vakar2021chad}
{\bf V{\'a}k{\'a}r, M.} \textbf{and} {\bf Smeding, T.} 2022.
\newblock Chad: Combinatory homomorphic automatic differentiation.
\newblock {\em ACM Transactions on Programming Languages and Systems (TOPLAS)},
  44(3):1--49.

\bibitem[Vicary, 2008]{vicary2008categorical}
{\bf Vicary, J.} 2008.
\newblock A categorical framework for the quantum harmonic oscillator.
\newblock {\em International Journal of Theoretical Physics},
  47(12):3408--3447.

\end{thebibliography}

\end{document}